\theoremstyle{plain}\newtheorem{theorem}{Theorem}[section]
\theoremstyle{plain}\newtheorem{proposition}[theorem]{Proposition}
\theoremstyle{plain}\newtheorem{lemma}[theorem]{Lemma}
\theoremstyle{plain}\newtheorem{corollary}[theorem]{Corollary}
\theoremstyle{definition}\newtheorem{definition}[theorem]{Definition}
\theoremstyle{definition}\newtheorem{remark}[theorem]{Remark}
\theoremstyle{definition}
\theoremstyle{definition}\newtheorem{example}[theorem]{Example}
\title{Minimizing Harmonic Maps on the Unit Ball with Tangential Anchoring}
\date{\today}
\begin{document}

\author[1]{Lia Bronsard\footnote{bronsard@mcmaster.ca }}
\author[2]{Andrew Colinet\footnote{acolinet@mun.ca}}
\author[3]{Dominik Stantejsky\footnote{dominik.stantejsky@univ-lorraine.fr}}

\affil[1]{Department of Mathematics and Statistics, McMaster University, Hamilton, ON L8S 4L8 Canada}
\affil[2]{Department of Computer Science, Memorial University of Newfoundland, St. John's, NL A1C 5S7 Canada}
\affil[3]{Universit\'e de Lorraine, Institut \'Elie Cartan de Lorraine, UMR 7502 CNRS,  54506 Vand\oe uvre-l\`es-Nancy, France}

\maketitle

\begin{abstract}
    Since the seminal work of Schoen-Uhlenbeck, many authors have studied     properties of harmonic maps satisfying Dirichlet boundary conditions.
    In this article, we instead investigate regularity and symmetry of $\mathbb{S}^2-$valued minimizing harmonic maps subject to a tangency constraint in the model case of the unit ball in $\mathbb{R}^{3}$.
    In particular, we obtain a monotonicity formula respecting tangentiality on a curved boundary in order to show optimal regularity up to the boundary.
    We introduce novel sufficient conditions under which the minimizer must exhibit symmetries.
    Under a symmetry assumption, we present a delineation of the singularities of minimizers, namely that a mimimizer has exactly two point singularities, located on the boundary at opposite points. \\
\linebreak
\textbf{Keywords:}  Harmonic maps, regularity, monotonicity formula, tangential boundary condition, equivariance. \hfill \hfill
\linebreak
\textbf{MSC2020:} 
49K20, 
49Q20, 
35B38, 
35A16, 
35B07, 
49S05. 
\end{abstract}




\section{Introduction}
\label{sec:intro}

In this article, we study minimizing harmonic maps $u:\Omega\to \mathbb{S}^2$, $\Omega \coloneqq B_1(0)\subset\mathbb{R}^3$, with natural tangential boundary conditions that arise in the study of nematic liquid crystal droplets in the one-constant approximation of the Oseen-Frank model \cite{LaVol}.
That is, we investigate minimizers of
\begin{equation}\label{def:dir}
    E(w)
    \coloneqq\frac{1}{2}\int_{\Omega} |\nabla w|^2 \dx x \,,
\end{equation}
subject to the boundary conditions
\begin{equation}\label{eq:tanbc}
    w(x)\cdot\nu(x)=0\qquad \text{for }\mathcal{H}^{2}-\text{almost every }x\in\partial\Omega,
\end{equation}
where $\nu$ is the outward unit normal to $\partial\Omega$.
Minimizers satisfy
\begin{equation}\label{eq:MinPDE}
    \begin{aligned}[c]
    	\begin{cases}
    		\hfill -\Delta{}u(x)=|\nabla{}u(x)|^{2}u(x)\phantom{0}&\text{for }x\in\Omega,\\
    		\hfill u(x)\cdot\nu(x)=0\phantom{|\nabla{}u(x)|^{2}u(x)}&\text{for }x\in\partial\Omega,\\
    		\hfill \partial_{\nu}u_{T}(x)=0\phantom{|\nabla{}u(x)|^{2}u(x)}&\text{for }x\in\partial\Omega,
    	\end{cases}
     \end{aligned}
\end{equation}
where $u_T=u-u_\nu\nu$ \ds{is the tangential part of $u$}.
Contrarily to Dirichlet conditions, the map $u$ is not a prescribed tangential map on $\partial\Omega$, but enjoys the freedom to be any unit vector field in the tangent plane and this introduces new technical difficulties that will be described below.
From topological considerations, the tangent bundle of the sphere $T\mathbb{S}^2$
is not parallelizable, and thus the constraint \eqref{eq:tanbc} cannot be satisfied
everywhere on $\mathbb{S}^2$ \cite{Mil}.
Hence our boundary conditions cannot be smooth on the entirety of $\mathbb{S}^{2}$.

Our goal is to describe the minimizing configurations $u$ and, in particular, to characterize its regularity (smoothness), symmetry, and the location of its defects.

For the regularity, the goal is to show the optimal smoothness that was first obtained in \cite{SU,GiGi} that $u$ is smooth, except on a discrete singular set (see \cite{HaKiLi} for the general Oseen-Frank energy).
The classical approach that we follow here, consists of three main steps:
First, one obtains a monotonicity formula for an appropriate energy density using inner and outer variations of the energy.
This allows one to analyze the fine structure of the support of the Dirichlet energy measure.
Second, equipped with a monotonicity formula, a careful investigation of the properties of the singular set is undertaken to rule out singularities of dimension $1$.
Finally, using a blow up procedure, we argue that the minimizer must be $0-$homogeneous around singularities, thus showing that the singular set consists only of isolated points.

While the overall strategy remains the same, there are a number of obstacles
caused by the fact that our analysis includes the boundary, in which we restrict the boundary data $u|_{\partial \Omega}$ to lie in the tangent plane:
\begin{enumerate}[label=(\roman*)]
    \item
        Much of our argument is built around the idea of exchanging boundary
        information for interior information of an extended function.
        This was done previously in various settings, see \cite{ABC,BCS,DiMiPi,Sch}.
        As a result, an immediate obstacle is the ambiguity in selecting a
        suitable extension.
        A lot of care is needed to match the behavior of a minimizer of
        \eqref{def:dir} to sufficient degree in order to preserve the elliptic
        structure of the associated PDEs.
        Extending by a spherical reflection  yields an extended function that satisfies a PDE similar to the harmonic map equation and is a critical point of a reflected energy functional that agrees with the Dirichlet energy integral to highest order.
        At the same time, the extended function is \emph{not} guaranteed to be a minimizer for the unconstrained extended energy, and so the regularity
        could not be obtained from \cite{Luck}.
    \item
        Another obstacle is that the standard approaches to rigorously
        establishing a monotonicity formula make use of inner variations of the
        minimizer of \eqref{def:dir} by smooth compactly supported perturbations
        of $\Omega$ (see \cite{GiMe}).
        This presents an additional difficulty since we do not have minimality for the extended energy over $B_{2}(0)$, but only minimality of the extension on $\Omega$ and $B_{2}(0)\setminus\Omega$ separately.
        As a result, there is a need for the perturbation to preserve the tangency
        condition imposed along $\partial\Omega$ in order to make use of
        minimality of each of the two energies.
        The classical approach for inner variation consists of competitors of the form $u(\psi_{t}(x))$ for a family of diffeomorphisms
        $\{\psi_{t}\}_{t>0}$.
        However, these competitors do \emph{not} satisfy the boundary condition \eqref{eq:tanbc}.
        To ensure tangency we consider a suitable family of interior
        preserving perturbations and a transformation that preserves both tangency and norm, see Definition~\ref{def:innervartan} in Section~\ref{subsec:tang_inner_var}.
        This allows us to simultaneously obtain test functions for the energies
        on $\Omega$ and $B_{2}(0)\setminus\Omega$.
        While this has been achieved in \cite{Sch} in the case of flat
        boundaries, we are able to construct those diffeomorphisms in the
        considerably harder case of a curved boundary.
    \item
        To establish monotonicity properties of the $1$-density of the Dirichlet
        energy for points near the boundary \cite{Ev2} simply considers balls
        of sufficiently small radii that are contained in $\Omega$.
        This is not possible when analyzing behavior up to the boundary.
        We construct an explicit family of diffeomorphisms to which
        our inner variation formula applies to, see Section~\ref{subsubsec:rigorous_calc}.
        In particular, this family of diffeomorphisms is built inside
        a coordinate ball which is centred at a point $x_{0}$ on
        the boundary and chosen to match the
        map $x\mapsto{}(x-x_{0})$ to leading order.
        However, this results in a number of additional error terms when investigating the derivative of the $1$-density
        of the Dirichlet energy coming from the approximation to
        $x\mapsto{}(x-x_{0})$ and the additional terms in the PDE
        for the extension.
        The additional error terms are of a lower order and so the monotonicity formula we obtain retains the same character for small radii, similar to
        \cite{DiMiPi,Sch}.
        For example, we a priori obtain upper semi-continuity of the energy
        density only up to a constant, not with the precise constant $1$.
        We note that the error terms that are present in our inner and outer
        variations are preventing us from applying other approaches for
        regularity, such as \cite{Ev2} since the additional terms, although of lower order, can not easily be seen to vanish after the appropriate rescaling.
\end{enumerate}

The symmetry of the minimizers is also difficult to obtain due to the fact that standard symmetrization techniques relying on rearrangements do not preserve the pointwise norm of the function and thus other methods need to be developed, see for example \cite{BBCH,diFraSlaZar,DiFrFiSl,KaSha,Sa,SaSha2,SaSha}.
Other techniques rely on the explicit comparison with a minimizer to deduce symmetry, see e.g.\ \cite{FHL,MiPo}.
We present two approaches consisting of a decomposition of $u$ in cylindrical coordinates and a symmetrization of the coefficients that preserves the norm.
We are not able to prove that $u$ is symmetric in the sense that $u\cdot\ee_\theta=0$ everywhere and $u$ is equivariant with respect to rotations about an axis.
However, we are able to obtain two distinct conditions which imply symmetry or a partial symmetry.
These approaches do not only apply to the harmonic map setting, but could also be used e.g.\ for the Ginzburg-Landau energy, see Remark~\ref{rem:GL}.
We conjecture that the symmetry holds also without these conditions.
The symmetry would imply that for any minimizer there are two and only two point defects on the boundary, at opposite points on the sphere and no interior defects.

The analysis of minimizers in the constrained class of symmetric functions can be reduced to functions defined on two-dimensional slices of the three dimensional ball $B_1(0)$.
In order to show that those minimizers cannot have interior defects, we adapt the techniques invoked in \cite{ABL} to establish sharp bounds on the angle between $u$ and the axis of symmetry $\ee_3$, preventing $u$ from turning towards $-\ee_3$. 
This shows that in the setting of symmetric functions there are exactly two point defects, located on the boundary, as expected.

Another natural question concerns the value of the minimal energy, which is known to be $4\pi$ in the case of a Dirichlet condition $u=\frac{x}{|x|}$ on $\mathbb{S}^2$, see \cite{FHL}.
We provide an upper bound by constructing an admissible map given by the unit normal vector field of a family of spheres with varying radius and center that intersect the boundary and the axis $\ee_3$ perpendicularly, see Figure~\ref{fig:competitor}.
Then, a lower bound is derived by passing into a coordinate system using the level sets of the minimizer.
Using variations in a slightly larger class, one can compute explicit solutions to the associated Euler-Lagrange equations which provide us with explicit parameters that allow us to further estimate the energy from below. 

Our results lead naturally to the following open questions:
\begin{itemize}
    \item
        How can one prove that the above sufficient conditions for symmetry hold?
    \item
        Can one extend the analysis to weak anchoring, i.e.\ replace the
        condition $u\cdot\nu=0$ by a surface term of the form
        $\int_{\partial \Omega} |u\cdot\nu|^2\dx\mathcal{H}^2$ ? 
    \item
        Replacing the One-constant Oseen-Frank model for liquid crystals by
        e.g.\ the general Oseen-Frank model or the Landau-de Gennes model, can we obtain analogous results? 
\end{itemize}



In a forthcoming work, the case $u\cdot\nu=\alpha$, for $\alpha\in (0,1)$ will be considered.

\subsection{Equivariance}
\label{subsec:equivariance}

In order to discuss symmetries in solutions over $\Omega$ we introduce
cylindrical coordinates, $(\rho,\theta,z)$, defined on
$(0,1)\times(0,2\pi)\times(-1,1)$ by
\begin{equation*}
    x=(\rho\cos(\theta),\rho\sin(\theta),z).
\end{equation*}
These coordinates can be inverted to obtain
\begin{align*}
    \rho=\sqrt{x_{1}^{2}+x_{2}^{2}},\hspace{15pt}
    \theta=
    \begin{cases}
        \arccos\Bigl(\frac{x_{1}}{\sqrt{x_{1}^{2}+x_{2}^{2}}}\Bigr)&
        \text{if }x_{2}>0,\\
        \pi&\text{if }x_{2}=0,\\
        2\pi-\arccos\Bigl(\frac{x_{1}}{\sqrt{x_{1}^{2}+x_{2}^{2}}}\Bigr)&
        \text{if }x_{2}<0,
    \end{cases}\hspace{15pt}
    z=x_{3}.
\end{align*}
The unit vectors $\mathbf{e}_{\rho}$, $\mathbf{e}_{\theta}$, and $\mathbf{e}_{\varphi}$
are defined by
\begin{equation*}
    \mathbf{e}_{\rho}\coloneqq(\cos(\theta),\sin(\theta),0),\hspace{15pt}
    \mathbf{e}_{\theta}\coloneqq(-\sin(\theta),\cos(\theta),0),\hspace{15pt}
    \mathbf{e}_{z}\coloneqq\mathbf{e}_{3}.
\end{equation*}
Using these unit vectors we can represent a function $u\colon\Omega\to\mathbb{S}^{2}$
as
\begin{equation*}
    u(\rho,\theta,z)=u(x(\rho,\theta,z))
    =u_{\rho}(\rho,\theta,z)\mathbf{e}_{\rho}
    +u_{\theta}(\rho,\theta,z)\mathbf{e}_{\theta}
    +u_{z}(\rho,\theta,z)\mathbf{e}_{z}
\end{equation*}
where $u_{a}$ for $a\in\{\rho,\theta,z\}$ is defined by
\begin{equation*}
    u_{a}(\rho,\theta,z)\coloneqq{}u(x(\rho,\theta,z))\cdot\mathbf{e}_{a}.
\end{equation*}

\begin{definition}[Equivariance]\label{def:equivariance}
A function $u\colon{}\Omega\xrightarrow{}\mathbb{R}^3$ is called \emph{equivariant} around an axis $\ell$ passing through the origin if for all rotations $R\in SO(3)$ around $\ell$ (i.e.\ $R$ leaves $\ell$ invariant) it holds that
\begin{align*}
u(Rx) \ = \ Ru(x) \, .
\end{align*}
We will later consider the axis $\ell$ given by the $\ee_3-$axis, in which case the rotation matrices $R$ are of the form $R_\theta$, where
\begin{align*}
R_\theta 
\ = \ 
\begin{pmatrix}
\cos(\theta) & -\sin(\theta) & 0 \\
\sin(\theta) & \cos(\theta)  & 0 \\
0 			  & 0 			   & 1
\end{pmatrix}
\end{align*}
for some angle $\theta\in [0,2\pi)$.
\end{definition}

A function, $u$, written in cylindrical coordinates as $u= u_\rho\ee_\rho + u_\theta \ee_\theta + u_z\ee_z$ is equivariant w.r.t. the $\ee_z-$axis if and only if all coefficient functions of $u$ are independent of $\theta$. The only dependence of $u$ on $\theta$ is via the basis vectors $\ee_r,\ee_\theta$. Symbolically, this means that
\begin{align*}
    \partial_\theta u_a 
    \ \equiv \ 0 
    \qquad \forall a\in\{r,\theta,z\} \, .
\end{align*}

\subsection{Main Results}
\label{subsec:main_results}

Our main results regarding minimizers of \eqref{def:dir} can be stated as follows:

\begin{theorem}\label{thm:main_regularity}
Let $u$ be a global minimizer of \eqref{def:dir} satisfying the boundary conditions \eqref{eq:tanbc}.
Then $u$ has only finitely many point defects and is smooth up to the boundary away from those defects. 
\end{theorem}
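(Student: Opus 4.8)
The plan is to follow the three-step Schoen–Uhlenbeck scheme adapted to the tangential boundary condition, treating interior regularity and boundary regularity separately but in parallel. For interior points, the minimizer $u$ is a genuine minimizing harmonic map into $\mathbb{S}^2$, so the classical results of Schoen–Uhlenbeck apply directly: $u$ is smooth away from a discrete set of interior points, with tangent maps at singularities being (up to rotation) the map $x/|x|$. So the entire difficulty is concentrated near $\partial\Omega$, and the argument there proceeds as follows.

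First I would localize near a boundary point $x_0\in\partial\Omega$ and pass to the reflected/extended picture: using a spherical reflection across $\partial\Omega$ (straightened in a coordinate ball matching $x\mapsto x-x_0$ to leading order), one builds an extension $\tilde u$ on a full ball $B_r(x_0)$ that satisfies a perturbed harmonic map equation and is a critical point of a reflected energy agreeing with the Dirichlet energy to highest order. Here one invokes the machinery flagged in the introduction — the tangency- and norm-preserving inner variations of Definition~\ref{def:innervartan}, valid across the curved boundary — to derive the almost-monotonicity formula for the scaled energy $r\mapsto r^{-1}\int_{B_r(x_0)}|\nabla \tilde u|^2$, with lower-order error terms that do not destroy the monotone character at small scales. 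From this one obtains: existence of the density $\Theta(x_0)=\lim_{r\to 0} r^{-1}E(\tilde u;B_r(x_0))$, upper semicontinuity of $\Theta$ (a priori up to a multiplicative constant), and compactness of blow-up sequences $u_{x_0,r}(y)=u(x_0+ry)$ in $H^1_{loc}$, with limits being $0$-homogeneous (because the error terms vanish in the rescaled monotonicity formula as $r\to 0$).

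Second I would rule out a singular set of positive dimension. Define the singular set $\Sigma$ as the complement of the set of points near which $u$ is continuous (equivalently, by $\varepsilon$-regularity, the set where $\Theta$ is bounded below by a fixed threshold $\varepsilon_0>0$). An $\varepsilon$-regularity lemma — small scaled energy on $B_r(x_0)$ implies smoothness on $B_{r/2}(x_0)$, proved for the extended equation via the usual reverse-Hölder/hole-filling argument, absorbing the lower-order error terms for $r$ small — shows $\Sigma$ is closed with $\mathcal H^1(\Sigma)=0$ by a dimension-reduction argument à la Federer: if $\dim\Sigma\ge 1$, blowing up at a point of $\Sigma$ and then along the putative line of singularities produces a nontrivial $0$-homogeneous (hence, after one more reduction, constant in one more variable) minimizing tangent map from $\mathbb{R}^2$ (or a half-space $\mathbb{R}^2_+$ with the reflected structure) into $\mathbb{S}^2$ with a line singularity, which is impossible since minimizing harmonic maps from $2$-dimensional domains are smooth. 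One must check the dimension reduction is compatible with the boundary: a blow-up at $x_0\in\partial\Omega$ converges to a minimizer on the half-space $\mathbb{R}^3_+$ satisfying the tangential condition on $\{x_3=0\}$, and its reflection is a $0$-homogeneous minimizer-type object on $\mathbb{R}^3$, to which the same reduction applies.

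Third, having shown $\Sigma$ is discrete, I would upgrade to smoothness: around each isolated singular point the $0$-homogeneous blow-up limit is, by the two-dimensional classification on the link $\mathbb{S}^2$ (or half-link), a minimizing harmonic map $\mathbb{S}^2\to\mathbb{S}^2$, and one concludes smoothness of $u$ away from $\Sigma$ by standard elliptic bootstrapping of the (perturbed, but smooth-coefficient) harmonic map equation, transferring interior regularity of $\tilde u$ back to boundary regularity of $u$ including the Neumann-type condition $\partial_\nu u_T=0$. Finally, finiteness of $\Sigma$ follows from a covering/monotonicity argument: each singular point carries a definite amount $\ge\varepsilon_0$ of scaled energy concentrated at unit scale, and $\bar\Omega$ is compact with finite total energy, so only finitely many such points can occur.

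The main obstacle I expect is the dimension-reduction step at the boundary: one must ensure that the almost-monotonicity formula (with its error terms coming from the approximation of $x\mapsto x-x_0$ and the extra terms in the extended PDE) is strong enough that blow-up limits are exactly $0$-homogeneous and exactly minimizing in the limiting (half-space, reflected) problem — i.e., that the loss of the sharp constant $1$ in upper semicontinuity does not obstruct the Federer reduction. This requires carefully tracking that the error terms scale subcritically and vanish under the parabolic-type rescaling, so that the limiting object genuinely satisfies the rigid structure needed to invoke the $2$-dimensional smoothness of minimizing harmonic maps.
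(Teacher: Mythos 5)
Your proposal follows the same Schoen--Uhlenbeck scheme as the paper: extend across $\partial\Omega$ by a norm-preserving spherical reflection, derive a boundary monotonicity formula from tangency- and interior-preserving inner variations, invoke $\varepsilon$-regularity to get $\mathcal{H}^1(\Sigma)=0$ (the paper cites Di Fratta--Minev--Pisante for this), and then use a blow-up/dimension-reduction argument to conclude $\Sigma$ is discrete and finite. The one place where your description diverges slightly from the paper's structure is that the paper builds a single global extension $\overline{u}$ on $B_2(0)$ via inversion $\iota(x)=x/|x|^2$ together with the reflection matrix $A(x)$, and only uses the local coordinate straightening $\Phi$ to manufacture admissible diffeomorphisms for the inner variation near a boundary point, not to construct a local extension in a ball; but this is a difference in bookkeeping, not in the essential argument, and your final caveat (that error terms in the almost-monotonicity formula must scale subcritically so blow-up limits are genuinely $0$-homogeneous minimizers) is exactly the point the paper handles by establishing Corollary~\ref{cor:radialmono} and the constant-$2$ upper semicontinuity in Lemma~\ref{lem:Theta_upper_semi_continuity}, which turns out to suffice for the Federer reduction.
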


The remaining results require some concepts regarding equivariance.
We refer the reader to Subsection~\ref{subsec:equivariance} for the
necessary background.

\begin{theorem}\label{thm:main_symmetry}
Let $u$ be a global minimizer of \eqref{def:dir} satisfying the boundary conditions \eqref{eq:tanbc}.
\begin{enumerate}
    \item If in addition 
    \begin{align*}
        \T 
        \ \coloneqq \
        \int_{\Omega} \frac{1}{\rho^2}(u_\rho\partial_\theta u_\theta - u_\theta\partial_\theta u_\rho) \dx x
        \ \geq \ 
        0
        \, ,
    \end{align*}
    then $u$ is equivariant about the $\ee_3-$axis. 
    \item If $u\cdot\ee_\theta=0$ on $\partial \Omega$, then $u\cdot\ee_\theta=0$ in the whole ball $B_1(0)$ and $u$ is equivariant about the $\ee_3-$axis.
\end{enumerate}
\end{theorem}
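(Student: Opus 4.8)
The plan is to work in cylindrical coordinates and decompose the Dirichlet energy. Writing $u = u_\rho \ee_\rho + u_\theta \ee_\theta + u_z \ee_z$ and using $\partial_\theta \ee_\rho = \ee_\theta$, $\partial_\theta \ee_\theta = -\ee_\rho$, the gradient splits as $|\nabla u|^2 = |\partial_\rho u|^2 + |\partial_z u|^2 + \frac{1}{\rho^2}|\partial_\theta u|^2$ with
\begin{equation*}
    |\partial_\theta u|^2 = (\partial_\theta u_\rho - u_\theta)^2 + (\partial_\theta u_\theta + u_\rho)^2 + (\partial_\theta u_z)^2 .
\end{equation*}
Expanding the first two squares produces, besides the squared $\theta$-derivatives of the coefficients, exactly the cross term $2(u_\rho \partial_\theta u_\theta - u_\theta \partial_\theta u_\rho) + (u_\rho^2 + u_\theta^2)$ whose integral against $\rho^{-2}\dx x$ is $2\T$ plus a $\theta$-independent remainder. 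The strategy for part~(i) is then a symmetrization: given a minimizer $u$, average out (or replace by the $\theta=0$ slice, extended equivariantly) the coefficient functions to obtain an equivariant competitor $\tilde u$ with $\partial_\theta \tilde u_a \equiv 0$; the point is that this operation preserves the constraint $|u|=1$ pointwise (it is applied slicewise in a norm-preserving way) and preserves the tangential boundary condition, while the energy changes by $-\big(\text{squared }\theta\text{-derivative terms}\big) - 2\T$ up to $\theta$-independent pieces that are unaffected. If $\T \geq 0$, this shows $E(\tilde u) \leq E(u)$, and by minimality equality must hold, forcing all $\partial_\theta u_a \equiv 0$, i.e.\ $u$ equivariant.

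For part~(ii), assume $u\cdot \ee_\theta = 0$ on $\partial\Omega$. The idea is to test with the reflected map $\hat u \coloneqq u - 2 u_\theta \ee_\theta$ (flip the $\ee_\theta$-component), which again preserves $|u|=1$ and, because $u_\theta = 0$ on $\partial\Omega$, preserves the boundary values exactly — hence $\hat u$ is an admissible competitor with $E(\hat u) = E(u)$ (the energy is even in $u_\theta$). By uniqueness considerations — more carefully, by considering $\tfrac12(u + \hat u)$ or applying the minimality/Euler–Lagrange system together with a unique continuation argument — one concludes $u_\theta \equiv 0$ in $B_1(0)$. Once $u\cdot\ee_\theta = 0$ throughout, $u$ lies in the plane spanned by $\ee_\rho$ and $\ee_z$ at each point; writing the remaining energy in terms of $u_\rho, u_z$ and their $\rho$-, $z$-, $\theta$-derivatives, the only surviving $\theta$-dependence sits in $\frac{1}{\rho^2}\big[(\partial_\theta u_\rho)^2 + (\partial_\theta u_z)^2\big] + \frac{1}{\rho^2} u_\rho^2$ plus harmless terms, and a second symmetrization in $\theta$ (average the coefficients, preserving norm since $u_\rho^2 + u_z^2 = 1$) strictly decreases the energy unless $\partial_\theta u_\rho \equiv \partial_\theta u_z \equiv 0$; minimality then forces equivariance.

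The main obstacle I expect is making the symmetrization rigorous while respecting the pointwise constraint $|u| = 1$ and the curved tangential boundary condition simultaneously: naive averaging of the three coefficients does not land back on $\mathbb{S}^2$, so one must either symmetrize via a norm-preserving reparametrization (e.g.\ work with the angular variables representing $u$ as a point on $\mathbb{S}^2$ and average those, or rescale $(\text{avg}_\theta u)/|\text{avg}_\theta u|$ and control the error), and then verify that the error introduced does not destroy the energy inequality — this is presumably where the precise form of $\T$ and the sign hypothesis are exploited. A secondary technical point is the boundary term: the competitor must still satisfy $w\cdot\nu = 0$ on $\partial B_1$, and since $\ee_\theta$ is tangent to the sphere the $\ee_\theta$-flip in part~(ii) is automatically compatible, but the $\theta$-averaging in parts~(i) and~(ii) needs the boundary trace to behave well under the averaging, which should follow from the trace theorem applied slicewise together with the fact that $\nu$ itself is $\theta$-equivariant. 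Finally, upgrading "energy does not increase, hence equality, hence $\theta$-derivatives vanish" to "$u$ is genuinely equivariant" uses that the discarded terms are nonnegative and vanish only for $\theta$-independent coefficients, which is immediate once the decomposition is set up correctly.
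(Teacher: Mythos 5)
Your overall strategy (norm-preserving symmetrization in $\theta$ plus the sign of $\T$) is the right one, and you correctly identify the central obstacle -- that averaging the three coefficients does not land back on $\mathbb{S}^2$ -- but the proposal does not actually resolve it, and the concrete mechanisms you offer in its place do not work. For part (i), ``replace by the $\theta=0$ slice'' is not meaningful for a $W^{1,2}$ map (a two-dimensional slice has no energy-controlling trace), and averaging angular representatives of $u$ on $\mathbb{S}^2$ is not set up. The paper's resolution is an iterated dyadic \emph{reflection} of the coefficient functions across half-planes through the $\ee_3$-axis (Proposition~\ref{prop:symmetrization}): reflection preserves $|u|=1$ and the tangency condition exactly. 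The subtle point you are missing is that $E$ itself is \emph{not} invariant under $\theta\mapsto 2\pi-\theta$, precisely because the cross term $u_\rho\partial_\theta u_\theta-u_\theta\partial_\theta u_\rho$ is odd under that reflection; one must introduce a modified energy $\tilde E$ (dropping $\T$ and halving the $\theta$-derivative terms), run the whole iteration for $\tilde E$, pass to the limit $u_\infty$, and only at the end use $E(u_\infty)=\tilde E(u_\infty)\le\tilde E(u)\le E(u)-\T\le E(u)$. Without some such device, the single inequality ``$E(\tilde u)\le E(u)$ if $\T\ge 0$'' you assert has no construction behind it.

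For part (ii) there is an outright error: the energy is \emph{not} even in $u_\theta$, because of the same cross term, so the flip $\hat u= u-2u_\theta\ee_\theta$ satisfies $E(\hat u)=E(u)-2\T$, not $E(\hat u)=E(u)$. (Admissibility of $\hat u$ is fine and minimality then yields $\T\le 0$, which is a true but different fact.) Moreover $\tfrac12(u+\hat u)$ is not $\mathbb{S}^2$-valued, and ``uniqueness considerations'' do not apply since minimizing harmonic maps are not unique. The paper's actual argument (Proposition~\ref{prop:u_theta_0_S2_implies_u_theta_0_equiv}, following Sandier--Shafrir) uses the hypothesis $u_\theta=0$ on $\partial\Omega$ only to show $u_\rho$ is constant on boundary circles, so that the competitor $\tilde u_\rho=\fint u_\rho$, $\tilde u_\theta=0$, $\tilde u_z=\sqrt{1-\tilde u_\rho^2}$ is admissible; the energy comparison then rests on a Pythagoras identity for the $\theta$-derivatives and on Jensen's inequality applied to the convex function $(\gamma,\zeta)\mapsto\gamma^2/(1-\zeta^2)$, with the equality case forcing both $u_\theta\equiv 0$ and equivariance. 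You would need to supply an argument of this type to close part (ii).
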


\begin{theorem}\label{thm:main_no_interior_defects}
Let $u$ be a global minimizer of \eqref{def:dir} satisfying the boundary conditions \eqref{eq:tanbc} such that $u_\theta=0$.
Then $u$ has exactly two point defects located at the North and South pole. 
\end{theorem}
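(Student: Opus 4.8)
The plan is to work in the reduced two-dimensional setting afforded by the hypothesis $u_\theta = 0$. Since $u\cdot\ee_\theta = 0$, Theorem~\ref{thm:main_symmetry}(2) applies and $u$ is equivariant about the $\ee_3$-axis; writing $u = u_\rho\ee_\rho + u_z\ee_z$ with $u_\rho^2 + u_z^2 = 1$, the map is completely determined by its restriction to a meridian half-disc $D \coloneqq \{(\rho,z) : \rho > 0,\ \rho^2 + z^2 < 1\}$. First I would record that on $D$ the map factors through the unit circle: setting $u_\rho = \sin\phi$, $u_z = \cos\phi$ for a phase function $\phi = \phi(\rho,z)$, the Dirichlet energy becomes (up to the cylindrical weight) $\tfrac12\int_D \bigl(|\nabla\phi|^2 + \tfrac{\sin^2\phi}{\rho^2}\bigr)\,\rho\,d\rho\,dz$, and the tangential boundary condition \eqref{eq:tanbc} forces $u\cdot\nu = 0$ on $\rho^2+z^2=1$, i.e.\ $\phi$ equals the polar angle on the circular part of $\partial D$ up to the sign/branch. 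The $\sin^2\phi/\rho^2$ term is exactly the Ginzburg–Landau-type penalization that produces defects where $\phi \in \pi\mathbb{Z}$ fails, i.e.\ where $u = \pm\ee_z$; by equivariance these are the only possible singular points and they lie on the $z$-axis.

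The heart of the argument is to show there are no interior defects, i.e.\ $u \neq \pm\ee_z$ for $\rho^2 + z^2 < 1$, which by the reduction means $\phi$ stays strictly between the two consecutive multiples of $\pi$ dictated by the boundary data on all of $D$. This is where I would adapt the technique of \cite{ABL}: establish a sharp \emph{a priori} bound on the angle $\phi$ (equivalently on $u\cdot\ee_3 = \cos\phi$) preventing $u$ from turning toward $-\ee_z$. Concretely, one compares $u$ with the competitor obtained by reflecting the "bad" part of $\phi$ back into the admissible strip — e.g.\ replacing $\phi$ by $\max(\phi,0)$ or by a truncation $\min(\phi,\pi)$ — and shows this strictly decreases the energy unless $\phi$ already lies in the strip, using that both the $|\nabla\phi|^2$ term and the $\sin^2\phi/\rho^2$ term do not increase under such a truncation while the boundary data is untouched. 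Since $u$ is a minimizer, $\phi$ must already satisfy the bound; combined with the strong maximum principle / Hopf lemma applied to the elliptic equation for $\phi$ (or for $u\cdot\ee_3$), one upgrades this to the strict inequality in the open disc, ruling out interior zeros of $\rho^2(1 - (u\cdot\ee_3)^2)$-type quantities away from the axis, and a separate analysis on the axis itself (where $\ee_\rho$ degenerates) using finiteness of the singular set from Theorem~\ref{thm:main_regularity}.

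Next I would pin down the boundary behavior. On the circular arc $\rho^2 + z^2 = 1$, the boundary data $\phi = $ polar angle ranges over an interval of length $\pi$ as one moves from the North pole $(0,1)$ to the South pole $(0,-1)$, so by a degree/winding argument $\phi$ must hit $0$ and $\pi$ exactly at the two poles and nowhere else on $\partial D$; these are genuine defects of the three-dimensional map $u$ (the energy density is non-integrable of the right borderline type there, or one invokes the classification of tangent maps from Theorem~\ref{thm:main_regularity} together with the fact that a smooth tangential unit field cannot exist in a neighborhood of either pole by the hairy-ball obstruction already noted in the introduction). Finally, counting: the interior-regularity step gives no defects in $\rho^2+z^2<1$, and the boundary step gives precisely the two poles, so $u$ has exactly two point defects, located at the North and South poles, as claimed.

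The main obstacle I anticipate is the interior no-defect step near the $\ee_3$-axis. Away from the axis the reduced problem is a clean 2D Ginzburg–Landau-type scalar equation for $\phi$ and the truncation/reflection competitor plus maximum principle works smoothly; but the weight $\rho$ degenerates on the axis and the frame $\ee_\rho, \ee_\theta$ is singular there, so one cannot directly write the equation for $\phi$ up to $\rho = 0$. One must instead argue in the original variable $u$ near axis points, use the monotonicity formula and the classification of blow-up limits from Theorem~\ref{thm:main_regularity} to see that any axis singularity is an isolated point with a $0$-homogeneous tangent map, and then exclude such a tangent map in the interior by an energy-comparison argument — essentially showing that an interior degree-one "hedgehog-like" bubble on the axis costs more energy than smoothing it out while respecting equivariance, which is the precise place the sharp angle bound of \cite{ABL} is needed.
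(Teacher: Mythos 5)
Your overall strategy coincides with the paper's: reduce via equivariance to a scalar phase $\psi$ with $u=\sin(\psi)\mathbf{e}_{\rho}+\cos(\psi)\mathbf{e}_{z}$, reduced energy $\int(|\nabla\psi|^{2}+\rho^{-2}\sin^{2}\psi)$, boundary condition $\psi=\varphi-\tfrac{\pi}{2}$, and then exclude interior defects by comparison with modified competitors in the spirit of \cite{ABL}. However, there is a genuine gap in your central step. The truncations you propose do not do the job. The competitor $\max(\phi,0)$ is \emph{inadmissible}: on the boundary arc the data $\phi=\varphi-\tfrac{\pi}{2}$ is negative on the whole upper half, so truncating at $0$ changes the boundary values. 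The admissible truncations are $\min(\phi,\pi)$ and $\max(\phi,-\pi)$ (since $|\phi|\le\tfrac{\pi}{2}$ on $\partial\Omega$), but these only yield $|\phi|\le\pi$ in $\Omega$ --- and $\phi=\pm\pi$ on the axis is \emph{exactly} an interior defect ($u=-\mathbf{e}_{z}$), so this bound rules out nothing. The sharp bound $|\phi|\le\tfrac{\pi}{2}$ that actually excludes interior defects cannot come from an energy-decreasing truncation at all, because the required cutoff levels $0$ and $\pm\tfrac{\pi}{2}$ intersect the range of the boundary data.

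What the paper does instead is use the energy-\emph{preserving} reflections $\psi\mapsto-\psi$ and $\psi\mapsto\pi-\psi$ (exploiting $\sin^{2}(-\psi)=\sin^{2}(\pi-\psi)=\sin^{2}(\psi)$), applied only on suitable connected components of $\{\psi>0\}$ and $\{\psi<0\}$ so that the boundary condition is respected. Since the modified function has the \emph{same} energy, it is again a minimizer, hence real-analytic away from the axis, and unique continuation of analytic functions forces the reflection identity to hold globally on the relevant component; this is what upgrades $|\psi|\le\pi$ to $|\psi|\le\tfrac{\pi}{2}$ and hence $\psi\equiv0$ on the axis. Your appeal to the strong maximum principle or Hopf lemma does not substitute for this: the equation for $\phi$ carries the term $\rho^{-2}\sin(\phi)\cos(\phi)$ whose sign is not controlled on the range $(-\pi,\pi)$, so no comparison with the constants $0$ or $\pm\tfrac{\pi}{2}$ is available. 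Finally, you locate the main difficulty at the degeneracy of the weight on the axis; in the paper that is handled simply by finiteness of the energy (forcing $\psi\in\{-\pi,0,\pi\}$ on $\{\rho=0\}$), and the real difficulty is precisely the connectivity-plus-analyticity argument you are missing.
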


\begin{remark}[Ginzburg-Landau]\label{rem:GL}
    Parts of our analysis not only apply to the $\mathbf{S}^2-$valued setting of harmonic maps, but also carry over to similar problems, for example the minimization of the  Ginzburg-Landau energy
    \begin{align*}
        E_{\eps}^{GL}(u)
        \ \coloneqq \
        \int_{\Omega} \frac12 |\nabla u|^2 + \frac{1}{4\eps^2}(|u|^2-1)^2 \dx x
        \, .
    \end{align*}
    In particular, the reflection of the energy in Section~\ref{sec:refl_E_reg} and the symmetrization procedure of Proposition~\ref{prop:symmetrization} in Section~\ref{sec:sym_equiv} do not require that $|u|=1$.
\end{remark}

\begin{remark}[General domains]\label{rem:generalization}
    We believe that the methods developed in this article for the particular geometry of $B_1(0)$ can also be employed to other, sufficiently smooth and symmetric domains. 
    For example, in the two-dimensional setting, the reflection-extension approach for tangential boundary conditions has successfully been employed in curved geometry in \cite{ABC,BCS}.
\end{remark}

\begin{remark}[Energy in 3D]\label{rem:3D_O1_bound}
    There are two main difficulties for proving results like Theorem \ref{thm:main_no_interior_defects} in 3D:
    \begin{enumerate}
        \item The loss of complex structure as the domain and range can no longer be seen as subsets of $\mathbb{C}$.
        The loss of complex structure can be partially compensated using tools from geometry as in Section~\ref{sec:refl_E_reg}.
        \item Point defects have finite energy. \ds{This implicates that it is no longer sufficient to study the Jacobian $\det(\nabla u)$ since the energy of the defects that are measured by the Jacobian  are of the same order as the interaction energy between the defects, that were of different order in the two-dimensional setting. }
    \end{enumerate}
    Regarding the energy bound, in the situation of this article we can pose a competitor and  show that $E(u_0)=5\pi - \tfrac{\pi^3}{4}\approx 7.956<\infty$ and $u_0$ has two boundary defects at $(0,0,\pm 1)$, see Lemma~\ref{lem:upper_bound_competitor}. 
    Articles concerned with the radial boundary condition $u\cdot\nu=1$ can make use of the explicit shape of the minimizer, known to be $u_*(x)=\tfrac{x}{|x|}$ \cite{BrCoLi,Jäger1983,FHL}.
\end{remark}

\paragraph{Organization of the paper.}
In Section~\ref{sec:refl_E_reg} we prove the regularity result Theorem~\ref{thm:main_regularity} using several steps:
First, we introduce the extended minimizer $\overline{u}$ that we reflect across the boundary $\partial\Omega$ (Subsection~\ref{subsec:reflprelim}).
Second, the show that $\overline{u}$ is a critical point of a reflected energy and thus satisfies a PDE on a larger domain, see Subsection~\ref{subsec:ext_outer_var}.
Third, a monotonicity formula is derived in Subsection~\ref{subsec:monoform} based on tangential inner variations that are introduced in Subsection~\ref{subsec:tang_inner_var}.
This can in turn be used to prove the regularity statement in Subsection~\ref{subsec:fullreg}.
All symmetry and equivariance results are proven in Section~\ref{sec:sym_equiv}.
Assuming $u_\theta=0$, we continue the analysis in Section~\ref{sec:bdry_defects}, showing that no interior defects can occur.
The last part of the article is devoted to the estimation of the minimal energy, see Section~\ref{sec:profile}.

\paragraph{Acknowledgments.}
LB received funding from a NSERC Discovery Grant.
This work was largely completed while AC and DS were affiliated with McMaster University.

\section{Regularity}
\label{sec:refl_E_reg}

In this section we prove Theorem~\ref{thm:main_regularity}, i.e.\ that minimizers have only finitely many point defects and are otherwise analytic.
These types of results have been known since the works of Schoen-Uhlenbeck \cite{SU} and Giaquinta-Giusti \cite{GiGi}.

Since we do not have a fixed Dirichlet condition at the boundary, we cannot directly apply their results.
To overcome this issue, we extend our energy as well as our minimizers
outside the domain $\Omega$, while still preserving minimality on the
exterior energy.
This extension idea has been successfully applied to solutions of partial differential equations \cite{BCS,DiMiPi,Sch}. 
Our extension is somewhat special since it takes place on the energy level, but as we note in Remark \ref{rem:refl_E_PDE_ELE} we could have also reflected the Euler-Lagrange equation instead as we preserve the variational character of the equation. 
Note, however, that the extended function is only minimal on the inside and on the outside separately, not minimizing both energies jointly. 
If this was the case, one could apply the results in \cite{Luck} to obtain regularity.
The surprising result that we obtain in Corollary~\ref{cor:outerpde}, is that the extended function is a critical point of the combined energies.\\

As we will be interested in the boundary conditions introduced in
\eqref{eq:tanbc} we define a corresponding subspace of
$W^{1,2}(\Omega;\mathbb{S}^{2})$ by
\begin{equation*}
    W_{T}^{1,2}(\Omega;\mathbb{S}^{2})\coloneqq
    \bigl\{w\in{}W^{1,2}(\Omega;\mathbb{S}^{2}):w\text{ satisfies }
    \eqref{eq:tanbc}
    \bigr\}.
\end{equation*}
Occasionally we will also make use of the corresponding subspace of
$W^{1,2}(\Omega;\mathbb{R}^{3})$.
We also introduce the tangential part, denoted $w_{T}$, defined by
\begin{equation*}
	w_{T}(x)\coloneqq\biggl[I_{3}-\frac{xx^{T}}{|x|^{2}}\biggr]w(x)
\end{equation*} 
for $x\in\Omega\setminus\{0\}$.
We notice that since $u$ is a minimizer of \eqref{def:dir}
then it is a critical point among outer variations and hence
\begin{equation}\label{eq:weakform}
	\int_{\Omega}\!{}\nabla{}u(x):\nabla\varphi(x)=\int_{\Omega}\!{}|\nabla{}u(x)|^{2}u(x)\cdot\varphi(x)
\end{equation}
for all $\varphi\in{}W_{T}^{1,2}(\Omega;\mathbb{R}^{3})\cap{}L^{\infty}(\Omega;\mathbb{R}^{3})$.
Notice that \eqref{eq:weakform} shows that the weak Laplacian of $u$,
$-\Delta{}u$, is an $L^{1}$ function.\\

Next, we define, as in \cite{BCS}, a weak formulation of the normal derivative
of the tangential part that is satisfied by minimizers of \eqref{def:dir}.
For any function
$w\in{}W_{T}^{1,2}(\Omega;\mathbb{R}^{3})\cap{}L^{\infty}(\Omega;\mathbb{R}^{3})$
satisfying $-\Delta{}w\in{}L^{1}(\Omega;\mathbb{R}^{3})$ we may define
the normal derivative of the tangential part,
$\partial_{\nu}w_{T}\in{}\bigl(W^{\frac{1}{2},2}(\partial\Omega;\mathbb{R}^{3})\cap{}L^{\infty}(\partial\Omega;\mathbb{R}^{3})\bigr)^{*}$,
by
\begin{equation*}
    \bigl<\partial_{\nu}w_{T},\phi\bigr>\coloneqq
    \int_{\Omega}\!{}(-\Delta{}w)\cdot\Phi
    -\int_{\Omega}\!{}\nabla{}w:\nabla\Phi
\end{equation*}
where $\phi\in{}W^{\frac{1}{2},2}(\partial\Omega;\mathbb{R}^{3})\cap{}L^{\infty}(\partial\Omega;\mathbb{R}^{3})$ satisfies \eqref{eq:tanbc} and
$\Phi\in{}W_{T}^{1,2}(\Omega;\mathbb{R}^{3})\cap{}L^{\infty}(\Omega;\mathbb{R}^{3})$ satisfies $\Phi_{T}\vert_{\partial\Omega}=\phi$.
Note that this is independent of the choice of extension since if
$\Phi_{1},\Phi_{2}$ are two such extensions of $\phi$ then
$\Phi\coloneqq\Phi_{1}-\Phi_{2}\in{}W_{0}^{1,2}(\Omega;\mathbb{R}^{3})$
and hence we would have
\begin{equation*}
    \int_{\Omega}\!{}(-\Delta{}w)\cdot\Phi
    =\int_{\Omega}\!{}\nabla{}w:\nabla\Phi.
\end{equation*}
In our case, this definition gives that $\partial_{\nu}u_{T}=0$ due to
\eqref{eq:weakform}.

\subsection{Reflection Preliminaries}\label{subsec:reflprelim}

In this subsection, we introduce the notation and basic concepts regarding
an extension of a harmonic map $u\colon\Omega\to\mathbb{S}^{2}$ in
$W_{T}^{1,2}(\Omega;\mathbb{S}^{2})$ in a way
that preserves both the elliptic structure and the modulus of the map.

The extension of a minimizer $u$ of \eqref{def:dir} takes place in two steps.
First, we invert the domain at the boundary using the involution defined
in \eqref{def:sphereinv} so that we can extend the definition of $u$ to
$B_{2}(0)$.
In a second step, we switch the sign on the radial component of $u$ using
the map defined in \eqref{def:radialswitch}.
Note that this does not change the fact that the new function is still
$\mathbb{S}^2-$valued. This step is crucial to ensure the continuity of
derivatives across the boundary $\partial\Omega$.\\

We start by defining
$A\colon\mathbb{R}^{3}\setminus\{0\}\to{}M_{3\times3}(\mathbb{R})$
by
\begin{equation}\label{def:radialswitch}
	A(x)\coloneqq{}I_{3}-2\frac{xx^{T}}{|x|^{2}}
\end{equation}
where $I_{3}$ is the $3\times3$ identity matrix.
We observe that for $x\in\mathbb{R}^{3}\setminus\{0\}$ we have the following
properties:
\begin{align}
	A(x)A(x)&=I_{3},\label{eq:involutionA}\\
	|A(x)|^{2}&=3,\\
	\partial_{x_{i}}A(x)&=-\frac{2}{|x|^{2}}\bigl[\mathbf{e}_{i}x^{T}+x\mathbf{e}_{i}^{T}\bigr]+\frac{4x_{i}}{|x|^{4}}xx^{T},
	\label{eq:partialA}\\
	\Delta{}A(x)&=\frac{12}{|x|^{4}}xx^{T}-\frac{4}{|x|^{2}}I_{3}.
\end{align}
Next, we let
$\iota\colon\mathbb{R}^{3}\setminus\{0\}\to\mathbb{R}^{3}\setminus\{0\}$
denote inversion about the sphere defined as
\begin{equation}\label{def:sphereinv}
    \iota(x)\coloneqq\frac{x}{|x|^{2}}.
\end{equation}
We note that this choice of $\iota$ preserves the differential equation to highest order. Other choices, such as $\iota(x)=(\frac2{|x|}-1)x$, lead to additional contributions in the second order derivatives of $u$.
We notice that $\iota$ satisfies:
\begin{align}
	\nabla{}\iota(x)&=\frac{1}{|x|^{2}}\biggl[I_{3}-2\frac{xx^{T}}{|x|^{2}}\biggr],\\
	\det(\nabla{}\iota(x))&=\frac{-1}{|x|^{6}}.\label{eq:covdet}
\end{align}
For a function $u\colon\Omega\to\mathbb{S}^{2}$ we define an extension,
$\overline{u}\colon{}B_{2}(0)\to\mathbb{S}^{2}$, by spherical
inversion as
\begin{equation}\label{eq:extension}
	\overline{u}(x)\coloneqq
	\begin{cases}
		u(x)&x\in\Omega,\\
		A(x)u\bigl(\iota(x)\bigr)&x\in{}B_{2}(0)\setminus\Omega.
	\end{cases}
\end{equation}
Note that preservation of the modulus of $\overline{u}$ on
$B_{2}(0)\setminus\Omega$ follows from \eqref{eq:involutionA}.
For convenience, we let, for maps $v\colon\Omega\to\mathbb{S}^{2}$,
$\widetilde{v}\colon{}B_{2}(0)\setminus\Omega\to\mathbb{S}^{2}$ denote
the map
\begin{equation}\label{def:tildeu}
    \widetilde{v}(x)\coloneqq{}A(x)v\bigl(\iota(x)\bigr).
\end{equation}
We also note that following identities
\begin{align}
    \overline{u}(x)&=A(x)\overline{u}\bigl(\iota(x)\bigr)
    \label{eq:invol}\\
	\partial_{x_{i}}\bigl[u(\iota(x))\bigr]&=\frac{1}{|x|^{2}}\partial_{x_{i}}u(\iota(x))
	-\frac{2x_{i}}{|x|^{4}}\nabla{}u(\iota(x))x
    \label{eq:partialinv}\\
	\nabla\bigl[u(\iota(x))\bigr]
	&=\frac{1}{|x|^{2}}\nabla{}u\bigl(\iota(x)\bigr)A(x)\label{eq:gradinversion}\\
	\bigl|\nabla\bigl[u\bigl(\iota(x)\bigr)\bigr]\bigr|^{2}&=\frac{1}{|x|^{4}}\bigl|\nabla{}u\bigl(\iota(x)\bigr)\bigr|^{2}.
	\label{eq:normsq}
\end{align}
Next, we introduce the exterior energy, $\widetilde{E}$ defined by
\begin{equation}\label{def:extenergy}
    \begin{aligned}[c]
    	\widetilde{E}(w)\coloneqq4\pi+
    	\frac{1}{2}&\int_{B_{2}(0)\setminus\Omega}\!{}\frac{1}{|x|^{2}}
    	\biggl[
    	|\nabla{}w(x)|^{2}
    	+\frac{4}{|x|^{4}}(w(x)\cdot{}x)^{2}\\
    	&+\frac{4}{|x|^{2}}w(x)^{T}\nabla{}w(x)^{T}x
        -\frac{4}{|x|^{2}}(w(x)\cdot{}x)\text{div}(w)(x)\biggr]
    \end{aligned}
\end{equation}
for functions $w\in{}W^{1,2}(B_{2}(0)\setminus\Omega;\mathbb{S}^{2})$.
As we will be particularly interested in relating harmonic maps on $\Omega$
to minimizers of $\widetilde{E}$ we will now introduce a suitable function
space.
For each $g\in{}W^{\frac{1}{2},2}(\partial{}B_{2}(0);\mathbb{S}^{2})$
we define the function space
\begin{equation*}
	\mathcal{W}_{g}\coloneqq
	\bigl\{w\in{}W^{1,2}(B_{2}(0)\setminus\Omega;\mathbb{S}^{2})\cap{}L^{\infty}(B_{2}(0)\setminus\Omega;\mathbb{S}^{2})
    :w\vert_{\partial{}B_{2}(0)}=g,\,
	w\text{ satisfies }\eqref{eq:tanbc}\bigr\}.
\end{equation*}
Of particular interest will be the case when
$g=\overline{u}\vert_{\partial{}B_{2}(0)}$ which we will abbreviate,
for convenience, to $\mathcal{W}_{u}$.
\\
Finally, we introduce the combined energy, $\overline{E}$, defined
for $w\in{}W^{1,2}(B_{2}(0);\mathbb{R}^{3})$ as
\begin{equation*}
    \overline{E}(w)\coloneqq
    E(w)+\widetilde{E}(w).
\end{equation*}
This will be important as we intend to ``glue" together compatible functions
defined on $\Omega$ and $B_{2}(0)\setminus\Omega$ in order to make boundary
vortices interior.

\subsection{Extension of the Outer Variation}
\label{subsec:ext_outer_var}

In this subsection we show that $u$ is a minimizer of \eqref{def:dir}
if and only if $\widetilde{u}$ is a minimizer of \eqref{def:extenergy}.
As a result, $\overline{u}$ satisfies a PDE on $B_{2}(0)\setminus\Omega$.
In addition, we demonstrate that the PDEs satisfied separately on
$\Omega$ and $B_{2}(0)\setminus\Omega$ ``glue" together due to the way
the extension from \eqref{eq:extension} was defined.\\

\begin{proposition}\label{prop:refl_E}
    A function $u$ is a minimizer of \eqref{def:dir} over
    $W_{T}^{1,2}(\Omega;\mathbb{S}^{2})$ if and only if
    $\widetilde{u}$ is defined as in \eqref{def:tildeu} above is a minimizer
    of the energy \eqref{def:extenergy} over $\mathcal{W}_{u}$.
\end{proposition}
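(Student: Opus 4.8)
The plan is to establish the equivalence by a direct change of variables, exploiting that the inversion $\iota$ is an involution on $\partial\Omega$ and that $\widetilde{E}$ was, by construction, defined precisely as the pullback of the Dirichlet energy under $\iota$ composed with the radial-switch matrix $A$. First I would fix an arbitrary $w\in\mathcal{W}_{u}$ and consider $v\colon\Omega\to\mathbb{S}^2$ defined by $v(y)\coloneqq A(y)w(\iota(y))$ for $y\in\Omega$; since $\iota$ is an involution and $A(x)A(x)=I_3$ by \eqref{eq:involutionA}, this is an inverse to the map $v\mapsto\widetilde v$, so it sets up a bijection between the admissible classes. The key computation is to verify that $E(v)$, the Dirichlet integral of $v$ over $\Omega$, equals $\widetilde{E}(w)-4\pi$ (the additive constant $4\pi$ being a harmless bookkeeping term that makes $\overline{E}$ additive in the intended gluing). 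This is carried out by substituting $x=\iota(y)$, using $\det(\nabla\iota(x))=-|x|^{-6}$ from \eqref{eq:covdet}, the gradient identities \eqref{eq:partialinv}--\eqref{eq:normsq}, and expanding $\nabla(A(x)w(\iota(x)))$ via the product rule together with \eqref{eq:partialA}; the four terms appearing in the integrand of \eqref{def:extenergy} are exactly the terms produced by this expansion after the Jacobian factor $|x|^{-6}$ is absorbed, so the two energies agree term by term.

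Next I would check that the admissibility conditions transfer correctly. The tangency condition \eqref{eq:tanbc} is preserved because on $\partial\Omega=\partial B_1(0)$ one has $|x|=1$, $\iota(x)=x$, and $A(x)=I_3-2xx^T$ is the reflection across the tangent plane fixing $x^\perp$ and sending $x\mapsto -x$; hence $v(x)\cdot\nu(x)=A(x)w(x)\cdot x=w(x)\cdot A(x)x=-w(x)\cdot x=0$ iff $w(x)\cdot x=0$. The pointwise constraint $|v|=1$ is immediate from $|A(x)w|=|w|$, again by \eqref{eq:involutionA}. Finally, since $v$ and $w$ are linked by an invertible correspondence that is an isometry of the relevant energies up to the constant $4\pi$, $u$ minimizes $E$ over $W^{1,2}_T(\Omega;\mathbb{S}^2)$ if and only if the corresponding $\widetilde u$ minimizes $\widetilde E$ over $\mathcal{W}_u$: any competitor for one yields, through the correspondence, a competitor for the other with the same energy (modulo $4\pi$), so the infima are attained at matching configurations. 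One should also remark that the class $\mathcal{W}_u$ is exactly the image of $W^{1,2}_T(\Omega;\mathbb{S}^2)$ under $v\mapsto\widetilde v$, and that the boundary trace condition $w|_{\partial B_2(0)}=\overline u|_{\partial B_2(0)}$ corresponds, via $\iota$, to no constraint beyond $v$ being well-defined on $\Omega$ — indeed $\partial B_2(0)$ is mapped by $\iota$ to $\partial B_{1/2}(0)$, which is interior to $\Omega$, so this is a removable/compatibility point rather than a genuine boundary condition; I would phrase it instead as the statement that $\widetilde u|_{\partial\Omega}$ matches $u|_{\partial\Omega}$ after the $A$-twist, which is automatic.

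The main obstacle I anticipate is the bookkeeping in the term-by-term identification of the integrand: one must carefully expand $|\nabla(A(x)w(\iota(x)))|^2$, which produces a pure $|\nabla w(\iota(x))|^2$ piece (rescaled), cross terms of the form $w^T\nabla w^T x$ coming from differentiating $A$ against $w$, and a quadratic term $(w\cdot x)^2$ from $|\nabla A|$-type contributions, and then track how the change-of-variables Jacobian and the factors of $|x|$ redistribute. The signs and the precise numerical coefficients ($4$, $-4$, etc.) must come out to match \eqref{def:extenergy} exactly; this is routine but error-prone, and is the place where the specific choice $\iota(x)=x/|x|^2$ (as opposed to the alternative mentioned after \eqref{def:sphereinv}) is essential, since a different inversion would leave uncancelled lower-order terms. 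Everything else — the involution property, norm preservation, tangency transfer, and the minimality equivalence — is then a formal consequence.
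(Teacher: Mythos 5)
Your overall strategy---change of variables via $\iota$, expansion of $\nabla\bigl(A(x)w(\iota(x))\bigr)$ using \eqref{eq:partialA} and \eqref{eq:covdet}, and a check that tangency transfers because $A(x)$ is a reflection across the tangent plane at $x\in\partial\Omega$---is indeed the same approach the paper takes. The term-by-term identification of the integrand of \eqref{def:extenergy}, and the tangency and norm checks, are correct.

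There is, however, a genuine geometric error in the way you set up the correspondence. You define $v(y)\coloneqq A(y)w(\iota(y))$ \emph{for $y\in\Omega$} and claim $E(v)$ is the Dirichlet integral over $\Omega$. But $\iota$ maps $B_{2}(0)\setminus\overline\Omega$ onto the \emph{annulus} $A_{\frac12,1}(0)=B_{1}(0)\setminus\overline{B_{\frac12}(0)}$, not onto $\Omega$ (for $|y|<\frac12$ one has $|\iota(y)|>2$, so $w(\iota(y))$ is not even defined). The change of variables therefore identifies $\widetilde{E}(w)$ with the Dirichlet energy of $v$ over the annulus $A_{\frac12,1}(0)$ only; this is exactly what the paper's computation $E\bigl(u,A_{\frac12,1}\bigr)=\widetilde{E}(\widetilde{u})$ records. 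The constant $4\pi$ is not a free bookkeeping choice but the precise value of $\frac12\int_{B_2\setminus\Omega}\frac{4}{|y|^4}\,\mathrm{d}y$, the integral of the residual constant term produced by the expansion.

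This error propagates into your treatment of the admissible classes. You claim that $\mathcal{W}_u$ is the image of $W^{1,2}_T(\Omega;\mathbb{S}^2)$ under $v\mapsto\widetilde v$, and that the trace condition $w\vert_{\partial B_2(0)}=\overline u\vert_{\partial B_2(0)}$ ``corresponds to no constraint.'' Neither is true: since $\iota(\partial B_2(0))=\partial B_{\frac12}(0)$ and $A(\iota(x))=A(x)$, that trace condition pins down $v\vert_{\partial B_{\frac12}(0)}=u\vert_{\partial B_{\frac12}(0)}$, which is a real constraint on $v$. This constraint is precisely what makes the forward implication work: given $w\in\mathcal{W}_u$, the transformed map $v$ on $A_{\frac12,1}(0)$ has the same trace as $u$ on $\partial B_{\frac12}(0)$, so you can glue $v$ with $u\vert_{B_{\frac12}(0)}$ to obtain an admissible competitor in $W^{1,2}_T(\Omega;\mathbb{S}^2)$; minimality of $u$ then gives $E(u,A_{\frac12,1})\leq E(v,A_{\frac12,1})$, i.e.\ $\widetilde E(\widetilde u)\leq\widetilde E(w)$. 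By dismissing the trace condition you discard exactly the ingredient that permits this gluing, and the argument as written would compare energies on mismatched domains.
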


\begin{proof}
Starting from \eqref{def:dir}, we can perform a change of variables $x=\iota(y)$ with Jacobian computed in \eqref{eq:covdet} to find that
for all functions $u\colon\Omega\to\mathbb{S}^{2}$ we have
\begin{align*}
    E\bigl(u,A_{\frac{1}{2},1}\bigr)
    \ &= \
    \frac12\int_{B_{2}(0)\setminus\Omega} \frac{1}{|y|^6}
    \left| \nabla u\bigl(\iota(y)\bigr) \right|^2 \dx y
    \ = \
    \frac12\int_{B_{2}(0)\setminus\Omega} \frac{1}{|y|^2}\left| \nabla \left[A(y) \tilde{u}(y)\right] \right|^2 \dx y
    \, .
\end{align*}
Calculating $\nabla \left[A(y) \tilde{u}(y)\right]$  by appealing to
\eqref{eq:partialA}, one can see that
\begin{align*}
    E\bigl(u,A_{\frac{1}{2},1}\bigr)
    \ &= \
    \frac12 \int_{B_{2}(0)\setminus\Omega}
    \frac{1}{|y|^2}\biggl( 
    |\nabla \tilde{u}|^2
    - \frac{4}{|y|^2}(y\cdot\tilde{u})\dif(\tilde{u})
    + \frac{4}{|y|^2} y\cdot (\nabla\tilde{u})\tilde{u}
    + \frac{4}{|y|^4} (y\cdot\tilde{u})^2
    + \frac{4}{|y|^2} 
     \biggr)
    \dx y \\
    \ &= \
    \widetilde{E}(\widetilde{u}).
\end{align*}
Minimality of $u$ for $E$ is therefore equivalent to minimality of $\tilde{u}$ for $\widetilde{E}$ as each competitor for one of them can be transformed into a competitor for the other via the involutive mappings $\iota$ and $A$.
\end{proof}

Since $\widetilde{u}$ is a minimizer of $\widetilde{E}$ then it is also a
critical point of $\widetilde{E}$ among outer variations.
As a result, we have the following corollary:
\begin{corollary}\label{cor:outerpde}
	Suppose that $\varphi\in{}\mathcal{W}_{0}$.
	Then we have
	\begin{align*}
		\int_{B_{2}(0)\setminus\Omega}\!{}\frac{1}{|x|^{2}}\nabla\widetilde{u}(x):\nabla\varphi(x)
		=&\int_{B_{2}(0)\setminus\Omega}\!{}\frac{1}{|x|^{2}}\biggl[
		-\frac{4}{|x|^{4}}(\widetilde{u}(x)\cdot{}x)x^{T}
		+\frac{4}{|x|^{4}}(\widetilde{u}(x)\cdot{}x)^{2}\widetilde{u}(x)^{T}
		+|\nabla\widetilde{u}(x)|^{2}\widetilde{u}(x)^{T}\\
		&-\frac{4}{|x|^{2}}x^{T}\nabla\widetilde{u}(x)
		+\frac{4}{|x|^{2}}\bigl[\widetilde{u}(x)^{T}\nabla\widetilde{u}(x)^{T}x\bigr]\widetilde{u}(x)^{T}
		+\frac{4}{|x|^{2}}{\rm{div}}(\widetilde{u})(x)x^{T}\\
		&-\frac{4}{|x|^{2}}(\widetilde{u}(x)\cdot{}x){\rm{div}}(\widetilde{u})(x)\widetilde{u}(x)^{T}
	    \biggr]\varphi(x).
	\end{align*}
	In addition, we have
	\begin{align*}
		\int_{B_{2}(0)\setminus\Omega}\!{}\nabla\widetilde{u}(x):\nabla\varphi(x)
		=&\int_{B_{2}(0)\setminus\Omega}\!{}\biggl[
		-\frac{4}{|x|^{4}}(\widetilde{u}(x)\cdot{}x)x^{T}
		+\frac{4}{|x|^{4}}(\widetilde{u}(x)\cdot{}x)^{2}\widetilde{u}(x)^{T}
		+|\nabla\widetilde{u}(x)|^{2}\widetilde{u}(x)^{T}\\
		&-\frac{4}{|x|^{2}}x^{T}\nabla\widetilde{u}(x)
		+\frac{4}{|x|^{2}}\bigl[\widetilde{u}(x)^{T}\nabla\widetilde{u}(x)^{T}x\bigr]\widetilde{u}(x)^{T}
		+\frac{4}{|x|^{2}}{\rm{div}}(\widetilde{u})(x)x^{T}\\
		&-\frac{4}{|x|^{2}}(\widetilde{u}(x)\cdot{}x){\rm{div}}(\widetilde{u})(x)\widetilde{u}(x)^{T}
        -\frac{2}{|x|^{2}}x^{T}\nabla\widetilde{u}(x)^{T}
		\biggr]\varphi(x).
	\end{align*}
\end{corollary}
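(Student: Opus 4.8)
The plan is to obtain both identities as the weak Euler--Lagrange equations of the exterior energy $\widetilde{E}$, using that $\widetilde{u}$ is a minimizer, and then to deduce the second from the first by feeding it the rescaled test function $|x|^{2}\varphi$. By Proposition~\ref{prop:refl_E}, $\widetilde{u}$ minimizes $\widetilde{E}$ over $\mathcal{W}_{u}$, hence is a critical point among all competitors that stay in $\mathcal{W}_{u}$. For $\varphi\in\mathcal{W}_{0}$ I would use the normalized outer variation $\widetilde{u}_{t}\coloneqq(\widetilde{u}+t\varphi)/|\widetilde{u}+t\varphi|$; for $|t|$ small this lies in $\mathcal{W}_{u}$, as it is $\mathbb{S}^{2}$-valued and in $W^{1,2}\cap L^{\infty}$ (since $|\widetilde{u}+t\varphi|\geq 1-|t|\,\|\varphi\|_{L^{\infty}}>0$), it agrees with $\overline{u}$ on $\partial B_{2}(0)$ (since $\varphi$ vanishes there), and it satisfies \eqref{eq:tanbc} (since $\widetilde{u}\cdot\nu=\varphi\cdot\nu=0$ on $\partial\Omega$). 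Its derivative at $t=0$ is $\psi\coloneqq\varphi-(\widetilde{u}\cdot\varphi)\widetilde{u}$, which is again admissible and satisfies $\psi\cdot\widetilde{u}=0$ a.e.

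Because the integrand of \eqref{def:extenergy} is quadratic in $(w,\nabla w)$, differentiating $t\mapsto\widetilde{E}(\widetilde{u}_{t})$ at $t=0$ and setting the result to zero gives the raw first variation
\begin{align*}
0
\ = \
\int_{B_{2}(0)\setminus\Omega}\frac{1}{|x|^{2}}\Bigl[
&\nabla\widetilde{u}:\nabla\psi
+\tfrac{4}{|x|^{4}}(\widetilde{u}\cdot x)(\psi\cdot x)
+\tfrac{2}{|x|^{2}}\bigl(\psi^{T}\nabla\widetilde{u}^{T}x+\widetilde{u}^{T}\nabla\psi^{T}x\bigr)\\
&-\tfrac{2}{|x|^{2}}\bigl((\psi\cdot x)\,\text{div}\,\widetilde{u}+(\widetilde{u}\cdot x)\,\text{div}\,\psi\bigr)
\Bigr]\dx x\,.
\end{align*}
Next I would integrate by parts in the two terms carrying a derivative on $\psi$, namely $\widetilde{u}^{T}\nabla\psi^{T}x$ and $(\widetilde{u}\cdot x)\,\text{div}\,\psi$ (legitimate since $\widetilde{u},\varphi\in W^{1,2}\cap L^{\infty}$). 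All boundary contributions vanish: on $\partial B_{2}(0)$ because $\psi=0$ there, and on $\partial\Omega$ because the boundary integrands are proportional to $\widetilde{u}\cdot\nu=0$ by \eqref{eq:tanbc} --- this is precisely where tangency of $\widetilde{u}$ on the curved inner boundary is used. Expanding the derivatives of $|x|^{-2}$, of $\widetilde{u}$, and of $x$, and using $\psi\cdot\widetilde{u}=0$, a number of terms cancel pairwise and one is left with $0=\int|x|^{-2}\bigl[\nabla\widetilde{u}:\nabla\psi+\tfrac{4}{|x|^{4}}(\widetilde{u}\cdot x)(\psi\cdot x)+\tfrac{4}{|x|^{2}}\psi^{T}\nabla\widetilde{u}^{T}x-\tfrac{4}{|x|^{2}}(\psi\cdot x)\,\text{div}\,\widetilde{u}\bigr]$.

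Finally I would substitute $\psi=\varphi-(\widetilde{u}\cdot\varphi)\widetilde{u}$ and use $|\widetilde{u}|=1$ together with its differentiated form $\widetilde{u}^{T}\nabla\widetilde{u}=0$ (so that $\nabla\widetilde{u}:\nabla[(\widetilde{u}\cdot\varphi)\widetilde{u}]=(\widetilde{u}\cdot\varphi)|\nabla\widetilde{u}|^{2}$) and the elementary transposition $\varphi^{T}\nabla\widetilde{u}^{T}x=x^{T}\nabla\widetilde{u}\,\varphi$: the component of $\psi$ along $\widetilde{u}$ assembles exactly into the $\widetilde{u}^{T}$-terms (those with factors $|\nabla\widetilde{u}|^{2}$, $(\widetilde{u}\cdot x)^{2}$, $\widetilde{u}^{T}\nabla\widetilde{u}^{T}x$, and $(\widetilde{u}\cdot x)\,\text{div}\,\widetilde{u}$), the component along $\varphi$ gives the remaining terms, and moving $\nabla\widetilde{u}:\nabla\varphi$ to the left yields the first identity. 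For the second identity I would apply the first one to $|x|^{2}\varphi$, which again belongs to $\mathcal{W}_{0}$ because $|x|^{2}$ is smooth and bounded away from $0$ on $B_{2}(0)\setminus\Omega$ and preserves both the trace on $\partial B_{2}(0)$ and the tangency on $\partial\Omega$; from $\nabla(|x|^{2}\varphi)=|x|^{2}\nabla\varphi+2\varphi x^{T}$ one gets $\frac{1}{|x|^{2}}\nabla\widetilde{u}:\nabla(|x|^{2}\varphi)=\nabla\widetilde{u}:\nabla\varphi+\frac{2}{|x|^{2}}x^{T}\nabla\widetilde{u}^{T}\varphi$, while on the right-hand side the weight $|x|^{-2}$ cancels the inserted factor $|x|^{2}$; rearranging gives the second identity with the extra contribution $-\frac{2}{|x|^{2}}x^{T}\nabla\widetilde{u}^{T}$.

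The main obstacle is the algebraic bookkeeping in the integration-by-parts step: one has to organize it so as to see that every boundary term genuinely vanishes (which relies crucially on the tangential condition on the curved boundary $\partial\Omega$) and to track signs and tensor contractions through the many structurally similar terms so that the cancellations producing the stated compact forms come out correctly.
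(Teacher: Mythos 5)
Your proposal is correct and follows essentially the same route as the paper: compute the first variation of $\widetilde{E}$ along the normalized perturbation $(\widetilde{u}+t\varphi)/|\widetilde{u}+t\varphi|$, integrate by parts twice in the terms carrying derivatives of the test function (with the boundary contributions killed by $\varphi=0$ on $\partial B_{2}(0)$ and by the tangency $\widetilde{u}\cdot\nu=0$ on $\partial\Omega$), and relate the two identities by the rescaling $\varphi\leftrightarrow|x|^{2}\varphi$. The only cosmetic difference is that you first project onto $\psi=\varphi-(\widetilde{u}\cdot\varphi)\widetilde{u}$ and substitute back at the end, whereas the paper carries the $(\widetilde{u}\cdot\varphi)$ terms explicitly throughout; the resulting algebra is identical.
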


\begin{proof}
	Notice that the second identity follows from the first by treating $\frac{1}{|x|^{2}}\varphi(x)$ as the test function.
	In order to prove the first identity we consider $\varphi$ satisfying
    the hypothesis of the corollary.
	We can calculate that
	\begin{align*}
		\frac{\mathrm{d}}{\mathrm{d}t}\bigg|_{t=0}\widetilde{E}\biggl(\frac{\widetilde{u}(x)+t\varphi(x)}{|\widetilde{u}(x)+t\varphi(x)|}\biggr)
		&=\int_{B_{2}(0)\setminus\Omega}\!{}\frac{1}{|x|^{2}}\biggl[\frac{4}{|x|^{4}}\widetilde{u}(x)^{T}xx^{T}\biggr]\varphi(x)
		-\int_{B_{2}(0)\setminus\Omega}\!{}\frac{1}{|x|^{2}}\biggl[\frac{4}{|x|^{4}}(\widetilde{u}(x)\cdot{}x)^{2}
		(\widetilde{u}(x)\cdot{}\varphi(x))\biggr]\\
		+&\int_{B_{2}(0)\setminus\Omega}\!{}
        \frac{1}{|x|^{2}}\biggl[-|\nabla\widetilde{u}(x)|^{2}\widetilde{u}(x)^{T}\biggr]\varphi(x)
		+\int_{B_{2}(0)\setminus\Omega}\!{}\frac{1}{|x|^{2}}\nabla\widetilde{u}(x):\nabla{}\varphi(x)\\
		+&\frac{1}{2}\int_{B_{2}(0)\setminus\Omega}\!{}
		\frac{4}{|x|^{4}}\varphi(x)^{T}\nabla\widetilde{u}(x)^{T}x
		-\int_{B_{2}(0)\setminus\Omega}\!{}
		\frac{4}{|x|^{4}}(\widetilde{u}(x)\cdot{}\varphi(x))\widetilde{u}(x)^{T}\nabla\widetilde{u}(x)^{T}x\\
		+&\frac{1}{2}\int_{B_{2}(0)\setminus\Omega}\!{}\frac{4}{|x|^{4}}\widetilde{u}(x)^{T}
		\nabla{}\varphi(x)^{T}x
        -\frac{1}{2}\int_{B_{2}(0)\setminus\Omega}\!{}\frac{4}{|x|^{4}}
		(\varphi(x)\cdot{}x)\text{div}(\widetilde{u})(x)\\
        +&\int_{B_{2}(0)\setminus\Omega}\!{}\frac{4}{|x|^{4}}
        (\widetilde{u}(x)\cdot{}x)(\widetilde{u}(x)\cdot\varphi(x))
        \text{div}(\widetilde{u})(x)
		-\frac{1}{2}\int_{B_{2}(0)\setminus\Omega}\!{}\frac{4}{|x|^{4}}(\widetilde{u}(x)\cdot{}x)\text{div}(\varphi)(x)
	\end{align*}
	Integrating by parts and using that $u$ satisfies \eqref{eq:tanbc} as well as
    that $\varphi(x)=0$ on $\partial{}B_{2}(0)$ we see that
	\begin{align*}
		-\frac{1}{2}\int_{B_{2}(0)\setminus\Omega}\!{}\frac{4}{|x|^{4}}(\widetilde{u}(x)\cdot{}x)\text{div}(\varphi)(x)
		=&\frac{1}{2}\int_{B_{2}(0)}\!{}\frac{4}{|x|^{4}}\varphi(x)^{T}\nabla\widetilde{u}(x)^{T}x
		+\frac{1}{2}\int_{B_{2}(0)\setminus\Omega}\!{}\frac{4}{|x|^{4}}(\widetilde{u}(x)\cdot{}\varphi(x))\\
		&-\frac{1}{2}\int_{B_{2}(0)\setminus\Omega}\!{}\frac{16}{|x|^{6}}(\widetilde{u}(x)\cdot{}x)(\varphi(x)\cdot{}x).
	\end{align*}
	A second integration by parts using \eqref{eq:tanbc} and
    $\varphi(x)=0$ on $\partial{}B_{2}(0)$ results in
	\begin{align*}
		\frac{1}{2}\int_{B_{2}(0)\setminus\Omega}\!{}\frac{4}{|x|^{4}}\widetilde{u}(x)^{T}\nabla{}\varphi(x)^{T}x
		=&\frac{-1}{2}\int_{B_{2}(0)\setminus\Omega}\!{}\frac{4}{|x|^{4}}(\varphi(x)\cdot{}x)
		\text{div}(\widetilde{u})(x)+\frac{1}{2}\int_{B_{2}(0)\setminus\Omega}\!{}\frac{16}{|x|^{6}}(\widetilde{u}(x)\cdot{}x)
		(\varphi(x)\cdot{}x)\\
		&-\frac{1}{2}\int_{B_{2}(0)\setminus\Omega}\!{}\frac{4}{|x|^{4}}(\widetilde{u}(x)\cdot{}\varphi(x)).
	\end{align*}
	Putting these into our calculation of the outer variation and using Proposition
    \ref{prop:refl_E} gives the desired result.
\end{proof}

\begin{remark}[Reflection of the Energy vs.\ reflection of the Euler-Lagrange equations]\label{rem:refl_E_PDE_ELE}
Determining the PDE associated to the second equation of Corollary~\ref{cor:outerpde}, we obtain
\begin{equation}\label{rem:ELE_tilde_n}
\begin{aligned}[c]
    -\Delta \widetilde{u}
    \ &= \
    - \frac{2}{|y|^2} (y\cdot\nabla) \widetilde{u}
    -\frac{4}{|y|^4} (\widetilde{u}\cdot y) y
    -\frac{4}{|y|^2} y(\nabla \widetilde{u})
    + \frac{4}{|y|^2} \dif(\widetilde{u}) y \\
    &\quad+ \left( |\nabla \widetilde{u}|^2
    + \frac{4}{|y|^4} (\widetilde{u}\cdot y)^2
    - \frac{4}{|y|^2} (y\cdot \widetilde{u})\dif(\widetilde{u})
    - \frac{4}{|y|^2} \right) \widetilde{u}
    \, .
\end{aligned}
\end{equation}
On the other hand, one could calculate the Euler-Lagrange equations for $u$ by varying \eqref{def:dir} yielding the well-known harmonic map equation
\begin{align*}
    -\Delta u
    \ &= \
    |\nabla u|^2 u
    \, .
\end{align*}
Reflecting this PDE across $\partial\Omega$ and rewriting it in terms of $\widetilde{u}$, we also obtain \eqref{rem:ELE_tilde_n}.
In other words, the following diagram commutes:
\[
\begin{xy}
\xymatrix@=4em{ 
    E \ar[r]^{\widetilde{u}=A(u\circ\iota)} \ar[d]_{\delta} & \widetilde{E} \ar[d]^{\delta}  \\
    \mathrm{ELE} \ar[r]_{\widetilde{u}=A(u\circ\iota)} &\widetilde{\mathrm{ELE}}
}
\end{xy}
\]
\end{remark}

Next we show that the PDEs satisfied on $\Omega$ and
$B_{2}(0)\setminus\Omega$ ``glue" together.
As a result, we establish $\overline{u}$ satisfies a PDE on $B_{2}(0)$.
The novelty here is that the test functions no longer need to satisfy
the tangential requirements of $W_{T}^{1,2}(\Omega;\mathbb{S}^{2})$
or $\mathcal{W}_{u}$.
The approach to proving this is to appeal to the reflective symmetry
across $\partial\Omega$ and the Change of Variables Theorem as in
\cite{BCS, DiMiPi, Sch}.
This method is elegant as it evades the technical problem of defining the normal derivative of $u$ along $\partial\Omega$.
Note that our minimization problem involving the Dirichlet energy,
\eqref{def:dir}, only gives
meaning to $\partial_{\nu}u_{T}$ as noted in \eqref{eq:MinPDE}.
\begin{lemma}\label{lem:global_PDE}
	Suppose $u$ is a minimizer for $\eqref{def:dir}$ and $\overline{u}$
    is the extension to $B_{2}(0)$ defined in \eqref{eq:extension}.
	Then, $\overline{u}$ satisfies
	\begin{align*}
		&\int_{\Omega}\!{}\nabla\overline{u}(x):\nabla\varphi(x)
		+\int_{B_{2}(0)\setminus\Omega}\!{}
        \frac{1}{|x|^{2}}\nabla\overline{u}(x):\nabla\varphi(x)\\
		=&\int_{\Omega}\!{}|\nabla\overline{u}(x)|^{2}
        \overline{u}(x)\cdot\varphi(x)
		+\int_{B_{2}(0)\setminus\Omega}\!{}\frac{1}{|x|^{2}}
        \biggl[|\nabla\overline{u}(x)|^{2}\overline{u}(x)
		-\frac{4}{|x|^{4}}(\overline{u}(x)\cdot{}x)x^{T}
		+\frac{4}{|x|^{4}}(\overline{u}(x)\cdot{}x)^{2}\overline{u}(x)^{T}
		\\
		&-\frac{4}{|x|^{2}}x^{T}\nabla\overline{u}(x)
		+\frac{4}{|x|^{2}}\bigl[\overline{u}(x)^{T}\nabla\overline{u}(x)^{T}
        x\bigr]\overline{u}(x)^{T}
		+\frac{4}{|x|^{2}}{\rm{div}}(\overline{u})(x)x^{T}\\
		&-\frac{4}{|x|^{2}}(\overline{u}(x)\cdot{}x)
        {\rm{div}}(\overline{u})(x)\overline{u}(x)^{T}
		\biggr]\varphi(x)
	\end{align*}
	where $\varphi\in{}W_{0}^{1,2}(B_{2}(0);\mathbb{R}^{3})\cap{}L^{\infty}(B_{2}(0);\mathbb{R}^{3})$.
\end{lemma}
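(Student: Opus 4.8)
The plan is to prove Lemma~\ref{lem:global_PDE} by combining the two weak formulations already established --- equation~\eqref{eq:weakform} on $\Omega$ and the first identity of Corollary~\ref{cor:outerpde} on $B_2(0)\setminus\Omega$ --- and showing that the boundary terms that would ordinarily obstruct the gluing cancel \emph{by the very construction of the spherical-inversion extension}. Concretely, fix $\varphi\in W_0^{1,2}(B_2(0);\mathbb R^3)\cap L^\infty$. The difficulty is that $\varphi$ is \emph{not} required to be tangential on $\partial\Omega$, so it is not an admissible test function for either of the two weak formulations directly; we may only feed them tangential test functions. The idea, following \cite{BCS,DiMiPi,Sch}, is to split $\varphi$ into its tangential and normal parts relative to $\partial\Omega$ and to handle the normal part via a change of variables.

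First I would decompose $\varphi = \varphi_T + \varphi_\nu\nu$ near $\partial\Omega$ (extended appropriately, e.g.\ using $\nu(x)=x/|x|$ so that the decomposition is global on $B_2(0)\setminus\{0\}$) and treat the two contributions separately. For the tangential piece $\varphi_T$, which \emph{is} an admissible test function on $\Omega$ and (after noting it lies in the right space) relates to a test function on $B_2(0)\setminus\Omega$, the two identities \eqref{eq:weakform} and Corollary~\ref{cor:outerpde} apply and produce exactly the corresponding tangential portions of the claimed identity. For the normal piece, I would exploit the reflective symmetry of $\overline u$ encoded in \eqref{eq:invol}: the map $\iota$ together with the conjugation by $A$ identifies the integrand on $B_2(0)\setminus\Omega$ with the integrand on (a neighborhood of $\partial\Omega$ inside) $\Omega$ under the change of variables $x=\iota(y)$, with Jacobian \eqref{eq:covdet}. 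The crucial point --- and this is why the inversion $\iota(x)=x/|x|^2$ and the radial flip $A$ were chosen --- is that the normal derivatives of $u$ and of $\overline u$ from the two sides of $\partial\Omega$ match up to the precise lower-order correction terms recorded in \eqref{def:extenergy}, so that the boundary integral over $\partial\Omega$ arising from integration by parts on each side appears twice with opposite signs and cancels.

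Therefore the key steps, in order, are: (1) reduce to tangential and normal components of $\varphi$; (2) for the tangential component, invoke \eqref{eq:weakform} and the first identity of Corollary~\ref{cor:outerpde}, checking that $\varphi_T$ (resp.\ its inversion-transform) lies in $W_T^{1,2}\cap L^\infty$ and in $\mathcal W_0$ respectively; (3) for the normal component, perform the change of variables $x=\iota(y)$ using \eqref{eq:gradinversion}, \eqref{eq:partialinv}, \eqref{eq:covdet}, \eqref{eq:invol}, and the defining relation \eqref{def:tildeu}, together with \eqref{eq:partialA}, to rewrite the $B_2(0)\setminus\Omega$ contribution as an integral over $\Omega$; (4) integrate by parts on each side of $\partial\Omega$ and observe that the $\mathcal H^2$-boundary terms cancel because the extension matches $u$ and its normal derivative across $\partial\Omega$ to the relevant order (this is where the algebra of $A$, especially \eqref{eq:involutionA} and \eqref{eq:partialA}, does the work); (5) reassemble. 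I expect step (4) --- verifying the precise cancellation of the boundary contributions over $\partial\Omega$, i.e.\ that $\overline u\in W^{1,2}_{\mathrm{loc}}$ across $\partial\Omega$ with the normal-derivative jump exactly accounted for by the lower-order terms of $\widetilde E$ --- to be the main obstacle, since it requires tracking all the error terms generated by $\partial_{x_i}A$ and checking that they do not leave a residual distributional contribution concentrated on $\partial\Omega$; an alternative route that sidesteps explicit boundary integrals, used in \cite{BCS,DiMiPi,Sch}, is to note that $\overline u$ is weakly differentiable across $\partial\Omega$ (so no surface measure appears in $\nabla\overline u$) and then the identity is a pure interior computation, with the change of variables merely rewriting the exterior integrand.
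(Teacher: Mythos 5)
The decomposition you propose is not the one that makes this proof go through, and as stated it leaves a gap. You split $\varphi=\varphi_{T}+\varphi_{\nu}\nu$ by \emph{pointwise} orthogonal projection onto the tangent/normal directions. The paper instead splits $\varphi=\varphi_{e}+\varphi_{o}$ into even and odd parts with respect to the \emph{reflection operator} $R\varphi(x)\coloneqq A(x)\varphi(\iota(x))$, i.e.\ $\varphi_{e}(x)=\tfrac12\bigl[\varphi(x)+A(x)\varphi(\iota(x))\bigr]$ and $\varphi_{o}(x)=\tfrac12\bigl[\varphi(x)-A(x)\varphi(\iota(x))\bigr]$. These two decompositions coincide on $\partial\Omega$ (since $\iota=\mathrm{id}$ and $A(x)\nu=-\nu$ there), but \emph{not} in the interior, and the distinction is exactly what makes or breaks the argument.

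The point is that the change of variables $x=\iota(y)$ only produces the advertised cancellation if the test function transforms cleanly under $R$. The odd piece does: $\varphi_{o}(x)=-A(x)\varphi_{o}(\iota(x))$, which is what lets the exterior integral $\int_{B_{2}(0)\setminus\Omega}\frac{1}{|x|^{2}}\nabla\overline{u}:\nabla\varphi_{o}$ be rewritten as (minus) an interior integral against $\varphi_{o}$ again, and the even piece $\varphi_{e}$ is tangential on $\partial\Omega$, hence an admissible test function simultaneously for \eqref{eq:weakform} and for Corollary~\ref{cor:outerpde}. Your $\varphi_{T}$ and $\varphi_{\nu}\nu$ have neither property: one checks that $A(x)\varphi_{T}(\iota(x))=[I-\nu\nu^{T}]\varphi(\iota(x))$, which is the tangential part of $\varphi\circ\iota$ rather than $\pm\varphi_{T}$, so $\varphi_{T}$ is neither $R$-even nor $R$-odd. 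Consequently, when you push the normal piece through $\iota$, it does not come back as $\pm$ itself, and the claimed cancellation of the $\partial\Omega$ contributions will not occur; you would be left with a residual term involving $\varphi\circ\iota$ that you cannot identify with anything on the other side. You would then have to re-split this residual into $R$-even and $R$-odd parts, at which point you have effectively rediscovered the paper's decomposition. The rest of your outline (using \eqref{eq:weakform} and Corollary~\ref{cor:outerpde} on the admissible piece, changing variables via \eqref{eq:covdet}, \eqref{eq:gradinversion}, \eqref{eq:invol}, and integrating by parts using $\overline{u}\cdot\nu=0$) is the right machinery; the issue is purely which decomposition of $\varphi$ you feed into it.
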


\begin{proof}
	We may suppose that $\text{supp}(\varphi)\subseteq{}B_{2}(0)\setminus{}B_{\frac{1}{2}}(0)$ since we could otherwise use a partition of unity
    subordinate to the cover of $B_{2}(0)$ given by
    $\{B_{2}(0)\setminus{}B_{\frac{1}{2}}(0),B_{\frac{3}{4}}(0)\}$.
	We rewrite $\varphi$ as
	\begin{equation*}
		\varphi=\varphi_{e}+\varphi_{o}
	\end{equation*}
	where $\varphi_{e}(x)\coloneqq\frac{1}{2}[\varphi(x)+A(x)\varphi(\iota(x))]$ and
	$\varphi_{o}(x)\coloneqq\frac{1}{2}[\varphi(x)-A(x)\varphi(\iota(x))]$.
	We also observe that $\varphi_{e}\in{}W_{T}^{1,2}(\Omega,\mathbb{R}^{3})\cap{}\mathcal{W}_{u}$.
	From this observation, \eqref{eq:weakform}, and Corollary \ref{cor:outerpde} we conclude that
	\begin{align*}
		\int_{\Omega}\!{}\nabla\overline{u}:\nabla\varphi
		+\int_{B_{2}(0)\setminus\Omega}\!{}\frac{1}{|x|^{2}}\nabla\overline{u}:\nabla\varphi
		&=\int_{\Omega}\!{}\nabla\overline{u}:\nabla\varphi_{o}+\int_{B_{2}(0)\setminus\Omega}\!{}\frac{1}{|x|^{2}}\nabla\overline{u}:\nabla\varphi_{o}
		+\int_{\Omega}\!{}|\nabla\overline{u}|^{2}\overline{u}\cdot\varphi_{e}\\
		+&\int_{B_{2}(0)\setminus\Omega}\!{}\frac{1}{|x|^{2}}\biggl[|\nabla\overline{u}|^{2}\overline{u}
		-\frac{4}{|x|^{4}}(\overline{u}(x)\cdot{}x)x^{T}
		+\frac{4}{|x|^{4}}(\overline{u}(x)\cdot{}x)^{2}\overline{u}(x)^{T}
		\\
		-&\frac{4}{|x|^{2}}x^{T}\nabla\overline{u}(x)
		+\frac{4}{|x|^{2}}\bigl[\overline{u}(x)^{T}\nabla\overline{u}(x)^{T}x\bigr]\overline{u}(x)^{T}
		+\frac{4}{|x|^{2}}\text{div}(\overline{u})(x)x^{T}\\
		-&\frac{4}{|x|^{2}}(\overline{u}(x)\cdot{}x)\text{div}(\overline{u})(x)\overline{u}(x)^{T}
		\biggr]\varphi_{e}(x).
	\end{align*}
	Using \eqref{eq:invol} and that
    $\varphi_{o}(x)=-A(x)\varphi_{o}(\iota(x))$ we obtain
	\begin{equation*}
		\int_{B_{2}(0)\setminus\Omega}\!{}\frac{1}{|x|^{2}}
        \nabla\overline{u}(x):\nabla\varphi_{o}(x)
		=
		-\int_{B_{2}(0)\setminus\Omega}\!{}\frac{1}{|x|^{2}}\nabla{}\bigl[A(x)\overline{u}(\iota(x))\bigr]:
		\nabla\bigl[A(x)\varphi_{o}(\iota(x))\bigr].
	\end{equation*}
	Using \eqref{eq:partialA} and \eqref{eq:partialinv} we find that
	\begin{align*}
	\partial_{x_{i}}\bigl[A(x)\overline{u}(\iota(x))\bigr]
	=&\biggl[-\frac{2}{|x|^{2}}\bigl[\mathbf{e}_{i}x^{T}+x\mathbf{e}_{i}^{T}\bigr]+\frac{4x_{i}}{|x|^{4}}xx^{T}\biggr]
	\overline{u}(\iota(x))\\
	&+A(x)\biggl[\frac{1}{|x|^{2}}\partial_{x_{i}}\overline{u}(\iota(x))
	-\frac{2x_{i}}{|x|^{4}}\nabla{}\overline{u}(\iota(x))x\biggr]
	\end{align*}
	and similarly for $A(x)\varphi_{o}(\iota(x))$.
    Hence,
	\begin{align*}
		\nabla\bigl[A(x)
        \overline{u}(\iota(x))\bigr]:
        &\nabla\bigl[A(x)\varphi_{o}(\iota(x))\bigr]
		=\frac{1}{|x|^{4}}\nabla\overline{u}(\iota(x)):
        \nabla\varphi_{0}(\iota(x))\\
		&+\frac{2}{|x|^{4}}\varphi_{o}(\iota(x))^{T}\nabla{}\overline{u}(\iota(x))^{T}x
		-\frac{2}{|x|^{4}}\text{div}(\overline{u})(\iota(x))\bigl(x\cdot\varphi_{o}(\iota(x))\bigr)\\
		&-\frac{2}{|x|^{4}}\bigl(x\cdot{}\overline{u}(\iota(x))\bigr)
		\text{div}(\varphi_{o})(\iota(x))
		+\frac{2}{|x|^{4}}\overline{u}(\iota(x))^{T}\nabla\varphi_{o}(\iota(x))^{T}x\\
		&+\frac{4}{|x|^{4}}\bigl(x\cdot{}\overline{u}(\iota(x))\bigr)\bigl(x\cdot{}\varphi_{o}(\iota(x))\bigr)
		+\frac{4}{|x|^{2}}\overline{u}(\iota(x))\cdot\varphi_{o}(\iota(x)).
	\end{align*}
	From this we see, using that $\varphi_{0}$ is supported in $B_{2}(0)\setminus{}B_{\frac{1}{2}}(0)$ as well as  the change of variables
	$y=\iota(x)$ and \eqref{eq:covdet}, that
	\begin{align*}
		-\int_{B_{2}(0)\setminus\Omega}\!{}\frac{1}{|x|^{2}}\nabla{}\bigl[A(x)\overline{u}(\iota(x))\bigr]:&
		\nabla\bigl[A(x)\varphi_{o}(\iota(x))\bigr]
		=-\int_{\Omega}\!{}\nabla{}\overline{u}(y):\nabla\varphi_{o}(y)\\
		&-2\int_{\Omega}\!{}\varphi_{o}(y)^{T}\nabla{}\overline{u}(y)^{T}\iota(y)
		+2\int_{\Omega}\!{}\text{div}(\overline{u}(y))\bigl(\iota(y)\cdot\varphi_{o}(y)\bigr)\\
		&+2\int_{\Omega}\!{}\text{div}(\varphi_{o}(y))\bigl(\iota(y)\cdot{}\overline{u}(y)\bigr)
		-2\int_{\Omega}\!{}\overline{u}(y)^{T}\nabla\varphi_{o}(y)^{T}\iota(y)\\
		&-4\int_{\Omega}\!{}\bigl(\iota(y)\cdot{}\overline{u}(y)\bigr)\bigl(\iota(y)\cdot{}\varphi_{o}(y)\bigr)
		-4\int_{\Omega}\!{}\frac{1}{|y|^{2}}\overline{u}(y)\cdot\varphi_{o}(y).
	\end{align*}
	Integrating by parts twice and using that $\overline{u}$ has no normal
    part on $\partial\Omega$ leads to
	\begin{align*}
		-\int_{B_{2}(0)\setminus\Omega}\!{}\frac{1}{|x|^{2}}\nabla{}\bigl[A(x)\overline{u}(\iota(x))\bigr]:
		\nabla\bigl[A(x)\varphi_{o}(\iota(x))\bigr]
		=&-\int_{\Omega}\!{}\nabla{}\overline{u}(y):\nabla\varphi_{o}(y)-4\int_{\Omega}\!{}\frac{1}{|y|^{2}}\overline{u}(y)\cdot\varphi_{o}(y)\\
        &-4\int_{\Omega}\!{}\bigl(\iota(y)\cdot{}\overline{u}(y)\bigr)\bigl(\iota(y)\cdot{}\varphi_{o}(y)\bigr)\\
        &-4\int_{\Omega}\!{}\varphi_{o}(y)^{T}\nabla{}\overline{u}(y)^{T}\iota(y)
		+4\int_{\Omega}\!{}\text{div}(\overline{u}(y))\bigl(\iota(y)\cdot\varphi_{o}(y)\bigr).
	\end{align*}
	Next using \eqref{eq:invol}, \eqref{eq:partialA}, and
    \eqref{eq:partialinv} and making the change of variables $y=\iota(x)$
    we obtain
	\begin{align*}
		\int_{B_{2}(0)\setminus\Omega}\!{}\frac{1}{|x|^{2}}|\nabla\overline{u}(x)|^{2}\overline{u}(x)\cdot\varphi_{e}(x)
		=&\frac{1}{2}\int_{B_{2}(0)\setminus\Omega}\!{}\frac{1}{|x|^{2}}|\nabla\overline{u}(x)|^{2}\overline{u}(x)\cdot\varphi(x)\\
		&+\frac{1}{2}\int_{\Omega}\!{}|\nabla\overline{u}(y)|^{2}\overline{u}(y)\cdot\varphi(y)\\
		&+2\int_{\Omega}\!{}\frac{1}{|y|^{4}}(y\cdot\overline{u}(y))^{2}(\overline{u}(y)\cdot\varphi(y))
		+2\int_{\Omega}\!{}\frac{1}{|y|^{2}}\overline{u}(y)\cdot\varphi(y)\\
		&+2\int_{\Omega}\!{}\frac{1}{|y|^{2}}\bigl[\overline{u}(y)^{T}\nabla\overline{u}(y)^{T}y\bigr]\overline{u}(y)\cdot\varphi(y)\\
		&-2\int_{\Omega}\!{}\frac{1}{|y|^{2}}(y\cdot\overline{u}(y))\text{div}(\overline{u}(y))(\overline{u}(y)\cdot\varphi(y)).
	\end{align*}
	A similar calculation gives
	\begin{align*}
		\frac{1}{2}\int_{\Omega}\!{}
		|\nabla\overline{u}|^{2}
		\overline{u}\cdot{}A(x)\varphi(\iota(y))
		=&\frac{1}{2}\int_{B_{2}(0)\setminus\Omega}\!{}\frac{1}{|x|^{2}}|\nabla\overline{u}(x)|^{2}\overline{u}(x)\cdot\varphi(x)\\
		&+2\int_{B_{2}(0)\setminus\Omega}\!{}\frac{1}{|x|^{6}}(x\cdot\overline{u}(x))^{2}(\overline{u}(x)\cdot\varphi(x))
		+2\int_{B_{2}(0)\setminus\Omega}\!{}\frac{1}{|x|^{4}}\overline{u}(x)\cdot\varphi(x)\\
		&+2\int_{B_{2}(0)\setminus\Omega}\!{}\frac{1}{|x|^{4}}\bigl[\overline{u}(x)^{T}\nabla\overline{u}(x)^{T}x\bigr]
		(\overline{u}(x)\cdot\varphi(x))\\
		&-2\int_{B_{2}(0)\setminus\Omega}\!{}\frac{1}{|x|^{4}}(x\cdot\overline{u}(x))\text{div}(\overline{u}(x))(\overline{u}(x)\cdot\varphi(x)).
	\end{align*}
	which leads to
	\begin{align*}
		\int_{\Omega}\!{}\nabla\overline{u}:\nabla\varphi
		+\int_{B_{2}(0)\setminus\Omega}\!{}\frac{1}{|x|^{2}}\nabla\overline{u}:\nabla\varphi
		=&2\int_{\Omega}\!{}\frac{1}{|y|^{2}}\overline{u}(y)\cdot{}A(y)\varphi(\iota(y))
		+\int_{\Omega}\!{}|\nabla\overline{u}|^{2}\overline{u}\cdot\varphi\\
		&+\int_{B_{2}(0)\setminus\Omega}\!{}\frac{1}{|x|^{2}}|\nabla\overline{u}(x)|^{2}\overline{u}(x)\cdot\varphi(x)\\
		&+\int_{B_{2}(0)\setminus\Omega}\!{}\frac{1}{|x|^{2}}\biggl[
		-\frac{4}{|x|^{4}}(\overline{u}(x)\cdot{}x)x^{T}
		+\frac{4}{|x|^{4}}(\overline{u}(x)\cdot{}x)^{2}\overline{u}(x)^{T}
		\\
		&-\frac{4}{|x|^{2}}x^{T}\nabla\overline{u}(x)
		+\frac{4}{|x|^{2}}\bigl[\overline{u}(x)^{T}\nabla\overline{u}(x)^{T}x\bigr]\overline{u}(x)^{T}
		+\frac{4}{|x|^{2}}\text{div}(\overline{u})(x)x^{T}\\
		&-\frac{4}{|x|^{2}}(\overline{u}(x)\cdot{}x)\text{div}(\overline{u})(x)\overline{u}(x)^{T}
		\biggr]\varphi_{e}(x)\\
		&+2\int_{\Omega}\!{}\frac{1}{|y|^{4}}(y\cdot\overline{u}(y))^{2}(\overline{u}(y)\cdot\varphi(y))\\
		&+2\int_{\Omega}\!{}\frac{1}{|y|^{2}}\bigl[\overline{u}(y)^{T}\nabla\overline{u}(y)^{T}y\bigr]\overline{u}(y)\cdot\varphi(y)\\
		&-2\int_{\Omega}\!{}\frac{1}{|y|^{2}}(y\cdot\overline{u}(y))\text{div}(\overline{u}(y))(\overline{u}(y)\cdot\varphi(y))\\
		&+2\int_{B_{2}(0)\setminus\Omega}\!{}\frac{1}{|x|^{6}}(x\cdot\overline{u}(x))^{2}(\overline{u}(x)\cdot\varphi(x))
		+2\int_{B_{2}(0)\setminus\Omega}\!{}\frac{1}{|x|^{4}}\overline{u}(x)\cdot\varphi(x)\\
		&+2\int_{B_{2}(0)\setminus\Omega}\!{}\frac{1}{|x|^{4}}\bigl[\overline{u}(x)^{T}\nabla\overline{u}(x)^{T}x\bigr]
		(\overline{u}(x)\cdot\varphi(x))\\
		&-2\int_{B_{2}(0)\setminus\Omega}\!{}\frac{1}{|x|^{4}}(x\cdot\overline{u}(x))\text{div}(\overline{u}(x))(\overline{u}(x)\cdot\varphi(x))\\
        &-4\int_{\Omega}\!{}\varphi_{o}^{T}(y)\nabla\overline{u}(y)^{T}i(y)
        +4\int_{\Omega}\!{}\text{div}(\overline{u})(y)(i(y)\cdot\varphi_{o}(y))\\
        &-4\int_{\Omega}\!{}(i(y)\cdot\overline{u}(y))
        (i(y)\cdot{}\varphi_{o}(y)).
	\end{align*}
	Using \eqref{eq:invol}, making the change of variables
    $x=\iota(y)$, and using that $\varphi$ is supported in
    $B_{2}(0)\setminus{}B_{\frac{1}{2}}(0)$ leads to
	\begin{equation}\label{eq:extpde_cov1}
		2\int_{\Omega}\!{}\frac{1}{|y|^{2}}\overline{u}(y)\cdot{}A(y)\varphi(\iota(y))
		=2\int_{B_{2}(0)\setminus\Omega}\!{}\frac{1}{|x|^{4}}
        \overline{u}(x)\cdot\varphi(x).
	\end{equation}
	We note the following identities:
	\begin{align*}
		\overline{u}(\iota(x))^{T}\nabla\overline{u}(\iota(x))^{T}\iota(x)
		&=-2+\frac{2}{|x|^{2}}(x\cdot\overline{u}(x))^{2}-x^{T}\nabla\overline{u}(x)\overline{u}(x)\\
		\text{div}(\overline{u})
        	(\iota(x))&=-4(x\cdot\overline{u}(x))+|x|^{2}\text{div}(\overline{u}(x)).
	\end{align*}
	Using the above identities, the change of coordinates $y=\iota(x)$,
    equation \eqref{eq:covdet}, equation \eqref{eq:gradinversion} applied
    with $\overline{u}$, and that the support of $\varphi$ is in
    $B_{2}(0)\setminus{}B_{\frac{1}{2}}(0)$ we obtain
	\begin{align}
		2\int_{\Omega}\!{}\frac{1}{|y|^{4}}(y\cdot\overline{u}(y))^{2}(\overline{u}(y)\cdot\varphi(y))&=
		2\int_{B_{2}(0)\setminus\Omega}\!{}\frac{1}{|x|^{6}}\!{}(x\cdot\overline{u}(x))^{2}
		\biggl(\overline{u}(x)\cdot{}A(x)\varphi(\iota(x))\biggr)
        \label{eq:extpde_cov2}\\
		2\int_{\Omega}\!{}\frac{1}{|y|^{2}}\bigl[\overline{u}(y)^{T}\nabla\overline{u}(y)^{T}y\bigr]\overline{u}(y)\cdot\varphi(y)=&
		2\int_{B_{2}(0)\setminus\Omega}\!{}\frac{1}{|x|^{4}}\biggl(\frac{2}{|x|^{2}}(x\cdot\overline{u}(x))^{2}-2\biggr)
		\biggl(\overline{u}(x)\cdot{}A(x)\varphi(\iota(x))\biggr)
        \nonumber\\
		&-2\int_{B_{2}(0)\setminus\Omega}\!{}\frac{1}{|x|^{4}}[x^{T}\nabla\overline{u}(x)\overline{u}(x)]
		\biggl(\overline{u}(x)\cdot{}A(x)\varphi(\iota(x))\biggr)
        \label{eq:extpde_cov3}\\
		-2\int_{\Omega}\!{}\frac{1}{|y|^{2}}(y\cdot\overline{u}(y))\text{div}(\overline{u}(y))(\overline{u}(y)\cdot\varphi(y))=&
		-8\int_{B_{2}(0)\setminus\Omega}\!{}\frac{1}{|x|^{6}}(x\cdot\overline{u}(x))^{2}
		\biggl(\overline{u}(x)\cdot{}A(x)\overline{u}(\iota(x))\biggr)
        \nonumber\\
		&+2\int_{B_{2}(0)\setminus\Omega}\!{}\frac{1}{|x|^{4}}(x\cdot\overline{u}(x))\text{div}(\overline{u}(x))
		\biggl(\overline{u}(x)\cdot{}A(x)\varphi(\iota(x))\biggr).
        \label{eq:extpde_cov4}
	\end{align}
    Finally, making the change of variables $x=i(y)$ and using
    \eqref{eq:covdet} leads to
    \begin{align}
        -4\int_{\Omega}\!{}
        (i(y)\cdot\overline{u}(y))(i(y)\cdot\varphi_{o}(y))
        &=4\int_{B_{2}(0)\setminus\Omega}\!{}\frac{1}{|x|^{6}}
        (x\cdot\overline{u}(x))(x\cdot\varphi_{o}(x)),
        \label{eq:intpde_cov1}\\
        4\int_{\Omega}\!{}\text{div}(\overline{u})(y)
        (i(y)\cdot\varphi_{o}(y))
        =&-16\int_{B_{2}(0)\setminus\Omega}\!{}\frac{1}{|x|^{6}}
        (x\cdot\overline{u}(x))(x\cdot\varphi_{o}(x))
        \label{eq:intpde_cov2}\\
        &+4\int_{B_{2}(0)\setminus\Omega}\!{}\frac{1}{|x|^{4}}
        \text{div}(\overline{u})(x)(x\cdot\varphi_{o}(x)),\nonumber\\
        -4\int_{B_{2}(0)\setminus\Omega}\!{}\varphi_{o}(y)^{T}
        \nabla\overline{u}(y)^{T}i(y)
        =&8\int_{B_{2}(0)\setminus\Omega}\!{}\frac{1}{|x|^{4}}
        (x\cdot\overline{u}(x))(x\cdot\varphi_{o}(x))
        \label{eq:intpde_cov3}\\
        &-8\int_{B_{2}(0)\setminus\Omega}\!{}\frac{1}{|x|^{4}}
        (\overline{u}(x)\cdot\varphi_{o}(x))
        -4\int_{B_{2}(0)\setminus\Omega}\!{}\varphi_{o}(x)^{T}
        \nabla\overline{u}(x)^{T}x.\nonumber
    \end{align}
	Applying identities \eqref{eq:extpde_cov1}, \eqref{eq:extpde_cov2},
    \eqref{eq:extpde_cov3}, \eqref{eq:extpde_cov4}, \eqref{eq:intpde_cov1},
    \eqref{eq:intpde_cov2}, and \eqref{eq:intpde_cov3} we obtain:
	\begin{align*}
		&\int_{\Omega}\!{}\nabla\overline{u}(x):\nabla\varphi(x)
		+\int_{B_{2}(0)\setminus\Omega}\!{}\frac{1}{|x|^{2}}\nabla\overline{u}(x):\nabla\varphi(x)\\
		=&\int_{\Omega}\!{}|\nabla\overline{u}(x)|^{2}\overline{u}(x)\cdot\varphi(x)
		+\int_{B_{2}(0)\setminus\Omega}\!{}\frac{1}{|x|^{2}}\biggl[|\nabla\overline{u}(x)|^{2}\overline{u}(x)
		-\frac{4}{|x|^{4}}(\overline{u}(x)\cdot{}x)x^{T}
		+\frac{4}{|x|^{4}}(\overline{u}(x)\cdot{}x)^{2}\overline{u}(x)^{T}
		\\
		&-\frac{4}{|x|^{2}}x^{T}\nabla\overline{u}(x)
		+\frac{4}{|x|^{2}}\bigl[\overline{u}(x)^{T}\nabla\overline{u}(x)^{T}x\bigr]\overline{u}(x)^{T}
		+\frac{4}{|x|^{2}}\text{div}(\overline{u})(x)x^{T}
		-\frac{4}{|x|^{2}}(\overline{u}(x)\cdot{}x)\text{div}(\overline{u})(x)\overline{u}(x)^{T}
		\biggr]\varphi(x)
	\end{align*}
\end{proof}

%

\subsection{Tangential Inner Variation}
\label{subsec:tang_inner_var}

In this subsection, we show that the extension satisfies an inner variation
formula similar to the one demonstrated in \cite{Ev2}.
However, in order to account for behaviour near the boundary, we modified
the family of diffeomorphisms that are considered.
This has the effect of adding some additional error terms.\\

We begin by introducing a suitable family of diffeomorphisms which
separately preserves the interior and exterior domains.
\begin{definition}[Tangential Inner Variation]\label{def:innervartan}
    For a fixed $\delta>0$ we consider a function
    $\varphi\colon{}B_{2}(0)\times[0,\delta]\to{}B_{2}(0)$ which is $C^{3}$ and,
    for each $t\in[0,\delta]$, $x\mapsto\varphi(x,t)$ is a diffeomorphism of
    $B_{2}(0)$ satisfying
    \begin{enumerate}
    	\item\label{eq:preserveint}
    		$\varphi(\Omega\times\{t\})=\Omega\text{ for each }t\in[0,\delta]$,
    	\item\label{eq:idatzero}
    		$\varphi(\cdot,0)\coloneqq{}\text{id}_{B_{2}(0)}(\cdot)$,
    	\item\label{eq:compactperturb}
    		$x\mapsto\varphi(x,t)-\text{id}_{B_{2}(0)}(x)$ has compact support in $B_{2}(0)$ for all  $t\in[0,\delta]$.
    \end{enumerate}
    As we will be most interested in this family with fixed $t\in[0,\delta]$ we introduce the notation
    \begin{equation*}
    	\varphi_{t}(\cdot)\coloneqq\varphi(\cdot,t).
    \end{equation*}
    In addition, since $\varphi_{t}$ is a diffeomorphism for each $t\in[0,\delta]$ we let $\psi_{t}\coloneqq\varphi_{t}^{-1}$.
    By assumption, $t\mapsto\varphi_{t}(x)$ is $C^{3}$ for each $x\in{}B_{2}(0)$.
    As a result, we can define $T\colon{}B_{2}(0)\to{}B_{2}(0)$ by
    \begin{equation}\label{def:infinnervar}
    	T(x)\coloneqq\frac{\mathrm{d}}{\mathrm{d}t}\bigg\vert_{t=0}\varphi_{t}(x).
    \end{equation}
\end{definition}
Since $\nabla_{x}\varphi_{t}(x)\in{}GL_{3}(\mathbb{R})$ for all $x\in{}B_{2}(0)$ and $t\in[0,\delta]$, and $|\overline{u}|=1$ for
$\mathcal{L}^{3}$-almost every point in $B_{2}(0)$ then, by Sard's Theorem,
we can define, for each
$t\in[0,\delta]$, $\overline{u}_{t}\colon{}B_{2}(0)\to\mathbb{S}^{2}$ for $\mathcal{L}^{3}$-almost every point by
\begin{equation*}
	\overline{u}_{t}(y)\coloneqq\frac{\nabla_{x}\varphi_{t}(\psi_{t}(y))\overline{u}(\psi_{t}(y))}
	{|\nabla_{x}\varphi_{t}(\psi_{t}(y))\overline{u}(\psi_{t}(y))|}
\end{equation*}
where we use the coordinate $y$ to emphasize that we are sampling from the domain of $\psi_{t}$.
We now demonstrate that
\begin{equation*}
    \{\overline{u}_{t}\}_{0<t<\delta}\coloneqq
    \{\overline{u}_{t}:0<t<\delta\}
\end{equation*}
defines a family of test functions within both
$W_{T}^{1,2}(\Omega;\mathbb{S}^{2})$ and $\mathcal{W}_{u}$.
This will be necessary in order to use minimality of $u$ and $\widetilde{u}$
to $E$ and $\widetilde{E}$ respectively.

\begin{lemma}\label{lem:tang-var-admissible}
    Suppose $u\colon\Omega\to\mathbb{S}^{2}$ is a minimizer of
    \eqref{def:dir} over $W_{T}^{1,2}(\Omega;\mathbb{S}^{2})$ and
    $\overline{u}$ is defined as in \eqref{eq:extension}.
    Then for $\delta>0$ we have
    \begin{equation*}
        \{\overline{u}_{t}\}_{0<t<\delta}
        \subseteq{}W_{T}^{1,2}(\Omega;\mathbb{S}^{2})\cap{}
        \mathcal{W}_{u}
    \end{equation*}
    for all $t\in[0,\delta]$ and $\overline{u}_{0}=\overline{u}$
    for $\mathcal{L}^{3}-$almost every point in $B_{2}(0)$.
\end{lemma}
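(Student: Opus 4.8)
The statement bundles three assertions: for every $t\in[0,\delta]$, (a) the restriction of $\overline{u}_{t}$ to $\Omega$ lies in $W_{T}^{1,2}(\Omega;\mathbb{S}^{2})$ and its restriction to $B_{2}(0)\setminus\Omega$ lies in $\mathcal{W}_{u}$; (b) the boundary behaviour is correct, i.e.\ \eqref{eq:tanbc} holds along $\partial\Omega$ and the prescribed trace along $\partial B_{2}(0)$ is attained; and (c) $\overline{u}_{0}=\overline{u}$ at $\mathcal{L}^{3}$-a.e.\ point. The plan is to prove all three by unwinding the definition of $\overline{u}_{t}$ and feeding in the three structural properties \ref{eq:preserveint}--\ref{eq:compactperturb} of the family $\varphi$ from Definition~\ref{def:innervartan}, together with the facts — part of the reflection set-up of Subsection~\ref{subsec:reflprelim} — that $\overline{u}\in W^{1,2}(B_{2}(0);\mathbb{S}^{2})\cap L^{\infty}$ and satisfies \eqref{eq:tanbc} on $\partial\Omega$ (on $\partial\Omega$ one has $|x|=1$, $\iota(x)=x$, and $A(x)u(x)=u(x)-2(u(x)\cdot x)x=u(x)$ since $u\cdot\nu=0$, so the inner and outer traces agree and $\overline{u}$ has no jump across $\partial\Omega$). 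Only $u\in W_{T}^{1,2}(\Omega;\mathbb{S}^{2})$ is used, not minimality.

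For (a), set $\psi_{t}=\varphi_{t}^{-1}$, again a $C^{3}$ diffeomorphism of $B_{2}(0)$ equal to the identity outside a compact set $K\subset B_{2}(0)$ and, by \ref{eq:preserveint}, satisfying $\psi_{t}(\Omega)=\Omega$ and $\psi_{t}(B_{2}(0)\setminus\Omega)=B_{2}(0)\setminus\Omega$. Then $\overline{u}\circ\psi_{t}\in W^{1,2}(B_{2}(0);\mathbb{S}^{2})\cap L^{\infty}$, and its restrictions to $\Omega$ and to $B_{2}(0)\setminus\Omega$ inherit the corresponding Sobolev regularity. The matrix field $x\mapsto\nabla_{x}\varphi_{t}(\psi_{t}(x))$ is $C^{2}$, uniformly bounded, and — since $\nabla_{x}\varphi_{t}(x)\in GL_{3}(\mathbb{R})$ for all $(x,t)$, equals $I_{3}$ off $K$, and is continuous on the compact set $\overline{K}\times[0,\delta]$ — has inverse bounded uniformly in $t$; with $|\overline{u}|=1$ a.e.\ this forces $|\nabla_{x}\varphi_{t}(\psi_{t}(\cdot))\,\overline{u}(\psi_{t}(\cdot))|\geq c>0$ a.e.\ for some $c$ independent of $t$. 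Hence $\nabla_{x}\varphi_{t}(\psi_{t}(\cdot))\,\overline{u}(\psi_{t}(\cdot))\in W^{1,2}\cap L^{\infty}$, and composing with $v\mapsto v/|v|$, smooth on $\{|v|\geq c\}$, keeps it in $W^{1,2}\cap L^{\infty}$; by construction $|\overline{u}_{t}|\equiv 1$. This gives (a) up to the boundary conditions, and the easy parts of (b) and (c) as well: by \ref{eq:compactperturb}, $\varphi_{t}=\mathrm{id}$ near $\partial B_{2}(0)$, so $\overline{u}_{t}=\overline{u}$ there and the trace on $\partial B_{2}(0)$ equals $\overline{u}|_{\partial B_{2}(0)}$; and by \ref{eq:idatzero}, $\varphi_{0}=\mathrm{id}$, hence $\psi_{0}=\mathrm{id}$ and $\nabla_{x}\varphi_{0}\equiv I_{3}$, so $\overline{u}_{0}(y)=\overline{u}(y)/|\overline{u}(y)|=\overline{u}(y)$ wherever $|\overline{u}(y)|=1$.

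The substantive step, which I expect to be the main obstacle, is the tangency along $\partial\Omega$. Fix $y\in\partial\Omega$. By \ref{eq:preserveint}, $\varphi_{t}$ restricts to a $C^{3}$ diffeomorphism of $\partial\Omega$, so $\psi_{t}(y)\in\partial\Omega$ and $\nabla_{x}\varphi_{t}(\psi_{t}(y))$ maps the tangent plane $T_{\psi_{t}(y)}\partial\Omega$ isomorphically onto $T_{y}\partial\Omega$. For $\mathcal{H}^{2}$-a.e.\ $y$ the vector $\overline{u}(\psi_{t}(y))$ is orthogonal to $\nu(\psi_{t}(y))$ by \eqref{eq:tanbc}, i.e.\ lies in $T_{\psi_{t}(y)}\partial\Omega$; hence $\nabla_{x}\varphi_{t}(\psi_{t}(y))\,\overline{u}(\psi_{t}(y))\in T_{y}\partial\Omega$, so it is orthogonal to $\nu(y)$, and normalization preserves this. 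The care needed is in reconciling this pointwise linear-algebra argument with the a.e./trace meaning of \eqref{eq:tanbc}: one invokes that $\psi_{t}|_{\partial\Omega}$ is bi-Lipschitz, hence sends $\mathcal{H}^{2}$-null subsets of $\partial\Omega$ to $\mathcal{H}^{2}$-null subsets, so the exceptional set on which $\overline{u}$ may fail tangency pulls back to an $\mathcal{H}^{2}$-null set for $\overline{u}_{t}$. Running the same argument with $\partial\Omega$ viewed as the inner boundary of $B_{2}(0)\setminus\Omega$ supplies the tangency required for $\mathcal{W}_{u}$, completing (b). The genuinely delicate part — constructing a family $\varphi$ realizing \ref{eq:preserveint}--\ref{eq:compactperturb} adapted to a point of the curved boundary — is deferred to Section~\ref{subsubsec:rigorous_calc}; only its abstract properties are used here.
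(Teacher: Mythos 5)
Your proposal is correct and follows essentially the same route as the paper: both arguments rest on $\varphi_{t}(\Omega)=\Omega$ (hence $\varphi_{t}(\partial\Omega)=\partial\Omega$) together with the tangency of $\overline{u}$, the only cosmetic difference being that you push forward tangent vectors by $\nabla_{x}\varphi_{t}$ while the paper works dually with the transformation law for the unit normal (Lemma A.1 of \cite{KiMaSt}) and cancels $\nabla_{x}\varphi_{t}^{T}\nabla_{x}\psi_{t}^{T}=I_{3}$ via the Inverse Function Theorem. Your additional care with the uniform lower bound $|\nabla_{x}\varphi_{t}(\psi_{t}(\cdot))\overline{u}(\psi_{t}(\cdot))|\ge c>0$ and with bi-Lipschitz images of $\mathcal{H}^{2}$-null sets fills in details the paper leaves implicit.
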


\begin{proof}
    Notice that when $t=0$ we have, since $\nabla_{x}\varphi_{0}=I_{3}$ and $\psi_{0}=\text{id}_{\mathbb{R}^{3}}$ that
    \begin{equation*}
    	\overline{u}_{0}(y)=\frac{\overline{u}(y)}{|\overline{u}(y)|}=\overline{u}(y)
    \end{equation*}
    for all points where $|\overline{u}|=1$.
    In particular, this holds for $\mathcal{L}^{3}$-almost every point in $B_{2}(0)$.
    It is shown in Lemma A.1 of Appendix A of \cite{KiMaSt}, for $x\in\partial\Omega$ and $t\ge0$, that
    \begin{equation*}
    	\nu_{\partial[\varphi_{t}(\Omega)]}(\varphi_{t}(x))=
    	\frac{\nabla_{x}\psi_{t}(\varphi_{t}(x))^{T}\nu_{\partial\Omega}(x)}{|\nabla_{x}\psi_{t}(\varphi_{t}(x))^{T}\nu_{\partial\Omega}(x)|}
    \end{equation*}
    where $\nu_{\partial\Omega}(x)$ denotes the outward unit normal to $\partial\Omega$ at $x\in\partial\Omega$ and similarly for
    $\nu_{\partial[\varphi_{t}(\Omega)]}$.
    We verify that $\overline{u}_{t}$ preserves the tangentiality condition along $\partial\Omega$.
    Observe that for $x\in\Omega$ and $t\in[0,\delta]$ we have,
    since $\varphi_{t}(\Omega)=\Omega$ for all $0\le{}t\le\frac{\delta}{2}$,
    \begin{align}
    	\overline{u}_{t}(y)\cdot\nu_{\partial\Omega}(y)
        &=\overline{u}_{t}(y)\cdot\nu_{\partial[\varphi_{t}(\Omega)]}(y) \nonumber \\
    	&=\biggl[\frac{\nabla_{x}\varphi_{t}(\psi_{t}(y))\overline{u}(\psi_{t}(y))}
    	{|\nabla_{x}\varphi_{t}(\psi_{t}(y))\overline{u}(\psi_{t}(y))|}\biggr]\cdot
    	\biggl[\frac{\nabla_{x}\psi_{t}(y)^{T}\nu_{\partial\Omega}(\psi_{t}(y))}{|\nabla_{x}\psi_{t}(y)^{T}\nu_{\partial\Omega}(\psi_{t}(y))|}\biggr] \nonumber \\
    	&=\frac{\overline{u}(\psi_{t}(y))^{T}\nabla_{x}\varphi_{t}(\psi_{t}(y))^{T}\nabla_{x}\psi_{t}(y)^{T}\nu_{\partial\Omega}(\psi_{t}(y))}
    	{|\nabla_{x}\varphi_{t}(\psi_{t}(y))\overline{u}(\psi_{t}(y))||\nabla_{x}\psi_{t}(y)\nu_{\partial\Omega}(\psi_{t}(y))|} \label{lem:tang-var-admissible:eq} \\
    	&=\frac{\overline{u}(\psi_{t}(y))^{T}\nu_{\partial\Omega}(\psi_{t}(y))}
    	{|\nabla_{x}\varphi_{t}(\psi_{t}(y))\overline{u}(\psi_{t}(y))||\nabla_{x}\psi_{t}(y)\nu_{\partial\Omega}(\psi_{t}(y))|} \nonumber \\
    	&=0 \, , \nonumber
    \end{align}
    where we have applied the Inverse Function Theorem as well as the assumption that $u$ satisfies \eqref{eq:tanbc}.
    We conclude that $\overline{u}_{t}$ satisfies \eqref{eq:tanbc} for all
    $t\in[0,\delta]$.
    Next, observe that since, for each $t\in[0,\delta]$, $\varphi_{t}\in{}C^{3}$ then by the Inverse Function Theorem we have that
    $\psi_{t}\in{}C^{3}$.
    From this it follows that $\overline{u}_{t}\in{}W^{1,2}(\Omega;\mathbb{S}^{2})$ for each $t\in[0,\delta]$ and hence
    $\overline{u}_{t}\in{}W_{T}^{1,2}(\Omega;\mathbb{S}^{2})$ for each $t\in[0,\delta]$.

    Finally, notice that since $\psi_{t}(y)=y$ for all $y$ in a
    neighbourhood of $\partial{}B_{2}(0)$ and $t\in[0,\delta]$ then $\nabla_{y}\psi_{t}(y)=I_{3}$ for all
    $y\in\partial{}B_{2}(0)$ and $t\in[0,\delta]$ and so $\overline{u}_{t}$ satisfies
    \begin{equation*}
    	\overline{u}_{t}(y)=\frac{\overline{u}(y)}{|\overline{u}(y)|}=\overline{u}(y)
    \end{equation*}
    for all $y\in{}\partial{}B_{2}(0)$ and $t\in[0,\delta]$.
    From this we conclude that $\overline{u}_{t}\in\mathcal{W}_{u}$ for all $t\in[0,\delta]$.
\end{proof}

Next, we use the admissible family of functions
$\{\overline{u}_{t}\}_{0<t<\delta}$ in order to calculate an inner
variation of the extended energy $\overline{E}$.
\begin{proposition}[Tangential Inner Variations]\label{prop:inner_variations_tang}
    Suppose $u\colon\Omega\to\mathbb{S}^{2}$ is a minimizer of
    \eqref{def:dir} and $\overline{u}$ is defined as in \eqref{eq:extension}.
    Suppose also that $T$ is given by \eqref{def:infinnervar} where
    $\varphi_{t}$ is a family defined as in Definition \ref{def:innervartan}.
    Then there are functions $\mathcal{F}$ and $\mathcal{G}$ such that
\begin{align*}
	0
	=&\int_{\Omega}\!{}\biggl\{-\nabla_{x}u(x):\nabla_{x}T(x)\nabla_{x}\overline{u}(x)+\frac{1}{2}|\nabla{}\overline{u}|^{2}\dif(T(x))\biggr\}\\
	&-\int_{B_{2}(0)\setminus\Omega}\!{}\frac{1}{|x|^{2}}\nabla_{x}\overline{u}(x):\nabla_{x}T(x)\nabla_{x}\overline{u}(x))
	+\frac{1}{2}\int_{B_{2}(0)\setminus\Omega}
	\frac{1}{|x|^{2}}|\nabla_{x}\overline{u}(x)|^{2}\dif(T(x))\\
	&+\int_{B_{2}(0)\setminus\Omega}\!{}\mathcal{F}(x,\overline{u}(x),\nabla_{x}\overline{u}(x))\cdot{}T(x)
	+\int_{B_{2}(0)\setminus\Omega}\!{}\mathcal{G}(x,\overline{u}(x),\nabla_{x}\overline{u}(x)):\nabla_{x}T(x)
\end{align*}
and
\begin{equation*}
	|\mathcal{F}(x,z,p)|\le{}C(1+|p|^{2}),\hspace{20pt}|\mathcal{G}(x,z,p)|\le{}C(1+|p|)
    \, .
\end{equation*}
\end{proposition}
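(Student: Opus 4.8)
The plan is to use that, by Lemma~\ref{lem:tang-var-admissible}, each $\overline{u}_t$ is a competitor for \emph{both} minimization problems — $\overline{u}_t|_{\Omega}\in W_T^{1,2}(\Omega;\mathbb{S}^2)$ and $\overline{u}_t|_{B_2(0)\setminus\Omega}\in\mathcal{W}_u$ — so that minimality of $u$ for $E$ and of $\widetilde{u}$ for $\widetilde{E}$ (Proposition~\ref{prop:refl_E}) makes $t\mapsto\overline{E}(\overline{u}_t)=E(\overline{u}_t)+\widetilde{E}(\overline{u}_t)$ have a minimum at $t=0$. Since every term in the derivative will be linear in $T$, running the argument for $\{\varphi_t\}$ and for $\{\varphi_t^{-1}\}$ (which generates $-T$ and is again a family as in Definition~\ref{def:innervartan}) promotes the one-sided inequality to $\tfrac{d}{dt}\big|_{t=0}\overline{E}(\overline{u}_t)=0$, and unwinding this equality is what we must do. To compute the derivative I would write $\overline{u}_t=N_t\circ\psi_t$ with $N_t(x):=\nabla_x\varphi_t(x)\overline{u}(x)/|\nabla_x\varphi_t(x)\overline{u}(x)|$ and change variables $y=\varphi_t(x)$ in each of $\Omega$ and $B_2(0)\setminus\Omega$ (which $\varphi_t$ preserves), eliminating $\psi_t$ and leaving the $t$-dependence only through $N_t$, $\nabla_x\varphi_t$, $\det\nabla_x\varphi_t$, and — in $\widetilde{E}$, which carries the explicit weights $|y|^{-k}$ — through $\varphi_t(x)$ and $|\varphi_t(x)|^{-k}$. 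Differentiation under the integral at $t=0$ is then legitimate by the standard scheme (cf.\ \cite{Ev2,GiMe}): $\varphi$ is $C^3$ with compact support, so every integrand is smooth in $t$ with an $L^1$-dominating function built from $|\nabla\overline{u}|^2$ and the uniform bounds on $\nabla_x\varphi_t$, $(\nabla_x\varphi_t)^{-1}$, $\det\nabla_x\varphi_t$, independent of $t\in[0,\delta]$.

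Using $N_0=\overline{u}$, $\partial_t|_0 N_t=\widehat{Z}:=(I_3-\overline{u}\,\overline{u}^T)(\nabla_x T)\overline{u}$, $\partial_t|_0(\nabla_x\varphi_t)^{-1}=-\nabla_x T$, $\partial_t|_0\det\nabla_x\varphi_t=\operatorname{div}(T)$, $\partial_t|_0\varphi_t(x)=T(x)$ and $\partial_t|_0|\varphi_t(x)|^{-k}=-k|x|^{-k-2}(x\cdot T(x))$, I would split $\tfrac{d}{dt}|_0\overline{E}(\overline{u}_t)$ into three groups. First, the terms in which $\partial_t$ lands on the Jacobian factors $(\nabla_x\varphi_t)^{-1}$ or $\det\nabla_x\varphi_t$ of the leading Dirichlet integrands; these reproduce exactly the displayed clean inner-variation terms $\int_\Omega\{-\nabla_x u:\nabla_x T\nabla_x\overline{u}+\tfrac12|\nabla\overline{u}|^2\operatorname{div}(T)\}$ and their $\tfrac1{|x|^2}$-weighted exterior analogue. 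Second, the remaining terms in which $\partial_t$ lands on an explicit $x$- or $\varphi_t$-dependence of $\widetilde{E}$: the weights $|\varphi_t(x)|^{-k}$ (the $k=2$ weight of the leading exterior term, paired with $\tfrac12|\nabla\overline{u}|^2 T$, contributes the single $|p|^2$ term of $\mathcal{F}$; the $k\ge 4$ weights of the lower-order terms), the point factors $\varphi_t(x)$, and the Jacobian factors occurring in the lower-order terms — all on $B_2(0)\setminus\Omega$, where $1\le|x|\le 2$ keeps every weight bounded and where, apart from that one factor $|\nabla\overline{u}|^2$, the integrands are at most linear in $\nabla\overline{u}$; hence this group is already $\int_{B_2(0)\setminus\Omega}\mathcal{F}\cdot T+\int_{B_2(0)\setminus\Omega}\mathcal{G}:\nabla_x T$ with $|\mathcal{F}|\le C(1+|p|^2)$, $|\mathcal{G}|\le C(1+|p|)$. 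Third, the terms in which $\partial_t$ lands on $N_t$, which assemble precisely into the first variation $\langle D\overline{E}(\overline{u}),\widehat{Z}\rangle$ of the combined energy at $\overline{u}$ in the single direction $\widehat{Z}$.

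This third group is handled by Lemma~\ref{lem:global_PDE}. The field $\widehat{Z}$ has compact support in $B_2(0)$ (since $T$ does), is continuous across $\partial\Omega$ (since $\overline{u}$ is, using $x\cdot\overline{u}=0$ there), hence lies in $W_0^{1,2}(B_2(0);\mathbb{R}^3)\cap L^\infty$, and it satisfies $\overline{u}\cdot\widehat{Z}\equiv 0$. The Dirichlet part $\int_\Omega\nabla\overline{u}:\nabla\widehat{Z}+\int_{B_2(0)\setminus\Omega}\tfrac1{|x|^2}\nabla\overline{u}:\nabla\widehat{Z}$ of $\langle D\overline{E}(\overline{u}),\widehat{Z}\rangle$ equals, by Lemma~\ref{lem:global_PDE} at $\varphi=\widehat{Z}$, the right-hand side there; in that right-hand side the interior term $\int_\Omega|\nabla\overline{u}|^2\overline{u}\cdot\widehat{Z}$ and every exterior term terminating in $\overline{u}^T$ vanish because $\overline{u}\cdot\widehat{Z}=0$, and what survives involves at most one derivative of $\overline{u}$ contracted against $\widehat{Z}$ (itself linear in $\nabla_x T$), i.e.\ is $\int_{B_2(0)\setminus\Omega}\mathcal{G}:\nabla_x T$ with $|\mathcal{G}|\le C(1+|p|)$. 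The remaining, lower-order, pieces of $\langle D\widetilde{E}(\overline{u}),\widehat{Z}\rangle$ (from the $(\overline{u}\cdot x)^2$, $\overline{u}^T\nabla\overline{u}^T x$, and $(\overline{u}\cdot x)\operatorname{div}(\overline{u})$ parts of $\widetilde{E}$) are weighted by bounded $|x|^{-k}$ and at most linear in $\nabla\overline{u}$; for the two of them carrying $\nabla\widehat{Z}$ a single integration by parts over $B_2(0)\setminus\Omega$ shifts the derivative onto $\overline{u}$ and the weights, with no boundary contribution from $\partial B_2(0)$ (compact support of $\widehat{Z}$) and none from $\partial\Omega$ (there $|x|=1$, $\nu=x$, and both $\overline{u}\cdot x=0$ and $\overline{u}\cdot\widehat{Z}=0$), so these are again $\int_{B_2(0)\setminus\Omega}\mathcal{G}:\nabla_x T$, $|\mathcal{G}|\le C(1+|p|)$. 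Collecting the three groups and absorbing all error contributions into $\mathcal{F},\mathcal{G}$ gives the stated identity.

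\textbf{Main obstacle.} The delicate points are, first, the rigorous interchange of $\tfrac{d}{dt}$ with the integrals at $t=0$ under only $W^{1,2}$ regularity of $\overline{u}$ — this is precisely where the $C^3$-regularity and compact support in Definition~\ref{def:innervartan} are needed — and, second, the bookkeeping in the second and third groups confirming that nothing exceeds the asserted growth: that the only source of a $|p|^2$ term in $\mathcal{F}$ is the pairing of $|\nabla\overline{u}|^2$ with $\partial_t|_0|\varphi_t|^{-2}$, and that the $\nabla^2 T$-type terms produced when $\partial_t$ falls on $N_t$ are annihilated by Lemma~\ref{lem:global_PDE} together with the single integration by parts, whose $\partial\Omega$ boundary terms vanish only because of the tangential anchoring \eqref{eq:tanbc} and the unit-ball geometry.
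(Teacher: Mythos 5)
Your plan is correct and the bookkeeping holds up, but you take a genuinely different route from the paper. The paper does not invoke Lemma~\ref{lem:global_PDE} at all in the proof of this proposition; instead it works with the two weak formulations separately. Concretely, the paper tests \eqref{eq:weakform} against the tangential field $(\overline{u}\cdot\nabla_xT\overline{u})\overline{u}$ on $\Omega$, tests Corollary~\ref{cor:outerpde} against $\mathrm{tr}(\overline{u}^T\nabla_xT\overline{u})\overline{u}$ on $B_2(0)\setminus\Omega$, and then handles the remaining cross term $\int_\Omega\nabla\overline{u}:\nabla(\nabla_xT\overline{u})+\int_{B_2\setminus\Omega}|x|^{-2}\nabla\overline{u}:\nabla(\nabla_xT\overline{u})$ by forming the even symmetrization $[\nabla_xT\overline{u}]_e(x)=\tfrac12(\nabla_xT\overline{u}+A(x)\nabla_xT(\iota(x))\overline{u}(\iota(x)))$ — which \emph{does} satisfy \eqref{eq:tanbc} even though $\nabla_xT\overline{u}$ alone does not — applies \eqref{eq:weakform} once more, and then changes variables back with $\iota$ while integrating by parts repeatedly. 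You avoid this symmetrization machinery entirely by observing that the field $\widehat{Z}=(I_3-\overline{u}\,\overline{u}^T)(\nabla_xT)\overline{u}$, although not tangential along $\partial\Omega$, \emph{is} continuous across $\partial\Omega$ (since $\overline{u}$ is, by the tangency condition and the form of the reflection), compactly supported, $L^\infty$, and $W^{1,2}$, hence admissible in the glued weak form of Lemma~\ref{lem:global_PDE}, where the orthogonality $\overline{u}\cdot\widehat{Z}=0$ kills the quadratic $|\nabla\overline{u}|^2\overline{u}^T$ term and every other $\overline{u}^T$-terminated term in one stroke. This is a real structural simplification: you replace the paper's double application of \eqref{eq:weakform} plus Corollary~\ref{cor:outerpde} plus the $\iota$-transport and its attendant error terms with a single application of a lemma that the paper has already established two results earlier. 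Both routes land on the same $\mathcal{F},\mathcal{G}$ bounds, and your careful tracking of where the sole $|p|^2$ contribution comes from (the $t$-derivative of $|\varphi_t|^{-2}$ against $\tfrac12|\nabla\overline{u}|^2$) and why the $\partial\Omega$ boundary terms in the remaining integrations by parts vanish ($\overline{u}\cdot\nu=0$ and $\overline{u}\cdot\widehat{Z}=0$) is the content that needed to be there.

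One small point to make explicit if you were to write this out in full: when you integrate by parts the two lower-order pieces carrying $\nabla\widehat{Z}$, you do pick up $\nabla\overline{u}$ from the coefficients $\overline{u}^i|x|^{-k}$ and $(\overline{u}\cdot x)|x|^{-k}$; this is fine for the $|\mathcal{G}|\le C(1+|p|)$ bound, but you should state that you are using $\overline{u}\in W^{1,2}(B_2(0))\cap L^\infty$, not merely $\overline{u}\in W^{1,2}(\Omega)$, so that these $\nabla\overline{u}$-terms on $B_2(0)\setminus\Omega$ are $L^1$ against $\nabla T\in L^\infty$. This is implicit in the reflection construction, and the paper uses it freely, so it is not a gap, only a detail worth flagging.
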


\begin{proof}
    In order to prove the desired formula we compute
    \begin{equation*}
        \frac{\mathrm{d}}{\mathrm{d}t}\bigg|_{t=0}
        \overline{E}(\overline{u})
        =\frac{\mathrm{d}}{\mathrm{d}t}\bigg|_{t=0}E(u)+
        \frac{\mathrm{d}}{\mathrm{d}t}\bigg|_{t=0}
        \widetilde{E}(\widetilde{u}).
    \end{equation*}
    To compute this we consider separately the energy over $\Omega$ and over
    $B_{2}(0)\setminus\Omega$.
    We begin with the interior portion of the energy.\\

    By minimality of $u$ in $W_{T}^{1,2}(\Omega;\mathbb{S}^{2})$ we have
    \begin{equation*}
    	0=\frac{\mathrm{d}}{\mathrm{d}t}\bigg\vert_{t=0}E(\overline{u}_{t})
    	  =\frac{\mathrm{d}}{\mathrm{d}t}\bigg\vert_{t=0}\int_{\Omega}\!{}\frac{1}{2}|\nabla_{y}\overline{u}_{t}(y)|^{2}\mathrm{d}y.
    \end{equation*}
    We notice that by an application of the Change of Variables Theorem,
    the assumption that $\varphi_{t}(\Omega)=\Omega$, the Inverse Function
    Theorem, as well as the Chain rule we have
    \begin{align*}
    	\frac{1}{2}\int_{\Omega}\!{}|\nabla_{y}\overline{u}_{t}|^{2}
    	&=\frac{1}{2}\int_{\Omega}\!{}|\nabla_{x}\bigl(\overline{u}_{t}(\varphi_{t}(x))\bigr)\nabla_{y}\psi_{t}(\varphi_{t}(x))|^{2}|\det(\nabla_{x}\varphi_{t}(x))|\\
    	&=\frac{1}{2}\int_{\Omega}\!{}\biggl|\nabla_{x}\biggl(
    	\frac{\nabla_{x}\varphi_{t}(x)u(x)}{|\nabla_{x}\varphi_{t}(x)u(x)|}\biggr)\nabla_{y}\psi_{t}(\varphi_{t}(x))\biggr|^{2}
    	\det(\nabla_{x}\varphi_{t}(x)).
    \end{align*}
    Notice that differentiating $\det(\nabla_{x}\varphi_{t}(x))$ with respect to $t$ and evaluating at $t=0$ gives
    \begin{equation*}
    	\frac{\mathrm{d}}{\mathrm{d}t}\bigg\vert_{t=0}\det(\nabla_{x}\varphi_{t}(x))
    	=\text{tr}\biggl(\nabla_{x}\frac{\mathrm{d}}{\mathrm{d}t}\bigg\vert_{t=0}\varphi_{t}(x)\biggr)
    	=\text{tr}(\nabla_{x}T(x))
    	=\text{div}(T(x)).
    \end{equation*}
    Differentiating the other integrand with respect to $t$ and evaluating at $t=0$ gives
    \begin{align*}
    	&\frac{\mathrm{d}}{\mathrm{d}t}\bigg\vert_{t=0}\frac{1}{2}\biggl|\nabla_{x}\biggl(
    	\frac{\nabla_{x}\varphi_{t}(x)\overline{u}(x)}{|\nabla_{x}\varphi_{t}(x)\overline{u}(x)|}\biggr)\nabla_{y}\psi_{t}(\varphi_{t}(x))\biggr|^{2}\\
    	=&\text{tr}\biggl(\nabla_{x}\overline{u}(x)^{T}\nabla_{x}\overline{u}(x)\nabla_{x}\frac{\mathrm{d}}{\mathrm{d}t}\bigg\vert_{t=0}\psi_{t}(x)\biggr)
    	+\text{tr}\biggl(\nabla_{x}\overline{u}(x)^{T}\nabla_{x}\biggl(\nabla_{x}\frac{\mathrm{d}}{\mathrm{d}t}\bigg\vert_{t=0}\varphi_{t}(x)\overline{u}(x)\biggr)\biggr)\\
    	&-\text{tr}\biggl(\nabla_{x}\overline{u}(x)^{T}
    	\nabla_{x}\biggl(\overline{u}(x)\text{tr}\biggl(u(x)^{T}\nabla_{x}\frac{\mathrm{d}}{\mathrm{d}t}\bigg\vert_{t=0}\varphi_{t}(x)\overline{u}(x)\biggr)\biggr)\biggr)
    \end{align*}
    where we have used that $\partial_{y_{i}y_{j}}^{2}\psi_{0}(y)=0$ for all $i,j=1,2,3$ and that $\nabla_{x}\varphi_{0}(x)=I_{3}$.
    Using the definition of $T$, that $\varphi_{t}$ and $\psi_{t}$ are inverses of each other, as well as that $\nabla_{y}\psi_{0}(y)=I_{3}$ gives
    \begin{equation*}
    	\frac{\mathrm{d}}{\mathrm{d}t}\bigg\vert_{t=0}\psi_{t}(y)+T(y)=0.
    \end{equation*}
    From this and our previous calculations we obtain
    \begin{align*}
    	\frac{\mathrm{d}}{\mathrm{d}t}\bigg\vert_{t=0}\frac{1}{2}\biggl|\nabla_{x}\biggl(
    	\frac{\nabla_{x}\varphi_{t}(x)\overline{u}(x)}{|\nabla_{x}\varphi_{t}(x)\overline{u}(x)|}\biggr)\nabla_{y}\psi_{t}(\varphi_{t}(x))\biggr|^{2}
    	=&-\text{tr}(\nabla_{x}\overline{u}(x)^{T}\nabla_{x}\overline{u}(x)\nabla_{x}T(x))\\
        &+\text{tr}(\nabla_{x}\overline{u}(x)^{T}\nabla_{x}(\nabla_{x}T(x)u(x)))\\
    	&-\text{tr}(\nabla_{x}\overline{u}(x)^{T}\nabla_{x}(\overline{u}(x)\text{tr}(\overline{u}(x)^{T}\nabla_{x}T(x)\overline{u}(x)))).
    \end{align*}
    Combining all our calculations now gives that
    \begin{align*}
    	\frac{\mathrm{d}}{\mathrm{d}t}\bigg\vert_{t=0}\frac{1}{2}\int_{\Omega}\!{}|\nabla_{y}\overline{u}_{t}|^{2}
    	=&\int_{\Omega}\!{}\biggl\{-\nabla_{x}\overline{u}(x):\nabla_{x}\overline{u}(x)\nabla_{x}T(x)+\nabla_{x}\overline{u}(x):\nabla_{x}(\nabla_{x}T(x)\overline{u}(x))\\
    	&-\nabla_{x}\overline{u}(x):\nabla_{x}[(\overline{u}(x)\cdot\nabla_{x}T(x)\overline{u}(x))\overline{u}(x)]+\frac{1}{2}|\nabla_{x}\overline{u}|^{2}\text{div}(T(x))\biggr\}.
    \end{align*}
    We observe that the restrictions imposed on $\varphi$ and $\overline{u}$ permit the map
    $x\mapsto(\overline{u}(x)\cdot\nabla_{x}T(x)\overline{u}(x))\overline{u}(x)$ to be a suitable test
    function in \eqref{eq:weakform}.
    As a result, we obtain
    \begin{equation*}
    	\int_{\Omega}\!{}\nabla_{x}\overline{u}(x):\nabla_{x}\bigl[(\overline{u}(x)\cdot\nabla_{x}T(x)\overline{u}(x))\overline{u}(x)\bigr]
    	=\int_{\Omega}\!{}|\nabla_{x}\overline{u}(x)|^{2}(\overline{u}(x)\cdot\nabla_{x}T(x)\overline{u}(x))
    \end{equation*}
    where we have used that $|\overline{u}|=1$ for $\mathcal{L}^{3}$-almost every point in $\Omega$.
    Using this calculation and minimality of $u$ for \eqref{def:dir}
    we end up with
    \begin{align*}
    	0
    	=&\int_{\Omega}\!{}\biggl\{-\nabla_{x}\overline{u}(x):\nabla_{x}\overline{u}(x)\nabla_{x}T(x)+\nabla_{x}\overline{u}(x):\nabla_{x}(\nabla_{x}T(x)\overline{u}(x))\\
    	&-|\nabla_{x}\overline{u}(x)|^{2}(\overline{u}(x)\cdot\nabla_{x}T(x)\overline{u}(x))+\frac{1}{2}|\nabla_{x}\overline{u}|^{2}\text{div}(T(x))\biggr\}.
    \end{align*}

    Next, we consider the exterior domain case.
    Notice that since $\overline{u}_{t}\in\mathcal{W}_{u}$ for all $t\in[0,\delta]$
    and since $\overline{u}_{0}\vert_{B_{2}(0)\setminus\Omega}=\overline{u}\vert_{B_{2}(0)\setminus\Omega}=\widetilde{u}$
    is a minimizer of $\widetilde{E}$ by
    Proposition \ref{prop:refl_E}, then
    \begin{equation*}
    	0=\frac{\mathrm{d}}{\mathrm{d}t}\bigg\vert_{t=0}\widetilde{E}(\overline{u}_{t})
    \end{equation*}
By the Change of Variables Theorem applied with $\varphi_{t}$ we can rewrite
    $\widetilde{E}(\overline{u}_{t})$ as
    \begin{align*}
    	\widetilde{E}(\overline{u}_{t})=&\frac{1}{2}\int_{B_{2}(0)\setminus\Omega}\!{}\frac{1}{|\varphi_{t}(x)|^{2}}
    	\biggl[|\nabla_{y}\overline{u}_{t}(\varphi_{t}(x))|^{2}+\frac{4}{|\varphi_{t}(x)|^{4}}(\overline{u}_{t}(\varphi_{t}(x))\cdot{}\varphi_{t}(x))^{2}\\
    	&+\frac{4}{|\varphi_{t}(x)|^{2}}\overline{u}_{t}(\varphi_{t}(x))^{T}\nabla_{y}\overline{u}_{t}(\varphi_{t}(x))^{T}\varphi_{t}(x)
    	-\frac{4}{|\varphi_{t}(x)|^{2}}(\overline{u}_{t}(\varphi_{t}(x))\cdot{}\varphi_{t}(x))\text{div}(\overline{u}_{t})(\varphi_{t}(x))\biggr]
    	\det(\nabla_{x}\varphi_{t}(x)).
    \end{align*}
    Combining the following identities
    \begin{equation*}
    	\overline{u}_{t}(\varphi_{t}(x))=\frac{\nabla_{x}\varphi_{t}(x)\overline{u}(x)}{|\nabla_{x}\varphi_{t}(x)\overline{u}(x)|},
        \hspace{20pt}
        \nabla_{y}\overline{u}_{t}(\varphi_{t}(x))=\nabla_{x}\biggl(\frac{\nabla_{x}\varphi_{t}(x)\overline{u}(x)}{|\nabla_{x}\varphi_{t}(x)\overline{u}(x)|}\biggr)\nabla_{y}\psi_{t}(\varphi_{t}(x)).
    \end{equation*}
    and using the Inverse Function Theorem we can further rewrite
    $\widetilde{E}(\overline{u}_{t})$ as
    \begin{align*}
    	\frac{1}{2}\int_{B_{2}(0)\setminus\Omega}\!{}\frac{1}{|\varphi_{t}(x)|^{2}}\biggl[
    	&\biggl|\nabla_{x}\biggl(\frac{\nabla_{x}\varphi_{t}(x)\overline{u}(x)}{|\nabla_{x}\varphi_{t}(x)\overline{u}(x)|}\biggr)
    	\nabla_{y}\psi_{t}(\varphi_{t}(x))\biggr|^{2}\\
    	&+\frac{4}{|\varphi_{t}(x)|^{4}|\nabla_{x}\varphi_{t}(x)\overline{u}(x)|^{2}}\Bigl(\overline{u}(x)^{T}\nabla_{x}\varphi_{t}(x)^{T}\varphi_{t}(x)\Bigr)^{2}\\
    	&+\frac{4}{|\varphi_{t}(x)|^{2}|\nabla_{x}\varphi_{t}(x)\overline{u}(x)|}\overline{u}(x)^{T}\biggl[\nabla_{x}\biggl(\frac{\nabla_{x}\varphi_{t}(x)\overline{u}(x)}{|\nabla_{x}\varphi_{t}(x)\overline{u}(x)|}\biggr)\biggr]^{T}\varphi_{t}(x)\\
    	&-\frac{4}{|\varphi_{t}(x)|^{2}|\nabla_{x}\varphi_{t}(x)\overline{u}(x)|}\overline{u}(x)^{T}\nabla_{x}\varphi_{t}(x)^{T}\varphi_{t}(x)
    	\text{tr}\biggl(
    	\nabla_{x}\biggl(\frac{\nabla_{x}\varphi_{t}(x)\overline{u}(x)}{|\nabla_{x}\varphi_{t}(x)\overline{u}(x)|}\biggr)\nabla_{y}\psi_{t}(\varphi_{t}(x))\biggr)\biggr].
    \end{align*}
    Differentiating in $t$, evaluating at $t=0$, and evaluating using minimality gives
    \begin{align*}
    	0=&-\int_{B_{2}(0)\setminus\Omega}\!{}\frac{x\cdot{}T(x)}{|x|^{4}}
    	\biggl[|\nabla_{x}\overline{u}(x)|^{2}+\frac{4}{|x|^{4}}(\overline{u}(x)\cdot{}x)^{2}
    	+\frac{4}{|x|^{2}}\overline{u}(x)^{T}\nabla_{x}\overline{u}(x)^{T}x
    	-\frac{4}{|x|^{2}}(\overline{u}(x)\cdot{}x)\text{div}(\overline{u})(x)\biggr]\\
    	&+\int_{B_{2}(0)\setminus\Omega}\!{}\frac{1}{|x|^{2}}\biggl[
    	-\nabla_{x}\overline{u}(x):\nabla_{x}\overline{u}(x)\nabla_{x}T(x)+\nabla_{x}\overline{u}(x):\nabla_{x}(\nabla_{x}T(x)\overline{u}(x))\\
    	&-\nabla_{x}\overline{u}(x):\nabla_{x}(\overline{u}(x)\text{tr}(\overline{u}(x)^{T}\nabla_{x}T(x)\overline{u}(x)))\\
    	&-\frac{16x\cdot{}T(x)}{|x|^{6}}(\overline{u}(x)\cdot{}x)^{2}
    	-\frac{8\overline{u}(x)^{T}\nabla_{x}T(x)^{T}\overline{u}(x)}{|x|^{4}}(\overline{u}(x)\cdot{}x)^{2}
    	+\frac{8}{|x|^{4}}(\overline{u}(x)\cdot{}x)\Bigl[\overline{u}(x)^{T}\nabla_{x}T(x)^{T}x+\overline{u}(x)\cdot{}T(x)\Bigr]\\
    	&-\frac{8x\cdot{}T(x)}{|x|^{4}}\overline{u}(x)^{T}\nabla_{x}\overline{u}(x)^{T}x
    	-\frac{4\overline{u}(x)^{T}\nabla_{x}T(x)\overline{u}(x)}{|x|^{2}}
    	+\frac{4}{|x|^{2}}\overline{u}(x)^{T}\Bigl[\nabla_{x}(\nabla_{x}T(x)\overline{u}(x))\Bigr]^{T}x\\
    	&-\frac{4}{|x|^{2}}\overline{u}(x)^{T}\biggl[\nabla_{x}\biggl(\bigl(\overline{u}(x)^{T}\nabla_{x}T(x)\overline{u}(x)\bigr)\overline{u}(x)\biggr)\biggr]^{T}x
    	+\frac{4}{|x|^{2}}\overline{u}(x)^{T}\nabla_{x}\overline{u}(x)^{T}T(x)\\
    	&+\frac{8x\cdot{}T(x)}{|x|^{4}}(\overline{u}(x)\cdot{}x)\text{div}(\overline{u})(x)
    	+\frac{4\overline{u}(x)^{T}\nabla_{x}T(x)\overline{u}(x)}{|x|^{2}}(\overline{u}(x)\cdot{}x)\text{div}(\overline{u})(x)
    	-\frac{4}{|x|^{2}}\overline{u}(x)^{T}\nabla_{x}T(x)^{T}x\text{div}(\overline{u})(x)\\
    	&-\frac{4}{|x|^{2}}(\overline{u}(x)\cdot{}T(x))\text{div}(\overline{u})(x)
    	-\frac{4}{|x|^{2}}(\overline{u}(x)\cdot{}x)\text{div}\bigl(\nabla_{x}T(x)\overline{u}(x)\bigr)\\
    	&+\frac{4}{|x|^{2}}(\overline{u}(x)\cdot{}x)\text{tr}\Bigl(\nabla_{x}\Bigl(\bigl(\overline{u}(x)^{T}\nabla_{x}T(x)\overline{u}(x)\bigr)\overline{u}(x)\Bigr)\Bigr)
    	-\frac{4}{|x|^{2}}(\overline{u}(x)\cdot{}x)\text{tr}\Bigl(\nabla_{x}\overline{u}(x)\Bigl[-\nabla_{x}T(x)\Bigr]\Bigr)
    	\biggr]\\
    	&+\frac{1}{2}\int_{B_{2}(0)\setminus\Omega}
    	\frac{1}{|x|^{2}}\biggl[|\nabla_{x}\overline{u}(x)|^{2}+\frac{4}{|x|^{2}}(\overline{u}(x)\cdot{}x)^{2}
    	+\frac{4}{|x|^{2}}\overline{u}(x)^{T}\nabla_{x}\overline{u}(x)^{T}x-\frac{4}{|x|^{2}}(\overline{u}(x)\cdot{}x)\text{div}(\overline{u})(x)\biggr]\text{div}(T(x)).
    \end{align*}
    Notice that since $\varphi(\cdot,t)-\text{Id}_{\mathbb{R}^{3}}(\cdot)$ has compact support in $B_{2}(0)$ then $T(x)=0$ for all
    $x\in\partial{}B_{2}(0)$.
    We observe that since $\varphi\in{}C^{3}$ and $\overline{u}\in{}W_{T}^{1,2}(\Omega;\mathbb{S}^{2})$ then
    \begin{equation*}
    	\text{tr}(\overline{u}(x)^{T}\nabla_{x}T(x)\overline{u}(x))\overline{u}(x)\in{}W_{T}^{1,2}(B_{2}(0)\setminus\Omega;\mathbb{R}^{3})\cap{}L^{\infty}(B_{2}(0)\setminus\Omega;\mathbb{R}^{3})
    \end{equation*}
    and hence
    \begin{equation*}
    	\text{tr}(\overline{u}(x)^{T}\nabla_{x}T(x)\overline{u}(x))\overline{u}(x)\in\mathcal{W}_{u}.
    \end{equation*}
    We conclude by Corollary \ref{cor:outerpde} that
    \begin{align*}
    	&-\int_{B_{2}(0)\setminus\Omega}\!{}\frac{1}{|x|^{2}}\nabla_{x}\overline{u}(x):\nabla_{x}(\overline{u}(x)\text{tr}(\overline{u}(x)^{T}\nabla_{x}T(x)\overline{u}(x)))\\
    	=&-\int_{B_{2}(0)\setminus\Omega}\!{}\frac{1}{|x|^{2}}\biggl[
    		-\frac{4}{|x|^{4}}(\overline{u}(x)\cdot{}x)x^{T}
    		+\frac{4}{|x|^{4}}(\overline{u}(x)\cdot{}x)^{2}\overline{u}(x)^{T}
    		+|\nabla{}\overline{u}(x)|^{2}\overline{u}(x)^{T}\\
    		&-\frac{4}{|x|^{2}}x^{T}\nabla{}\overline{u}(x)
    		+\frac{4}{|x|^{2}}\bigl[\overline{u}(x)^{T}\nabla{}\overline{u}(x)^{T}x\bigr]\overline{u}(x)^{T}
    		+\frac{4}{|x|^{2}}\text{div}(\overline{u})(x)x^{T}\\
    		&-\frac{4}{|x|^{2}}(\overline{u}(x)\cdot{}x)\text{div}(\overline{u})(x)\overline{u}(x)^{T}
    	\biggr]\overline{u}(x)\text{tr}(\overline{u}(x)^{T}\nabla_{x}T(x)\overline{u}(x)).
    \end{align*}
    Of particular note is that the highest order term in the gradient of $\overline{u}$ satisfies
    \begin{equation*}
    	|\nabla{}\overline{u}(x)|^{2}\overline{u}(x)^{T}\overline{u}(x)\text{tr}(\overline{u}(x)^{T}\nabla_{x}T(x)\overline{u}(x))=|\nabla{}\overline{u}(x)|^{2}\text{tr}(\overline{u}(x)^{T}\nabla_{x}T(x)\overline{u}(x))
    \end{equation*}
    which matches the corresponding interior term.
    Next, we notice that by \eqref{eq:gradinversion}, the Change of
    Variables Theorem applied with $\iota$, and \eqref{eq:invol} we obtain
    \begin{align*}
    	\int_{B_{2}(0)\setminus\Omega}\!{}\frac{1}{|x|^{2}}
    	\nabla_{x}\overline{u}(x):\nabla_{x}\bigl(\nabla_{x}T(x)\overline{u}(x)\bigr)
    	=&\int_{\Omega}\!{}\nabla_{x}\overline{u}(x):
    	\nabla_{x}\biggl[A(x)\nabla_{x}T(\iota(x))\overline{u}(\iota(x))\biggr]\\
    	&+\sum_{i=1}^{3}\int_{\Omega}\!{}\partial_{x_{i}}A(x)\overline{u}(x)\mathbf{e}_{i}^{T}:
    	\nabla_{x}\biggl[\nabla_{x}T(\iota(x))\overline{u}(\iota(x))\biggr]\\
    	&-\sum_{i=1}^{3}\int_{\Omega}\!{}\nabla_{x}\overline{u}(x):
    	\partial_{x_{i}}A(x)\nabla_{x}T(\iota(x))\overline{u}(\iota(x))\mathbf{e}_{i}^{T}.
    \end{align*}
    Combining this with the corresponding interior term we find that
    \begin{align*}
    	&\int_{\Omega}\!{}\nabla_{x}\overline{u}(x)\nabla_{x}\bigl(\nabla_{x}T(x)\overline{u}(x)\bigr)+
    	\int_{B_{2}(0)\setminus\Omega}\!{}\frac{1}{|x|^{2}}
    	\nabla_{x}\overline{u}(x):\nabla_{x}\bigl(\nabla_{x}T(x)\overline{u}(x)\bigr)\\
    	=&2\int_{\Omega}\!{}\nabla_{x}\overline{u}(x):\nabla_{x}\bigl[\nabla_{x}T(\cdot)\overline{u}(\cdot)\bigr]_{e}(x)
    	+\sum_{i=1}^{3}\int_{\Omega}\!{}\partial_{x_{i}}A(x)\overline{u}(x)\mathbf{e}_{i}^{T}:
    	\nabla_{x}[\nabla_{x}T(\iota(x))\overline{u}(\iota(x))]\\
    	&-\sum_{i=1}^{3}\int_{\Omega}\!{}\nabla_{x}\overline{u}(x):
    	\partial_{x_{i}}A(x)\nabla_{x}T(\iota(x))\overline{u}(\iota(x))\mathbf{e}_{i}^{T}
    \end{align*}
    where for a function $\Psi$ taking values in $\mathbb{R}^{3}$ we define $\Psi_{e}$ by
    \begin{equation*}
    	\Psi_{e}(x)\coloneqq\frac{1}{2}\Bigl(\Psi(x)+A(x)\Psi\bigl(\iota(x)\bigr)\Bigr).
    \end{equation*}
    We observe that for $x\in\partial\Omega$ we have, since $\nu(x)=x$ on $\partial\Omega$, that
    \begin{equation*}
    	\bigl[\nabla_{x}T(\cdot)\overline{u}(\cdot)\bigr]_{e}(x)\cdot\nu(x)
    	=\frac{1}{2}\bigl[\nu(x)^{T}\nabla_{x}T(x)\overline{u}(x)-\nu(x)^{T}\nabla_{x}T(x)\overline{u}(x)\bigr]=0.
    \end{equation*}
    Since $\bigl[\nabla_{x}T(\cdot)\overline{u}(\cdot)\bigr]_{e}\in{}W^{1,2}(\Omega;\mathbb{R}^{3})\cap{}L^{\infty}(\Omega;\mathbb{R}^{3})$
    then by \eqref{eq:weakform}, \eqref{eq:invol}, the Change of
    Variables Theorem applied with $\iota$, and \eqref{eq:normsq} we have
    \begin{align*}
    	&\int_{\Omega}\!{}\nabla_{x}\overline{u}(x):\nabla_{x}\bigl(\nabla_{x}T(x)\overline{u}(x)\bigr)
    	+
    	\int_{B_{2}(0)\setminus\Omega}\!{}\frac{1}{|x|^{2}}
    	\nabla_{x}\overline{u}(x):\nabla_{x}\bigl(\nabla_{x}T(x)\overline{u}(x)\bigr)\\
    	=&\int_{\Omega}\!{}|\nabla_{x}\overline{u}(x)|^{2}\overline{u}(x)^{T}\nabla_{x}T(x)\overline{u}(x)
    	+\int_{B_{2}(0)\setminus\Omega}\!{}\frac{1}{|x|^{2}}\bigl|\nabla_{x}\overline{u}(x)\bigr|^{2}
    	\overline{u}(x)^{T}\nabla_{x}T(x)\overline{u}(x)\\
    	&+2\sum_{i=1}^{3}\int_{B_{2}(0)\setminus\Omega}\!{}\frac{1}{|x|^{2}}\Bigl[A(x)\nabla_{x}\overline{u}(x):\partial_{x_{i}}A(x)\overline{u}(x)\mathbf{e}_{i}^{T}\Bigr]
    	\overline{u}(x)^{T}\nabla_{x}T(x)\overline{u}(x)\\
    	&+\int_{B_{2}(0)\setminus\Omega}\!{}\frac{1}{|x|^{2}}\biggl|\sum_{i=1}^{3}\partial_{x_{i}}A(x)\overline{u}(x)\mathbf{e}_{i}^{T}\biggr|^{2}
    	\overline{u}(x)^{T}\nabla_{x}T(x)\overline{u}(x)\\
    	&+\sum_{i=1}^{3}\int_{\Omega}\!{}\partial_{x_{i}}A(x)\overline{u}(x)\mathbf{e}_{i}^{T}:
    	\nabla_{x}[\nabla_{x}T(\iota(x))\overline{u}(\iota(x))]
    	-\sum_{i=1}^{3}\int_{\Omega}\!{}\nabla_{x}\overline{u}(x):
    	\partial_{x_{i}}A(x)\nabla_{x}T(\iota(x))\overline{u}(\iota(x))\mathbf{e}_{i}^{T}.
    \end{align*}
    Notice that integrating by parts as well as using \eqref{eq:partialA}
    and that $\overline{u}$ satisfies \eqref{eq:tanbc} we obtain
    \begin{equation*}
    	\sum_{i=1}^{3}\int_{\Omega}\!{}\partial_{x_{i}}A(x)\overline{u}(x)\mathbf{e}_{i}^{T}:
    	\nabla_{x}[\nabla_{x}T(\iota(x))\overline{u}(\iota(x))]
    	=-\sum_{i=1}^{3}\int_{\Omega}\partial_{x_{i}}\Bigl[\partial_{x_{i}}A(x)\overline{u}(x)\Bigr]
    	\cdot\nabla_{x}T(\iota(x))\overline{u}(\iota(x)).
    \end{equation*}
    Similarly, we observe that by
    multiple applications of integration by parts,
    using that $T$ has compact support in $B_{2}(0)$, and using that $\overline{u}$ satisfies \eqref{eq:tanbc} gives the following four identities
    \begin{align*}
    	&\int_{B_{2}(0)\setminus\Omega}\!{}\frac{4}{|x|^{4}}\overline{u}(x)^{T}\Bigl[\nabla_{x}\bigl(\nabla_{x}T(x)\overline{u}(x)\bigr)\Bigr]^{T}x
        =-\sum_{i=1}^{3}\int_{B_{2}(0)\setminus\Omega}\!{}\partial_{x_{i}}\biggl(\frac{4\overline{u}^{i}(x)x}{|x|^{4}}\biggr)\cdot\nabla_{x}T(x)\overline{u}(x)\\
    	&-\int_{B_{2}(0)\setminus\Omega}\!{}
    	\frac{4}{|x|^{4}}\overline{u}(x)^{T}=\sum_{i=1}^{3}\int_{B_{2}(0)\setminus\Omega}\!{}\partial_{x_{i}}\biggl(\frac{4\overline{u}(x)x_{i}}{|x|^{4}}\biggr)\cdot
    	\Bigl[\bigl(\overline{u}(x)^{T}\nabla_{x}T(x)\overline{u}(x)\bigr)\overline{u}(x)\Bigr]\\
        &-\int_{B_{2}(0)\setminus\Omega}\!{}\frac{4}{|x|^{4}}(\overline{u}(x)\cdot{}x)\text{div}\bigl(\nabla_{x}T(x)\overline{u}(x)\bigr)
    	=\int_{B_{2}(0)\setminus\Omega}\!{}\nabla_{x}\biggl[\frac{4}{|x|^{4}}\bigl(\overline{u}(x)\cdot{}x\bigr)\biggr]\cdot\nabla_{x}T(x)\overline{u}(x)\\
        &\int_{B_{2}(0)\setminus\Omega}\!{}\frac{4}{|x|^{4}}(\overline{u}(x)\cdot{}x)\text{tr}\Bigl(\nabla_{x}\Bigl(\bigl(\overline{u}(x)^{T}\nabla_{x}T(x)\overline{u}(x)\bigr)\overline{u}(x)\Bigr)\Bigr)
        \\
        &=-\int_{B_{2}(0)\setminus\Omega}\!{}\nabla_{x}\biggl(\frac{4}{|x|^{4}}(\overline{u}(x)\cdot{}x)\biggr)\cdot
    	\Bigl(\bigl(\overline{u}(x)^{T}\nabla_{x}T(x)\overline{u}(x)\bigr)\overline{u}(x)\Bigr)
    \end{align*}
    Combining this with our previous calculations we see that there are
    functions $\mathcal{F}$ and $\mathcal{G}$ such that
    \begin{align*}
    	0
    	=&\int_{\Omega}\!{}\biggl\{-\nabla_{x}\overline{u}(x):\nabla_{x}\overline{u}(x)\nabla_{x}T(x)+\frac{1}{2}|\nabla{}\overline{u}|^{2}\text{div}(T(x))\biggr\}\\
    	&-\int_{B_{2}(0)\setminus\Omega}\!{}\frac{1}{|x|^{2}}\nabla_{x}\overline{u}(x):\nabla_{x}\overline{u}(x)\nabla_{x}T(x)
    	+\frac{1}{2}\int_{B_{2}(0)\setminus\Omega}
    	\frac{1}{|x|^{2}}|\nabla_{x}\overline{u}(x)|^{2}\text{div}(T(x))\\
    	&+\int_{B_{2}(0)\setminus\Omega}\!{}\mathcal{F}(x,\overline{u}(x),\nabla_{x}\overline{u}(x))\cdot{}T(x)
    	+\int_{B_{2}(0)\setminus\Omega}\!{}\mathcal{G}(x,\overline{u}(x),\nabla_{x}\overline{u}(x)):\nabla_{x}T(x)
    \end{align*}
    and
    \begin{equation*}
    	|\mathcal{F}(x,z,p)|\le{}C(1+|p|^{2}),\hspace{20pt}|\mathcal{G}(x,z,p)|\le{}C(1+|p|).
    \end{equation*}
\end{proof}

\subsection{Monotonicity Formula and Partial Regularity}\label{subsec:monoform}

In this subsection, we demonstrate that an up to the boundary version of the
Monotonicity Formula holds for a minimizer of \eqref{def:dir} in
$W_{T}^{1,2}(\Omega;\mathbb{S}^{2})$.
This extends the interior Monotonicity Formula demonstrated in
\cite{Ev2}.\\

\subsubsection{Formal Calculation}\label{subsec:formalcalc}
    Before we begin with a rigorous derivation of the Monotonicity Formula
    we provide a formal calculation which acts as intuition for the
    rigorous result.
    While not completely rigorous this calculation demonstrates that one
    can expect the $1$-Hausdorff density of the energy density
    measure to satisfy an ``almost monotone" property resembling the
    classical Monotonicity Formula presented in equation $1.4$ of \cite{Ev2}.
    In order to make this rigorous, the main modification is to ensure item
    $1$ of Definition \ref{def:innervartan} is satisfied while preserving
    $C^{3}-$smoothness.\\

    For $\delta,r>0$ sufficiently small we consider the function
    $T_{\delta,r}\colon{}B_{2}(0)\to\mathbb{R}^{3}$ defined by
    \begin{equation*}
    	T_{\delta,r}(x)\coloneqq\eta_{\delta,r}(|x-x_{0}|)(x-x_{0})
    \end{equation*}
    where
    \begin{equation*}
    	\eta_{\delta,r}(s)\coloneqq\min\biggl\{1,\max\biggl\{0,\frac{r+\delta-s}{\delta}\biggr\}\biggr\}.
    \end{equation*}
    Observe that
    \begin{align*}
    	\nabla{}T_{\delta,r}(x)&=\eta_{\delta,r}'(|x-x_{0}|)\frac{(x-x_{0})(x-x_{0})^{T}}{|x-x_{0}|}+\eta_{\delta,r}(|x-x_{0}|)I_{3}\\
    	\text{div}(T_{\delta,r})(x)&=\eta_{\delta,r}'(|x-x_{0}|)|x-x_{0}|+3\eta_{\delta,r}(|x-x_{0}|).
    \end{align*}
    By Proposition \ref{prop:inner_variations_tang}, applying the Coarea
    Formula, using that $\eta_{\delta,r}'=\frac{-1}{\delta}$ on
    $A_{r,r+\delta}(x_{0})$, and letting $\delta\to0^{+}$
    \begin{align*}
    	0
    	&=\int_{\partial{}B_{r}(x_{0})}\!{}\min\Big\{1,\frac{1}{|x|^2}\Big\}\biggl\{r|\partial_{n}\overline{u}(x)|^{2}
    	-\frac{r}{2}|\nabla_{x}u|^{2}\biggr\}
    	+\int_{B_{r}(x_{0})}\!{}\frac{1}{2}\min\Big\{1,\frac{1}{|x|^2}\Big\} |\nabla_{x}\overline{u}|^{2}\\
    	&\qquad
        +\int_{B_{r}(x_{0})\cap(B_{2}(0)\setminus\Omega)}\!{}\mathcal{F}(x,\overline{u}(x),\nabla_{x}\overline{u}(x))\cdot{}(x-x_{0})\\
    	&\qquad
        -\frac{1}{r}\int_{\partial{}B_{r}(x_{0})\cap(B_{2}(0)\setminus\Omega)}\!{}\mathcal{G}(x,\overline{u}(x),\nabla_{x}\overline{u}(x)):
    	(x-x_{0})(x-x_{0})^{T}\\
    	&\qquad
        +\int_{B_{r}(x_{0})\cap(B_{2}(0)\setminus\Omega)}\!{}\mathcal{G}(x,\overline{u}(x),\nabla_{x}\overline{u}(x)):I_{3}.
    \end{align*}
We conclude that 
\begin{align*}
	r^{2}\frac{\mathrm{d}}{\mathrm{d}r}\biggl\{
    \frac{1}{2r}
    \int_{B_{r}(x_{0})}\!{}\min\Big\{1,\frac{1}{|x|^2}\Big\}|\nabla_{x}\overline{u}|^{2}
    \biggr\}
	=&\int_{\partial{}B_{r}(x_{0})}\!{}r\ \min\Big\{1,\frac{1}{|x|^2}\Big\}|\partial_{n}\overline{u}(x)|^{2} \\
	&+\int_{B_{r}(x_{0})\cap(B_{2}(0)\setminus\Omega)}\!{}\mathcal{F}(x,\overline{u}(x),\nabla_{x}\overline{u}(x))\cdot{}(x-x_{0})\\
	&-\frac{1}{r}\int_{\partial{}B_{r}(x_{0})\cap(B_{2}(0)\setminus\Omega)}\!{}\mathcal{G}(x,\overline{u}(x),\nabla_{x}\overline{u}(x)):
	(x-x_{0})(x-x_{0})^{T}\\
	&+\int_{B_{r}(x_{0})\cap(B_{2}(0)\setminus\Omega)}\!{}\mathcal{G}(x,\overline{u}(x),\nabla_{x}\overline{u}(x)):I_{3}.
\end{align*}
Using that $|\mathcal{F}(x,z,p)|\le{}C_{1}(1+|p|^{2})$,
$|\mathcal{G}(x,z,p)|\le{}C_{2}(1+|p|)$,
and the Arithmetic-Geometric Inequality we find that 
\begin{align*}
	r^{2}\frac{\mathrm{d}}{\mathrm{d}r}\biggl\{\frac{1}{2r}\int_{B_{r}(x_{0})}\!{}\min\Big\{1,\frac{1}{|x|^2}\Big\}|\nabla_{x}\overline{u}|^{2}
	\biggr\}
	\ge&\int_{\partial{}B_{r}(x_{0})}\!{}r\ \min\Big\{1,\frac{1}{|x|^2}\Big\}|\partial_{n}\overline{u}(x)|^{2} \\
	&-(C_{1}r+2\sqrt{3}C_{2})\int_{B_{r}(x_{0})\cap(B_{2}(0)\setminus\Omega)}\!{}\biggl\{1+|\nabla_{x}\overline{u}|^{2}\biggr\} \\
    &-C_{2}r\int_{\partial{}B_{r}(x_{0})\cap(B_{2}(0)\setminus\Omega)}\!{}\biggl\{1+|\nabla_{x}\overline{u}|\biggr\}.
\end{align*}
After a further rewrite and substituting constants we obtain
\begin{align*}
	\frac{\mathrm{d}}{\mathrm{d}r}\biggl\{\frac{1}{2r}\int_{B_{r}(x_{0})}\!{}\min\Big\{1,\frac{1}{|x|^2}\Big\}|\nabla_{x}\overline{u}|^{2}
	\biggr\}
	\ge&\overline{C}_{0}\biggl[\frac{1}{r}\int_{\partial{}B_{r}(x_{0})}\!{}\min\Big\{1,\frac{1}{|x|^2}\Big\}|\partial_{n}\overline{u}(x)|^{2}
	\biggr]
	-\overline{C}_{1}\\
	&-\overline{C}_{2}
	\biggl[\frac{1}{2r}\int_{B_{r}(x_{0})}\!{}\min\Big\{1,\frac{1}{|x|^2}\Big\}|\nabla_{x}\overline{u}|^{2}
	\biggr].
\end{align*}
Letting $f_{x_{0}}\colon(0,1\slash4)\to[0,\infty)$ be defined by
\begin{equation*}
	f_{x_{0}}(r)\coloneqq\frac{1}{2r}\int_{B_{r}(x_{0})}\!{}\min\Big\{1,\frac{1}{|x|^2}\Big\}|\nabla_{x}\overline{u}|^{2}
    \, ,
\end{equation*}
we can bound the previous equation from below to obtain
\begin{equation*}
	f_{x_{0}}'(r)\ge-\overline{C}_{2}f_{x_{0}}(r)-\overline{C}_{1}.
\end{equation*}
From this we conclude that
\begin{equation*}
	\frac{\mathrm{d}}{\mathrm{d}r}\biggl[e^{\overline{C}_{2}r}f_{x_{0}}(r)\biggr]\ge-\overline{C}_{1}e^{\overline{C}_{2}r}
	\ge-\overline{C}_{1}e^{\overline{C}_{2}}\eqqcolon-\overline{C}_{3}.
\end{equation*}
Thus, $r\mapsto{}e^{\overline{C}_{2}r}f_{x_{0}}(r)+\overline{C}_{3}r$ is
non-decreasing.

\subsubsection{Rigorous Calculation}
\label{subsubsec:rigorous_calc}

Before we proceed with a rigorous proof, we introduce some notation that
will be used.
Since the arguments for the Monotonicity Formula from \cite{Ev2} hold
for points in $\Omega$ and $B_{2}(0)\setminus\overline{\Omega}$ for
sufficiently small balls then we will focus on points
$x_{0}\in\partial\Omega$.
In addition, for notational convenience, we will assume without loss of
generality that $x_{0}=(0,0,1)$.
We note that the constants we obtain by estimating coordinate descriptions
about $(0,0,1)$ remain true for all $x_{0}\in\partial\Omega$ since we can
obtain similar coordinate representations through application of a rotation.
\\

We consider $0<r_{0}<\frac{1}{4}$, which will be fixed later,
and we let $x_{0}=(0,0,1)$.
We denote $\mathcal{O}_{x_{0},r_{0}}^{2d}\subseteq\mathbb{R}^{3}$ by
\begin{equation*}
    \mathcal{O}_{x_{0},r_{0}}^{2d}\coloneqq
    \Bigl\{\bigl(t\cos(\theta),t\sin(\theta),\sqrt{1-t^{2}}\bigr):
    0\le{}t<r_{0},\hspace{10pt}\theta\in[0,2\pi)\Bigr\}
\end{equation*}
and we define
$\phi_{x_{0},r_{0}}\colon{}B_{r_{0}}^{2d}(0)\to\mathcal{O}_{x_{0},r_{0}}^{2d}$
by
\begin{equation*}
    \phi_{x_{0},r_{0}}(y_{1},y_{2})\coloneqq
    \Bigl(y_{1},y_{2},\sqrt{1-y_{1}^{2}-y_{2}^{2}}\Bigr)
    \, .
\end{equation*}
The function $\phi_{x_0,r_0}$ maps the two-dimensional flat disk of radius $r_0$ into a curved disk centered around $x_0$ that is part of a the sphere of radius $1$ centered at $0$.
We also let $\mathcal{O}_{x_{0},r_{0}}\subseteq\mathbb{R}^{3}$ denote
\begin{equation*}
\mathcal{O}_{x_{0},r_{0}}\coloneqq
    \Bigl\{(1+t\cos(\psi))\bigl(t\cos(\theta)\sin(\psi),
    t\sin(\theta)\sin(\psi),\sqrt{1-t^{2}\sin^{2}(\psi)}\bigr):
    0\le{}t<r_{0},\hspace{5pt}\theta\in[0,2\pi),\hspace{5pt}
    \psi\in[0,\pi]\Bigr\}
\end{equation*}
and define $\Phi_{x_{0},r_{0}}\colon{}B_{r_{0}}(0)\to\mathcal{O}_{x_{0},r_{0}}$
by
\begin{equation*}
    \Phi_{x_{0},r_{0}}(y_{1},y_{2},y_{3})\coloneqq
    (1+y_{3})\phi_{x_{0},r_{0}}(y_{1},y_{2}).
\end{equation*}
The map $\Phi_{x_{0},r_{0}}$ deforms  the three dimensional unit ball in a way that sends the flat equatorial disk to the spherical piece given by the image of $\phi_{x_{0},r_{0}}$. 
The sign of the third component $x_3$ decides whether the point gets mapped to the inside ($x_3<0$) or outside of the ball ($x_3>0$).
Since $x_{0}$ is fixed and $r_{0}$ will eventually be fixed, we will omit
them from the notation on $\Phi$ for convenience.
Next, we provide some information satisfied by $\Phi$.
First, we notice that
\begin{align*}
	\nabla\Phi(y)&=\begin{bmatrix}
		(1+y_{3})& 0& y_{1}\\
		0& (1+y_{3})& y_{2}\\
		\frac{-y_{1}(1+y_{3})}{\sqrt{1-y_{1}^{2}-y_{2}^{2}}}& \frac{-y_{2}(1+y_{3})}{\sqrt{1-y_{1}^{2}-y_{2}^{2}}}& \sqrt{1-y_{1}^{2}-y_{2}^{2}}
	\end{bmatrix},\\
	\Phi^{-1}(x)&=\biggl[\frac{x_{1}}{|x|},\frac{x_{2}}{|x|},|x|-1\biggr]^{T},\\
	\nabla\Phi^{-1}(x)&=
	\begin{bmatrix}
		\frac{1}{|x|}-\frac{x_{1}^{2}}{|x|^{3}}& -\frac{x_{1}x_{2}}{|x|^{3}}& -\frac{x_{1}x_{3}}{|x|^{3}}\\
		-\frac{x_{1}x_{2}}{|x|^{3}}& \frac{1}{|x|}-\frac{x_{2}^{2}}{|x|^{3}}& -\frac{x_{2}x_{3}}{|x|^{3}}\\
		\frac{x_{1}}{|x|}& \frac{x_{2}}{|x|}& \frac{x_{3}}{|x|}
	\end{bmatrix}\\
	&=\frac{1}{|x|}I_{3}
	+\frac{1}{|x|^{3}}\biggl[x_{3}\mathbf{e}_{3}-x\biggr]x^{T}
	+\frac{1}{|x|}\mathbf{e}_{3}\bigl[x-\mathbf{e}_{3}\bigr]^{T},\\
	\nabla\Phi\bigl(\Phi^{-1}(x)\bigr)&=
	\begin{bmatrix}
		|x|& 0& \frac{x_{1}}{|x|}\\
		0& |x|& \frac{x_{2}}{|x|}\\
		\frac{-x_{1}|x|}{x_{3}}& \frac{-x_{2}|x|}{x_{3}}& \frac{x_{3}}{|x|}
	\end{bmatrix},\\
	\nabla\Phi\bigl(\Phi^{-1}(x)\bigr)\Phi^{-1}(x)&=
	\begin{bmatrix}
		&2x_{1}-\frac{x_{1}}{|x|}\\
		&2x_{2}-\frac{x_{2}}{|x|}\\
		&2x_{3}-\frac{x_{3}}{|x|}-\frac{|x|^{2}}{x_{3}}
	\end{bmatrix}
	=\biggl(2-\frac{1}{|x|}\biggr)(x-x_{0})
    +\biggl(\frac{x_{3}(|x|-1)+|x|(x_{3}-|x|^{2})}{x_{3}|x|}\biggr)\mathbf{e}_{3}.
\end{align*}
We also observe that:

\begin{align*}
	\nabla^{2}\Phi^{1}(y)&=
	\begin{bmatrix}
		0& 0& 1\\
		0& 0& 0\\
		1& 0& 0
	\end{bmatrix},\hspace{10pt}
	\nabla^{2}\Phi^{2}(y)=
	\begin{bmatrix}
		0& 0& 0\\
		0& 0& 1\\
		0& 1& 0
	\end{bmatrix},\\
	\nabla^{2}\Phi^{3}(y)&=
	\begin{bmatrix}
		-\frac{(1+y_{3})}{\sqrt{1-y_{1}^{2}-y_{2}^{2}}}-\frac{y_{1}^{2}(1+y_{3})}{(1-y_{1}^{2}-y_{2}^{2})^{\frac{3}{2}}}&
		-\frac{y_{1}y_{2}(1+y_{3})}{(1-y_{1}^{2}-y_{2}^{2})^{\frac{3}{2}}}&
		\frac{-y_{1}}{\sqrt{1-y_{1}^{2}-y_{2}^{2}}}\\	
		-\frac{y_{1}y_{2}(1+y_{3})}{(1-y_{1}^{2}-y_{2}^{2})^{\frac{3}{2}}}&
		-\frac{(1+y_{3})}{\sqrt{1-y_{1}^{2}-y_{2}^{2}}}
		-\frac{y_{2}^{2}(1+y_{3})}{(1-y_{1}^{2}-y_{2}^{2})^{\frac{3}{2}}}&
		\frac{-y_{2}}{\sqrt{1-y_{1}^{2}-y_{2}^{2}}}\\
		\frac{-y_{1}}{\sqrt{1-y_{1}^{2}-y_{2}^{2}}}&
		\frac{-y_{2}}{\sqrt{1-y_{1}^{2}-y_{2}^{2}}}&
		0
	\end{bmatrix}
\end{align*}
and hence
\begin{align*}
	\nabla^{2}\Phi^{1}\bigl(\Phi^{-1}(x)\bigr)&=
	\begin{bmatrix}
		0& 0& 1\\
		0& 0& 0\\
		1& 0& 0
	\end{bmatrix},\hspace{10pt}
	\nabla^{2}\Phi^{2}\bigl(\Phi^{-1}(x)\bigr)=
	\begin{bmatrix}
		0& 0& 0\\
		0& 0& 1\\
		0& 1& 0
	\end{bmatrix},\\
	\nabla^{2}\Phi^{3}\bigl(\Phi^{-1}(x)\bigr)&=
	\begin{bmatrix}
		-\frac{|x|^{2}}{x_{3}}-\frac{x_{1}^{2}|x|^{2}}{x_{3}^{3}}&
		-\frac{x_{1}x_{2}|x|^{2}}{x_{3}^{3}}&
		\frac{-x_{1}}{x_{3}}\\	
		-\frac{x_{1}x_{2}|x|^{2}}{x_{3}^{3}}&
		-\frac{|x|^{2}}{x_{3}}
		-\frac{x_{2}^{2}|x|^{2}}{x_{3}^{3}}&
		\frac{-x_{2}}{x_{3}}\\
		\frac{-x_{1}}{x_{3}}&
		\frac{-x_{2}}{x_{3}}&
		0
	\end{bmatrix}.
\end{align*}
As a result, we notice that on $\mathcal{O}_{x_{0},r_{0}}$ we have the
following leading order expansions
\begin{equation}\label{eq:phileadingorder}
	\Phi^{-1}(x)=O(r_{0}),\hspace{10pt}
	\nabla\Phi^{-1}(x)=I_{3}+O(r_{0}),\hspace{10pt}
	\nabla\Phi\bigl(\Phi^{-1}(x)\bigr)=I_{3}+O(r_{0}).
\end{equation}

Next we choose $0<\delta,r<\frac{r_{0}}{2}$ and define $\eta_{\delta,r}\colon\mathbb{R}\to\mathbb{R}$ by
\begin{equation*}
	\eta_{\delta,r}(s)\coloneqq\min\biggl\{1,\max\biggl\{0,\frac{r+\delta-|s|}{\delta}\biggr\}\biggr\}.
\end{equation*}
We also introduce a mollifying function $\rho\in{}C_{c}^{\infty}(\mathbb{R})$ satisfying $\rho\ge0$, $\text{supp}(\rho)=[-1,1]$, and
$\int_{\mathbb{R}}\!{}\rho=1$.
We also let $\rho_{\alpha}$, for $0<\alpha<r$, denote
\begin{equation*}
	\rho_{\alpha}(s)\coloneqq\frac{1}{\alpha^{3}}\rho\biggl(\frac{s}{\alpha}\biggr).
\end{equation*}
Using this mollifying function we define $\eta_{\delta,r,\alpha}$ by
\begin{equation*}
	\eta_{\delta,r,\alpha}(s)\coloneqq\rho_{\alpha}\star\eta_{\delta,r}(s).
\end{equation*}
We notice that this function satisfies
\begin{enumerate}
	\item
		$\text{supp}(\eta_{\delta,r,\alpha})\subseteq{}(-r-\delta-\alpha,r+\delta+\alpha)$,
	\item
		$\eta_{\delta,r,\alpha}\in{}C^{\infty}(\mathbb{R})$,
	\item
		$\eta_{\delta,r,\alpha}\equiv1\text{ on }(-r+\alpha,r-\alpha)$.
\end{enumerate}
Next, we define $\varphi_{\delta,r,\alpha}$ for $0\le{}t\le\frac{\delta}{2}$ by
\begin{equation*}
	\varphi_{\delta,r,\alpha}(x,t)\coloneqq{}
	\begin{cases}
		\Phi\bigl((1+t\eta_{\delta,r,\alpha}(|\Phi^{-1}(x)|))\Phi^{-1}(x)\bigr)& \text{for }x\in\mathcal{O}_{x_{0},r_{0}}\\
		x& \text{for }x\in{}B_{2}(0)\setminus\mathcal{O}_{x_{0},r_{0}}.
	\end{cases}
\end{equation*}
For given $t$, the function $\varphi_{\delta,r,\alpha}$ essentially represents the compact perturbation of the identity given by the radial vector field $x-x_0$ but preserves the boundary $\partial\Omega$. 
This is achieved by first mapping $x$ into a ball where the boundary is flat, then applying a stretch by a factor of $1+t\eta_{\delta,r,\alpha}$ (depending on the distance from the center of the ball) which stays in the (flat) image of the boundary, and finally mapping back the resulting point.
We demonstrate in the following lemma that $\varphi_{\delta,r,\alpha}$
satisfies the requirements of Definition \ref{def:innervartan}.
This will allow us to provide a rigorous treatment of the Monotonicity
Formula.
\begin{lemma}\label{lem:monotonicityfamily}
    Suppose $x_{0}=(0,0,1)$, $0<r_{0}<\frac{1}{4}$,
    $0<\delta,r<\frac{r_{0}}{2}$, $0<\alpha<r$, and
    $0\le{}t\le\frac{\delta}{2}$.
    Then $\varphi_{\delta,r,\alpha}$ defined as above satisfies the
    requirements of Definition~\ref{def:innervartan}.
\end{lemma}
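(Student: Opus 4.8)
The plan is to verify, in turn, global $C^{3}$-regularity, the diffeomorphism property, and the three itemized conditions of Definition~\ref{def:innervartan}, by exploiting that on the coordinate patch $\mathcal{O}_{x_{0},r_{0}}$ the map $\varphi_{\delta,r,\alpha}(\cdot,t)$ is conjugate through $\Phi=\Phi_{x_{0},r_{0}}$ to the purely radial dilation $S_{t}(y)\coloneqq\bigl(1+t\,\eta_{\delta,r,\alpha}(|y|)\bigr)y$ of the flat ball $B_{r_{0}}(0)$, while being the identity off a compact subset of $\mathcal{O}_{x_{0},r_{0}}$. First I would collect preliminaries. From $\Phi(y)=(1+y_{3})\phi_{x_{0},r_{0}}(y_{1},y_{2})$ and the recorded inverse $\Phi^{-1}(x)=(x_{1}/|x|,x_{2}/|x|,|x|-1)$ one reads off that $\Phi$ is a $C^{\infty}$ diffeomorphism of $B_{r_{0}}(0)$ onto $\mathcal{O}_{x_{0},r_{0}}$: indeed $1-y_{1}^{2}-y_{2}^{2}\ge 1-r_{0}^{2}>0$ on $B_{r_{0}}(0)$ and $\det\nabla\Phi(y)>0$ there, while $|x|$ and $x_{3}$ stay bounded away from $0$ on $\mathcal{O}_{x_{0},r_{0}}$ because $r_{0}<\tfrac14$, so $\Phi^{-1}$ is $C^{\infty}$; in particular $\mathcal{O}_{x_{0},r_{0}}=\Phi(B_{r_{0}}(0))$ is open and contained in $B_{1+r_{0}}(0)\subset B_{2}(0)$. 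I would also note that $\eta_{\delta,r}\equiv 1$ on $[-r,r]$, hence (since $\alpha<r$) $\eta_{\delta,r,\alpha}\equiv 1$ on $[-(r-\alpha),r-\alpha]$; consequently $y\mapsto\eta_{\delta,r,\alpha}(|y|)$ is $C^{\infty}$ on all of $B_{r_{0}}(0)$ (it is the constant $1$ near $0$, and a composition of smooth functions away from $0$), and it vanishes once $|y|\ge r+\delta+\alpha$. We use throughout that $r+\delta+\alpha<r_{0}$, which is consistent with, and to be read alongside, the parameter choices $\delta,r<\tfrac{r_{0}}{2}$, $\alpha<r$ from the surrounding text.

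Next I would analyze $S_{t}$. Being radial, $|S_{t}(y)|=g(|y|)$ with $g(s)\coloneqq s\bigl(1+t\,\eta_{\delta,r,\alpha}(s)\bigr)$ and $g'(s)=1+t\,\eta_{\delta,r,\alpha}(s)+t\,s\,\eta_{\delta,r,\alpha}'(s)$. The term $t\,\eta_{\delta,r,\alpha}(s)$ is nonnegative, and although $|\eta_{\delta,r,\alpha}'|\le 1/\delta$ so that $\eta'$ may be large, the support bound $|s|\le r+\delta+\alpha$ together with $t\le\tfrac{\delta}{2}$ gives $|t\,s\,\eta_{\delta,r,\alpha}'(s)|\le\tfrac{r+\delta+\alpha}{2}<\tfrac{r_{0}}{2}<\tfrac18$; this cancellation of the $1/\delta$ blow-up of $\eta'$ against $t\le\delta/2$ is the crux of the whole lemma. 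Hence $g'>0$, so $g$ is an increasing $C^{\infty}$ bijection of $[0,r_{0})$ onto itself with $g(s)=s$ for $s\ge r+\delta+\alpha$; therefore $S_{t}$ is a $C^{\infty}$ diffeomorphism of $B_{r_{0}}(0)$ onto itself, equal to the identity on $\{|y|\ge r+\delta+\alpha\}$, with $S_{0}=\mathrm{id}$, and $C^{\infty}$ jointly in $(y,t)$; moreover, being a radial dilation by a positive factor, $S_{t}$ is a sign-of-$y_{3}$-preserving bijection of $B_{r_{0}}(0)$, so it maps $\{y_{3}<0\}\cap B_{r_{0}}(0)=\Phi^{-1}(\Omega\cap\mathcal{O}_{x_{0},r_{0}})$ onto itself.

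Now set $\varphi_{t}\coloneqq\Phi\circ S_{t}\circ\Phi^{-1}$ on $\mathcal{O}_{x_{0},r_{0}}$ and $\varphi_{t}\coloneqq\mathrm{id}$ on $B_{2}(0)\setminus\mathcal{O}_{x_{0},r_{0}}$. Since $S_{t}=\mathrm{id}$ on $\{|y|\ge r+\delta+\alpha\}$ and $r+\delta+\alpha<r_{0}$, the map $\varphi_{t}$ equals $\mathrm{id}$ on the open set $B_{2}(0)\setminus K$, where $K\coloneqq\Phi\bigl(\overline{B_{r+\delta+\alpha}(0)}\bigr)$ is a compact subset of $\mathcal{O}_{x_{0},r_{0}}$; because $K\subset\mathcal{O}_{x_{0},r_{0}}$ and on $\mathcal{O}_{x_{0},r_{0}}$ the map is a composition of $C^{\infty}$ diffeomorphisms, $\varphi_{t}$ is $C^{\infty}$ on $B_{2}(0)$ (in particular $C^{3}$), is jointly $C^{3}$ in $(x,t)$, and is a $C^{\infty}$ diffeomorphism of $B_{2}(0)$ onto itself with inverse $\Phi\circ S_{t}^{-1}\circ\Phi^{-1}$ on $\mathcal{O}_{x_{0},r_{0}}$ and $\mathrm{id}$ elsewhere. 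This gives condition~2 of Definition~\ref{def:innervartan} (at $t=0$, $S_{0}=\mathrm{id}$ forces $\varphi_{0}=\mathrm{id}_{B_{2}(0)}$) and condition~3 ($\varphi_{t}-\mathrm{id}$ is supported in $K$, a compact subset of $B_{2}(0)$); the time interval produced is $[0,\delta/2]$, which may be relabelled to $[0,\delta]$.

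For condition~1, $\varphi_{t}(\Omega)=\Omega$: outside $\mathcal{O}_{x_{0},r_{0}}$ the map is the identity, and $\varphi_{t}(\Omega\cap\mathcal{O}_{x_{0},r_{0}})=\Phi\bigl(S_{t}(\{y_{3}<0\}\cap B_{r_{0}}(0))\bigr)=\Phi\bigl(\{y_{3}<0\}\cap B_{r_{0}}(0)\bigr)=\Omega\cap\mathcal{O}_{x_{0},r_{0}}$ by the last observation of the second paragraph; equivalently, for $x\in\mathcal{O}_{x_{0},r_{0}}$ one computes $|\varphi_{t}(x)|-1=\bigl(1+t\,\eta_{\delta,r,\alpha}(|\Phi^{-1}(x)|)\bigr)(|x|-1)$, which has the same sign as $|x|-1$. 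Hence $\varphi_{t}(\Omega)=(\Omega\cap\mathcal{O}_{x_{0},r_{0}})\cup(\Omega\setminus\mathcal{O}_{x_{0},r_{0}})=\Omega$ (and likewise $\varphi_{t}(B_{2}(0)\setminus\overline{\Omega})=B_{2}(0)\setminus\overline{\Omega}$), which is precisely what is needed to later invoke minimality of $u$ on $\Omega$ and of $\widetilde{u}$ on $B_{2}(0)\setminus\Omega$ separately. The only genuine obstacle I anticipate is the sign of $g'$, i.e.\ seeing that the $O(1/\delta)$ size of $\eta_{\delta,r,\alpha}'$ is absorbed by the constraint $t\le\delta/2$; everything after that is bookkeeping with the formulas for $\Phi$, $\Phi^{-1}$, $\nabla\Phi$ already recorded before the lemma.
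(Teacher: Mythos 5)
Your proof is correct and follows essentially the same route as the paper's: both hinge on the conjugation through $\Phi$ to the radial dilation $y\mapsto(1+t\eta_{\delta,r,\alpha}(|y|))y$, the key derivative bound $1+t\eta_{\delta,r,\alpha}(s)+st\eta_{\delta,r,\alpha}'(s)\ge 1-\tfrac{s}{2}>0$ (absorbing the $1/\delta$ size of $\eta'$ via $t\le\delta/2$) to get a monotone radial bijection, and the preservation of the sign of $y_{3}$ to conclude $\varphi_{t}(\Omega)=\Omega$. Your packaging of surjectivity and injectivity into the single statement that $S_{t}$ is a diffeomorphism of $B_{r_0}(0)$ fixing $\{|y|\ge r+\delta+\alpha\}$ is a tidier presentation of the paper's separate $\subseteq$/$\supseteq$ arguments, and you rightly flag the implicit requirement $r+\delta+\alpha<r_{0}$, but the substance is the same.
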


\begin{proof}
    From our definition, we see that
    $\varphi_{\delta,r,\alpha}(\cdot,t)-\text{id}_{B_{2}(0)}$ has support in
    $\overline{\Phi\Bigl(B_{r+\delta+\alpha}(0)\Bigr)}$,
    which is compact
    since $\Phi$ is continuous.
    In particular, since
    $\text{supp}(\eta_{\delta,r,\alpha})\subseteq\mathcal{O}_{x_{0},r_{0}}$
    then $\varphi_{\delta,r,\alpha}(x,t)=x$ in a neighbourhood of
    $\partial\mathcal{O}_{x_{0},r_{0}}$.
    As a result, $\varphi_{\delta,r,\alpha}$ is $C^{3}$ on $B_{2}(0)$.
    Next, we observe that
    \begin{equation*}
    	\varphi_{\delta,r\alpha}(x,0)=x=\text{id}_{B_{2}(0)}(x)
    \end{equation*}
    for all $x\in{}B_{2}(0)$.
    Next we demonstrate that
    $\varphi_{\delta,r,\alpha}(\Omega\times\{t\})=\Omega$ for all
    $0\le{}t\le\frac{\delta}{2}$.
    If $x\in\Omega\setminus\mathcal{O}_{x_{0},r_{0}}$ then by definition
    $\varphi_{\delta,r,\alpha}(x,t)=x\in\Omega$.
    If $x\in\Omega\cap\mathcal{O}_{x_{0},r_{0}}$ then observe that
    $y=\Phi^{-1}(x)$ is such that $y_{3}<0$.
    Since
    $\bigl(1+t\eta_{\delta,r,\alpha}(|\Phi^{-1}(x)|\bigr)>0$
    then
    \begin{equation*}
    	z=\bigl(1+t\eta_{\delta,r,\alpha}
        (|\Phi^{-1}(x)|)\bigr)\Phi^{-1}(x)
    	=\bigl(1+t\eta_{\delta,r,\alpha}(|\Phi^{-1}(x)|)\bigr)
        (y_{1},y_{2},y_{3})
    \end{equation*}
    has negative third component since $y_{3}<0$, $t\ge0$, and
    $|\Phi^{-1}(x)|\ge0$.
    By definition of $\Phi$ we then have
    \begin{equation*}
    	|\Phi(z)|=|1+z_{3}|\cdot
        |\phi_{x_{0},r_{0}}(z_{1},z_{2})|=|1+z_{3}|<1.
    \end{equation*}
    Hence, since $0\le{}t\le\frac{\delta}{2}$ was arbitrary,
    $\varphi_{\delta,r,\alpha}(\Omega\times{}\{t\})\subseteq\Omega$ for all
    $0\le{}t\le\frac{\delta}{2}$.
    Next, we show that
    $\varphi_{\delta,r,\alpha}(\Omega\times\{t\})\supseteq\Omega$.
    Consider $z_{0}\in\Omega$.
    If $z_{0}\in\Omega\setminus\mathcal{O}_{x_{0},r_{0}}$ then
    $\varphi_{\delta,r,\alpha}(z_{0},t)=z_{0}$ and hence we may assume that
    $z_{0}\in\Omega\cap\mathcal{O}_{x_{0},r_{0}}$.
    By definition of $\Phi$ and $\mathcal{O}_{x_{0},r_{0}}$,
    there is $z\in{}B_{r_{0}}(0)\times(0,1)$ such that
    $z_{0}=\Phi(z)$.
    If $|z|\ge{}r+\delta+\alpha$ then
    $\varphi_{\delta,r,\alpha}(z_{0},t)=z_{0}$ so we may assume that
    $|z|<r+\delta+\alpha$.
    Next, I demonstrate that the map
    \begin{equation*}
    	y\mapsto{}\bigl(1+t\eta_{\delta,r,\alpha}(|y|)\bigr)y
    \end{equation*}
    is surjective on $B_{r+\delta+\alpha}(0)$.
    First, notice that $0$ maps to $0$ under this map.
    Next, for $y\neq0$ we rewrite this map as
    \begin{equation*}
    	y\mapsto{}\bigl(1+t\eta_{\delta,r,\alpha}(|y|)\bigr)|y|\cdot
        \frac{y}{|y|}.
    \end{equation*}
    From this we see that the function factors into the form
    $f(s)\sigma$ for $s>0$ and $\sigma\in\mathbb{S}^{2}$.
    Thus, to demonstrate surjectivity it suffices to demonstrate that the
    function
    \begin{equation*}
    	s\mapsto{}\bigl(1+t\eta_{\delta,r,\alpha}(s)\bigr)s
    \end{equation*}
    is a surjective map from $(0,r+\delta+\alpha)$ to $(0,r+\delta+\alpha)$.
    To see that this is true note that the function is continuous and
    increasing since its derivative satisfies
    \begin{equation*}
    	1+t\eta_{\delta,r,\alpha}(s)+st\eta_{\delta,r,\alpha}'(s)
        \ge1-st\left\|\eta_{\delta,r,\alpha}'\right\|_{L^{\infty}}
        \ge1-\frac{s}{2}
        >1-\frac{r+\delta+\alpha}{2}>0.
    \end{equation*}
    Finally, observe that when $s=r+\delta+\alpha$ we have, by the support
    of $\eta_{\delta,r,\alpha}$, that
    \begin{equation*}
    	\bigl(1+t\eta_{\delta,r,\alpha}(r+\delta+\alpha)\bigr)
        (r+\delta+\alpha)=r+\delta+\alpha.
    \end{equation*}
    From surjectivity we can now find $y\in{}B_{r+\delta+\alpha}(0)$ such
    that $z=\bigl(1+t\eta_{\delta,r,\alpha}(|y|)\bigr)y$.
    Altogether we now have that
    $\varphi_{\delta,r,\alpha}(\Omega\times\{t\})=\Omega$.
    Notice that since $s\mapsto{}\bigl(1+t\eta_{\delta,r,\alpha}(s)\bigr)s$
    is increasing on $(0,r+\delta+\alpha)$ then it is injective
    which means that $y\mapsto{}\bigl(1+t\eta_{\delta,r,\alpha}(|y|)\bigr)y$
    is injective on $B_{r+\delta+\alpha}(0)$.
    From this we conclude that $\varphi_{\delta,r,\alpha}$ is bijective on
    $\Phi\bigl(B_{r+\delta+\alpha}(0)\bigr)$
    being the composition of injective maps.
    Since $\varphi_{\delta,r,\alpha}$ is defined as the identity map on
    $B_{2}(0)\setminus\Phi\bigl(B_{r+\delta+\alpha}(0)\bigr)$
    then we conclude that $\varphi_{\delta,r,\alpha}$ is injective on
    $B_{2}(0)$.
    Finally, we conclude that $\varphi_{\delta,t,\alpha}$ is a
    $C^{3}$-diffeomorphism
    for all $0\le{}t\le\frac{\delta}{2}$, $0<\delta,r<\frac{r_{0}}{2}$,
    $0<\alpha<r$, and $0<r_{0}<\frac{1}{4}$.
\end{proof}
Now that we have a suitable family of functions, we now demonstrate that the
Monotonicity Formula holds in our setting.

\begin{proposition}[Monotonicity Formula]\label{prop:mod_monotonicity_formula}
    Suppose $u\colon\Omega\to\mathbb{S}^{2}$ is a minimizer of
    \eqref{def:dir} and $\overline{u}$ is defined as in \eqref{eq:extension}.
    If $0<r_{0}<\frac{1}{4}$ is chosen sufficiently small
    and $f_{x_{0},r_{0}}\colon(0,\frac{r_{0}}{2})\to[0,\infty)$ is
    defined by
    \begin{equation*}
    	f_{x_{0},r_{0}}(r)\coloneqq
        \frac{1}{2r}
        \int_{\Phi(B_{r}(x_{0}))}\!{}\min\biggl\{1,\frac{1}{|x|}\biggr\}
        |\nabla_{x}\overline{u}|^{2}
        \, .
    \end{equation*}
    then there exists $C_{1},C_{2}\in\mathbb{R}$ independent of $x_{0}$
    such that
    \begin{equation*}
    	\frac{\mathrm{d}}{\mathrm{d}r}
        \biggl[e^{C_{1}r}f_{x_{0},r_{0}}(r)+C_{2}r\biggr]\ge 0
        \, ,
    \end{equation*}
    i.e.\ the map $r\mapsto{}e^{C_{1}r}f_{x_{0},r_{0}}(r)+C_{2}r$ is non-decreasing.
\end{proposition}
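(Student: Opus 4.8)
The plan is to make rigorous the formal computation of Section~\ref{subsec:formalcalc}, using the smooth admissible family $\varphi_{\delta,r,\alpha}$ of Lemma~\ref{lem:monotonicityfamily} in place of the non-smooth radial field $T_{\delta,r}$ used there. Since the interior Monotonicity Formula of \cite{Ev2} already covers points of $\Omega$ and of $B_{2}(0)\setminus\overline{\Omega}$ for balls small enough to stay in those regions, and a rotation reduces a general $x_{0}\in\partial\Omega$ to $x_{0}=(0,0,1)$, I would fix this $x_{0}$ throughout and fix $r_{0}\in(0,\tfrac14)$ small, constrained below. First I would apply Proposition~\ref{prop:inner_variations_tang} to the inner variation generated by $\varphi_{\delta,r,\alpha}(\cdot,t)$, whose infinitesimal generator is
\[
T_{\delta,r,\alpha}(x)=\eta_{\delta,r,\alpha}\bigl(|\Phi^{-1}(x)|\bigr)\,\nabla\Phi\bigl(\Phi^{-1}(x)\bigr)\,\Phi^{-1}(x)\quad\text{on }\mathcal{O}_{x_{0},r_{0}},
\]
and $T_{\delta,r,\alpha}\equiv0$ off $\mathcal{O}_{x_{0},r_{0}}$; this yields the inner variation identity of Proposition~\ref{prop:inner_variations_tang} with $T=T_{\delta,r,\alpha}$ and remainders $\mathcal{F},\mathcal{G}$ satisfying $|\mathcal{F}(x,z,p)|\le C(1+|p|^{2})$ and $|\mathcal{G}(x,z,p)|\le C(1+|p|)$.

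\textbf{Expansion and error collection.} Next I would expand $T_{\delta,r,\alpha}$: using the explicit formula for $\nabla\Phi(\Phi^{-1}(x))\Phi^{-1}(x)$ recorded before the statement together with the leading-order expansions \eqref{eq:phileadingorder}, one gets $T_{\delta,r,\alpha}(x)=\eta_{\delta,r,\alpha}(|\Phi^{-1}(x)|)\,(x-x_{0})+R_{\delta,r,\alpha}(x)$ on $\mathcal{O}_{x_{0},r_{0}}$, where $R_{\delta,r,\alpha}$ and $|x-x_{0}|\,\nabla R_{\delta,r,\alpha}$ are $O(r_{0})$ relative to the leading term; likewise $\nabla T_{\delta,r,\alpha}$ and $\operatorname{div}T_{\delta,r,\alpha}$ are controlled expansions of $\eta\,I_{3}+\eta'\,\tfrac{(x-x_{0})(x-x_{0})^{T}}{|x-x_{0}|}$. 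Substituting into the identity and sorting by order, the leading contribution reproduces the combination from Section~\ref{subsec:formalcalc} — after the Coarea formula and $\delta\to0^{+}$, the left side becomes $r^{2}\tfrac{\mathrm d}{\mathrm dr}f_{x_{0},r_{0}}(r)$ and the principal right-hand term becomes the nonnegative flux $\int r\min\{1,1/|x|\}|\partial_{n}\overline u|^{2}$ over the relevant level set (here the $\Phi$-pullback is what produces the precise weight $\min\{1,1/|x|\}$ and the distorted balls appearing in $f_{x_{0},r_{0}}$). All remaining terms — the $O(r_{0})$ pieces from $R_{\delta,r,\alpha}$ and the $\mathcal{F}\!\cdot\! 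T$, $\mathcal{G}\!:\!\nabla T$ contributions — I would estimate exactly as in Section~\ref{subsec:formalcalc}, using the growth bounds on $\mathcal{F},\mathcal{G}$ and the arithmetic--geometric inequality, by $C\,r_{0}\bigl(1+\int_{\Phi(B_{r}(0))}|\nabla_{x}\overline u|^{2}\bigr)$ plus a sphere term $C\,r\int_{\partial\Phi(B_{r}(0))}(1+|\nabla_{x}\overline u|)$ that is absorbed into the flux by Young's inequality.

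\textbf{Limits and conclusion.} I would then pass to the limits $\alpha\to0^{+}$ (dominated convergence: $\eta_{\delta,r,\alpha}\to\eta_{\delta,r}$ uniformly and $\eta_{\delta,r,\alpha}'\to\eta_{\delta,r}'$ in $L^{1}_{\mathrm{loc}}$) and then $\delta\to0^{+}$, using the Coarea formula on the terms carrying $\eta_{\delta,r}'=-\tfrac1\delta\mathbf{1}_{\{r<|\Phi^{-1}(x)|<r+\delta\}}$ to localise them on $\{|\Phi^{-1}(x)|=r\}$ for a.e.\ $r\in(0,\tfrac{r_{0}}2)$ (Lebesgue points of $\rho\mapsto\int_{\{|\Phi^{-1}|=\rho\}}(1+|\nabla_{x}\overline u|^{2})$). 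Discarding the nonnegative flux, this leaves, in the distributional sense on $(0,\tfrac{r_{0}}2)$,
\[
f_{x_{0},r_{0}}'(r)\ \ge\ -\,\overline C_{2}\,f_{x_{0},r_{0}}(r)\ -\ \overline C_{1},
\]
with $\overline C_{1},\overline C_{2}\ge0$ independent of $x_{0}$. Multiplying by the integrating factor $e^{\overline C_{2}r}$ gives $\tfrac{\mathrm d}{\mathrm dr}\bigl(e^{\overline C_{2}r}f_{x_{0},r_{0}}(r)\bigr)\ge-\overline C_{1}e^{\overline C_{2}r}\ge-\overline C_{1}e^{\overline C_{2}r_{0}/2}$, so with $C_{1}\coloneqq\overline C_{2}$ and $C_{2}\coloneqq\overline C_{1}e^{\overline C_{2}r_{0}/2}$ the map $r\mapsto e^{C_{1}r}f_{x_{0},r_{0}}(r)+C_{2}r$ is non-decreasing. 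Here $r_{0}$ is fixed once and for all so small that $\mathcal{O}_{x_{0},r_{0}}$ lies where the extension PDE (Corollary~\ref{cor:outerpde} and Lemma~\ref{lem:global_PDE}) is available and $|x|$ is bounded away from $0$, that $2-\tfrac1{|x|}\ge\tfrac12$ on $\mathcal{O}_{x_{0},r_{0}}$, and that the $O(r_{0})$ remainders above do not exceed half the corresponding leading terms; Lemma~\ref{lem:monotonicityfamily} then guarantees admissibility of $\varphi_{\delta,r,\alpha}$.

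\textbf{Main obstacle.} The hard part will be the bookkeeping in the middle step: one must verify that the $O(r_{0})$ mismatch between $\Phi$ and the pure translation $x\mapsto x-x_{0}$, together with the $\mathcal{F},\mathcal{G}$-terms generated by the extra lower-order terms in the PDE for $\overline u$ (Lemma~\ref{lem:global_PDE}), combine into contributions only of the benign types $\overline C_{1}+\overline C_{2}f_{x_{0},r_{0}}(r)$ and a flux term absorbable into $|\partial_{n}\overline u|^{2}$ — never a term comparable to the leading energy density itself — and that all these estimates are uniform in $\delta,\alpha$ so the two limits may legitimately be taken. It is exactly this imperfect matching (rather than a genuine flat reflection) that forces the ``almost monotone'' quantity $e^{C_{1}r}f_{x_{0},r_{0}}(r)+C_{2}r$ with unspecified constants instead of monotonicity of $f_{x_{0},r_{0}}$ itself.
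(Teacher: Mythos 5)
Your proposal follows essentially the same route as the paper's proof: apply the inner variation identity of Proposition~\ref{prop:inner_variations_tang} with the admissible family $\varphi_{\delta,r,\alpha}$ of Lemma~\ref{lem:monotonicityfamily}, pass to the limits $\alpha\to0^{+}$ then $\delta\to0^{+}$ via dominated convergence and coarea, estimate the lower-order terms using the growth bounds on $\mathcal{F},\mathcal{G}$, and conclude with an integrating factor. The one organisational difference is that the paper first changes variables by $\Phi$ to integrate over the straight balls $B_{r}(0)$, producing the quantity $F_{x_{0}}(r)$ with Jacobian and pulled-back gradients (and only returns to $f_{x_{0},r_{0}}$ at the very end), whereas you propose to expand $T_{\delta,r,\alpha}=\eta\cdot(x-x_{0})+R$ directly in the $x$-coordinates and track $R$; these are equivalent up to bookkeeping, since the coarea localisation on $\{|\Phi^{-1}(x)|=r\}$ forces the same Jacobian factors to appear. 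As you correctly flag in your final paragraph, the substance of the paper's proof is precisely the deferred verification that the lower-order terms — the $O(r_{0})$ mismatch between $\Phi$ and translation and the $\mathcal{F}\!\cdot\!T$, $\mathcal{G}\!:\!\nabla T$ pieces — produce only contributions of the form $\overline C_{1}+\overline C_{2}f$ plus a sphere integral that is absorbed after Young's inequality into the surviving normal-derivative flux; your sketch asserts this rather than carrying it out, and the detailed pointwise estimates on $\nabla\Phi(\Phi^{-1}(x))\Phi^{-1}(x)$, $\nabla^{2}\Phi$, and $\det\nabla\Phi$ that make the absorption work are where the real work in the paper lies.
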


\begin{proof}
    In order to demonstrate this, we consider the family of maps
    $T_{\delta,r,\alpha}$ defined as in \eqref{def:infinnervar} using
    $\varphi_{\delta,r,\alpha}$ as in Lemma \ref{lem:monotonicityfamily}.
    We observe that
    \begin{align*}
    	T_{\delta,r,\alpha}(x)\coloneqq\frac{\mathrm{d}}{\mathrm{d}t}\bigg\vert_{t=0}\varphi_{\delta,r,\alpha}(x,t)
    	&=\chi_{\mathcal{O}_{x_{0},r_{0}}}(x)\eta_{\delta,r,\alpha}(|\Phi^{-1}(x)|)
    	\nabla\Phi\bigl(\Phi^{-1}(x)\bigr)\Phi^{-1}(x)\\
    	&=\eta_{\delta,r,\alpha}(|\Phi^{-1}(x)|)\nabla\Phi\bigl(\Phi^{-1}(x)\bigr)\Phi^{-1}(x)
    \end{align*}
    where we used that
    $\text{supp}(\eta_{\delta,r,\alpha})\subseteq\mathcal{O}_{x_{0},r_{0}}$
    in the final equality.

    In addition, we observe that
    \begin{align*}
    	\nabla{}T_{\delta,r,\alpha}(x)=&\eta_{\delta,r,\alpha}(|\Phi^{-1}(x)|)I_{3}
    	+\frac{\eta_{\delta,r,\alpha}'(|\Phi^{-1}(x)|)}{|\Phi^{-1}(x)|}\nabla\Phi\bigl(\Phi^{-1}(x)\bigr)\Phi^{-1}(x)
    	\Bigl[\nabla\Phi\bigl(\Phi^{-1}(x)\bigr)\Phi^{-1}(x)\Bigr]^{T}\\
    	&+\eta_{\delta,r,\alpha}(|\Phi^{-1}(x)|)\sum_{j=1}^{3}\sum_{i=1}^{3}\Phi^{-1}(x)^{T}
    	\nabla^{2}\Phi^{i}(\Phi^{-1}(x))\partial_{x_{j}}\Phi^{-1}(x)\mathbf{e}_{i}\mathbf{e}_{j}^{T}\\
    	\text{div}\bigl(T_{\delta,r,\alpha}(x)\bigr)=&3\eta_{\delta,r,\alpha}\bigl(|\Phi^{-1}(x)|\bigr)
    	+\frac{\eta_{\delta,r,\alpha}'(|\Phi^{-1}(x)|)\cdot|\nabla\Phi\bigl(\Phi^{-1}(x)\bigr)\Phi^{-1}(x)|^{2}}{|\Phi^{-1}(x)|}\\
    	&+\eta_{\delta,r,\alpha}(|\Phi^{-1}(x)|)\sum_{i=1}^{3}\Phi^{-1}(x)^{T}\nabla^{2}\Phi^{i}(\Phi^{-1}(x))\partial_{x_{i}}\Phi^{-1}(x).
    \end{align*}
    Combining this with Proposition \ref{prop:inner_variations_tang} we
    find that
    \begin{align*}
    	0
    	=&\int_{\Omega}\!{}\biggl\{-\eta_{\delta,r,\alpha}\bigl(|\Phi^{-1}(x)|\bigr)|\nabla_{x}\overline{u}(x)|^{2}
    	-\frac{\eta_{\delta,r,\alpha}'\bigl(|\Phi^{-1}(x)|\bigr)}{|\Phi^{-1}(x)|}|\nabla_{x}\overline{u}(x)\nabla\Phi\bigl(\Phi^{-1}(x)\bigr)\Phi^{-1}(x)|^{2}
    	\biggr\}\\
    	&-\sum_{i,j=1}^{3}\int_{\Omega}\!{}\biggl\{\eta_{\delta,r,\alpha}\bigl(|\Phi^{-1}(x)|\bigr)
    	\bigl[\partial_{x_{i}}\overline{u}(x)\cdot\partial_{x_{j}}\overline{u}(x)\bigr]\Bigl[\Phi^{-1}(x)^{T}
    	\nabla^{2}\Phi^{i}(\Phi^{-1}(x))\partial_{x_{j}}\Phi^{-1}(x)\Bigr]\biggr\}\\
    	&+\int_{\Omega}\!{}\biggl\{\frac{3}{2}|\nabla{}\overline{u}|^{2}\eta_{\delta,r,\alpha}\bigl(|\Phi^{-1}(x)|\bigr)
    	+\frac{1}{2}|\nabla{}\overline{u}|^{2}\frac{\eta_{\delta,r,\alpha}'(|\Phi^{-1}(x)|)|\nabla\Phi\bigl(\Phi^{-1}(x)\bigr)\Phi^{-1}(x)|^{2}}{|\Phi^{-1}(x)|}\biggr\}\\
    	&+\frac{1}{2}\sum_{i=1}^{3}\int_{\Omega}\!{}|\nabla{}\overline{u}|^{2}\eta_{\delta,r,\alpha}(|\Phi^{-1}(x)|)\bigl[\Phi^{-1}(x)^{T}\nabla^{2}\Phi^{i}(\Phi^{-1}(x))\partial_{x_{i}}\Phi^{-1}(x)\bigr]\\
    	&-\int_{B_{2}(0)\setminus\Omega}\!{}\frac{\eta_{\delta,r,\alpha}\bigl(|\Phi^{-1}(x)|\bigr)}{|x|^{2}}|\nabla_{x}\overline{u}(x)|^{2}
    	-\int_{B_{2}(0)\setminus\Omega}\!{}\frac{\eta_{\delta,r,\alpha}'\bigl(|\Phi^{-1}(x)|\bigr)}{|\Phi^{-1}(x)|}
    	|\nabla_{x}\overline{u}(x)\nabla\Phi\bigl(\Phi^{-1}(x)\bigr)\Phi^{-1}(x)|^{2}\\
    	&-\sum_{i,j=1}^{3}\int_{B_{2}(0)\setminus\Omega}\!{}\frac{1}{|x|^{2}}\biggl\{\eta_{\delta,r,\alpha}\bigl(|\Phi^{-1}(x)|\bigr)
    	\bigl[\partial_{x_{i}}\overline{u}(x)\cdot\partial_{x_{j}}\overline{u}(x)\bigr]\Bigl[\Phi^{-1}(x)^{T}
    	\nabla^{2}\Phi^{i}(\Phi^{-1}(x))\partial_{x_{j}}\Phi^{-1}(x)\Bigr]\biggr\}\\
    	&+\frac{3}{2}\int_{B_{2}(0)\setminus\Omega}
    	\frac{1}{|x|^{2}}|\nabla_{x}\overline{u}(x)|^{2}\eta_{\delta,r,\alpha}\bigl(|\Phi^{-1}(x)|\bigr)
    	+\frac{1}{2}\int_{B_{2}(0)\setminus\Omega}\!{}\frac{1}{|x|^{2}}
    	|\nabla{}\overline{u}|^{2}|\frac{\eta_{\delta,r,\alpha}'(|\Phi^{-1}(x)|)|\nabla\Phi\bigl(\Phi^{-1}(x)\bigr)\Phi^{-1}(x)|^{2}}{|\Phi^{-1}(x)|}\\
    	&+\frac{1}{2}\sum_{i=1}^{3}\int_{\Omega}\!{}|\nabla{}\overline{u}|^{2}\eta_{\delta,r,\alpha}(|\Phi^{-1}(x)|)\bigl[\Phi^{-1}(x)^{T}\nabla^{2}\Phi^{i}(\Phi^{-1}(x))\partial_{x_{i}}\Phi^{-1}(x)\bigr]\\
    	&+\int_{B_{2}(0)\setminus\Omega}\!{}
    	\mathcal{F}(x,\overline{u}(x),\nabla_{x}\overline{u}(x))\cdot{}T_{\delta,r,\alpha}(x)
    	+\int_{B_{2}(0)\setminus\Omega}\!{}\mathcal{G}(x,\overline{u}(x),\nabla_{x}\overline{u}(x)):\nabla_{x}T_{\delta,r,\alpha}(x)
    \end{align*}
    for all $0<\delta,r<\frac{r_{0}}{2}$ and $0<\alpha<r$.
    First we let $\alpha\to0^{+}$ and use the Dominated Convergence Theorem
    to remove the parameter from the above equation.
    After this, we apply the Change of Variables Theorem with
    $\Phi$, let $\delta\to0^{+}$, and using convergence of the
    mollifier in $L^{2}$ to obtain
   \begin{align*}
    	0
    	=&\frac{1}{2}\int_{B_{r}(0)}\!{}
        \min\biggl\{1,\frac{1}{|\Phi(y)|^{2}}\biggr\}|\nabla_{x}\overline{u}(\Phi(y))|^{2}|\det(\nabla\Phi(y))|\\
    	&+\frac{1}{r}\int_{\partial{}B_{r}(0)}\!{}
        \min\biggl\{1,\frac{1}{|\Phi(y)|^{2}}\biggr\}|\nabla_{x}\overline{u}(\Phi(y))\nabla\Phi\bigl(y\bigr)y|^{2}
    	\biggr\}|\det(\nabla\Phi(y))|\\
    	&-\sum_{i,j=1}^{3}\int_{B_{r}(0)}\!{}
        \min\biggl\{1,\frac{1}{|\Phi(y)|^{2}}\biggr\}\biggl\{
    	\bigl[\partial_{x_{i}}\overline{u}(\Phi(y))\cdot\partial_{x_{j}}\overline{u}(\Phi(y))\bigr]\Bigl[y^{T}
    	\nabla^{2}\Phi^{i}(y)\partial_{x_{j}}\Phi^{-1}(\Phi(y))\Bigr]\biggr\}|\det(\nabla\Phi(y))|\\
    	&-\frac{1}{2r}\int_{\partial{}B_{r}(0)}\!{}
        \min\biggl\{1,\frac{1}{|\Phi(y)|^{2}}\biggr\}\biggl\{
    	|\nabla{}\overline{u}(\Phi(y))|^{2}|\nabla\Phi\bigl(y\bigr)y|^{2}\biggr\}|\det(\nabla\Phi(y))|\\
    	&+\frac{1}{2}\sum_{i=1}^{3}\int_{B_{r}^{+}(0)}\!{}
        \min\biggl\{1,\frac{1}{|\Phi(y)|^{2}}\biggr\}|\nabla{}\overline{u}(\Phi(y))|^{2}\bigl[y^{T}\nabla^{2}\Phi^{i}(y)\partial_{x_{i}}\Phi^{-1}(\Phi(y))\bigr]|\det(\nabla\Phi(y))|\\
    	&+\int_{B_{r}^{-}(0)}\!{}
    	\mathcal{F}(\Phi(y),\overline{u}(\Phi(y)),\nabla_{x}\overline{u}(\Phi(y)))\cdot{}\nabla\Phi\bigl(y\bigr)y|\det(\nabla\Phi(y))|\\
    	&+\int_{B_{r}^{-}(0)}\!{}\mathcal{G}(\Phi(y),\overline{u}(\Phi(y)),\nabla_{x}\overline{u}(\Phi(y))):I_{3}|\det(\nabla\Phi(y))|\\
    	&-\frac{1}{r}\int_{\partial{}B_{r}^{-}(0)}\!{}\mathcal{G}(\Phi(y),\overline{u}(\Phi(y)),\nabla_{x}\overline{u}(\Phi(y))):
    	\nabla\Phi\bigl(y\bigr)y\bigl[\nabla\Phi\bigl(y\bigr)y\bigr]^{T}|\det(\nabla\Phi(y))|\\
    	&+\sum_{i,j=1}^{3}
    	\int_{B_{r}^{-}(0)}\!{}\mathcal{G}(\Phi(y),\overline{u}(\Phi(y)),\nabla_{x}\overline{u}(\Phi(y))):\bigl[y^{T}
    	\nabla^{2}\Phi^{i}(y)\partial_{x_{j}}\Phi^{-1}(\Phi(y))\bigr]\mathbf{e}_{i}\mathbf{e}_{j}^{T}|\det(\nabla\Phi(y))|.
    \end{align*}
    Rearranging, dividing by $r^{2}$ and adding
    \begin{equation*}
    	\frac{1}{2r}\int_{\partial{}B_{r}(0)}\!{}
        \min\biggl\{1,\frac{1}{|\Phi(y)|^{2}}\biggr\}|\nabla_{x}\overline{u}(\Phi(y))|^{2}|\det(\nabla\Phi(y))|
    \end{equation*}
    to both sides  we obtain 
    \begin{align*}
    	&\frac{\mathrm{d}}{\mathrm{d}r}\biggl[
    	\frac{1}{2r}\int_{B_{r}(0)}\!{}
        \min\biggl\{1,\frac{1}{|\Phi(y)|^{2}}\biggr\}|\nabla_{x}\overline{u}(\Phi(y))|^{2}|\det(\nabla\Phi(y))|\biggr]\\
    	&+\sum_{i,j=1}^{3}\frac{1}{r^{2}}\int_{B_{r}(0)}\!{}
        \min\biggl\{1,\frac{1}{|\Phi(y)|^{2}}\biggr\}\biggl\{
    	\bigl[\partial_{x_{i}}\overline{u}(\Phi(y))\cdot\partial_{x_{j}}\overline{u}(\Phi(y))\bigr]\Bigl[y^{T}
    	\nabla^{2}\Phi^{i}(y)\partial_{x_{j}}\Phi^{-1}(\Phi(y))\Bigr]\biggr\}|\det(\nabla\Phi(y))|\\
    	&-\frac{1}{2r^{2}}\sum_{i=1}^{3}\int_{B_{r}(0)}\!{}
        \min\biggl\{1,\frac{1}{|\Phi(y)|^{2}}\biggr\}|\nabla{}\overline{u}(\Phi(y))|^{2}\bigl[y^{T}\nabla^{2}\Phi^{i}(y)\partial_{x_{i}}\Phi^{-1}(\Phi(y))\bigr]|\det(\nabla\Phi(y))|\\
    	&=
        \frac{1}{2r}\int_{\partial{}B_{r}(0)}\!{}
        \min\biggl\{1,\frac{1}{|\Phi(y)|^{2}}\biggr\}
        |\nabla_{x}\overline{u}(\Phi(y))|^{2}|\det(\nabla\Phi(y))|\\
    	&\quad
        -\frac{1}{2r^{3}}\int_{\partial{}B_{r}(0)}\!{}
        \min\biggl\{1,\frac{1}{|\Phi(y)|^{2}}\biggr\}
    	|\nabla{}\overline{u}(\Phi(y))|^{2}|\nabla\Phi\bigl(y\bigr)y|^{2}|\det(\nabla\Phi(y))|\\
    	&\quad
    	+\frac{1}{r^{3}}\int_{\partial{}B_{r}(0)}\!{}
        \min\biggl\{1,\frac{1}{|\Phi(y)|^{2}}\biggr\}|\nabla_{x}\overline{u}(\Phi(y))\nabla\Phi\bigl(y\bigr)y|^{2}|\det(\nabla\Phi(y))|\\
    	&\quad
        +\frac{1}{r^{2}}\int_{B_{r}^{-}(0)}\!{}
    	\mathcal{F}(\Phi(y),\overline{u}(\Phi(y)),\nabla_{x}\overline{u}(\Phi(y)))\cdot{}\nabla\Phi\bigl(y\bigr)y|\det(\nabla\Phi(y))|\\
    	&\quad
        +\frac{1}{r^{2}}\int_{B_{r}^{-}(0)}\!{}\mathcal{G}(\Phi(y),\overline{u}(\Phi(y)),\nabla_{x}\overline{u}(\Phi(y))):I_{3}|\det(\nabla\Phi(y))|\\
    	&\quad
        -\frac{1}{r^{3}}\int_{\partial{}B_{r}^{-}(0)}\!{}\mathcal{G}(\Phi(y),\overline{u}(\Phi(y)),\nabla_{x}\overline{u}(\Phi(y))):
    	\nabla\Phi\bigl(y\bigr)y\bigl[\nabla\Phi\bigl(y\bigr)y\bigr]^{T}|\det(\nabla\Phi(y))|\\
    	&\quad
        +\sum_{i,j=1}^{3}
    	\frac{1}{r^{2}}\int_{B_{r}^{-}(0)}\!{}\mathcal{G}(\Phi(y),\overline{u}(\Phi(y)),\nabla_{x}\overline{u}(\Phi(y))):\bigl[y^{T}
    	\nabla^{2}\Phi^{i}(y)\partial_{x_{j}}\Phi^{-1}(\Phi(y))\bigr]\mathbf{e}_{i}\mathbf{e}_{j}^{T}|\det(\nabla\Phi(y))|.
    \end{align*}
    Using that on $B_{r}(0)$ we have
    \begin{equation*}
    	\bigl|y^{T}\nabla^{2}\Phi^{i}(y)\partial_{x_{j}}\Phi^{-1}(\Phi(y))\bigr|\le{}C_{1}r\hspace{15pt}
        |\nabla\Phi(y)y|\le{}(1+C_{2}r)|y|\le{}(1+C_{2}r)r
    \end{equation*}
    for $i=1,2,3$ and combining this with the inequality for the derivative
    in $r$ we find that there are constants $C_{3},C_{4}>0$ such that
    \begin{align*}
    	&(1+C_{3}r)\frac{\mathrm{d}}{\mathrm{d}r}\biggl[
    	\frac{1}{2r}\int_{B_{r}(0)}\!{}\min\biggl\{1,\frac{1}{|\Phi(y)|^{2}}\biggr\}
        |\nabla_{x}\overline{u}(\Phi(y))|^{2}|\det(\nabla\Phi(y))|
    	\biggr]\\
    	\ge
    	&-C_{4}\biggl[\frac{1}{2r}\int_{B_{r}(0)}\!{}
        \min\biggl\{1,\frac{1}{|\Phi(y)|^{2}}\biggr\}
    	|\nabla{}\overline{u}(\Phi(y))|^{2}|\det(\nabla\Phi(y))|
    	\biggr]\\
    	&
    	+\frac{1}{r^{3}}\int_{\partial{}B_{r}(0)}\!{}
        \min\biggl\{1,\frac{1}{|\Phi(y)|^{2}}\biggr\}|\nabla_{x}\overline{u}(\Phi(y))\nabla\Phi\bigl(y\bigr)y|^{2}|\det(\nabla\Phi(y))|\\
    	&+\frac{1}{r^{2}}\int_{B_{r}^{-}(0)}\!{}
    	\mathcal{F}(\Phi(y),\overline{u}(\Phi(y)),\nabla_{x}\overline{u}(\Phi(y)))\cdot{}\nabla\Phi\bigl(y\bigr)y|\det(\nabla\Phi(y))|\\
    	&+\frac{1}{r^{2}}\int_{B_{r}^{-}(0)}\!{}\mathcal{G}(\Phi(y),\overline{u}(\Phi(y)),\nabla_{x}\overline{u}(\Phi(y))):I_{3}|\det(\nabla\Phi(y))|\\
    	&-\frac{1}{r^{3}}\int_{\partial{}B_{r}^{-}(0)}\!{}\mathcal{G}(\Phi(y),\overline{u}(\Phi(y)),\nabla_{x}\overline{u}(\Phi(y))):
    	\nabla\Phi\bigl(y\bigr)y\bigl[\nabla\Phi\bigl(y\bigr)y\bigr]^{T}|\det(\nabla\Phi(y))|\\
    	&+\sum_{i,j=1}^{3}
    	\frac{1}{r^{2}}\int_{B_{r}^{-}(0)}\!{}\mathcal{G}(\Phi(y),\overline{u}(\Phi(y)),\nabla_{x}\overline{u}(\Phi(y))):\bigl[y^{T}
    	\nabla^{2}\Phi^{i}(y)\partial_{x_{j}}\Phi^{-1}(\Phi(y))\bigr]\mathbf{e}_{i}\mathbf{e}_{j}^{T}|\det(\nabla\Phi(y))|.
    \end{align*}
    Next, using that $|\mathcal{F}(x,z,p)|\le{}C\bigl[1+|p|^{2}\bigr]$, $|\mathcal{G}(x,z,p)|\le{}C(1+|p|)$, we can estimate this from
    below and rewrite it as
    \begin{align*}
    	&(1+C_{3}r)\frac{\mathrm{d}}{\mathrm{d}r}\biggl[
    	\frac{1}{2r}\int_{B_{r}(0)}\!{}
        \min\biggl\{1,\frac{1}{|\Phi(y)|^{2}}\biggr\}|\nabla_{x}\overline{u}(\Phi(y))|^{2}|\det(\nabla\Phi(y))|
    	\biggr]\\
    	\ge
    	&-C_{4}\biggl[\frac{1}{2r}\int_{B_{r}(0)}\!{}
        \min\biggl\{1,\frac{1}{|\Phi(y)|^{2}}\biggr\}
    	|\nabla{}\overline{u}(\Phi(y))|^{2}|\det(\nabla\Phi(y))|
    	\biggr]\\
    	&
    	+\frac{1}{r^{3}}\int_{\partial{}B_{r}(0)}
        \min\biggl\{1,\frac{1}{|\Phi(y)|^{2}}\biggr\}|\nabla_{x}\overline{u}(\Phi(y))\nabla\Phi\bigl(y\bigr)y|^{2}|\det(\nabla\Phi(y))|\\
    	&-C_{5}-C_{6}\frac{1}{2r}\int_{B_{r}^{-}(0)}\!{}\frac{1}{|\Phi(y)|^{2}}
    	|\nabla_{x}\overline{u}\bigl(\Phi(y)\bigr)|^{2}|\det(\nabla\Phi(y))|\\
    	&-C_{7}\biggl[\frac{1}{2r}\int_{B_{r}(0)}\!{}
        \min\biggl\{1,\frac{1}{|\Phi(y)|^{2}}\biggr\}
        |\nabla_{x}\overline{u}\bigl(\Phi(y)\bigr)|^{2}|\det(\nabla\Phi(y))|\biggr]\\
    	\ge&\frac{1}{r^{3}}\int_{\partial{}B_{r}(0)}\!{}
        \min\biggl\{1,\frac{1}{|\Phi(y)|^{2}}\biggr\}|\nabla_{x}\overline{u}(\Phi(y))\nabla\Phi\bigl(y\bigr)y|^{2}|\det(\nabla\Phi(y))|\\
    	&-(C_{4}+C_{6}+C_{7})
    	\biggl[\frac{1}{2r}\int_{B_{r}(0)}\!{}
        \min\biggl\{1,\frac{1}{|\Phi(y)|^{2}}\biggr\}
        |\nabla_{x}\overline{u}\bigl(\Phi(y)\bigr)|^{2}|\det(\nabla\Phi(y))|\biggr]\\
    	&-C_{5}.
    \end{align*}
    Choosing $r_{0}>0$ sufficiently small, dividing both sides by $1+C_{3}r$, and bounding from below gives
    \begin{align}\label{eq:estim}
    	&\frac{\mathrm{d}}{\mathrm{d}r}\biggl[
    	\frac{1}{2r}\int_{B_{r}(0)}\!{}\biggl\{
        \min\biggl\{1,\frac{1}{|\Phi(y)|^{2}}\biggr\}|\nabla_{x}\overline{u}(\Phi(y))|^{2}\biggr\}|\det(\nabla\Phi(y))|
    	\biggr]\\ \nonumber
    	\ge&\overline{C}_{1}
    	\biggl[\frac{1}{r^{3}}\int_{\partial{}B_{r}(0)}\!{}
        \min\biggl\{1,\frac{1}{|\Phi(y)|^{2}}\biggr\}
        |\nabla_{x}\overline{u}(\Phi(y))\nabla\Phi\bigl(y\bigr)y|^{2}|\det(\nabla\Phi(y))|
    	\biggr]\\ \nonumber
    	&-\overline{C}_{2}
    	\biggl[\frac{1}{2r}\int_{B_{r}(0)}\!{}
        \min\biggl\{1,\frac{1}{|\Phi(y)|^{2}}\biggr\}
        |\nabla_{x}\overline{u}\bigl(\Phi(y)\bigr)|^{2}|\det(\nabla\Phi(y))|\biggr]\\ \nonumber
    	&-\overline{C}_{3}.
    \end{align}
    Similar to the formal calculation we let $F_{x_{0}}\colon(0,\frac{r_{0}}{2})\to\mathbb{R}$ be defined by
    \begin{equation*}
    	F_{x_{0}}(r)\coloneqq\frac{1}{2r}\int_{B_{r}(0)}\!{}
        \min\biggl\{1,\frac{1}{|\Phi(y)|^{2}}\biggr\}|\nabla_{x}\overline{u}(\Phi(y))|^{2}|\det(\nabla\Phi(y))|.
    \end{equation*}
    Then our above calculation gives
    \begin{equation*}
    	F_{x_{0}}'(r)\ge-\overline{C}_{2}F_{x_{0}}(r)-\overline{C}_{3}.
    \end{equation*}
    As in the formal calculation this gives that the function
    \begin{equation*}
    	r\mapsto{}e^{\overline{C}_{2}r}F_{x_{0}}(r)+\overline{C}_{3}r
    \end{equation*}
    is non-decreasing.
    Applying the Change of Variables Theorem with $\Phi^{-1}$ now shows that the following function is non-decreasing
    \begin{align*}
    	r\mapsto{}e^{\overline{C}_{2}r}\biggl[\frac{1}{2r}\int_{\Phi(B_{r}(x_{0}))\cap\Omega}\!{}
        \min\biggl\{1,\frac{1}{|x|^{2}}\biggr\}|\nabla_{x}\overline{u}|^{2}\biggr]+\overline{C}_{3}r.
    \end{align*}
    Notice that the constants we obtained only depended on estimates of
    $\Phi$ which are independent of $x_{0}$ since similar local
    representations about any point on $\partial\Omega$ can be obtained by
    applying a rotation to $\Phi$.
\end{proof}
Next we provide a Corollary that will be useful for proving partial regularity.
Before we begin we introduce the following notation for the $1$-density
of the energy measure
\begin{align*}
    \Theta(\overline{u},x_{0},s)\coloneqq\frac{1}{2s}\int_{B_{s}(x_{0})}\!{}
|\nabla{}\overline{u}|^{2}
\, .
\end{align*}

\begin{corollary}\label{corr:densbehav}
    Suppose $u\colon\Omega\to\mathbb{S}^{2}$ is a minimizer of
    \eqref{def:dir}, $\overline{u}$ is defined by \eqref{eq:extension},
    and $x_{0}\in\partial\Omega$.
    Then,
    \begin{equation}\label{eq:dirdenslimit}
        \lim_{s\to0^{+}}\Theta(\overline{u},x_{0},s)
        =\lim_{s\to0^{+}}f_{x_{0},r_{0}}(s)
    \end{equation}
    exists and there exists $\overline{r}_{0}>0$ such that
    \begin{equation}\label{eq:dirdensbound}
        \sup_{0<r<\overline{r}_{0}}
        \biggl\{\Theta(\overline{u},x_{0},r)\biggr\}
        \le{}\overline{C}
    \end{equation}
    where $\overline{C}$ is independent of $x_{0}$.
\end{corollary}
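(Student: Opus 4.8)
Both assertions will be read off the Monotonicity Formula (Proposition~\ref{prop:mod_monotonicity_formula}) and the leading-order expansions \eqref{eq:phileadingorder}. Fix $x_{0}\in\partial\Omega$ (as in Proposition~\ref{prop:mod_monotonicity_formula} take $x_{0}=(0,0,1)$ without loss of generality) and let $r_{0},C_{1},C_{2}$ be as there, so that $g(r)\coloneqq e^{C_{1}r}f_{x_{0},r_{0}}(r)+C_{2}r$ is non-decreasing on $(0,\tfrac{r_{0}}{2})$. Since $f_{x_{0},r_{0}}\ge 0$, the function $g$ is bounded below, hence $\lim_{r\to0^{+}}g(r)=\inf_{0<r<r_{0}/2}g(r)$ exists and is finite; because $e^{C_{1}r}\to 1$ and $C_{2}r\to 0$, also $L_{x_{0}}\coloneqq\lim_{r\to0^{+}}f_{x_{0},r_{0}}(r)$ exists and is finite. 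Moreover, fixing $\overline{r}_{0}<\tfrac{r_{0}}{2}$ and using $g(r)\le g(\overline{r}_{0})$ for $0<r<\overline{r}_{0}$ gives $f_{x_{0},r_{0}}(r)\le e^{|C_{1}|\overline{r}_{0}}\bigl(e^{|C_{1}|\overline{r}_{0}}f_{x_{0},r_{0}}(\overline{r}_{0})+|C_{2}|\overline{r}_{0}\bigr)$, and since $\min\{1,\tfrac{1}{|x|}\}\le 1$ and $\Phi(B_{\overline{r}_{0}}(0))\subseteq B_{2}(0)$, we get $f_{x_{0},r_{0}}(\overline{r}_{0})\le\tfrac{1}{2\overline{r}_{0}}\int_{B_{2}(0)}|\nabla\overline{u}|^{2}$, which is finite and independent of $x_{0}$ because $\overline{u}$ is the fixed $W^{1,2}$-reflection \eqref{eq:extension} of $u$. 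Thus $f_{x_{0},r_{0}}$ is bounded on $(0,\overline{r}_{0})$ by a constant independent of $x_{0}$.

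It remains to transfer these facts to $\Theta(\overline{u},x_{0},\cdot)$. Since $\Phi(0)=x_{0}$, $\nabla\Phi(0)=I_{3}$ and $\Phi\in C^{3}$, Taylor's theorem gives $\Phi(y)=x_{0}+y+O(|y|^{2})$ on $B_{r_{0}/2}(0)$ with constants uniform in $x_{0}$ (by rotation), and as $\Phi$ is a diffeomorphism $C^{2}$-close to a translation it follows that $B_{(1-Cr)r}(x_{0})\subseteq\Phi(B_{r}(0))\subseteq B_{(1+Cr)r}(x_{0})$ for $r$ small. Also $|\Phi(y)|=1+y_{3}$, so on $B_{r}(0)$ the weight satisfies $|\min\{1,\tfrac{1}{|\Phi(y)|}\}-1|\le r$, and $\min\{1,\tfrac{1}{|x|}\}\ge\tfrac12$ on all of $B_{2}(0)$. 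Writing $\mu\coloneqq|\nabla\overline{u}|^{2}\mathcal{L}^{3}$ and $\mu_{w}\coloneqq\min\{1,\tfrac{1}{|x|}\}\mu$, so that $\mu_{w}(\Phi(B_{s}(0)))=2s\,f_{x_{0},r_{0}}(s)$, the inclusions together with $\mu\le 2\mu_{w}$ give $\mu(B_{r}(x_{0}))\le 4(1+Cr)r\,f_{x_{0},r_{0}}((1+Cr)r)$, whence \eqref{eq:dirdensbound} follows from the bound of the first paragraph (shrinking $\overline{r}_{0}$ if needed). For \eqref{eq:dirdenslimit}: the symmetric difference $B_{r}(x_{0})\,\triangle\,\Phi(B_{r}(0))$ lies in $\Phi(B_{(1+Cr)r}(0))\setminus\Phi(B_{(1-Cr)r}(0))$, whose $\mu_{w}$-mass equals $2(1+Cr)r\,f_{x_{0},r_{0}}((1+Cr)r)-2(1-Cr)r\,f_{x_{0},r_{0}}((1-Cr)r)$; since $f_{x_{0},r_{0}}\to L_{x_{0}}$ and is bounded, this is $2r\bigl[2Cr\,f_{x_{0},r_{0}}(r)+o(1)\bigr]=o(r)$ as $r\to0^{+}$. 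Using $\mu\le 2\mu_{w}$ to pass to $\mu$, and $0\le 1-\min\{1,\tfrac{1}{|x|}\}\le r$ to pass from $\mu_{w}(\Phi(B_{r}(0)))=2r\,f_{x_{0},r_{0}}(r)$ to $\mu(\Phi(B_{r}(0)))=2r\,f_{x_{0},r_{0}}(r)+o(r)$, one concludes $\mu(B_{r}(x_{0}))=2r\,f_{x_{0},r_{0}}(r)+o(r)$, i.e.\ $\Theta(\overline{u},x_{0},r)=f_{x_{0},r_{0}}(r)+o(1)\to L_{x_{0}}$, which is \eqref{eq:dirdenslimit}.

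The step I expect to be the main obstacle is this last one: estimating $\mu$ on the annular region of relative width $O(r)$ that separates $B_{r}(x_{0})$ from $\Phi(B_{r}(0))$. The energy-density bound alone makes this only $O(r)$, which is not enough to close the argument; one genuinely needs the near-proportionality of the weighted energy at scale $r$ furnished by the Monotonicity Formula — equivalently, the convergence of $f_{x_{0},r_{0}}$ — in order to see that this correction is $o(r)$ and therefore disappears in the limit. All remaining estimates are routine bookkeeping with the explicit expressions for $\Phi$, $\nabla\Phi$ and $\nabla^{2}\Phi$ recorded before Lemma~\ref{lem:monotonicityfamily}.
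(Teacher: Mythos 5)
Your proof is correct and takes essentially the same route as the paper's: both extract convergence of $f_{x_{0},r_{0}}$ from the Monotonicity Formula, and both use the leading-order expansions of $\Phi$ to produce the two-sided comparison $B_{(1-Cr)r}(x_{0})\subseteq\Phi(B_{r}(0))\subseteq B_{(1+Cr)r}(x_{0})$ that transfers this to $\Theta(\overline{u},x_{0},\cdot)$, with the convergence of $f_{x_{0},r_{0}}$ (not just its boundedness) being the ingredient that kills the annular error — exactly the point you flag as the crux. The only difference is organizational: the paper squeezes $\Theta$ between rescaled copies of $f_{x_{0},r_{0}}$ and then $f_{x_{0},r_{0}}$ between rescaled copies of $\Theta$, whereas you isolate the symmetric difference explicitly; note that your claimed inclusion $B_{r}(x_{0})\triangle\Phi(B_{r}(0))\subseteq\Phi(B_{(1+Cr)r}(0))\setminus\Phi(B_{(1-Cr)r}(0))$ fails with the same constant $C$ on the outer side (one finds $(1-C(1+Cr)r)(1+Cr)=1-2C^{2}r^{2}+O(r^{3})<1$, so $B_{r}(x_{0})\not\subseteq\Phi(B_{(1+Cr)r}(0))$), but replacing $C$ by, say, $2C$ in the outer radius fixes this harmlessly and the $o(r)$ estimate goes through unchanged.
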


Corollary~\ref{corr:densbehav} justifies the definition of
\begin{align*}
    \Theta(\overline{u},x_{0})
    \ \coloneqq \
    \lim_{s\to0^{+}}\Theta(\overline{u},x_{0},s)
    \, .
\end{align*}

\begin{proof}[Proof of Corollary~\ref{corr:densbehav}]
    First, observe that \eqref{eq:dirdensbound} follows from
    \eqref{eq:dirdenslimit} and so it suffices to demonstrate that
    \eqref{eq:dirdenslimit} holds.
    Let $0<r_{0}<\frac{1}{4}$ be chosen as in Proposition
    \ref{prop:mod_monotonicity_formula}.
    Since $\Phi$ satisfies \eqref{eq:phileadingorder} then there exists
    $c_{0}>0$
    such that
    \begin{equation*}
        |\Phi^{-1}(x^{2})-\Phi^{-1}(x^{1})|\le{}(1+c_{0}s)|x^{2}-x^{1}|
    \end{equation*}
    for all $x^{1},x^{2}\in{}\Phi\bigl(B_{s}(0)\bigr)$ where
    $0<s<\frac{r_{0}}{4}$.
    In addition, we have that $\Phi\bigl(B_{\frac{r_{0}}{4}}(0)\bigr)$ is
    open and hence there exists $s>0$ such that
    $B_{s}(x_{0})\subseteq\Phi\bigl(B_{\frac{r_{0}}{4}}(0)\bigr)$.
    Next we define $s_{0}>0$ by
    \begin{equation*}
        s_{0}\coloneqq\sup\bigl\{
        s>0:B_{s}(x_{0})\subseteq{}\Phi\bigl(B_{\frac{r_{0}}{4}}(0)\bigr)
        \bigr\}.
    \end{equation*}
    For $0<s<\min\bigl\{s_{0},\frac{r_{0}}{4(1+c_{0})}\bigr\}$ we now have that
    $\Phi^{-1}\bigl(B_{s}(x_{0})\bigr)\subseteq{}B_{(1+c_{0}s)s}(0)$
    and hence
    \begin{equation*}
        B_{s}(x_{0})\subseteq\Phi\bigl(B_{(1+c_{0}s)s}(0)\bigr).
    \end{equation*}
    Next, we see that for all
    $0<s<\min\bigl\{s_{0},\frac{r_{0}}{4(1+c_{0})}\bigr\}$ we have
    \begin{align*}
        &e^{C_{1}s}\biggl\{\frac{1}{2s}\int_{B_{s}(x_{0})}\!{}
        |\nabla{}\overline{u}|^{2}\biggr\}+C_{2}s\\
        \le&(1+s)^{2}e^{C_{1}s}\biggl\{\frac{1}{2s}
        \int_{B_{s}(x_{0})}\!{}
        \min\biggl\{1,\frac{1}{|x|^{2}}\biggr\}|\nabla\overline{u}|^{2}
        \biggr\}+C_{2}s\\
        \le&(1+s)^{2}e^{C_{1}s}\biggl\{\frac{1}{2s}
        \int_{\Phi(B_{(1+c_{0}s)s}(0))}\!{}
        \min\biggl\{1,\frac{1}{|x|^{2}}\biggr\}|\nabla\overline{u}|^{2}
        \biggr\}+C_{2}s\\
        \le&(1+s)^{2}e^{|C_{1}c_{0}|s}
        \max\biggl\{1+c_{0}s,\frac{1}{1+c_{0}s}\biggr\}f_{x_{0},r_{0}}((1+c_{0}s)s).
    \end{align*}
    Similarly, we have, since $\Phi$ satisfies \eqref{eq:phileadingorder},
    that there is $c_{1}>0$ such that
    \begin{equation*}
        |\Phi(y^{2})-\Phi(y^{1})|\le{}(1+c_{1}s)|y^{2}-y^{1}|
    \end{equation*}
    for all $y^{1},y^{2}\in{}B_{s}(0)$.
    Hence, for $0<s<\frac{r_{0}}{4}$ we have
    \begin{equation*}
        \Phi\bigl(B_{s}(0)\bigr)\subseteq{}B_{(1+c_{1}s)s}(x_{0}).
    \end{equation*}
    We conclude that for all $0<s<\frac{r_{0}}{4}$ we have
    \begin{equation*}
        f_{x_{0},r_{0}}(s)\le{}
        e^{|C_{1}c_{1}|s}\max\biggl\{1+c_{1}s,\frac{1}{1+c_{1}s}\biggr\}
        \biggl[{}e^{C_{1}(1+c_{1}s)s}\biggl\{
        \frac{1}{2(1+c_{1}s)s}\int_{B_{(1+c_{1}s)s}(0)}\!{}
        |\nabla\overline{u}|^{2}
        \biggr\}+C_{2}(1+c_{1}s)s\biggr].
    \end{equation*}
    For $0<s<\min\bigl\{\frac{s_{0}}{1+c_{1}},\frac{r_{0}}{4(1+c_{0})(1+c_{1})
    }\bigr\}$ we find, after combining the above inequalities with
    Proposition \ref{prop:mod_monotonicity_formula} and the Squeeze Theorem
    we now have that
    \begin{equation*}
        \lim_{s\to0^{+}}\biggl\{\frac{1}{2(1+c_{1}s)s}
        \int_{B_{(1+c_{1}s)s}(0)}\!{}
        |\nabla{}\overline{u}|^{2}\biggr\}
        =\lim_{s\to0^{+}}f_{x_{0},r_{0}}(s).
    \end{equation*}
\end{proof}

The Monotonicity Formula can now be used to establish regularity results pertaining to minimizers of \eqref{def:dir}.
In order to do this, we use the arguments of
\cite{DiMiPi} in order to obtain H\"o{}lder regularity when the $1$-density of the energy measure is small.

\begin{proposition}\label{prop:part_reg}
    Let the singular set $\Sigma(u)\subset\overline{B_1(0)}$ be defined as
    \begin{equation}\label{def:singset}
    \Sigma(u)
    \ \coloneqq \
    \Big\{ x\in\overline{B_1(0)} \sd \Theta(u,x) > 0 \Big\}
    \, .
    \end{equation}
    Then $\mathcal{H}^1(\Sigma(u))=0$ and $u\in C^\infty(B_1(0)\setminus\Sigma(u))$. 
\end{proposition}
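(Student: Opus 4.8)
The plan is to run the classical Schoen--Uhlenbeck/Giaquinta--Giusti partial-regularity scheme \cite{SU,GiGi} in the reflected formulation of \cite{DiMiPi}, using as its engine the up-to-the-boundary monotonicity formula of Proposition~\ref{prop:mod_monotonicity_formula} together with the uniform density bound of Corollary~\ref{corr:densbehav}. There are three ingredients: an $\varepsilon$-regularity lemma; upper semicontinuity of the density together with an elliptic bootstrap; and a Federer-type dimension reduction for the high-density set. Note that since $B_1(0)$ is open, the smoothness assertion concerns only interior points, for which the ordinary interior harmonic-map theory for $u$ suffices once continuity is known; it is the assertion $\mathcal H^1(\Sigma(u))=0$ — with $\Sigma(u)\subseteq\overline{B_1(0)}$ possibly meeting $\partial B_1(0)$ — that forces us to control the density near the boundary, which is exactly what the results of Section~\ref{sec:refl_E_reg} provide.

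\emph{$\varepsilon$-regularity.} I would first establish that there are $\varepsilon_0>0$ and $\overline r_0>0$, both independent of $x_0$, such that whenever $x_0\in\overline{B_1(0)}$ and $\Theta(\overline u,x_0,r)<\varepsilon_0$ for some $0<r<\overline r_0$, the map $\overline u$ is $C^{0,\alpha}$ near $x_0$ for some $\alpha\in(0,1)$ — hence $C^{1,\alpha}$ there by the divergence structure of the equation from Lemma~\ref{lem:global_PDE}. For $x_0\in\Omega$ this is the $\varepsilon$-regularity of minimizers of $E$, and for $x_0\in B_2(0)\setminus\overline\Omega$ that of minimizers of $\widetilde{E}$ (whose principal part is the Dirichlet integral with the smooth, uniformly elliptic weight $|x|^{-2}$, $1\le|x|\le2$), so the genuinely new case is $x_0\in\partial\Omega$. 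There I would argue as in \cite{DiMiPi}: combine the almost-monotonicity of $f_{x_0,r_0}$ with a comparison estimate in which $\overline u$ is replaced, on $\Phi(B_r(x_0))$, by a competitor obtained from the spherical reflection of Section~\ref{sec:refl_E_reg}, chosen so as to be tangential along $\partial\Omega$ and to coincide with $\overline u$ on $\partial\Phi(B_r(x_0))$; minimality of $u$ on $\Omega$ and of $\widetilde{u}$ on $B_2(0)\setminus\Omega$ \emph{separately} then yields a decay of the scaled energy on each side, the lower-order terms $\mathcal F,\mathcal G$ of Proposition~\ref{prop:inner_variations_tang} and the extra terms in the PDE contributing only errors of strictly subcritical scaling. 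A Morrey-type iteration of this decay gives the Hölder bound. This $\varepsilon$-regularity step is the main obstacle: one cannot invoke \cite{Luck}, since $\overline u$ minimizes only $E$ on $\Omega$ and $\widetilde{E}$ on $B_2(0)\setminus\Omega$ and not the combined energy $\overline{E}$, so the competitor must be built on each half while preserving tangency on $\partial\Omega$, and one must verify that the reflection-induced errors are genuinely subcritical so that the iteration closes with a fixed exponent.

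\emph{Closedness of $\Sigma$ and interior smoothness.} From Proposition~\ref{prop:mod_monotonicity_formula} the map $x_0\mapsto\Theta(\overline u,x_0)$ is an infimum of functions continuous in $x_0$ (up to the explicit $e^{C_1r}$, $C_2r$ corrections), hence upper semicontinuous, so $\{x_0:\Theta(\overline u,x_0)\ge\varepsilon_0\}$ is relatively closed. Since a point with $\Theta(\overline u,x_0)<\varepsilon_0$ is regular and there the density vanishes identically, $\Theta(\overline u,\cdot)$ takes values in $\{0\}\cup[\varepsilon_0,\infty)$; as $\overline u$ extends $u$ (so the two densities have the same zero set on $\overline{B_1(0)}$, being equal on $B_1(0)$ and comparable up to a constant on $\partial B_1(0)$ via the reflection),
\begin{equation*}
    \Sigma(u)=\bigl\{x\in\overline{B_1(0)}:\Theta(\overline u,x)\ge\varepsilon_0\bigr\}
\end{equation*}
is relatively closed. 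On $B_1(0)\setminus\Sigma(u)$ the map $u$ is continuous and solves $-\Delta u=|\nabla u|^2u$, so the standard harmonic-map bootstrap (rewrite the equation in a good moving frame and iterate Schauder estimates) upgrades $u$ to $C^\infty$, indeed real-analytic.

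\emph{Vanishing of $\mathcal H^1(\Sigma)$.} Finally I would carry out the Federer dimension reduction. Using the uniform bound \eqref{eq:dirdensbound} and the almost-monotonicity, the blow-ups $\overline u_{x_*,r_j}(y):=\overline u(x_*+r_jy)$ at a point $x_*\in\Sigma(u)$ are bounded in $W^{1,2}_{\mathrm{loc}}(\mathbb{R}^{3})$ and, after passing to a subsequence and upgrading to strong convergence via minimality on each side, converge to a tangent map $\varphi\colon\mathbb{R}^{3}\to\mathbb{S}^{2}$ that is a minimizing harmonic map — the weight $\min\{1,|x|^{-2}\}$ tends to the constant $1$ near $\partial\Omega$, the spherical inversion degenerates to a Euclidean reflection, and every error term carries an extra power of $r_j$ — and that is $0$-homogeneous, the boundary term of the monotonicity formula being forced to vanish in the limit. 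If $\mathcal H^1(\Sigma(u))>0$, then at $\mathcal H^1$-almost every $x_*\in\Sigma(u)$ one can take $\varphi$ invariant under translation along a line $\ell$; together with $0$-homogeneity this makes $\varphi$ constant along $\ell$, so its restriction to a plane orthogonal to $\ell$ is a $0$-homogeneous minimizing harmonic map $\mathbb{R}^{2}\to\mathbb{S}^{2}$, hence constant, contradicting $\ell\subseteq\Sigma(u)$. Therefore $\dim_{\mathcal H}\Sigma(u)\le0$, and in particular $\mathcal H^1(\Sigma(u))=0$.
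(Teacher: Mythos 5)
Your $\varepsilon$-regularity and bootstrap steps follow the same route as the paper: the paper simply verifies the hypotheses of \cite[Theorem 2.12]{DiMiPi} (the reflected PDE of Lemma~\ref{lem:global_PDE} supplies the required structure, and Corollary~\ref{corr:densbehav} supplies the uniform density bound) and then invokes \cite[Corollary 2.19]{DiMiPi} for analyticity, so your more detailed sketch of the comparison/decay argument is consistent with, if more explicit than, what is actually done. Where you genuinely diverge is the proof that $\mathcal{H}^1(\Sigma(u))=0$. The paper obtains this from a soft covering/density argument (\cite[Proposition 9.21]{GiMe}): the uniform bound $\Theta(\overline{u},x,r)\le\overline{C}$ makes each set $\{\Theta\ge\varepsilon\}$ of finite $\mathcal{H}^1$-measure, hence $\Sigma(u)$ is Lebesgue-null, hence the absolutely continuous measure $|\nabla\overline{u}|^2\,\mathrm{d}x$ vanishes on $\Sigma(u)$, and the standard upper-density comparison theorem then forces $\Theta=0$ at $\mathcal{H}^1$-a.e.\ point of $\Sigma(u)$ — no blow-up analysis is needed. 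You instead run a Federer dimension reduction, which buys the stronger conclusion $\dim_{\mathcal H}\Sigma(u)\le 0$, but that is precisely the content of the \emph{next} result, Proposition~\ref{prop:full_reg}, and it is not free in this tangential-boundary setting: it requires the compactness and strong $W^{1,2}$-convergence of blow-up sequences (Lemma~\ref{lem:full_reg_cptness}, a nontrivial adaptation of \cite{Luck} respecting the tangency constraint) and the upper semicontinuity of the density under varying base points (Lemma~\ref{lem:Theta_upper_semi_continuity}, which near the boundary only holds up to a factor of $2$), both of which the paper establishes only after Proposition~\ref{prop:part_reg}. Also note that your phrase ``at $\mathcal{H}^1$-a.e.\ $x_*$ one can take $\varphi$ invariant under translation along a line'' compresses the usual two-step reduction (first produce a tangent map whose singular set has positive $\mathcal{H}^1$-measure via upper semicontinuity, then blow up again at a nonzero singular point of that tangent map to gain the translation invariance). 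So your argument is workable in principle but is heavier than necessary for the statement at hand and, as written at this stage of the paper, rests on machinery not yet available; the covering argument is the intended and far more economical route.
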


\begin{proof}
    The goal of this proof is to apply Theorem 2.12 in \cite{DiMiPi}.
    We therefore need to check the assumptions of this Theorem. 
    From Lemma~\ref{lem:global_PDE} we know that $u$ solves a PDE of the required form \cite[(2.44)]{DiMiPi}.
    By Corollary~\ref{corr:densbehav} we know that $\sup_{0<r<r_0}\Theta(u,x,r)\leq \overline{C}$ for some constant $\overline{C}$, which implies \cite[(2.45)]{DiMiPi}. 
    We therefore conclude that $u$ is $C^{0,\alpha}-$regular outside of $\Sigma(u)$ and by Corollary~2.19 in \cite{DiMiPi}, one obtains that $u$ is analytic away from $\Sigma(u)$.

    The fact that $\mathcal{H}^1(\Sigma(u))=0$ follows from a covering argument, see e.g.\ \cite[Proposition 9.21]{GiMe}.  
\end{proof}

We conclude this subsection with a corollary demonstrating that we
can obtain $L^{2}$ control of the radial derivative on a set diffeomorphic
to an annuli centered on $\partial\Omega$.
This will be important for establishing full regularity in Subsection
\ref{subsec:fullreg}. To simplify again the notation, we will drop the dependence on $r_0$ in the definition of $f_{x_0,r_0}$.

\begin{corollary}\label{cor:radialmono}
    Suppose $u\colon\Omega\to\mathbb{S}^{2}$ is a minimizer for
    \eqref{def:dir} and $\overline{u}$ is an extension defined as in
    \eqref{eq:extension}.
    Suppose also that $x_{0}\in\partial\Omega$.
    Then for $0<r_{0}<\frac{1}{4}$ chosen sufficiently small there exists
    $C>0$ such that for $0<r_{1}<r_{2}<\frac{r_{0}}{2}$ we have
    \begin{align*}
        C\int_{\Phi(B_{r_{2}}(0))\setminus\Phi(B_{r_{1}}(0))}\!{}
        &\frac{1}{|x-x_{0}|}
        \biggl|\nabla\overline{u}(x)\cdot\frac{x-x_{0}}{|x-x_{0}|}\biggr|^{2}\\
        &\le{}f_{x_{0}}(r_{2})-f_{x_{0}}(r_{1})
        +E(\overline{u},
        \Phi(B_{r_{2}}(0))\setminus{}\Phi(B_{r_{1}}(0))).
    \end{align*}
\end{corollary}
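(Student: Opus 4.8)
The plan is to derive the inequality by integrating, over $r\in(r_1,r_2)$, the rigorous differential inequality \eqref{eq:estim} that was produced inside the proof of Proposition~\ref{prop:mod_monotonicity_formula}, and then pulling everything back to the physical domain through the change of variables $x=\Phi(y)$. Writing $F_{x_0}$ as in that proof and letting $H(r)$ denote the boundary term appearing on the right of \eqref{eq:estim}, the estimate \eqref{eq:estim} reads $F_{x_0}'(r)\ge\overline{C}_1 H(r)-\overline{C}_2 F_{x_0}(r)-\overline{C}_3$, so that integrating between $r_1$ and $r_2$ gives
\[
\overline{C}_1\int_{r_1}^{r_2}H(r)\,\mathrm{d}r\ \le\ F_{x_0}(r_2)-F_{x_0}(r_1)+\overline{C}_2\int_{r_1}^{r_2}F_{x_0}(r)\,\mathrm{d}r+\overline{C}_3(r_2-r_1).
\]
The whole task is then to recognise the left side as a constant multiple of the radial integral in the statement, and the right side as $f_{x_0}(r_2)-f_{x_0}(r_1)+E(\overline{u},\Phi(B_{r_2}(0))\setminus\Phi(B_{r_1}(0)))$ up to harmless errors.

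For the left side I would first apply the Coarea Formula to rewrite $\int_{r_1}^{r_2}H(r)\,\mathrm{d}r$ as an integral over the flat annulus $B_{r_2}(0)\setminus B_{r_1}(0)$ of $|y|^{-3}\min\{1,|\Phi(y)|^{-2}\}\,|\nabla_x\overline{u}(\Phi(y))\,\nabla\Phi(y)y|^2\,|\det\nabla\Phi(y)|$, and then substitute $x=\Phi(y)$; since the Jacobian of this substitution is exactly the factor $|\det\nabla\Phi(y)|$ already present, the integral is transported to $\Phi(B_{r_2}(0))\setminus\Phi(B_{r_1}(0))$ with the remaining integrand expressed through $y=\Phi^{-1}(x)$. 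Using $\Phi^{-1}(x_0)=0$, the leading order expansions \eqref{eq:phileadingorder}, and the explicit formula for $\nabla\Phi(\Phi^{-1}(x))\Phi^{-1}(x)$ computed above, one checks that on the spherical cap $\mathcal{O}_{x_0,r_0}$ this vector agrees with $x-x_0$ up to an error of size $O(|x-x_0|^2)$ — the correction terms, a priori only of first order in $|x-x_0|$, are in fact quadratic on $\mathcal{O}_{x_0,r_0}$ because its points satisfy a constraint — and likewise $|\Phi^{-1}(x)|=|x-x_0|(1+O(r_0))$. Peeling off the cross term via $(A-B)^2\ge\tfrac12 A^2-B^2$, the left side is then bounded below, for $r_0$ small, by $c\int_{\Phi(B_{r_2}(0))\setminus\Phi(B_{r_1}(0))}\frac{1}{|x-x_0|}\bigl|\nabla\overline{u}(x)\tfrac{x-x_0}{|x-x_0|}\bigr|^2$ minus a term of the form $Cr_0\int_{\Phi(B_{r_2}(0))\setminus\Phi(B_{r_1}(0))}|\nabla\overline{u}|^2$, which is moved to the right and absorbed into $E(\overline{u},\Phi(B_{r_2}(0))\setminus\Phi(B_{r_1}(0)))$ up to a dimensional constant; also $\min\{1,|x|^{-2}\}\ge\tfrac12$ on $\Phi(B_{r_0/2}(0))$, which accounts for the weight in $H$.

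The remaining work, which I expect to be the main obstacle, is the bookkeeping on the right side. Changing variables $x=\Phi(y)$ identifies $F_{x_0}(r_2)-F_{x_0}(r_1)$ with $f_{x_0}(r_2)-f_{x_0}(r_1)$ modulo the discrepancy between the weights $\min\{1,|x|^{-2}\}$ and $\min\{1,|x|^{-1}\}$, which vanishes on $\Omega$ and is $O(r_0)$ on the exterior part of $\mathcal{O}_{x_0,r_0}$, hence again controllable by $r_0$ times a Dirichlet energy; and the zeroth order contributions $\overline{C}_2\int_{r_1}^{r_2}F_{x_0}(r)\,\mathrm{d}r+\overline{C}_3(r_2-r_1)$ are finite and of order $O(r_2-r_1)$, since by Corollary~\ref{corr:densbehav} one has $F_{x_0}(r)\le C\,\Theta(\overline{u},x_0,Cr)\le C\overline{C}$ uniformly for $r<\tfrac{r_0}{2}$. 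One then has to argue carefully that all of these lower order pieces are genuinely dominated by the energy term $E(\overline{u},\Phi(B_{r_2}(0))\setminus\Phi(B_{r_1}(0)))$ on the right of the asserted inequality, uniformly in $r_1,r_2$. The delicate point is that the extracted radial integral cannot itself be absorbed in this way: carrying the weight $|x-x_0|^{-1}$, it is strictly more singular than the Dirichlet energy, so it must be paid for entirely by the difference $f_{x_0}(r_2)-f_{x_0}(r_1)$, exactly as in the monotonicity formula, with only the genuinely lower order errors charged to $E(\overline{u},\cdot)$. Finally, as in Proposition~\ref{prop:mod_monotonicity_formula}, all the constants depend only on $C^2$ bounds for $\Phi$, which are invariant under the rotations relating the coordinate charts at different points of $\partial\Omega$, so the conclusion holds uniformly in $x_0\in\partial\Omega$.
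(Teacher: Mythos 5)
Your proposal follows the same route as the paper's proof: integrate the differential inequality \eqref{eq:estim} over $[r_{1},r_{2}]$, use the coarea formula and the change of variables $x=\Phi(y)$, compare $\nabla\Phi\bigl(\Phi^{-1}(x)\bigr)\Phi^{-1}(x)$ with $x-x_{0}$ via the expansions computed in Subsubsection~\ref{subsubsec:rigorous_calc} (the cancellation in the $\mathbf{e}_{3}$-component making the discrepancy $O(|x-x_{0}|^{2})$ is exactly the point the paper uses), peel off the cross term with $\tfrac{1}{2}A^{2}-B^{2}\le(A-B)^{2}$, and charge the resulting $|x-x_{0}|^{2}|\nabla\overline{u}|^{2}$ error to $E(\overline{u},\Phi(B_{r_{2}}(0))\setminus\Phi(B_{r_{1}}(0)))$. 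All of these are the steps actually carried out in the paper, and your treatment of the weight discrepancy and of the uniformity in $x_{0}$ is also the paper's.

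The one point you flag but do not close is genuine: the contributions $\overline{C}_{2}\int_{r_{1}}^{r_{2}}F_{x_{0}}(r)\,\mathrm{d}r+\overline{C}_{3}(r_{2}-r_{1})$ are of order $r_{2}-r_{1}$ (by the uniform bound of Corollary~\ref{corr:densbehav}), and a quantity of order $r_{2}-r_{1}$ is \emph{not} dominated by $E(\overline{u},\cdot)$ on the annulus --- that energy can vanish while $r_{2}-r_{1}$ does not --- nor by $f_{x_{0}}(r_{2})-f_{x_{0}}(r_{1})$, which need not be nonnegative since only the corrected quantity $e^{\overline{C}_{2}r}F_{x_{0}}(r)+\overline{C}_{3}r$ is monotone. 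So with your (correct) bookkeeping the inequality one actually obtains carries an additional $+C(r_{2}-r_{1})$ on the right-hand side, or equivalently one must phrase it in terms of the corrected monotone quantity. The paper's own proof sidesteps this by asserting directly that $c_{0}H(r)\le f_{x_{0}}'(r)$, i.e.\ by silently discarding the lower-order terms of \eqref{eq:estim}; your version is the more honest one. Since the corollary is only invoked in the blow-up argument of Proposition~\ref{prop:full_reg}, where these $O(r_{2}-r_{1})$ contributions vanish in the limit, the discrepancy is harmless for the application, but you should state the extra term explicitly rather than claim it can be absorbed into the annulus energy.
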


\begin{proof}
    From the inequality \eqref{eq:estim} in the proof of Proposition~\ref{prop:mod_monotonicity_formula} we have that
    there exists a constant $c_{0}$ such that
    \begin{equation*}
        c_{0}\biggl[\frac{1}{r^{3}}\int_{\partial{}B_{r}(0)}
        \min\biggl\{1,\frac{1}{|\Phi(y)|^{2}}\biggr\}|\nabla_{x}\overline{u}(\Phi(y))\nabla\Phi\bigl(y\bigr)y|^{2}|\det(\nabla\Phi(y))|\biggr]
        \le\frac{\mathrm{d}f_{x_{0}}}{\mathrm{d}r}(r)
    \end{equation*}
    for $0<r<r_{0}$ where $r_{0}>0$ is chosen sufficiently small.
    Considering $0<r_{1}<r_{2}<r_{0}$, using absolute continuity of
    $r\mapsto{}\frac{1}{r^{3}}\int_{B_{r}(0)}\!{}
    \min\bigl\{1,\frac{1}{|\Phi(y)|^{2}}\bigr\}
    |\nabla_{x}\overline{u}(\Phi(y))\nabla\Phi(y)y|^{2}|\det(\nabla\Phi(y))|$, as well as integrating this inequality over
    $[r_{1},r_{2}]$ gives
    \begin{align*}
        c_{0}\biggl[\int_{A_{r_{1},r_{2}}(0)}
        \min\biggl\{1,\frac{1}{|\Phi(y)|^{2}}\biggr\}\frac{1}{|y|}\biggl|\nabla_{x}\overline{u}(\Phi(y))\nabla\Phi\bigl(y\bigr)\frac{y}{|y|}\biggr|^{2}
        |\det(\nabla\Phi(y))|\biggr]
        \le{}f_{x_{0}}(r_{2})-f_{x_{0}}(r_{1})
    \end{align*}
    where $A_{r_{1},r_{2}}(0)\coloneqq{}B_{r_{2}}(0)\setminus{}B_{r_{1}}(0)$.
    Making the change of variables $y=\Phi^{-1}(x)$ gives
    \begin{equation*}
        c_{0}\biggl[\int_{\Phi(B_{r_{2}}(0))\setminus
        \Phi(B_{r_{1}}(0))}\!{}
        \min\biggl\{1,\frac{1}{|x|^{2}}\biggr\}
        \frac{1}{|\Phi^{-1}(x)|}\biggl|\nabla_{x}\overline{u}(x)\nabla\Phi\bigl(\Phi^{-1}(x)\bigr)\frac{\Phi^{-1}(x)}{|\Phi^{-1}(x)|}\biggr|^{2}
        \biggr]
        \le{}f_{x_{0}}(r_{2})-f_{x_{0}}(r_{1}).
    \end{equation*}
    By the identities established at the beginning of
    Subsubsection~\ref{subsubsec:rigorous_calc} as well as that
    \begin{align*}
        |\Phi^{-1}(x)|^{2}&=
        \frac{x_{1}^{2}+x_{2}^{2}+|x|^{2}(|x|-1)^{2}}{|x|^{2}}
        \le\frac{(2+|x|^{2})|x-x_{0}|^{2}}{|x|^{2}}
        \le\frac{(2+r_{0})^{2}|x-x_{0}|^{2}}{|x|^{2}},\\
        x_{3}(|x|-1)&=(x_{3}-1)+O(|x-x_{0}|^{2}),\\
        |x|(x_{3}-|x|^{2})&=-(x_{3}-1)+O(|x-x_{0}|^{2}),
    \end{align*}
    where the last two identities follow from Taylor's Theorem applied
    near $x=x_{0}$,
    we have
    \begin{align*}
        \biggl|\nabla{}\overline{u}(x)\nabla\Phi\bigl(\Phi^{-1}(x)\bigr)
        \frac{\Phi^{-1}(x)}{|\Phi^{-1}(x)|}\biggr|
        &\ge\frac{2|x|}{2+r_{0}}\biggl|\nabla\overline{u}(x)\frac{x-x_{0}}{|x-x_{0}|}\biggr|
        -c_{1}|x-x_{0}||\nabla\overline{u}(x)|
    \end{align*}
    and hence, since $\frac{1}{2}x^{2}-y^{2}\le(x-y)^{2}$, we have
    \begin{equation*}
        \biggl|\nabla\overline{u}(x)\nabla\Phi\bigl(\Phi^{-1}(x)\bigr)
        \frac{\Phi^{-1}(x)}{|\Phi^{-1}(x)|}\biggr|^{2}
        \ge\frac{2|x|^{2}}{(2+r_{0})^{2}}\biggl|\nabla\overline{u}(x)\frac{x-x_{0}}{|x-x_{0}|}\biggr|^{2}
        -c_{1}^{2}|x-x_{0}|^{2}|\nabla\overline{u}(x)|^{2}.
    \end{equation*}
    Notice that, considering only the error term and using that
    $\frac{1}{|x|}\le1$ for $|x|\ge1$, we have that
    \begin{align*}
        c_{1}^{2}\int_{\Phi(B_{r_{2}}(0))\setminus\Phi(B_{r_{1}}(0))}\!{}
        \min\biggl\{1,\frac{1}{|x|^{2}}\biggr\}
        \frac{|x-x_{0}|^{2}}{|\Phi^{-1}(x)|}|\nabla\overline{u}(x)|^{2}
        \le{}c_{1}^{2}(1+r_{0})^{2}E(\overline{u},
        \Phi(B_{r_{2}}(0))\setminus\Phi(B_{r_{1}}(0))).
    \end{align*}
    Altogether this gives that there is $c_{2}>0$
    \begin{align*}
        c_{2}\biggl[\int_{\Phi(B_{r_{2}}(0))\setminus\Phi(B_{r_{1}}(0))}\!{}
        &\min\biggl\{1,\frac{1}{|x|^{2}}\biggr\}
        \frac{1}{|\Phi^{-1}(x)|}\biggl|\nabla\overline{u}(x)\cdot{}\frac{x-x_{0}}{|x-x_{0}|}\biggr|^{2}\\
        \le&(f_{x_{0}}(r_{2})-f_{x_{0}}(r_{1}))
        +c_{1}^{2}(1+r_{0})^{2}\overline{E}(\overline{u},
        \Phi(B_{r_{2}}(0))\setminus{}\Phi(B_{r_{1}}(0)))\\
        \le&\max\biggl\{1,c_{1}^{2}(1+r_{0})^{2}\biggr\}
        \Bigl[(f_{x_{0}}(r_{2})-f_{x_{0}}(r_{1}))
        +E(\overline{u},
        \Phi(B_{r_{2}}(0)_\setminus{}\Phi(B_{r_{1}}(0)))\Bigr].
    \end{align*}
    Using that $\min\bigl\{1,\frac{1}{|x|^{2}}\bigr\}\ge\frac{1}{4}$ on $B_{2}(0)$,
    that $|\Phi^{-1}(x)|\le\frac{|x|}{2+r_{0}}|x-x_{0}|$,
    as well as rearranging constants that there is $c_{3}>0$ such
    that
    \begin{align*}
        c_{2}\biggl[\int_{\Phi(B_{r_{2}}(0))\setminus\Phi(B_{r_{1}}(0))}\!{}
        &
        \frac{1}{|x-x_{0}|}\biggl|\nabla\overline{u}(x)\cdot{}\frac{x-x_{0}}{|x-x_{0}|}\biggr|^{2}\\
        &\le{}(f_{x_{0}}(r_{2})-f_{x_{0}}(r_{1}))
        +E(\overline{u},
        \Phi(B_{r_{2}}(0))\setminus{}\Phi(B_{r_{1}}(0))).
    \end{align*}
    Rearranging constants gives the desired conclusion.
\end{proof}

\subsection{Full Regularity}\label{subsec:fullreg}

In this subsection, we improve the regularity of Proposition~\ref{prop:part_reg} to the following \emph{full} regularity result:

\begin{proposition}\label{prop:full_reg}
    Let $u\in W^{1,2}(\Omega;\mathbb{S}^2)$ be a minimizer of \eqref{def:dir}.
    Then $H^0(\Sigma)<+\infty$ and $\Sigma$ is discrete.
\end{proposition}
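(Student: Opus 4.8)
The plan is to upgrade the partial regularity result of Proposition~\ref{prop:part_reg}, which tells us $\mathcal{H}^1(\Sigma)=0$, to the statement that $\Sigma$ is discrete, hence (being also closed by upper semi-continuity of the density) finite in the compact set $\overline{B_1(0)}$. The standard mechanism for this, following Schoen--Uhlenbeck \cite{SU} and the adaptation in \cite{Ev2,DiMiPi}, is a blow-up argument combined with a dimension reduction argument \`a la Federer, but the cleanest route here is to establish that any minimizer which is $0$-homogeneous about a singular point and smooth away from that point must be constant, and then use the almost-monotonicity formula of Proposition~\ref{prop:mod_monotonicity_formula} together with Corollary~\ref{cor:radialmono} to show that blow-ups are $0$-homogeneous.

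The key steps, in order, are as follows. \emph{Step 1: blow-up limits exist and are tangent maps.} Fix $x_0\in\Sigma$. If $x_0\in B_1(0)$ the interior theory of \cite{SU,Ev2} applies directly, so assume $x_0\in\partial\Omega$, and WLOG $x_0=(0,0,1)$. Using the uniform density bound \eqref{eq:dirdensbound} from Corollary~\ref{corr:densbehav}, the rescalings $\overline{u}_{\lambda}(x)\coloneqq\overline{u}(x_0+\lambda x)$ are bounded in $W^{1,2}_{\mathrm{loc}}$; a subsequence converges weakly, and strong convergence in $W^{1,2}_{\mathrm{loc}}$ on the relevant balls follows from the minimality/energy comparison exactly as in \cite{Ev2,DiMiPi} (the error terms $\mathcal{F},\mathcal{G}$ in Proposition~\ref{prop:inner_variations_tang} carry an extra factor of $\lambda$ after rescaling and vanish in the limit, since the coefficients of the reflected PDE from Lemma~\ref{lem:global_PDE} on $B_2(0)\setminus\Omega$ are smooth and bounded away from $|x|=0$). \emph{Step 2: the tangent map is $0$-homogeneous.} Here is where Corollary~\ref{cor:radialmono} enters: the function $f_{x_0}(r)$ is monotone-up-to-the-exponential-correction and bounded, hence $f_{x_0}(r_2)-f_{x_0}(r_1)\to 0$ along the blow-up scales, and together with the energy going to zero on thin annuli this forces $\nabla\overline{u}_\infty\cdot\frac{x-x_0}{|x-x_0|}=0$ a.e., i.e.\ the tangent map $\overline{u}_\infty$ is $0$-homogeneous (radially invariant) about the origin. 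Since $\partial\Omega$ blows up to a hyperplane, $\overline{u}_\infty$ is, after the reflection, a $0$-homogeneous minimizing (in the reflected sense) harmonic map on all of $\mathbb{R}^3$.

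\emph{Step 3: classify $0$-homogeneous tangent maps.} A $0$-homogeneous map $\overline{u}_\infty$ is determined by its restriction $\omega$ to $\mathbb{S}^2$, which is a minimizing harmonic map $\mathbb{S}^2\to\mathbb{S}^2$ with the induced tangency/reflection symmetry across the equator; its energy is related to the density $\Theta(\overline u,x_0)$. The point is that such $\omega$ must be \emph{constant}: this is the two-dimensional reduction, and one invokes that a minimizing harmonic map from $\mathbb{S}^2$ (or, equivalently in the reflected picture, from the flat $2$-disk with free tangential boundary values) into $\mathbb{S}^2$ with small enough energy — and here the energy is controlled by the density bound — is smooth, hence $0$-homogeneous of degree $0$ and smooth at the origin forces it to be constant (a non-constant $0$-homogeneous map is singular at $0$). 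More precisely, Proposition~\ref{prop:part_reg} applied to the tangent map itself shows its singular set has $\mathcal{H}^1$-measure zero and so it is smooth on a punctured neighborhood of $0$; smoothness of a $0$-homogeneous map on $\mathbb{S}^2$ plus the energy-minimality then yields, by the standard gap/classification for two-dimensional targets, that $\omega$ is constant. \emph{Step 4: conclude.} If every tangent map at $x_0$ is constant then $\Theta(\overline{u},x_0)=0$, contradicting $x_0\in\Sigma$ unless $\Sigma$ has no accumulation points near $x_0$; this is the usual $\varepsilon$-regularity consequence — more carefully, one shows that the set of points where $\Theta\geq\varepsilon_0$ (with $\varepsilon_0$ from the $\varepsilon$-regularity of Proposition~\ref{prop:part_reg}) is discrete, because near an accumulation point the blow-up would be a non-constant $0$-homogeneous tangent map with a lower-dimensional singular set, which by the dimension-reduction argument (Federer's) would force a singularity of positive $\mathcal{H}^1$-measure for $u$ itself, contradicting Proposition~\ref{prop:part_reg}. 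A covering argument on the compact set $\overline{B_1(0)}$ then gives $\mathcal{H}^0(\Sigma)<+\infty$.

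The main obstacle I expect is \emph{Step 2--3 in the boundary case}: one must make sure the error terms $\mathcal{F},\mathcal{G}$ and the non-standard reflected energy density $\frac{1}{|x|^2}[\,\cdots]$ genuinely disappear under blow-up and that the limiting object is a bona-fide minimizer of a clean (constant-coefficient) energy to which the two-dimensional classification applies — the paper has been careful to note (item (iii) of the introduction and Remark following Proposition~\ref{prop:mod_monotonicity_formula}) that the monotonicity constant is only $1$ up to error, so one must check that after rescaling the density actually converges and the extra terms are $o(1)$; this is exactly the delicate point. The classification of $0$-homogeneous minimizers (Step 3) is the other potential sticking point, but it reduces to a two-dimensional problem where the reflection trick of Section~\ref{sec:refl_E_reg} and known results for harmonic maps with free boundary values on a disk can be brought to bear.
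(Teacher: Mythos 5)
Your Steps 1--2 are sound and correspond to what the paper does (the compactness/blow-up setup and the use of Corollary~\ref{cor:radialmono} to get $0$-homogeneity of the tangent map). But Step~3 contains a genuine error, and because of it Step~4 never quite assembles the contradiction the paper actually uses.

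In Step~3 you assert that a $0$-homogeneous minimizing tangent map must be constant, on the grounds that its restriction $\omega$ to $\mathbb{S}^2$ is smooth by the 2D regularity and a smooth $0$-homogeneous map must be constant. That last implication is false: $x\mapsto x/|x|$ is a non-constant, $0$-homogeneous, energy-minimizing harmonic map which is smooth away from $0$; its restriction to $\mathbb{S}^2$ is the identity and perfectly smooth. There is no classification result forcing the blow-up limit to be constant -- indeed, at singular points the density $\Theta$ is strictly positive and the tangent map is necessarily non-constant, which is exactly why singular points occur. Your Step~4 tacitly abandons the constancy claim and appeals to ``Federer dimension reduction,'' but that is where the real work happens and it is left unspecified.

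What the paper does instead, and what is missing from your proposal, is the following device. Suppose $x_j\in\Sigma$ with $x_j\to x_0$. One blows up at the \emph{sequence-dependent scales} $\rho_j\coloneqq 2|x_j-x_0|$, so that in the rescaled picture the singular points $z_j=\tfrac12\frac{x_j-x_0}{|x_j-x_0|}$ all sit on the fixed sphere $\partial B_{1/2}(0)$. By compactness of $\partial B_{1/2}(0)$, $z_j\to z_0$ with $|z_0|=\tfrac12$, and the crucial ingredient -- the weak upper semi-continuity of $\Theta$ established in Lemma~\ref{lem:Theta_upper_semi_continuity} -- shows that $z_0$ is a singular point of the blow-up limit $\overline{u}_\infty$. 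Then $0$-homogeneity of $\overline{u}_\infty$ (from Corollary~\ref{cor:radialmono}) propagates this singularity along the entire segment $\{\lambda z_0:\lambda>0\}\cap B_1(0)$, giving $\mathcal{H}^1(\Sigma(\overline{u}_\infty))>0$. This contradicts Proposition~\ref{prop:part_reg} applied to the limit map $\overline{u}_\infty$, which is itself energy-minimizing and tangential by Lemma~\ref{lem:full_reg_cptness}. Without the choice of scales that pins the rescaled singular points to $\partial B_{1/2}(0)$ and without the upper semi-continuity of the density to deduce singularity of the limit point, there is no mechanism to produce a singular point of the tangent map away from the origin, and hence no contradiction with $\mathcal{H}^1(\Sigma)=0$.
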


We follow the procedure in \cite[Section 10.3.2]{GiMe}. 
Most of the results therein carry over to our setting without modification.
The only change is the addition of the constraint $u\cdot\nu=0$ to the minimization in \cite[Theorem 10.25]{GiMe} and the inclusion of the extended energy \eqref{def:extenergy} in addition to the standard Dirichlet energy.

For this reason, and in view of the blow up procedure that this lemma will be applied to, we introduce the energy for $\overline{u}\in W^{1,2}(B_1(0);\mathbb{S}^2)$
    \, ,
\begin{align*}
    \overline{E}_{x_0,r}(\overline{u})
    \ &\coloneqq \
    \frac12\int_{\Phi_r(P^-)} |\nabla \overline{u}|^2 \dx x \\
    &\qquad
    + \frac{1}{2}\int_{\Phi_r(P^+)} \frac{1}{|rx-x_0|^{2}}
    	\biggl[
    	|\nabla{}\overline{u}|^{2}
    	+\frac{4 r^2}{|rx-x_0|^{4}}(\overline{u}\cdot{}(rx-x_0))^{2}\\
    &\qquad\qquad\qquad\qquad\qquad
        +\frac{4 r}{|rx-x_0|^{2}}\overline{u}^{T}\nabla{}\overline{u}^{T}(rx-x_0)
        -\frac{4 r}{|rx-x_0|^{2}}(\overline{u}\cdot{}(rx-x_0))\text{div}(\overline{u})\biggr]
    \, ,
\end{align*}
where $\Phi_r$ parametrizes $B_1(0)\cap[\frac1r(\Omega-x_0)] = \Phi_r(P^-)$ and $B_1(0)\setminus\overline{[\frac1r(\Omega-x_0)]} = \Phi_r(P^+)$ for $P^\pm\subset 
 \{\pm x\cdot x_0>0\}$.

Such $\Phi_r$ can be constructed as in Subsection~\ref{subsubsec:rigorous_calc} with the modification that $r_0=1$ and $\phi(y_1,y_2) = (y_1,y_2,\sqrt{\frac{1}{r^2} - y_1^2 - y_2^2})-\frac{1}{r}e_3$.

To improve readability, we do not write the dependence of $\Phi$ on $r$ and $x_0$.
Note that the energy $\overline{E}_{x_0,r}(u)$ corresponds to the combined energies $\frac1r (E(u)+\widetilde{E}(\widetilde{u}))$ on the ball $B_r(x_0)$ with a rescaling $\overline{u}(z)=\widetilde{u}(rz+x_0)$ to $B_1(0)$.

\begin{lemma}\label{lem:full_reg_cptness}
    Let $\{r_{k}\}_{k\in\mathbb{N}}$ be a sequence of positive real numbers
    such that $r_k\searrow 0$ as $k\to\infty$.
    Furthermore, let $\overline{u}_k\in W^{1,2}(B_1(0);\mathbb{S}^2)$ be a sequence of energy minimizing \emph{tangential} harmonic maps with equibounded energies, i.e.\ 
    \begin{align*}
        \overline{u}_k
        \in
        \argmin_{u} \overline{E}_{x_0,r_k}(u)
        \, ,
    \end{align*}
    where the minimum is taken over maps $u\in W^{1,2}(B_1(0);\mathbb{S}^2)$ with $u\cdot\nu=0$ on $\Phi_{r_k}(\{x\cdot x_0=0\})$ and we assume there exists $C>0$ such that
    \begin{align*}
        \sup_{k\in\mathbb{N}} \int_{B_1(0)} |\nabla \overline{u}_k|^2 \dx x
        \ \leq \
        C
        \, .
        \end{align*}
    Then, a subsequence $\overline{u}_{k_\ell}$ converges in $W^{1,2}(B_1(0);\mathbb{S}^2)$ to an energy minimizing harmonic map $\overline{u}$ with $\overline{u}\cdot\nu=0$ on $\{x\cdot x_0=0\}\cap B_1(0)$.
\end{lemma}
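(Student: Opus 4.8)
The plan is to run the standard compactness argument for sequences of energy‑minimizing harmonic maps (as in \cite[Section 10.3.2]{GiMe}, following Schoen--Uhlenbeck and Luckhaus), isolating the two features specific to our situation: the reflected correction terms in $\overline{E}_{x_0,r_k}$, and the fact that the tangential constraint is imposed on the \emph{curved} interface $\Gamma_{r_k}:=\Phi_{r_k}(\{x\cdot x_0=0\})$ rather than on a flat one. When $x_0$ is an interior point the $\Phi_{r_k}(P^+)$--piece is empty for $k$ large and the statement is \cite[Section 10.3.2]{GiMe} verbatim, so I focus on $x_0\in\partial\Omega$.

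\emph{Step 1: energy asymptotics as $r_k\to0$.} Since $|x_0|=1$, on $B_1(0)$ one has $1-r_k\le|r_kx-x_0|\le1+r_k$, so all the prefactors $\tfrac{4r_k^2}{|r_kx-x_0|^4}$, $\tfrac{4r_k}{|r_kx-x_0|^2}$ are $O(r_k)$ and $\tfrac1{|r_kx-x_0|^2}\to1$ uniformly; estimating the cross terms by $|\overline u|=1$ and Cauchy--Schwarz yields a modulus $\omega(r_k)\to0$ with
\begin{equation*}
\Big|\overline{E}_{x_0,r_k}(u)-\tfrac12\!\int_{B_1(0)}\!|\nabla u|^2\Big|\ \le\ \omega(r_k)\Big(\tfrac12\!\int_{B_1(0)}\!|\nabla u|^2+1\Big)\qquad\text{for all }u\in W^{1,2}(B_1(0);\mathbb{S}^2).
\end{equation*}
Moreover, because $\phi(y_1,y_2)=(y_1,y_2,0)+O(r_k)$, the diffeomorphisms $\Phi_{r_k}$ converge to $\mathrm{id}_{B_1(0)}$ in $C^2_{\mathrm{loc}}$, so $\Gamma_{r_k}$ flattens to $\Gamma:=\{x\cdot x_0=0\}\cap B_1(0)$ and the outer unit normals $\nu_{r_k}$ converge uniformly to the constant vector $x_0$.

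\emph{Step 2: weak compactness and passage of the constraint.} From $\sup_k\|\nabla\overline{u}_k\|_{L^2}\le C$ and $|\overline{u}_k|=1$, a subsequence (not relabelled) satisfies $\overline{u}_k\rightharpoonup\overline{u}$ weakly in $W^{1,2}(B_1(0);\mathbb{R}^3)$, strongly in $L^2$ and a.e.; hence $|\overline{u}|=1$ a.e., i.e. $\overline{u}\in W^{1,2}(B_1(0);\mathbb{S}^2)$. Transporting $\Gamma_{r_k}$ to $\Gamma$ by $\Phi_{r_k}$ (which is $C^2$-close to the identity) and using compactness of the trace $W^{1,2}(B_1(0))\to L^2(\Gamma)$ together with $\nu_{r_k}\to x_0$ in $L^\infty$, one passes to the limit in $\int_{\Gamma_{r_k}}(\overline{u}_k\cdot\nu_{r_k})\psi=0$ to obtain $\int_\Gamma(\overline{u}\cdot x_0)\psi=0$ for all test $\psi$, so $\overline{u}\cdot\nu=0$ on $\Gamma$. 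By Step~1 and weak lower semicontinuity of the Dirichlet integral,
\begin{equation*}
\tfrac12\!\int_{B_1(0)}\!|\nabla\overline{u}|^2\ \le\ \liminf_{k}\ \tfrac12\!\int_{B_1(0)}\!|\nabla\overline{u}_k|^2\ =\ \liminf_{k}\ \overline{E}_{x_0,r_k}(\overline{u}_k).
\end{equation*}

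\emph{Step 3: the $\limsup$ inequality, minimality of the limit, and strong convergence.} Let $w\in W^{1,2}(B_1(0);\mathbb{S}^2)$ be any admissible competitor for the limit problem ($w\cdot\nu=0$ on $\Gamma$, and sharing the trace of $\overline{u}_k$ on $\partial B_1(0)$). I construct competitors $w_k$ for $\overline{E}_{x_0,r_k}$ in two moves. First, apply to $w$ the norm‑ and tangency‑preserving transformation attached to $\Phi_{r_k}$ from Definition~\ref{def:innervartan}, $w\mapsto\widehat w_k:=\nabla\Phi_{r_k}\big(w\circ\Phi_{r_k}^{-1}\big)/|\,\cdots\,|$; by the computation \eqref{lem:tang-var-admissible:eq} this yields $\widehat w_k\cdot\nu=0$ on $\Gamma_{r_k}$, and $\widehat w_k\to w$ in $W^{1,2}$ with $\overline{E}_{x_0,r_k}(\widehat w_k)\to\tfrac12\int_{B_1}|\nabla w|^2$ by Step~1. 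Second, to restore the boundary trace of $\overline{u}_k$, interpolate between $\widehat w_k$ and $\overline{u}_k$ on a thin spherical shell $B_1(0)\setminus B_{1-\delta}(0)$ using Luckhaus' interpolation lemma: by a Fubini argument (strong $L^2$ convergence of $\overline{u}_k$ plus $L^2$-bounded gradients) one selects, along a further subsequence, a radius $1-\delta$ with $\overline{u}_k|_{\partial B_{1-\delta}}\to\overline{u}|_{\partial B_{1-\delta}}$ in $L^2$ and $\int_{\partial B_{1-\delta}}(|\nabla\overline{u}_k|^2+|\nabla\overline{u}|^2)$ bounded, and Luckhaus' lemma then produces an $\mathbb{S}^2$-valued interpolant of shell‑energy $\le C\delta\int_{\partial B_{1-\delta}}(|\nabla\overline{u}_k|^2+|\nabla\overline{u}|^2)+o_k(1)$; performing this interpolation in the $\Phi_{r_k}$-flattened coordinates keeps the affine constraint $u\cdot\nu=0$ on the equatorial slice intact. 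The resulting $w_k$ is admissible, and $\limsup_{\delta\to0}\limsup_k\overline{E}_{x_0,r_k}(w_k)\le\tfrac12\int_{B_1}|\nabla w|^2$. Minimality of $\overline{u}_k$ gives $\overline{E}_{x_0,r_k}(\overline{u}_k)\le\overline{E}_{x_0,r_k}(w_k)$, hence $\limsup_k\overline{E}_{x_0,r_k}(\overline{u}_k)\le\tfrac12\int_{B_1}|\nabla w|^2$. Taking $w=\overline{u}$ and combining with Step~2 forces $\overline{E}_{x_0,r_k}(\overline{u}_k)\to\tfrac12\int_{B_1}|\nabla\overline{u}|^2$, which by Step~1 means $\|\nabla\overline{u}_k\|_{L^2}\to\|\nabla\overline{u}\|_{L^2}$; together with weak convergence and Rellich this gives $\overline{u}_k\to\overline{u}$ strongly in $W^{1,2}(B_1(0);\mathbb{S}^2)$. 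For general admissible $w$ the same chain of inequalities yields $\tfrac12\int_{B_1}|\nabla\overline{u}|^2\le\tfrac12\int_{B_1}|\nabla w|^2$, so $\overline{u}$ is energy minimizing (and hence a harmonic map) with the claimed constraint.

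\emph{Main obstacle.} The delicate point is entirely in Step~3: simultaneously transporting a competitor across the curved interface $\Gamma_{r_k}$ while keeping it $\mathbb{S}^2$-valued and tangential, and carrying out the Luckhaus gluing near $\partial B_1(0)$ in a manner compatible with the equatorial constraint and with the convergence of traces along a good sequence of radii. Both are handled by systematically passing to the $\Phi_{r_k}$-flattened coordinates, where $\Gamma_{r_k}$ becomes the flat equator and the transformation of Definition~\ref{def:innervartan} degenerates to the identity, but the careful absorption of the error terms — using the uniform estimate of Step~1 and the $C^2$-closeness $\Phi_{r_k}\to\mathrm{id}$ — is where essentially all the work lies; the remainder of the argument is as in \cite[Section 10.3.2]{GiMe}.
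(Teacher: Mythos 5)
Your overall strategy matches the paper's proof (which likewise appeals to the compactness machinery of \cite[Section~10.3.2]{GiMe} together with the interpolation construction of Lemma~1 in \cite{Luck}), including the transport of competitors via the normalization $w\mapsto\nabla\Phi_{r_k}(w\circ\Phi_{r_k}^{-1})/|\cdots|$ to restore tangency on the curved interface. However, there is a genuine gap in Step~3, precisely at the point you flag as delicate. You assert that carrying out the Luckhaus interpolation ``in the $\Phi_{r_k}$-flattened coordinates keeps the affine constraint $u\cdot\nu=0$ on the equatorial slice intact.'' This is not true as stated: the Luckhaus lemma interpolates between two $\mathbb{S}^2$-valued traces on the inner and outer spheres of the shell, but the tangency constraint lives on a two-dimensional equatorial slab cutting through the interior of the shell, and the Luckhaus construction (cellular decomposition of $\mathbb{S}^2$, radial averaging, degree-zero homogeneous extensions, reprojection onto $\mathbb{S}^2$) does not produce a map that is tangential along that slab, even when the constraint is a flat hyperplane condition $u\cdot x_0=0$ on $\{x\cdot x_0=0\}$. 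Flattening the geometry only flattens the constraint set; it does not cause the interpolant to satisfy the constraint.

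What is actually needed --- and what the paper's proof does --- is to modify the Luckhaus construction itself: choose the cellular decomposition of $\mathbb{S}^2$ so that the intersection of the equatorial slice with the sphere is a union of low-dimensional cells ($\bigcup_{j=0}^{1}\bigcup_{i=0}^{m_j}e_i^j$), arrange (up to a bilipschitz change) that the constraint set inside the shell is carried by the corresponding thickened cells $\hat e_i^j$, and then, in the extension step for cells of dimension $j=0,1$, compose with a projection onto the tangent space $TP_k^0$ so that the extended map remains tangential along $P_k^0$. This projection introduces only a small error because both the inner trace $\overline u_k$ and the transported competitor $\widehat w_k$ are already tangential on $P_k^0$. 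Only after this does one reproject onto $\mathbb{S}^2$ (as in \cite[Cor.~10.24]{GiMe}) to recover an admissible competitor. Without this modification of the cell structure and extension, your glued $w_k$ is not admissible for the constrained problem $\overline E_{x_0,r_k}$, and the minimality inequality $\overline E_{x_0,r_k}(\overline u_k)\le\overline E_{x_0,r_k}(w_k)$ that drives the whole argument cannot be invoked.
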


\begin{figure}
\begin{center}
\scalebox{0.7}{\begin{tikzpicture}[scale=1]
\pgfmathsetmacro{\r}{2}  
\pgfmathsetmacro{\R}{10} 

\draw[black,line width=1] (0,0) circle (\r cm);
\fill[black] (0,0) circle (0.05 cm) node[below] {$0$};
\draw[black, line width=0.5] (0,0) -- (45:\r);
\node at (55:0.5*\r) {$1$};

\draw[black,line width=1] (1.6*\r,-0.3*\r) arc (180-110:180-70:\R);

\draw[black,line width=1] (\R,0) circle (\r cm);
\fill[black] (\R,0) circle (0.05 cm) node[below] {$0$};

\draw[black, line width=0.5] (\R,0) -- ++(45:\r);
\node at (\R+0.3*\r,0.5*\r) {$1$};

\draw[black,line width=1] (\R+1.5*\r,0) -- (\R-1.5*\r,0);
\node[] at (1.3*\R,0.3) {$\{x\cdot x_0=0\}$};

\node[] at (0.95*\R, 0.9) {$P^+$};
\node[] at (0.95*\R,-0.9) {$P^-$};

\draw[black,line width=2, ->] (\R-1.8*\r,0) -- (1.8*\r,0);
\node[black] at (0.5*\R,0.3) {$\Phi_{r_k}$};

\end{tikzpicture}}
\caption{The diffeomorphism $\Phi_{r_k}$ maps the flat boundary $\{x\cdot x_0=0\}$ to a curved surface (that is part of a sphere of radius $\frac{1}{r_k}$) on which the tangentiality condition is satisfied.}
\label{fig:straighten_bdry}
\end{center}
\end{figure}
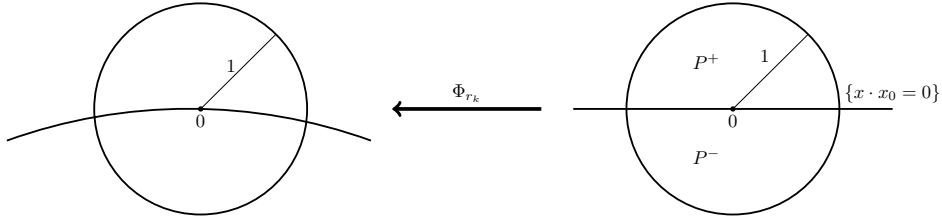

\begin{proof}
    Mutatis mutandis, the proof of Theorem 10.25 in \cite{GiMe} and Lemma 1 in \cite{Luck} can be applied.
    More precisely the following changes are necessary:
    First, observe that the tangentiality condition $\overline{u}_{k}\cdot\nu=0$ on $\Phi_{r_k}(\{x\cdot x_0=0\})) =: P_k^0$ is preserved by weak $W^{1,2}-$convergence.
    Second, in order to construct a competitor $v_k$ for the $k-$problem from a competitor of the limit energy $v$ (which is tangential to $P^0 \coloneqq \{x\cdot x_0=0\}$), one needs to take into account the changing set $P_k^0$ and hence alter the function to stay tangential w.r.t.\ $P_k^0$. 
    One way to accomplish this, is to look at the function
    \begin{equation*}
        \widetilde{v_k}(x)
        \ = \
        \frac{\nabla\Phi_{k}(\Phi_{k}^{-1}(x)) v(\Phi_{k}^{-1}(x))}
        {|\nabla\Phi_{k}(\Phi_{k}^{-1}(x)) v(\Phi_{k}^{-1}(x))|},
    \end{equation*}
    rather than $v$.
    This modification ensures tangentiality of $\widetilde{v_k}$ to $P_{k}^0 = \Phi_{r_k}(P^0)$ since
    \begin{align*}
    \widetilde{v_k}(y)\cdot\nu_{P_k^0}(y)
        \ &= \ 
        \widetilde{v_k}(y)\cdot\nu_{\Phi_{r_k}(P^0)}(y) 
        \ = \
        0 \, , 
    \end{align*}
    by Lemma A.1 in \cite{KiMaSt} ( see also the calculation in \eqref{lem:tang-var-admissible:eq} in the proof of lemma~\ref{lem:tang-var-admissible}), and does not change the energy of $v$ up to a negligible error as $k\to\infty$.
    \ds{
    The construction to connect this tangential map $\widetilde{v_k}$ with the map $u_k$ is carried out in Lemma 1 of \cite{Luck} and we adopt the notation from there.
    One can choose the decomposition into cells of the sphere in a way that respects $P^0_k$, namely that $P_k^0$ intersected with the sphere is given by $\bigcup_{j=0}^1 \bigcup_{i=0}^{m_j} e_i^j$, for some $m_j\in\mathbb{N}$, $m_j\leq k_j$.
    Up to a bilipschitz isomorphism, we can additionally assume that $P_k^0$ in the spherical shell can be written as $P_k^0 = \bigcup_{j=0}^1 \bigcup_{i=0}^{m_j} \hat{e}_i^j$.
    In the construction of the extension $\varphi$, one needs to add a projection onto $TP_k^0$ for $j=0,1$ to preserve the tangentiality of $\varphi$ on $P_k^0$. 
    This modification is small since $u_j$ and $v_j$ are both tangential to $P_k^0$.
    For $j=2$, we use the $0-$homogeneous extension without modification.
    At the very end, to obtain $v_k$, we project $\varphi$ at each point onto $\mathbb{S}^2$ which introduces another small error (see e.g.\ \cite[Cor. 10.24]{GiMe}).
    } 
\end{proof}

Next, we include a lemma which highlights a weak upper semi-continuity property satisfied by the density $\Theta$.
This is analogous to Proposition 10.26 of \cite{GiMe}.
\ds{
Note that we cannot obtain the exact upper semi-continuity $\limsup_{j\to\infty}\Theta(u_{j},x_{j})\leq\Theta(u,x)$ for boundary points $x\in\partial\Omega$ since the available monotonicity formulas are only applicable either in the inside of $\Omega$ with radii $r$ at most the distance to the boundary (classical), or directly on the boundary (Proposition~\ref{prop:mod_monotonicity_formula}).
The lack of a monotonicity formula that is uniform in the distance to the boundary can be explained through the inadmissibility of a radial deformation in view of our tangential boundary conditions, i.e.\ there is no diffeomorphism $\varphi_t$ that looks like $x-x_0$ in a ball $B_r(x_0)$ for some $x_0$ close to $\partial\Omega$ and $r>\dist(x_0,\partial\Omega)$ and at the same time preserves the boundary.
This means that for a sequence $x_j$ inside $\Omega$ that approaches $x\in\partial\Omega$, we cannot compare energies and use convergence of the Dirichlet integral for a small but uniformly chosen radius as e.g.\ in the proof of \cite[Proposition 10.26]{GiMe}. 

Once the regularity of our minimizer is proven, one can a posteriori show by testing the Euler-Lagrange equations from Lemma~\ref{lem:global_PDE} with $\varphi=\nabla{}u\zeta$ where $\zeta\in{}C_{c}^{\infty}(B_{2}(0);\mathbb{R}^{3})$, that a monotonicity formula also holds close to the boundary and thus $\Theta$ satisfies the upper semi-continuity with the precise constant $1$ instead of $2$.
} 
\begin{lemma}\label{lem:Theta_upper_semi_continuity}
    Suppose that
    $\{u_{j}\}_{j\in\mathbb{N}}\subseteq{}W_{T}^{1,2}(\Omega;\mathbb{S}^{2})$
    is a sequence of locally energy minimizing maps to \eqref{def:dir}
    with locally equibounded energies and $u_{j}\rightharpoonup{}u$ in
    $L^{2}$.
    Suppose also that we define extensions of each member of this sequence,
    $\overline{u}_{j}\in\mathcal{W}_{u_{j}}$ as in \eqref{eq:extension}.
    Suppose also that we have a sequence
    $\{x_{j}\}_{j\in\mathbb{N}}\subseteq{}\overline{\Omega}$
    converging to $x\in{}\overline{\Omega}$.
    Then
    \begin{equation*}
        \limsup_{j\to\infty}\Theta(\overline{u}_{j},x_{j})
        \ \leq \
        2 \ \Theta(\overline{u},x)
        \, .
    \end{equation*}
\end{lemma}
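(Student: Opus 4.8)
The plan is to adapt the proof of weak upper semi-continuity of the density in \cite[Proposition~10.26]{GiMe}, splitting according to the rate at which $x_{j}$ approaches $\partial\Omega$. The one place where the argument of \cite{GiMe} breaks is a sequence of interior points accumulating on $\partial\Omega$: there the interior monotonicity formula applies only on balls of radius at most $\dist(x_{j},\partial\Omega)\to 0$, so one cannot fix a small uniform radius, and this forces the constant $2$. I would first record the compactness. Since $|u_{j}|=1$ and the energies are locally equibounded, $\{u_{j}\}$ is bounded in $W^{1,2}_{loc}(\Omega;\mathbb{R}^{3})$; as $u_{j}\rightharpoonup u$ in $L^{2}$, in fact $u_{j}\rightharpoonup u$ weakly in $W^{1,2}_{loc}(\Omega)$, and by the classical compactness of energy minimizers (\cite{Luck}, \cite{SU}, \cite[Sec.~10.3]{GiMe}) this convergence is strong in $W^{1,2}_{loc}(\Omega)$, with $u$ again locally minimizing. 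Because the extension \eqref{eq:extension} is given on $B_{2}(0)\setminus\overline{\Omega}$ by the fixed formula $\widetilde{u}_{j}=A\,(u_{j}\circ\iota)$, the change of variables $x=\iota(y)$ together with \eqref{eq:partialA}, \eqref{eq:partialinv} transfers this to $\widetilde{u}_{j}\to\widetilde{u}$ strongly in $W^{1,2}_{loc}(B_{2}(0)\setminus\overline{\Omega})$, and since $\partial\Omega$ is Lebesgue-null one obtains $|\nabla\overline{u}_{j}|^{2}\to|\nabla\overline{u}|^{2}$ in $L^{1}_{loc}(B_{2}(0))$. Passing to a subsequence (and recalling that the $L^{2}$-weak limit is $u$), I may assume $x_{j}\in\partial\Omega$ for all $j$, or $x_{j}\in\Omega$ for all $j$ with $x\in\Omega$, or $x_{j}\in\Omega$ for all $j$ with $x\in\partial\Omega$.

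In the first two cases one in fact recovers $\limsup_{j}\Theta(\overline{u}_{j},x_{j})\le\Theta(\overline{u},x)$. If $x\in\Omega$ then $\dist(x_{j},\partial\Omega)\ge\tfrac12\dist(x,\partial\Omega)$ eventually; for fixed small $R$ the balls $B_{R}(x_{j})$ lie in a fixed compact subset of $\Omega$ where $\overline{u}_{j}=u_{j}$, the classical monotonicity formula of \cite{Ev2} gives $\Theta(\overline{u}_{j},x_{j})\le\Theta(\overline{u}_{j},x_{j},R)$, and $B_{R}(x_{j})\subseteq B_{R+|x_{j}-x|}(x)$ together with the $L^{1}_{loc}$ convergence of the densities yields $\limsup_{j}\Theta(\overline{u}_{j},x_{j})\le\Theta(\overline{u},x,R)\to\Theta(\overline{u},x)$ as $R\to0^{+}$. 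If $x_{j}\in\partial\Omega$, then the monotonicity of $r\mapsto e^{C_{1}r}f_{x_{j},r_{0}}(r)+C_{2}r$ of Proposition~\ref{prop:mod_monotonicity_formula} together with \eqref{eq:dirdenslimit} gives $\Theta(\overline{u}_{j},x_{j})\le e^{C_{1}R}f_{x_{j},r_{0}}(R)+C_{2}R$ for fixed small $R$; since $\Phi_{x_{j},r_{0}}$ is a rotation of $\Phi_{x,r_{0}}$ and $x_{j}\to x$, the domains $\Phi_{x_{j},r_{0}}(B_{R}(x_{j}))$ converge, so with the $L^{1}_{loc}$ convergence of the densities $f_{x_{j},r_{0}}(R)\to f_{x,r_{0}}(R)$, and one lets $R\to0^{+}$ using \eqref{eq:dirdenslimit}.

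The remaining case $x_{j}\in\Omega$, $x\in\partial\Omega$ produces the factor $2$. Write $d_{j}:=\dist(x_{j},\partial\Omega)\to0$ and let $y_{j}\in\partial\Omega$ be the nearest point to $x_{j}$, so $|x_{j}-y_{j}|=d_{j}$ and $y_{j}\to x$. As $B_{d_{j}}(x_{j})\subseteq\Omega$ and $\overline{u}_{j}=u_{j}$ there, the classical interior monotonicity formula gives $\Theta(\overline{u}_{j},x_{j})\le\Theta(\overline{u}_{j},x_{j},d_{j})$, and the inclusion $B_{d_{j}}(x_{j})\subseteq B_{2d_{j}}(y_{j})$ gives
\begin{equation*}
\Theta(\overline{u}_{j},x_{j})
\ \le\ \frac{1}{2d_{j}}\int_{B_{d_{j}}(x_{j})}|\nabla\overline{u}_{j}|^{2}
\ \le\ \frac{1}{2d_{j}}\int_{B_{2d_{j}}(y_{j})}|\nabla\overline{u}_{j}|^{2}
\ =\ 2\,\Theta(\overline{u}_{j},y_{j},2d_{j}).
\end{equation*}
Since $y_{j}\in\partial\Omega$, the comparison between $\Theta$ and $f_{y_{j},r_{0}}$ from the proof of Corollary~\ref{corr:densbehav}, the monotonicity of Proposition~\ref{prop:mod_monotonicity_formula}, and the uniformity of all constants in the boundary point give, for fixed small $R$ and $j$ large, $\Theta(\overline{u}_{j},y_{j},2d_{j})\le C(2d_{j})\bigl[e^{C_{1}R}f_{y_{j},r_{0}}(R)+C_{2}R\bigr]$ with $C(2d_{j})\to1$; as above $f_{y_{j},r_{0}}(R)\to f_{x,r_{0}}(R)$, hence $\limsup_{j}\Theta(\overline{u}_{j},x_{j})\le 2\bigl[e^{C_{1}R}f_{x,r_{0}}(R)+C_{2}R\bigr]$, and letting $R\to0^{+}$ with \eqref{eq:dirdenslimit} gives $\limsup_{j}\Theta(\overline{u}_{j},x_{j})\le 2\,\Theta(\overline{u},x)$.

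The conceptual heart, and the source of the loss of the sharp constant, is precisely this last case: no monotonicity formula is available on balls centered at an interior point $x_{j}$ with radius exceeding $\dist(x_{j},\partial\Omega)$, so the enlargement $B_{d_{j}}(x_{j})\subseteq B_{2d_{j}}(y_{j})$ — costing the factor $2$ — followed by the boundary monotonicity formula at the \emph{moving} center $y_{j}$ is what closes the argument. The remaining work is routine but must be carried out carefully: tracking the convergence of the straightening diffeomorphisms $\Phi_{y_{j},r_{0}}$ (differing from $\Phi_{x,r_{0}}$ only by a rotation) jointly with the $L^{1}_{loc}$ convergence of $|\nabla\overline{u}_{j}|^{2}$, and deriving the latter from strong $W^{1,2}_{loc}(\Omega)$ convergence of the minimizers through the reflection formula of Subsection~\ref{subsec:reflprelim}.
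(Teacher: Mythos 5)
Your argument is essentially the paper's own: the interior and boundary-on-boundary cases are handled exactly as you do, and the crux in the remaining case is the same enlargement $B_{d_{j}}(x_{j})\subseteq B_{2d_{j}}(y_{j})$ (the paper uses $z_{j}=x_{j}/|x_{j}|$, which for the unit ball \emph{is} the nearest boundary point $y_{j}$), followed by the boundary monotonicity at the moving foot point. The only substantive difference is bookkeeping: the paper avoids your "moving domain limit" $f_{y_{j},r_{0}}(R)\to f_{x,r_{0}}(R)$ by instead chaining ball inclusions $B_{s}(z_{j})\subseteq B_{s+2\varepsilon}(x)$ and pushing everything onto integrals over balls centered at the \emph{fixed} point $x$, then sending $j\to\infty$, $\varepsilon\to0^{+}$, $\rho\to0^{+}$ in that order. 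Your route is equivalent but does require a separate justification of domain convergence, which you flag as "routine."

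One spot where your sketch is a little too thin is the compactness. Strong convergence in $W^{1,2}_{loc}(\Omega)$ (interior compactness of minimizers) plus its reflection to $W^{1,2}_{loc}(B_{2}(0)\setminus\overline{\Omega})$ does not by itself yield $L^{1}_{loc}(B_{2}(0))$ convergence of $|\nabla\overline{u}_{j}|^{2}$ on balls touching $\partial\Omega$, which is precisely what the estimates in the boundary case need. The paper closes this by invoking Lemma~\ref{lem:full_reg_cptness} (applied with $\Phi=\mathrm{Id}$), i.e.\ a Luckhaus-type compactness statement adapted to the tangential constraint, which upgrades to strong $W^{1,2}(\Omega)$ convergence up to $\partial\Omega$. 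Your proposal should cite a boundary compactness statement of this kind (or prove it) rather than only the interior one.
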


\begin{proof}
    If $x\in\Omega$ then, by perhaps passing to a subsequence, we may assume that
    $\{x_{j}\}_{j\in\mathbb{N}}\subseteq\Omega$
    and apply the proof of Proposition $10.26$ of \cite{GiMe}.
    As a result, we may assume that $x\in\partial\Omega$.
    \ds{Appealing to Lemma~\ref{lem:full_reg_cptness} (with $\Phi=I$)}
    we have that $\overline{u}_{j}\to{}\overline{u}$ strongly in
    $W^{1,2}(\Omega;\mathbb{S}^{2})$.
    For each $j\in\mathbb{N}$ we let $z_{j}\coloneqq\frac{x_{j}}{|x_{j}|}$.
    Fix $0<\rho,\varepsilon<\frac{r_{0}}{4(1+c_{0})}$, where $c_{0}$ is the constant
    from Corollary \ref{corr:densbehav}, and consider $j$ sufficiently
    large that $|x-x_{j}|<\varepsilon$.
    Observe that
    \begin{equation*}
        B_{s}(x_{j})\subseteq{}B_{s+\varepsilon}(x),\hspace{15pt}
        B_{s}(x_{j})\subseteq{}B_{s+\varepsilon}(z_{j}),\hspace{15pt}
        B_{s}(z_{j})\subseteq{}B_{s+2\varepsilon}(x)
    \end{equation*}
    for all $s>0$.
    In addition, for $0<s\le\text{dist}(x_{j},\partial\Omega)$ we have
    \begin{equation*}
        B_{s}(x_{j})\subseteq{}B_{2\text{dist}(x_{j},\partial\Omega)}(z_{j}).
    \end{equation*}
    We first consider the case that $x_{j}\in\partial\Omega$ where we have
    $x_{j}=z_{j}$.
    Appealing to Corollary \ref{corr:densbehav} we have that there are
    smooth functions $h_{1},h_{2}\colon[0,\infty)\to[0,\infty)$,
    independent of the point $x_{0}\in\partial\Omega$, satisfying $h_{1}(0)=1=h_{2}(0)$ as well as
    \begin{equation*}
        f_{z_{j},r_{0}}(u_{j},\rho)\le{}
        h_{1}(\rho)\biggl[e^{C_{1}h_{2}(\rho)}\Theta(\overline{u}_{j},z_{j},h_{2}(\rho)\rho)
        +C_{2}h_{2}(\rho)\rho\biggr].
    \end{equation*}
    From this and \eqref{eq:dirdenslimit} we see that if $\text{}$
    \begin{align*}
        \Theta(\overline{u}_{j},x_{j})
        &\le{}f_{x_{j},r_{0}}(\overline{u}_{j},\rho)\\
        &\le{}h_{1}(\rho)\biggl[e^{C_{1}h_{2}(\rho)\rho}\Theta(\overline{u}_{j},x_{j},h_{2}(\rho)\rho)
        +C_{2}h_{2}(\rho)\rho\biggr]\\
        &\le{}h_{1}(\rho)\biggl[e^{C_{1}h_{2}(\rho)\rho}
        \cdot\frac{1}{2h_{2}(\rho)\rho}\int_{B_{h_{2}(\rho)\rho+\varepsilon}(x)}\!{}
        |\nabla\overline{u}_{j}|^{2}
        +C_{2}h_{2}(\rho)\rho\biggr].
    \end{align*}
    Next we consider the case $x_{j}\in\Omega$.
    Appealing to the classical Monotonicity Formula we have that
    $s\mapsto\Theta(\overline{u}_{j},x_{j},s)$ is non-decreasing for
    $0<s\le\text{dist}(x_{j},\partial\Omega)$.
    Observe that if $0<\rho<\text{dist}(x_{j},\partial\Omega)$ then
    \begin{equation*}
        \Theta(\overline{u}_{j},x_{j})
        \le\Theta(\overline{u}_{j},x_{j},\rho)
        \le\frac{1}{2\rho}\int_{B_{\rho+2\varepsilon}(x)}\!{}|\nabla\overline{u}_{j}|^{2}.
    \end{equation*}
    If $\text{dist}(x_{j},\partial\Omega)\le\rho$ then we have, by following the
    proof of Corollary \ref{corr:densbehav}, that
    \begin{align*}
        \Theta(\overline{u}_{j},x_{j})
        &\le\Theta(\overline{u}_{j},x_{j},\text{dist}(x_{j},\partial\Omega))\\
        &\le\frac{1}{2\text{dist}(x_{j},\partial\Omega)}
        \int_{B_{2\text{dist}(x_{j},\partial\Omega)}(z_{j})}\!{}
        |\nabla\overline{u}_{j}|^{2}\\
        &=2\Theta(\overline{u}_{j},z_{j},2\text{dist}(x_{j},\partial\Omega))\\
        &\le2e^{2C_{1}\text{dist}(x_{j},\partial\Omega)}
        \Theta(\overline{u}_{j},z_{j},2\text{dist}(x_{j},\partial\Omega))
        +2C_{2}\text{dist}(x_{j},\partial\Omega)\\
        &\le2h_{3}(2\text{dist}(x_{j},\partial\Omega))
        f_{z_{j},r_{0}}(\overline{u}_{j},(1+2c_{0}\text{dist}(x_{j},\partial\Omega))\text{dist}(x_{j},\partial\Omega))\\
        &\le2h_{3}(2\text{dist}(x_{j},\partial\Omega))
        f_{z_{j},r_{0}}(\overline{u}_{j},(1+c_{0})\rho)\\
        &\le2h_{3}(2\text{dist}(x_{j},\partial\Omega))h_{1}((1+c_{0})\rho)
        \biggl[\\
        &\cdot\frac{e^{C_{1}h_{2}((1+c_{0})\rho)(1+c_{0})\rho}}{2h_{2}((1+c_{0})\rho)(1+c_{0})\rho}
        \int_{B_{h_{2}((1+c_{0})\rho)(1+c_{0})\rho+2\varepsilon}(x)}\!{}
        |\nabla\overline{u}_{j}|^{2}
        +C_{2}h_{2}((1+c_{0})\rho)(1+c_{0})\rho\biggr]
    \end{align*}
    where $h_{3}\colon[0,\infty)\to[0,\infty)$ is a smooth function appearing in the proof of Corollary
    \ref{corr:densbehav} satisfying $h_{3}(0)=1$.
    We now see that, appealing to strong $W^{1,2}$ convergence we have, for all $j$
    sufficiently large and for some fixed $s>0$
    \begin{equation*}
        \frac{1}{2s}\int_{B_{s+2\varepsilon}(x)}\!{}
        |\nabla\overline{u}_{j}|^{2}
        \le\frac{1}{2s}\int_{B_{s+2\varepsilon}(x)}\!{}
        |\nabla\overline{u}|^{2}+\varepsilon.
    \end{equation*}
    Using this in either case gives, after taking the limit supremum in $j$ followed
    by the limit $\varepsilon\to0^{+}$, that
    \begin{align*}
        &\limsup_{j\to\infty}\Theta(\overline{u}_{j},x_{j})
        \le{}\max\biggl\{
        \frac{1}{2\rho}\int_{B_{\rho}(x)}\!{}|\nabla\overline{u}|^{2},\\
        &2h_{1}((1+c_{0})\rho)\biggl[
        \cdot\frac{e^{C_{1}h_{2}((1+c_{0})\rho)(1+c_{0})\rho}}{2h_{2}((1+c_{0})\rho)(1+c_{0})\rho}
        \int_{B_{h_{2}((1+c_{0})\rho)(1+c_{0})\rho}(x)}\!{}
        |\nabla\overline{u}|^{2}
        +C_{2}h_{2}((1+c_{0})\rho)(1+c_{0})\rho\biggr]\biggr\}.
    \end{align*}
    Finally, letting $\rho\to0^{+}$ we obtain
    \begin{equation*}
        \limsup_{j\to\infty}\Theta(\overline{u}_{j},x_{j})
        \le2\Theta(\overline{u},x).
    \end{equation*}
\end{proof}

We now have all necessary ingredients to prove the full regularity result and hence also Theorem~\ref{thm:main_regularity}:

\begin{proof}[Proof of Proposition \ref{prop:full_reg}]
    In the interior of $B_1(0)$, this result is well known \cite{GiGi, GiMe, SU}.
    So we only need to prove regularity at boundary points $x_0\in\partial B_1(0)$.
   So assume for the sake of contradiction that there is a sequence $x_j\in \overline{B_1(0)}\cap\Sigma(u)$ of singular points that converge to $x_0$.
   Performing a blow up with the radius $\rho_j = 2|x_j|$ (see Figure \ref{fig:dim_reduction}), we obtain a sequence $u_j(z) = u(x_0+\rho_j z)$ which is equibounded in energy and tangential, hence satisfying the assumptions of Lemma~\ref{lem:full_reg_cptness}. 
   Therefore, we can assume, up to passing to a subsequence, that $u_j\to u$ in $W^{1,2}(\Omega;\mathbb{S}^2)$ with $u$ energy minimizing and tangential.
   Note that since the boundary of our initial domain is smooth, the blow up converges to a hyperplane $\{x\cdot x_0=0\}$.
   By compactness of $\partial B_{\frac12}(0)$, we can furthermore assume that the singular points $z_j = \frac{1}{2}\frac{x_j-x_0}{|x_j-x_0|}$ in the rescaled picture converge to a limiting point $z_0$ with $|z_0|=\frac12$.
   Then the weak upper semi-continuity of the density $\Theta$ (Lemma~\ref{lem:Theta_upper_semi_continuity})    with the definition of a singular point, \eqref{def:singset}, yields that $z_0$ is also singular. 
   In the limit $j\to\infty$ (i.e.\ $\rho_j\to 0$) $f_{x_0, r_0}(u)$ is constant, as in the classical case, and all error terms in the monotonicity formula vanish,  and thus we can conclude from Corollary~\ref{cor:radialmono}, as in the classical case, that the limiting function $u$ must be homogeneous of degree $0$ in a neighborhood of the origin.
   This implies that a whole segment $\mathfrak{S}\subset\{\lambda z_0\sd \lambda>0\}\cap B_1(0)$ is singular.
   This contradicts Proposition~\ref{prop:part_reg}.
\end{proof}

\begin{figure}
\begin{center}
\begin{tikzpicture}[scale=1]
\pgfmathsetmacro{\r}{3} 	

\draw[black,line width=1] (0,0) circle (\r cm);
\fill[black] (0,0) circle (0.05 cm) node[left] {$x_0$};

\draw[blue] (0,0) -- (50*0.9:0.9*0.9*\r);
\node[blue] at (65*0.9:{0.54*0.9*0.9*\r}) {$\frac12\rho_j$};

\foreach \t in {0.9,0.8,...,0.1}{ 
	\draw[blue,line width=0.5] (0,0) circle (\t*\t*\r cm);
	\fill[purple] ({50*\t}:{\t*\t*\r}) circle (0.05 cm);
	} 
\node[purple] at ({50*0.9}:{(0.9*0.9+0.1)*\r}) {$x_j$};

\end{tikzpicture}  
\caption{The radii $\rho_j$ for the blow up are chosen such that $\partial B_{\frac12\rho_j}(x_0)$ contains a singular point $x_j$.}
\label{fig:dim_reduction}
\end{center}
\end{figure}
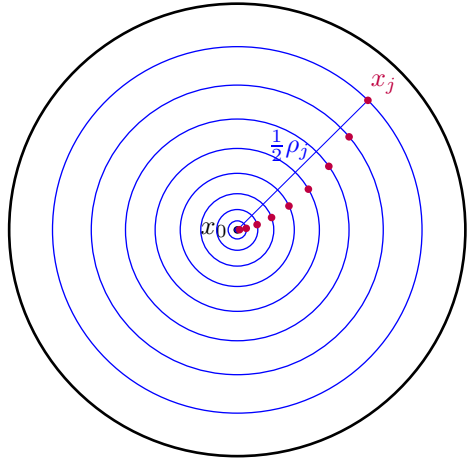

\section{Symmetrization and Equivariance}
\label{sec:sym_equiv}

In this section we're concerned with proving partial results on the symmetry of minimizers.
The main difficulty lies in the fact that standard techniques such as symmetric rearrangements are not applicable, since they do not preserve the norm constraint or the tangency condition on the boundary.

The main result in this section is Proposition~\ref{prop:symmetrization} in which we prove the equivariance of minimizers under one additional assumption. 

\begin{remark}\label{rem:e_theta=0_is_difficult_too}
    A first idea to prove that $u_\theta=0$ might consist in comparing the minimizer $u = u_r\ee_r + u_\theta \ee_\theta + u_\varphi\ee_\varphi$ with the competitor
    \begin{align*}
        \widetilde{u}
        \ \coloneqq \
        u_r\:\ee_r \ \pm \ \sqrt{u_\varphi^2+u_\theta^2}\:\ee_\varphi
        \,,
    \end{align*}
    where we have used spherical coordinates to express the functions.
    Note that $\widetilde{u}$ is $\mathbb{S}^2-$valued and tangential, provided that $u$ is tangential.
    However, it is not obvious that $\widetilde{u}$ has a lower energy, even if one assumes $u$ to be equivariant since certain mixed terms in the energy do not carry a sign.
    It would be interesting to prove that $u_\theta$ must be zero using an explicit competitor.
\end{remark}

We introduce some notation we will
use for the rest of the section.
We note that the unit vectors defined in Subsection~\ref{subsec:equivariance} also satisfy
\begin{align*}
    \partial_{\rho}\mathbf{e}_{\rho}&=0,\hspace{15pt}
    \partial_{\theta}\mathbf{e}_{\rho}=\mathbf{e}_{\theta},\hspace{10pt}
    \phantom{-0}\partial_{z}\mathbf{e}_{\rho}=0,\\
    \partial_{\rho}\mathbf{e}_{\theta}&=0,\hspace{15pt}
    \partial_{\theta}\mathbf{e}_{\theta}=-\mathbf{e}_{\rho},\phantom{0}\hspace{10pt}
    \partial_{z}\mathbf{e}_{\theta}=0,\\
    \partial_{\rho}\mathbf{e}_{z}&=0,\hspace{15pt}
    \partial_{\theta}\mathbf{e}_{z}=0,\phantom{-\mathbf{e}_{\rho}}\hspace{10pt}
    \partial_{z}\mathbf{e}_{z}=0.
\end{align*}
It follows that we have
\begin{align*}
    \partial_{\rho}u(\rho,\theta,z)&=
    \partial_{\rho}u_{\rho}\mathbf{e}_{\rho}+\partial_{\rho}u_{\theta}\mathbf{e}_{\theta}
    +\partial_{\rho}u_{z}\mathbf{e}_{z},\\
    \partial_{\theta}u(\rho,\theta,z)&=
    \partial_{\theta}u_{\rho}\mathbf{e}_{\rho}+\partial_{\theta}u_{\theta}\mathbf{e}_{\theta}
    +\partial_{\theta}u_{z}\mathbf{e}_{z}
    +u_{\rho}\mathbf{e}_{\theta}
    -u_{\theta}\mathbf{e}_{\rho}\\
    \partial_{z}u(\rho,\theta,z)&=
    \partial_{z}u_{\rho}\mathbf{e}_{\rho}
    +\partial_{z}u_{\theta}\mathbf{e}_{\theta}
    +\partial_{z}u_{z}\mathbf{e}_{z}.
\end{align*}
We also see that
\begin{align*}
    \partial_{x_{1}}u&=\cos(\theta)\partial_{\rho}u(\rho,\theta,z)
    -\frac{\sin(\theta)}{\rho}\partial_{\theta}u(\rho,\theta,z),\\
    \partial_{x_{2}}u&=\sin(\theta)\partial_{\rho}u(\rho,\theta,z)
    +\frac{\cos(\theta)}{\rho}\partial_{\theta}u(\rho,\theta,z),\\
    \partial_{x_{3}}u&=\partial_{z}u(\rho,\theta,z),
\end{align*}
and hence
\begin{align*}
    |\nabla{}u|^{2}&=
    |\partial_{\rho}u(\rho,\theta,z)|^{2}+\frac{1}{\rho^{2}}
    |\partial_{\theta}u(\rho,\theta,z)|^{2}
    +|\partial_{z}u(\rho,\theta,z)|^{2}\\
    &=|\partial_{\rho}u_{\rho}|^{2}+|\partial_{\rho}u_{\theta}|^{2}
    +|\partial_{\rho}u_{z}|^{2}
    +\frac{1}{\rho^{2}}
    \bigl[|\partial_{\theta}u_{\rho}|^{2}+|\partial_{\theta}u_{\theta}|^{2}
    +|\partial_{\theta}u_{z}|^{2}\bigr]\\
    &+\bigl[|\partial_{z}u_{\rho}|^{2}
    +|\partial_{z}u_{\theta}|^{2}+|\partial_{z}u_{z}|^{2}\bigr]
    +\frac{2}{\rho^{2}}\bigl[u_{\rho}\partial_{\theta}u_{\theta}
    -u_{\theta}\partial_{\theta}u_{\rho}\bigr].
\end{align*}

The following proposition adapts the proof of Theorem 1 from \cite{SaSha} to our setting.

\begin{proposition}\label{prop:u_theta_0_S2_implies_u_theta_0_equiv}
    Let $u\in W^{1,2}(\Omega;\mathbb{S}^2)$ satisfy \eqref{eq:tanbc}.
    Assume furthermore that $u_\theta=0$ on $\mathbb{S}^2$.
    Then there exists a function $\widetilde{u}\in W^{1,2}(\Omega;\mathbb{S}^2)$ satisfying the boundary condition \eqref{eq:tanbc} and
    \begin{enumerate}
        \item $\widetilde{u}\cdot \ee_\theta = 0$ in $\Omega$
        \item $\widetilde{u}$ is equivariant.
    \end{enumerate}
    with $E(\widetilde{u})\leq E(u)$ \ds{with equality if and only if $u$ is  equivariant and orthogonal to $\ee_\theta$ in $\Omega$.}
\end{proposition}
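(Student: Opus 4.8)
The plan is to construct $\widetilde{u}$ explicitly by a two-step averaging/reflection procedure in the angular variable $\theta$, carried out on the coefficient functions rather than on $u$ itself, and then to verify that each step does not increase the Dirichlet energy while preserving the $\mathbb{S}^2$-constraint and the tangency condition on $\partial\Omega$. Write $u = u_\rho\ee_\rho + u_\theta\ee_\theta + u_z\ee_z$ in cylindrical coordinates. The hypothesis $u_\theta = 0$ on $\mathbb{S}^2$ (i.e.\ on $\partial B_1(0)$) means that on the boundary $u$ already lies in the $\ee_\rho$-$\ee_z$ plane, and by \eqref{eq:tanbc} in fact $u\cdot\ee_\rho = 0$ there too, so $u = \pm\ee_z$ at the poles and $u = \pm\ee_z$... more precisely $u\cdot\nu = u\cdot\ee_\rho\cdot(\text{component}) = 0$ gives that $u|_{\partial\Omega}$ points along $\ee_z$ only where $\rho\to 0$; in general the boundary data is a tangential unit field. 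The first step is to replace $u$ by the map $v$ whose coefficients are $v_\rho(\rho,z) \coloneqq \big(\fint_0^{2\pi} u_\rho\,d\theta\big)$-type angular rearrangement is \emph{not} what we want since it breaks $|v|=1$; instead I follow \cite{SaSha}: set $v \coloneqq u_\rho\ee_\rho + 0\cdot\ee_\theta + \operatorname{sgn}(u_z)\sqrt{u_\theta^2 + u_z^2}\,\ee_z$, or symmetrically rotate the $(\ee_\theta,\ee_z)$ part into the $\ee_z$ axis. One must check this is measurable, lies in $W^{1,2}$, is $\mathbb{S}^2$-valued, is tangential on $\partial\Omega$ (immediate since $u_\theta=0$ there already forces the boundary data to be unchanged), and satisfies $v\cdot\ee_\theta=0$; the energy comparison uses that the terms $|\partial_a u_\theta|^2 + |\partial_a u_z|^2 \geq |\partial_a \sqrt{u_\theta^2+u_z^2}|^2$ pointwise for $a\in\{\rho,z\}$ and a careful treatment of the $\theta$-derivative term $\tfrac1{\rho^2}(|\partial_\theta u_\theta|^2 + |\partial_\theta u_z|^2 + 2u_\rho\partial_\theta u_\theta - \ldots)$, where one invokes the pointwise inequality relating $|\partial_\theta(\ee_\theta,\ee_z)\text{-block}|^2$ plus the curvature cross-term to the corresponding quantity after collapsing $u_\theta$ into $u_z$. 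This is exactly the computation in the proof of Theorem 1 of \cite{SaSha}, adapted to the fact that our domain is $B_1(0)$ and the relevant cross-term is $\tfrac{2}{\rho^2}(u_\rho\partial_\theta u_\theta - u_\theta\partial_\theta u_\rho)$ from the displayed formula for $|\nabla u|^2$ above.

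The second step is to make $v$ equivariant. Since $v\cdot\ee_\theta = 0$, we have $v = v_\rho(\rho,\theta,z)\ee_\rho + v_z(\rho,\theta,z)\ee_z$ with $v_\rho^2 + v_z^2 = 1$, so we may write $v = \cos\beta(\rho,\theta,z)\,\ee_z + \sin\beta(\rho,\theta,z)\,\ee_\rho$ for a phase $\beta$ (defined up to the usual ambiguity, and measurable by a standard lifting argument away from the set $\{v = \pm\ee_z\}$, which has the right structure). Then $|\nabla v|^2 = |\nabla\beta|^2 + \tfrac{\sin^2\beta}{\rho^2}$ after using the identities for $\partial_\theta\ee_\rho = \ee_\theta$ etc.\ and that the $\ee_\theta$-component vanishes — the $\ee_\theta$ contribution $\tfrac1{\rho^2}(\sin\beta + \partial_\theta(\ldots))^2$ forces an extra term, and the key observation is that this term is \emph{convex} in $\partial_\theta\beta$ and minimized (for fixed $(\rho,z)$-behaviour) by $\partial_\theta\beta \equiv 0$, i.e.\ by the $\theta$-average of $\beta$. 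So I set $\widetilde{\beta}(\rho,z) \coloneqq \fint_0^{2\pi}\beta(\rho,\theta,z)\,d\theta$ and $\widetilde{u} \coloneqq \cos\widetilde{\beta}\,\ee_z + \sin\widetilde{\beta}\,\ee_\rho$. By Jensen's inequality applied to the convex integrand, $\int |\nabla\widetilde{u}|^2 \leq \int|\nabla v|^2$, with equality iff $\beta$ is $\theta$-independent. The boundary condition survives because $\widetilde{u}\cdot\nu = \sin\widetilde{\beta}\,\ee_\rho\cdot\ee_\rho$ evaluated at $\partial\Omega$ where $\nu = \ee_r$ (spherical) — one needs $\widetilde{u}\cdot\nu=0$ there; since on $\partial B_1$ the original $u$ was tangential with $u_\theta=0$, $\beta$ on $\partial\Omega$ already encodes a tangential field, and averaging a tangential field's phase keeps it tangential (the tangency constraint on the sphere at a point $(\rho,\theta,z)\in\partial\Omega$ is a single linear condition on $(\sin\beta,\cos\beta)$ independent of $\theta$ by axisymmetry of $\partial\Omega$). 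Composing, $\widetilde{u}$ satisfies (1) and (2) and $E(\widetilde{u})\leq E(v)\leq E(u)$.

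For the equality statement: $E(\widetilde{u}) = E(u)$ forces equality in both steps. Equality in step two (Jensen) forces $\beta$ to be independent of $\theta$, i.e.\ $v$ — hence the collapsed map — is already equivariant. Equality in step one forces, via the pointwise inequalities, that $\partial_a\big(\sqrt{u_\theta^2+u_z^2}\big)$ achieves equality with $|\partial_a u_\theta|^2 + |\partial_a u_z|^2$ for all $a$, which (by the equality case of the elementary inequality $(p_1 a + p_2 b)^2/(a^2+b^2) \le p_1^2+p_2^2$, or rather the Cauchy–Schwarz-type inequality underlying the collapse) forces $u_\theta \equiv 0$ pointwise in $\Omega$, i.e.\ $u$ was already orthogonal to $\ee_\theta$; combined with the first conclusion, $u$ is equivariant and orthogonal to $\ee_\theta$. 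Conversely if $u$ already has these two properties then $v=u$ and $\widetilde{u}=u$ so the energies agree. One should phrase this carefully around the measure-zero sets where the lifting $\beta$ or the sign function is ambiguous (the poles $\{u=\pm\ee_z\}$ and $\{\rho=0\}$), but these do not affect the $W^{1,2}$ energy.

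The main obstacle I anticipate is \textbf{not} the energy inequalities, which are the routine adaptation of \cite{SaSha}, but rather (a) the measurable-selection / $W^{1,2}$-regularity of the lifting $\beta$ — one must argue that $v\colon\Omega\to\{\ee_\rho,\ee_z\}\text{-circle}$ lifts to a $W^{1,2}$ phase, which is standard for maps into $S^1$ on a simply connected domain minus the small bad set, but needs care near $\rho=0$ and near $v=\pm\ee_z$ where $\ee_\rho$ is undefined or $\beta$ degenerates — and (b) checking that the tangency condition \eqref{eq:tanbc} is genuinely preserved under the $\theta$-average of $\beta$, which relies on the axisymmetry of $\partial B_1(0)$ ensuring the linear tangency constraint at boundary points does not itself depend on $\theta$ in a way that would break under averaging. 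I would handle (a) by working on $\Omega\setminus(\{\rho=0\}\cup\{v=\pm\ee_z\})$, noting this removed set is $\mathcal{H}^1$-small hence $W^{1,2}$-negligible, using the classical $S^1$-lifting theorem there, and extending; and (b) by writing the tangency constraint explicitly in spherical coordinates at $x\in\partial\Omega$ and observing it reads $\sin\beta \cdot (\ee_\rho\cdot\nu) + \cos\beta\cdot(\ee_z\cdot\nu) = 0$ with coefficients $\ee_\rho\cdot\nu$, $\ee_z\cdot\nu$ depending only on $z$ (not $\theta$), so if $\beta(\rho,\theta,z)$ solves it for a.e.\ $\theta$ then so does any fixed value and hence so does the average — using here that $u_\theta = 0$ on $\partial\Omega$ already put us in the regime $v=u$ on $\partial\Omega$.
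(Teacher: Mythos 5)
Your proposal departs from the paper in a structural way and has two concrete gaps, one in each of your two steps.

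\textbf{Gap in Step 1 (the collapse).} You define $v \coloneqq u_\rho\ee_\rho + \operatorname{sgn}(u_z)\sqrt{u_\theta^2+u_z^2}\,\ee_z$ and assert that this does not increase the energy, deferring the $\theta$-derivative part to ``the computation in \cite{SaSha}.'' But the collapse in isolation does \emph{not} obviously decrease the $\theta$-derivative portion of the energy. A direct computation using $\partial_\theta\ee_\rho=\ee_\theta$, $\partial_\theta\ee_\theta=-\ee_\rho$ gives
\begin{align*}
|\partial_\theta u|^2 - |\partial_\theta v|^2
\ = \
(\partial_\theta u_\theta)^2 + (\partial_\theta u_z)^2 - \big|\partial_\theta\sqrt{u_\theta^2+u_z^2}\big|^2
\ + \ u_\theta^2 \ + \ 2\bigl(u_\rho\partial_\theta u_\theta - u_\theta\partial_\theta u_\rho\bigr)
\, ,
\end{align*}
and while the first three terms combine to something non-negative by Cauchy--Schwarz and $u_\theta^2 \ge 0$, the final cross-term $2(u_\rho\partial_\theta u_\theta - u_\theta\partial_\theta u_\rho)$ has no sign, even after integrating in $\theta$. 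This is precisely the quantity that shows up as $\T$ in Proposition~\ref{prop:symmetrization} and that is the obstruction there. Your intermediate map $v$ may therefore have \emph{higher} energy than $u$. There is also a regularity obstruction: $\operatorname{sgn}(u_z)$ produces a jump in $v_z$ across $\{u_z=0\}\cap\{u_\theta\ne 0\}$, where $\sqrt{u_\theta^2+u_z^2}$ stays bounded away from zero, so $v\notin W^{1,2}$ in general. The paper sidesteps both issues by never forming $v$ as a standalone competitor; the Cauchy--Schwarz collapse is used only as a pointwise lower bound on $|\partial_\alpha u|^2$ for $\alpha\in\{\rho,z\}$, and the $\theta$-derivative comparison is done via a Pythagorean identity against the $\theta$-average (not against the collapsed map), where the orthogonality $\int_{\rho\mathbb{S}^1+z\ee_z}\partial_\theta\widetilde{v}\cdot\partial_\theta(v-\widetilde{v}) = 0$ absorbs the cross-term.

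\textbf{Gap in Step 2 (averaging the phase).} You set $\widetilde{\beta}(\rho,z) = \fint\beta\,d\theta$ and invoke Jensen's inequality for the integrand $|\nabla\beta|^2 + \tfrac{\sin^2\beta}{\rho^2}$. Jensen applies to the gradient terms (squaring is convex), but $\sin^2$ is \emph{not} convex on $\mathbb{R}$ (its second derivative $2\cos 2\beta$ changes sign), so there is no reason that $\sin^2\bigl(\fint\beta\,d\theta\bigr) \le \fint\sin^2\beta\,d\theta$. For instance, if $\beta$ alternates between $0$ and $\pi$ on a circle, the average is $\pi/2$, where $\sin^2=1$, while $\fint\sin^2\beta=0$. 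This is exactly why the paper averages the \emph{component} $u_\rho = \sin\beta$ rather than the phase $\beta$: as a function of $u_\rho$ the problematic term $u_\rho^2/\rho^2$ is genuinely convex, so $\widetilde{u}_\rho^2 = \bigl(\fint u_\rho\bigr)^2 \le \fint u_\rho^2$ holds by Jensen, and the companion term $|\partial_\alpha u_\rho|^2/(1-u_\rho^2)$ is handled by joint convexity of $(\gamma,\zeta)\mapsto\gamma^2/(1-\zeta^2)$ on $\mathbb{R}\times[-1,1]$. Incidentally, at the start of your proposal you briefly considered averaging $u_\rho$ and dismissed it because ``it breaks $|v|=1$'' --- but it does not, since one defines $\widetilde{u}_z \coloneqq \sqrt{1-\widetilde{u}_\rho^2}$, which is precisely the paper's competitor.

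In short: the paper builds a single competitor from the $\theta$-average of $u_\rho$ (with $\widetilde{u}_z=\sqrt{1-\widetilde{u}_\rho^2}$) and proves the energy inequality by combining an $L^2$-orthogonality/Pythagoras argument in $\theta$ with Cauchy--Schwarz and convexity in $\rho,z$. Your two-stage program of ``collapse, then average the phase'' fails at both stages: the collapse has an uncontrolled cross-term and a regularity issue, and phase-averaging runs against the non-convexity of $\sin^2$. Replacing the phase-average by the component-average and doing the $\theta$-comparison via orthogonality rather than collapse would repair the argument and land you on the paper's proof.
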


\begin{proof}
    We decompose $u$ into components in cylindrical coordinates $(\rho,\theta,z)$, i.e. $u = u_\rho\ee_\rho + u_\theta \ee_\theta + u_z\ee_z$.
    Inspired by \cite{SaSha}, we pose the competitor
    \begin{align*}
        \widetilde{u}(\rho,z)
        \ &\coloneqq \
        \widetilde{u}_\rho(\rho,z)\ee_\rho + \widetilde{u}_z(\rho,z)\ee_z
        \, ,
    \end{align*}
    where
    \begin{align*}
        \widetilde{u}_\rho(\rho,z)
        \ \coloneqq \
        \frac{1}{2\pi \rho} \int_{\rho\mathbb{S}^1 + z\ee_z} u_\rho \dx s
        \qquad\text{ and }\qquad
        \widetilde{u}_z(\rho,z)
        \ \coloneqq \ \sqrt{1 - (\widetilde{u_\rho}(\rho,z))^2}
        \, ,
    \end{align*}
    where the circle $\mathbb{S}^1$ lies in the plane $\{z=0\}$.
    While this competitor in general will not satisfy the boundary condition $\widetilde{u}\cdot\nu=0$, we can use the assumption that $u_\theta=0$ on the boundary and the $W^{1,2}-$regularity to deduce that $u=\pm\ee_\varphi$ on the boundary and thus $u_\rho$ (and also $u_z$) are constant along circles $\rho\mathbb{S}^1 + z\ee_z$ with $\rho^2+z^2=1$.
    Hence, the averaging does not change the boundary condition and we have $\widetilde{u}=u$ on $\mathbb{S}^2$.

    Next, we compute $|\partial_\alpha \widetilde{u}|^2$ and $|\partial_\alpha {u}|^2$ for $\alpha\in\{\theta,\rho,z\}$ to obtain the energy estimate.

    We find that
    \begin{align*}
        |\partial_\theta u|^2
        \ &= \ 
        |\partial_\theta v|^2 + |\partial_\theta u_z|^2 
        \qquad\qquad
        |\partial_\theta \widetilde{u}|^2
        \ = \ 
        |\partial_\theta \widetilde{v}|^2
        \, ,
    \end{align*}
    where $v = u_\rho\ee_\rho + u_\theta \ee_\theta$ and $\widetilde{v}=\widetilde{u}_\rho\ee_\rho$.
    A straightforward computation then shows that
    \begin{align*}
        \partial_\theta\widetilde{v} \cdot \partial_\theta(v-\widetilde{v})
        \ &= \
        \widetilde{u}_\rho ((u_\rho - \widetilde{u}_\rho) + \partial_\theta u_\theta)
        \, .
    \end{align*}
    Integrating this equality over $\rho\mathbb{S}^1 + z\ee_z$ gives zero since $\widetilde{u}_\rho$ is the average of $u_\rho$ and $u_\theta$ is periodic.
    By Fubini, we get that
    \begin{align*}
        \int_{B_1(0)} \frac{1}{\rho^2}|\partial_\theta v|^2 \dx x
        \ &= \
        \int_{B_1(0)} \frac{1}{\rho^2}|\partial_\theta (\widetilde{v} - v)|^2 \dx x
        +
        \int_{B_1(0)} \frac{1}{\rho^2}|\partial_\theta \widetilde{v}|^2 \dx x
        \, ,
    \end{align*}
    which implies
    \begin{align}\label{prop:u_theta_0_S2_implies_u_theta_0_equiv:theta}
        \int_{B_1(0)} \frac{1}{\rho^2}|\partial_\theta \widetilde{u}|^2 \dx x
        \ &\leq \
        \int_{B_1(0)} \frac{1}{\rho^2}|\partial_\theta u|^2 \dx x
        \, ,
    \end{align}
    with equality if and only if $v-\widetilde{v}$ is a constant on each circle and $\partial_\theta u_z=0$.

    Now we turn to the remaining derivatives. 
    Let $\alpha\in\{\rho,z\}$.
    Then, by Cauchy-Schwartz inequality it holds 
    \begin{align*}
        \bigl|\partial_{\alpha}(\sqrt{u_{\theta}^{2}+u_{z}^{2}})\bigr|
        &=\biggl|(\partial_{\alpha}u_{\theta},\partial_{\alpha}u_{z})
        \cdot\frac{(u_{\theta},u_{z})}{\sqrt{u_{\theta}^{2}+u_{z}^{2}}}\biggr|\\
        &\le\sqrt{\partial_{\alpha}u_{\theta}^{2}+\partial_{\alpha}u_{z}^{2}}
        \sqrt{\biggl(\frac{u_{\theta}}{\sqrt{u_{\theta}^{2}+u_{z}^{2}}}\biggr)^{2}
        +\biggl(\frac{u_{z}}{\sqrt{u_{\theta}^{2}+u_{z}^{2}}}\biggr)^{2}}\\
        &=\sqrt{\partial_{\alpha}u_{\theta}^{2}+\partial_{\alpha}u_{z}^{2}}
    \end{align*}
    and hence
    \begin{align*}
        |\partial_\alpha u|^2
        \ &= \
        |\partial_\alpha u_\rho|^2
        + |\partial_\alpha u_\theta|^2
        + |\partial_\alpha u_z|^2
        \ \geq \
        |\partial_\alpha u_\rho|^2
        + |\partial_\alpha \sqrt{u_\theta^2+u_z^2}|^2 \\
        \ &= \
        |\partial_\alpha u_\rho|^2
        + |\partial_\alpha \sqrt{1-u_\rho^2}|^2
        \ = \
        |\partial_\alpha u_\rho|^2
        + \bigg|\frac{\partial_\alpha u_\rho}{\sqrt{1-u_\rho^2}}\bigg|^2
        \, ,
    \end{align*}
    which implies
    \begin{align*}
        \int_{\rho\mathbb{S}^1+z\ee_z}
        |\partial_\alpha u|^2 
        \dx s
        \ &\geq \
        \int_{\rho\mathbb{S}^1+z\ee_z}
        \bigg(|\partial_\alpha u_\rho|^2
        + 
        \bigg|\frac{\partial_\alpha u_\rho}{\sqrt{1-u_\rho^2}}\bigg|^2
        \bigg)\dx s
        \, ,
    \end{align*}
    At the same time
    \begin{align*}
        \int_{\rho\mathbb{S}^1+z\ee_z}
        |\partial_\alpha \widetilde{u}|^2
        \dx s
        \ &= \
        \int_{\rho\mathbb{S}^1+z\ee_z}
        \bigg(|\partial_\alpha \widetilde{u}_\rho|^2
        + \bigg|\frac{\partial_\alpha \widetilde{u}_\rho}{\sqrt{1-(\widetilde{u}_\rho)^2}}\bigg|^2
        \bigg)\dx s
        \, .
    \end{align*}
    Since the map $\beta\mapsto\beta^2$ is convex on $\mathbb{R}$ and the map $(\gamma,\zeta)\mapsto \frac{\gamma^2}{1-\zeta^2}$ is convex on $\mathbb{R}\times [-1,1]$, Jensen's inequality yields
    \begin{align} \label{prop:u_theta_0_S2_implies_u_theta_0_equiv:alpha}
        \int_{B_1(0)} |\partial_\alpha \widetilde{u}|^2 \dx x
        \ &\leq \
        \int_{B_1(0)} |\partial_\alpha u|^2 \dx x
        \, .
    \end{align}
    \ds{If equality holds in \eqref{prop:u_theta_0_S2_implies_u_theta_0_equiv:alpha}, then $\partial_\alpha u_\rho$ and $u_\rho$ are constant on each circle $\rho\mathbb{S}^1+z\ee_z$.
    This implies that $u_\rho=\widetilde{u}_\rho$.
    From the equality case for the $\theta-$derivative we recall that $v-\widetilde{v}$ must be constant on each circle.
    Using that $u_\rho=\widetilde{u}_\rho$ this implies that $u_\theta=0$. 
    From the norm constraint we then deduce that $|u_z|=|\widetilde{u}_z|$, so $u$ is equivariant and $u\cdot\ee_\theta=0$.
    On the other hand one can check that if $u_\theta=0$ and $u_\rho$ is independent of $\theta$ (i.e.\ constant on the circles $\rho\mathbb{S}^1+z\ee_z$), then it holds equality in \eqref{prop:u_theta_0_S2_implies_u_theta_0_equiv:alpha}.
    } 

    Combining \eqref{prop:u_theta_0_S2_implies_u_theta_0_equiv:theta} and \eqref{prop:u_theta_0_S2_implies_u_theta_0_equiv:alpha}
    gives
    \begin{align*}
        \int_{B_1(0)} |\nabla \widetilde{u}|^2 \dx x
        \ &= \
        \int_{B_1(0)} 
        \Big(|\partial_\rho \widetilde{u}|^2 
        + \frac{1}{\rho^2}|\partial_\theta \widetilde{u}|^2 
        + |\partial_z \widetilde{u}|^2 
        \Big)\dx x \\
        \ &\leq \
        \int_{B_1(0)} 
        \Big(|\partial_\rho {u}|^2 
        + \frac{1}{\rho^2}|\partial_\theta {u}|^2 
        + |\partial_z {u}|^2 
        \Big)\dx x
        \ = \
        \int_{B_1(0)} |\nabla u|^2 \dx x
        \, .
    \end{align*}
\end{proof}

The main result of this section is the following proposition which constructs an equivariant function from a given function $u$ with energy smaller or equal to that of $u$.
As we will explain in Remark~\ref{rem:symm_cover_diff_cases},
Proposition~\ref{prop:u_theta_0_S2_implies_u_theta_0_equiv} and Proposition~\ref{prop:symmetrization} cover different cases and are of independent interest.

\begin{proposition}[Symmetrization] \label{prop:symmetrization}
Let $u\in W^{1,2}(\Omega;\mathbb{S}^2)$ satisfy \eqref{eq:tanbc}.
Assume furthermore that
\begin{align}\label{prop:symmetrization:T_=_0}
\T 
\ = \
\int_{B_1(0)} \frac{1}{\rho^2}(u_\rho\partial_\theta u_\theta - u_\theta\partial_\theta u_\rho) \dx x
\ \geq \ 
0
\, .
\end{align}
Then there exists an equivariant function $u_\infty\in W^{1,2}(\Omega;\mathbb{S}^2)$ satisfying the boundary condition \eqref{eq:tanbc} with $E(u_\infty)\leq E(u)$ and equality if and only if $u$ is equivariant.
\end{proposition}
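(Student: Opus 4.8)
The plan is to build $u_\infty$ by averaging the \emph{squares} of the cylindrical coefficients of $u$ over the angular variable $\theta$ — this is the norm-preserving substitute for the usual rearrangement — and then to compare Dirichlet energies term by term in cylindrical coordinates, using the hypothesis $\T\ge 0$ only to control the single term in $|\nabla u|^2$ that is not a sum of squares. Writing $u=u_\rho\ee_\rho+u_\theta\ee_\theta+u_z\ee_z$ and using the cylindrical expression for $|\nabla u|^2$ recalled above, the only contribution to $\frac12\int_{B_1(0)}|\nabla u|^2$ without a sign is $\int_{B_1(0)}\frac{1}{\rho^2}(u_\rho\partial_\theta u_\theta-u_\theta\partial_\theta u_\rho)\dx x=\T$; every other contribution is a square, and the $\partial_\theta$-derivative squares vanish for an equivariant map.

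Concretely, I would set $u_\infty\coloneqq u_\rho^\infty\ee_\rho+u_\theta^\infty\ee_\theta+u_z^\infty\ee_z$ with coefficients independent of $\theta$ and determined up to sign by
\[
    \bigl(u_a^\infty(\rho,z)\bigr)^2\ \coloneqq\ \frac{1}{2\pi}\int_0^{2\pi}u_a(\rho,\theta,z)^2\dx\theta\,,\qquad a\in\{\rho,\theta,z\}\,.
\]
Since $\sum_a(u_a^\infty)^2=\frac{1}{2\pi}\int_0^{2\pi}\sum_a u_a^2\dx\theta=1$ this is $\mathbb{S}^2$-valued; and if $w_a\coloneqq\frac{1}{2\pi}\int_0^{2\pi}u_a^2\dx\theta\in W^{1,2}$, then by Cauchy–Schwarz $|\nabla\sqrt{w_a}|=\bigl|\tfrac{1}{2\pi}\int_0^{2\pi}u_a\nabla u_a\dx\theta\bigr|/\sqrt{w_a}\le\sqrt{\tfrac{1}{2\pi}\int_0^{2\pi}|\nabla u_a|^2\dx\theta}\in L^2$, so each $\sqrt{w_a}$ lies in $W^{1,2}\cap L^\infty$. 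The signs of $u_\rho^\infty$ and $u_z^\infty$ are to be chosen so that $u_\infty\in W^{1,2}$ (they may change only on the zero sets of the corresponding averages) and so that the tangency relation $\rho\,u_\rho^\infty+z\,u_z^\infty=0$ holds on $\partial B_1(0)$; this is consistent because the boundary condition $\rho u_\rho+z u_z=0$ forces $\rho^2 u_\rho^2=z^2 u_z^2$ pointwise on $\partial B_1(0)$, a relation that survives squaring and averaging. Should a winding obstruction prevent a single such map from being globally $W^{1,2}$, I would instead iterate a partial angular-averaging operator $\mathcal A$, producing a nonincreasing energy sequence $u_{k+1}=\mathcal A u_k$, and take $u_\infty$ to be a weak $W^{1,2}$ limit of a subsequence; it is then equivariant and, by weak lower semicontinuity of the Dirichlet energy together with the a priori bound, satisfies $E(u_\infty)\le E(u)$.

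The energy comparison is then term by term, integrating against the ($\theta$-independent) volume element $\rho\,\dx\rho\,\dx\theta\,\dx z$. For $\alpha\in\{\rho,z\}$, Cauchy–Schwarz gives $(\partial_\alpha u_a^\infty)^2=\bigl(\tfrac{1}{2\pi}\int_0^{2\pi}u_a\partial_\alpha u_a\dx\theta\bigr)^2/w_a\le\tfrac{1}{2\pi}\int_0^{2\pi}(\partial_\alpha u_a)^2\dx\theta$, hence $\int_{B_1(0)}|\partial_\alpha u_\infty|^2\le\int_{B_1(0)}|\partial_\alpha u|^2$. For the angular term, $|\partial_\theta u_\infty|^2=(u_\rho^\infty)^2+(u_\theta^\infty)^2$ pointwise, so $\int_{B_1(0)}\tfrac{1}{\rho^2}|\partial_\theta u_\infty|^2=\int_{B_1(0)}\tfrac{1}{\rho^2}(u_\rho^2+u_\theta^2)$, whereas the cylindrical formula gives $\int_{B_1(0)}\tfrac{1}{\rho^2}|\partial_\theta u|^2=\int_{B_1(0)}\tfrac{1}{\rho^2}\bigl[(\partial_\theta u_\rho)^2+(\partial_\theta u_\theta)^2+(\partial_\theta u_z)^2+u_\rho^2+u_\theta^2\bigr]+2\T$; subtracting, the difference equals $\int_{B_1(0)}\tfrac{1}{\rho^2}\bigl[(\partial_\theta u_\rho)^2+(\partial_\theta u_\theta)^2+(\partial_\theta u_z)^2\bigr]+2\T\ge 0$ precisely because $\T\ge 0$. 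Summing the three contributions yields $E(u_\infty)\le E(u)$. Equality forces all the $\theta$-derivative squares in the angular term to vanish, i.e.\ $\partial_\theta u_a=0$ a.e.\ for every $a$, which is exactly equivariance of $u$; conversely an equivariant $u$ has $w_a=u_a^2$ and $u_\infty=u$.

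I expect the construction of $u_\infty$ to be the main obstacle: one must reconcile the pointwise constraint $|u_\infty|=1$ with the tangency condition $u_\infty\cdot\nu=0$ while keeping $W^{1,2}$-regularity, which is exactly where symmetric rearrangement and naive renormalization of the averaged coefficient vector break down (the latter could raise the energy on the set where the average is short). Averaging the squared coefficients is designed to circumvent this, but the attendant sign and winding bookkeeping near the zero sets of the $w_a$ — and, if needed, the limiting procedure producing $u_\infty$ — is the delicate part; the energy estimate itself is the string of Jensen/Cauchy–Schwarz inequalities above together with the single use of $\T\ge 0$.
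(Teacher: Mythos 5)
Your energy estimate is correct and is a genuinely different — and in one respect cleaner — route than the paper's: you compare $E(u)$ and $E(u_\infty)$ directly term by term in cylindrical coordinates (Cauchy--Schwarz for the $\rho,z$-derivatives, exact equality for the non-derivative term $\tfrac1{\rho^2}(u_\rho^2+u_\theta^2)$, and a single use of $\T\ge0$ to discard the remaining two angular pieces), whereas the paper introduces a modified functional $\tilde{E}$ (dropping $\T$ and halving the $\theta$-derivative terms), iterates a dyadic angular reflection that decreases $\tilde{E}$, and passes to the weak limit before invoking $\T\ge0$ once at the end. The equality case follows the same way in both arguments.

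However, there is a genuine gap in your \emph{construction} of $u_\infty$, and I don't think it can be repaired within the square-averaging framework. You set $(u_a^\infty)^2 = w_a \coloneqq \tfrac{1}{2\pi}\int_0^{2\pi} u_a^2\,\mathrm{d}\theta$ and then have to choose the sign of each $u_a^\infty$. That sign cannot always be chosen to give a map in $W^{1,2}$. Consider a profile with $u_z$ (after averaging) bounded away from zero near $z=0$ in the interior, which is typical for a hedgehog-like configuration: then $w_z>0$ there, $\sqrt{w_z}$ is bounded below, and any sign assignment that flips across $z=0$ (which the geometry of the tangency condition naturally forces, since $\rho u_\rho^\infty=-z u_z^\infty$ on $\partial\Omega$ and $u_\rho^\infty=0$ at the equator) produces a jump discontinuity of size $2\sqrt{w_z}$, hence $u_z^\infty\notin W^{1,2}$. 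The same obstruction appears for $u_\rho^\infty$ and $u_\theta^\infty$ wherever the averaged square is positive but the original coefficient has no globally consistent sign — e.g.\ $u_\theta=\cos(\theta)\,f(\rho,z)$ gives $\tilde u_\theta\equiv0$ but $w_\theta=f^2/2>0$, leaving no canonical sign. Your fallback of iterating an unspecified ``partial angular-averaging operator $\mathcal{A}$'' does not resolve this: any averaging-type operator faces the identical square-root/sign problem at each step, and you would still need to show that the limit is equivariant with no energy loss. The paper avoids the issue entirely because its symmetrization step is a \emph{reflection} (it transplants the signed coefficients of $u_k$ from the low-energy angular sector), never an average, so no square roots are extracted and no sign choice is needed; the modified energy $\tilde{E}$ is introduced precisely because $\tilde{E}$, unlike $E$, is invariant under the angular reflection $\theta\mapsto 2\pi-\theta$. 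To salvage your approach you would need a different definition of $u_\infty$ that inherits signs continuously — which is essentially what the reflection construction accomplishes.
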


The proof relies on an iterative procedure of decreasing the energy by symmetrization of the coefficient functions of $u$.
The general idea is as follows:
Write the function $u$ and the energy $E$ in  cylindrical coordinates and choose a half-ball containing the $\ee_z-$axis of lower energy than the remaining half-ball.  
Reflect the coefficient function $u_a$ for $a\in \{r,\theta,\varphi\}$ across the plane separating the half-balls to define a new function $u_1$.
By construction, this new function satisfies the same boundary conditions as $u$ and preserves the norm.
From the energy one can see that this procedure decreases the energy only if the integral $\T$ in \eqref{prop:symmetrization:T_=_0} is non-negative.
Then choose one half of the lower energy half-ball and symmetrize again.
Repeating this symmetrization leads to a sequence of functions $u_k$ which will converge to $u_\infty$ with the desired properties.

\begin{proof}
We first note that the gradient of a function $u=u_{\ds{\rho}}\mathbf{e}_{\ds{\rho}}+u_{\theta}\mathbf{e}_{\theta}+u_{z}\mathbf{e}_{z}$ can be written as    
\begin{align*}
|\nabla u|^2
\ &= \
\left(
(\partial_\rho u_\rho)^{2}
+ (\partial_\rho u_\theta)^{2}
+ (\partial_\rho u_z)^{2}
\right) 
+\frac{1}{\rho^2}\left(
(\partial_{\theta}u_\rho)^{2}
+(\partial_{\theta}u_\theta)^{2}
+(\partial_{\theta}u_z)^{2}
\right) \\
&\qquad+ \left(
(\partial_{z}u_\rho)^{2}
+ (\partial_{z}u_\theta)^{2}
+ (\partial_{z}u_z)^{2}
\right) 
+\frac{2}{\rho^2}(u_{\rho}\partial_{\theta}u_{\theta}
-u_{\theta}\partial_{\theta}u_{\rho})
+\frac{1}{\rho^2}(u_{\rho}^{2}+u_{\theta}^{2})
\, .
\end{align*}
We introduce the modified energy $\widetilde{E}$ defined in cylindrical coordinates by
\begin{align*}
\tilde{E}(u)
\ &= \
\int_{\Omega_{\rho,\theta,z}} \: \bigg(
\frac{\rho}{2}
\left(
(\partial_\rho u_\rho)^{2}
+ (\partial_\rho u_\theta)^{2}
+ (\partial_\rho u_z)^{2}
\right) 
+\frac{1}{4\rho}\left(
(\partial_{\theta}u_\rho)^{2}
+(\partial_{\theta}u_\theta)^{2}
+(\partial_{\theta}u_z)^{2}
\right) \\
&\qquad\qquad+ \frac{\rho}{2}\left(
(\partial_{z}u_\rho)^{2}
+ (\partial_{z}u_\theta)^{2}
+ (\partial_{z}u_z)^{2}
\right) 
+\frac{1}{2\rho}(u_{\rho}^{2}+u_{\theta}^{2})
\bigg) \dx\rho \dx\theta \dx z.
\end{align*}
Compared to $E$, there are two modifications. Firstly, we omit the term $\T$ and secondly, we add an additional prefactor of $\frac12$ in front of all $\theta-$derivatives \ds{so that
\begin{align}\label{prop:symmetrization:tildeE_E_T_theta}
    \tilde{E}(u)
    \ &= \
    E(u) 
    - \T 
    - \frac{1}{2}\sum_{a\in\{\rho,\theta,z\}} \int_{\Omega_{\rho,\theta,z}} \: \frac{1}{2\rho} (\partial_\theta u_a)^{2}
\end{align}
} 
Note that for equivariant functions $v$ it holds $\tilde{E}(v)=E(v)$.

\vspace{0.5cm}
\textit{Step 1: Construction of $u_k$.}
We construct a sequence of functions which are increasingly symmetric but have non-increasing energy.
Specifically, we construct a sequence $\{u_{k}\}_{k=1}^{\infty}$ such that for each $k\in\mathbb{N}$ we have 
\begin{enumerate}
\item\label{eq:BasicConditions}
Admissibility:
$u_{k}\in{}W^{1,2}(\Omega,\mathbb{R}^{3})$ and 
$u_{k}\cdot\nu=0$ on $\partial\Omega$,
\item\label{eq:LInftyPreservation}
Norm preservation:
$|u_{k}(x)| = 1$ for $\mathcal{L}^{3}$-almost every $x\in \Omega$.
\item\label{eq:EnergyDecreasing}
Energy decreasing: $\tilde{E}(u_{k+1})\leq \tilde{E}(u_{k})$,
\item\label{eq:Even}
Reflection symmetry:
the components of $u_{k}$ are even relative to $\frac{2\pi}{2^{k}}$
for $\theta\in[0,\frac{2\pi}{2^{k-1}}]$,
\item\label{eq:Periodic}
Periodicity: 
the components of $u_{k}$ are $\frac{2\pi}{2^{k-1}}$-periodic
functions of $\theta$,
\end{enumerate}
To construct this sequence we proceed by induction.

For $0\le\theta_{1}<\theta_{2}\le2\pi$, we introduce the notation $\Omega_{[\theta_{1},\theta_{2}]}$ by
\begin{equation*}
    \Omega_{[\theta_{1},\theta_{2}]}\coloneqq
    \{(\rho,\theta,z)\in\Omega_{\rho,\theta,z}:
    \rho^2+z^2\leq 1,\,\theta\in[\theta_{1},\theta_{2}]\}.
\end{equation*}

We set $u_{0}=u$ to start the sequence and we define $u_1$.
Without loss of generality, we assume that $\tilde{E}(u_{0},\Omega_{[0,\pi]}) \leq \tilde{E}(u_{0},\Omega_{[\pi,2\pi]})$.
We then define $u_1$ for $\theta\in [0,\pi]$ by setting $u_1=u_0$.
We extend $u_{1}$ to the rest of $B_{1}(0)$, i.e.\ for $\theta\in [\pi,2\pi]$ as
\begin{equation*}
    u_{1}(\rho,\theta,z)
    \coloneqq{}u_{0,\rho}(\rho,2\pi-\theta,z)\mathbf{e}_{\rho}+
    u_{0,\theta}(\rho,2\pi-\theta,z)\mathbf{e}_{\theta}+
    u_{0,z}(\rho,2\pi-\theta,z)\mathbf{e}_{z}.
\end{equation*}
Notice that $u_{1}\in{}W^{1,2}(B_{1}(0),\mathbb{R}^{3})$, $u_{1}\cdot\nu=0$ on $\mathbb{S}^{2}$,
$|u_{1}| = 1$
and $u_{1}$ satisfies requirements \ref{eq:Even} and \ref{eq:Periodic} by construction.
In order to check \ref{eq:EnergyDecreasing}, we observe that since $\tilde{E}(u_{0},\Omega_{[0,\pi]}) \leq \tilde{E}(u_{0},\Omega_{[\pi,2\pi]})$ and the energy $\tilde{E}$ is invariant under the exchange of $\theta$ against $2\pi-\theta$ in the coefficients, it also holds that
\begin{equation*}
    \tilde{E}(u_{1})
    \ = \ 
    2\: \tilde{E}(u_{0},\Omega_{[0,\pi]})
    \ \leq \ 
    \tilde{E}(u_{0}).
\end{equation*}
It is important to note that only $\tilde{E}$ is invariant under replacing $\theta$ by $2\pi-\theta$, but not $E$. 
The function $u_1$ therefore satisfies all requirements \ref{eq:BasicConditions}-\ref{eq:Periodic}.

Now we assume that such a function has been constructed for some $k$.   Assuming that      
\begin{equation*}
    \tilde{E}(u_{k},\Omega_{[0,\frac{2\pi}{2^{k+1}}]})
    \ \leq \
    \tilde{E}(u_{k},\Omega_{[\frac{2\pi}{2^{k+1}},\frac{2\pi}{2^{k}}]})
    \, ,
\end{equation*}            
a similar argument will work in the case where the other sector $\Omega_{[\frac{2\pi}{2^{k+1}},\frac{2\pi}{2^{k}}]}$ has lowest energy. 
The requirement \ref{eq:Even} implies that    
\begin{align*}
\tilde{E}(u_{k},\Omega_{[3\frac{2\pi}{2^{k+1}},\frac{2\pi}{2^{k-1}}]})
\ &\leq \
\tilde{E}(u_{k},\Omega_{[\frac{2\pi}{2^{k}},3\frac{2\pi}{2^{k+1}}]}),
\end{align*}
while requirement  \ref{eq:Periodic} gives
\begin{align*}
\tilde{E}(u_{k},\Omega_{[4m\frac{2\pi}{2^{k+1}},(4m+1)\frac{2\pi}{2^{k+1}}]}) 
\ &\leq \
\tilde{E}(u_{k},\Omega_{[(4m+1)\frac{2\pi}{2^{k+1}},(4m+2)\frac{2\pi}{2^{k+1}}]}), \\
\tilde{E}(u_{k},\Omega_{[(4m+3)\frac{2\pi}{2^{k+1}},(4m+4)\frac{2\pi}{2^{k+1}}]})
\ &\leq \
\tilde{E}(u_{k},\Omega_{[(4m+2)\frac{2\pi}{2^{k+1}},(4m+3)\frac{2\pi}{2^{k+1}}]}),
\end{align*}
for $m=1,2,\ldots,2^{k-1}-1$.
We define $u_{k+1}:\Omega\to\mathbb{R}^{3}$ on $[0,\frac{2\pi}{2^{k+1}}]$ by $u_{k+1}=u_k$ and  then extend $u_{k}$ to the rest of $\Omega$ by reflecting the coefficient functions at the plane at angle $\theta=\frac{2\pi}{2^{k+1}}$ to extend to sector $\Omega_{[0,\frac{2\pi}{2^{k}}]}$ and then extend $u_{k+1}$ periodically. 
More precisely, we define $u_{k+1}$ for $\theta\in (\frac{2\pi}{2^{k+1}},\frac{2\pi}{2^{k}}]$ via
\begin{align*}
u_{k+1}(\rho,\theta,z)
\ &\coloneqq \
u_{k,\rho}(\rho,\tfrac{2\pi}{2^{k}}-\theta,z)\mathbf{e}_{\rho}
+ u_{k,\theta}(\rho,\tfrac{2\pi}{2^{k}}-\theta,z)\mathbf{e}_{\theta}
+ u_{k,z}(\rho,\tfrac{2\pi}{2^{k}}-\theta,z)\mathbf{e}_{z}
\, .
\end{align*}
For the periodic extension we note that for $\theta\in (\frac{2\pi}{2^{k}},2\pi]$ there exists $\llbracket \theta\rrbracket_k\in [0,\frac{2\pi}{2^{k}})$ such that $\theta = \llbracket \theta\rrbracket_k + \ell \frac{2\pi}{2^{k}}$ for some $\ell\in\mathbb{N}$. We then set
\begin{align*}
u_{k+1}(\rho,\theta,z)
\ &\coloneqq \
u_{k+1,\rho}(\rho,\llbracket \theta\rrbracket_k,z)\mathbf{e}_{\rho}
+ u_{k+1,\theta}(\rho,\llbracket \theta\rrbracket_k,z)\mathbf{e}_{\theta}
+ u_{k+1,z}(\rho,\llbracket \theta\rrbracket_k,z)\mathbf{e}_{z}
\, .
\end{align*}
It is clear from this definition that $u_{k+1}\in{}W^{1,2}(\Omega;\mathbb{R}^{3})$, and that we preserved the norm $|u_{k+1}| = 1$ a.e.\
and boundary condition $u_{k+1}\cdot\nu=0$ on $\partial\Omega$ so that requirements \ref{eq:BasicConditions} and \ref{eq:LInftyPreservation} hold.
By construction, the requirements \ref{eq:Even} and \ref{eq:Periodic} are also satisfied.
Finally, notice that
\begin{equation*}
    \tilde{E}(u_{k+1},\Omega)
    \ = \ 
    2^{k+1} \tilde{E}(u_{k},\Omega_{[0,\frac{2\pi}{2^{k+1}}]})
    \ \leq \
    \tilde{E}(u_{k},\Omega).
\end{equation*}
which gives requirement \ref{eq:EnergyDecreasing}.

\begin{figure}
\begin{center}
\begin{tikzpicture}[scale=1]
\pgfmathsetmacro{\r}{3} 	
\pgfmathsetmacro{\k}{4} 	
\pgfmathsetmacro{\kk}{2^\k} 
\pgfmathsetmacro{\kkm}{2^\k-1} 
\pgfmathsetmacro{\d}{0.1} 	

\fill[gray!20!blue] (0,0) -- (90:\r) arc (90:{90-360/\kk}:{\r}) -- cycle;
\fill[gray!20!cyan] (0,0) -- ({90-360/\kk}:{\r}) arc ({90-360/\kk}:{90-2*360/\kk}:{\r}) -- cycle;
\draw[] (90:{\r-\d})--(90:{\r+\d});
\node[] at (90:{\r+3*\d}) {$\theta=0$};

\draw[] ({90-360/\kk}:{\r-\d})--({90-360/\kk}:{\r+\d});
\node[] at ({90-360/\kk}:{\r+4*\d}) {$\theta=\frac{2\pi}{2^{k+1}}$};
\draw[] ({90-2*360/\kk}:{\r-\d})--({90-2*360/\kk}:{\r+\d});
\node[] at ({90-2*360/\kk}:{\r+5*\d}) {$\theta=\frac{2\pi}{2^{k}}$};

\foreach \m in {2,4,...,\kkm}{
	\fill[gray!40] (0,0) -- ({90-\m*360/\kk}:\r) arc ({90-\m*360/\kk}:{90-(\m+1)*360/\kk}:{\r}) -- cycle;
	\fill[gray!20] (0,0) -- ({90-(\m+1)*360/\kk}:\r) arc ({90-(\m+1)*360/\kk}:{90-(\m+2)*360/\kk}:{\r}) -- cycle;
	} 

\draw[black,line width=1] (0,0) circle (\r cm);

\end{tikzpicture}  
\caption{Schematic representation of the symmetrization procedure for $k=4$ on a slice through $\Omega$ for constant $z$. Dark blue represents the energy minimizing sector, light blue its reflection, in grey the periodic extension.}
\label{fig:thin_film_constr}
\end{center}
\end{figure}
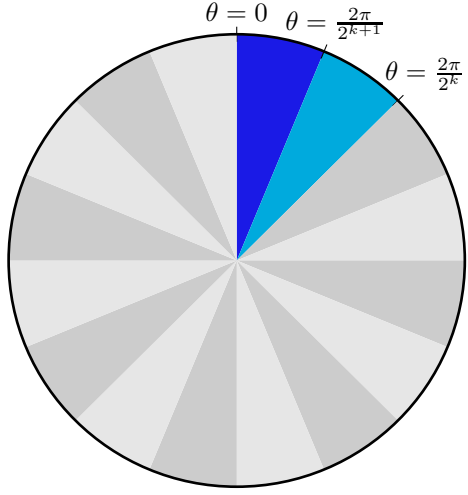

\vspace{0.5cm}
\textit{Step 2: Passing to the limit in $k$.}
With the sequence $\{u_{k}\}_{k=1}^{\infty}$ constructed in Step 1, we notice that by \ds{ definition of $\tilde{E}$ and \eqref{prop:symmetrization:tildeE_E_T_theta}}
\begin{align*}
\left\|\nabla{}u_{k}\right\|_{L^{2}(\Omega;\mathbb{R}^{3})}^{2}
\ &\leq \ 8\widetilde{E}(u_{k},\Omega)
\ \leq \ 8\widetilde{E}(u,\Omega)
\ \leq \ 8\left\|\nabla{}u\right\|_{L^{2}(\Omega;\mathbb{R}^{3})}^{2}
 \, , \\
\left\|u_{k}\right\|_{L^{2}(\Omega;\mathbb{R}^{3})}^{2}
\ &= \
\frac{4}{3}\pi
\, .
\end{align*}
We conclude that $\{u_{k}\}_{k=1}^{\infty}$ is uniformly bounded in
$W^{1,2}(\Omega;\mathbb{R}^{3})$.
By Banach-Alaoglu, there exists a subsequence (not relabelled)
which converges weakly in $W^{1,2}(\Omega;\mathbb{R}^{3})$ to
$u_{\infty}\in{}W^{1,2}(\Omega;\mathbb{R}^{3})$.
By compactness of the Sobolev Embedding we can find a further subsequence such
that $u_{k}\to{}u_{\infty}$ in $L^{4}(\Omega;\mathbb{R}^{3})$.
By weak continuity of the trace operator and compactness of the embedding $W^{\frac{1}{2},2}(\partial\Omega;\mathbb{R}^{3})$ into
$L^{p}(\partial\Omega;\mathbb{R}^{3})$ for $1\le{}p<4$, we then also have that for $k\to\infty$
\begin{align*}
    \int_{\partial\Omega} |u_\infty\cdot\nu|^2 \dx\sigma
    \ \xleftarrow{} \
    \int_{\partial\Omega} |u_k\cdot\nu|^2 \dx\sigma
    \ = \
    0
    \, ,
\end{align*}
i.e.\ $u_{\infty}$ satisfies the boundary conditions $u_\infty\cdot\nu=0$ almost everywhere on $\partial\Omega$.
Since $u_{k}\to{}u_{\infty}$ in $L^{4}(\Omega;\mathbb{R}^{3})$ then
\begin{equation*}
    0=\lim_{k\to\infty}
    \int_{\Omega_{\rho,\theta,z}}\!{}
    (|u_{k}|^{2}-1)^{2}
    \dx x
    \ = \
    \int_{\Omega_{\rho,\theta,z}}\!{}
    (|u_{\infty}|^{2}-1)^{2}
    \dx x
    \, ,
\end{equation*}
which implies that $|u_\infty|^2 = 1$ almost everywhere in $\Omega$.

It remains to show that $u_\infty$ is equivariant.
We start by noticing that if $k\ge{}m$ then
$\frac{2\pi}{2^{m}}$ is an integer multiple of $\frac{2\pi}{2^{k}}$.
Thus, $u_{k}$ has components which are $\frac{2\pi}{2^{m-1}}$-periodic in
$\theta$ for $k\ge{}m$.
Since $u_{k}\to{}u_{\infty}$ in $L^{2}(\Omega;\mathbb{R}^{3})$ then
\begin{equation*}
    \left\|u_{\infty,a}\biggl(\rho,\theta+\frac{2\pi}{2^{m-1}},z\biggr)-
    u_{\infty,a}(\rho,\theta,z)\right\|
    _{L^{2}(\Omega_{\rho,\theta,z})}=0
\end{equation*}
for $a=\rho,\theta,z$ and all $m=1,2,\ldots$.
We conclude that
$u_{\infty}\bigl(\rho,\theta+\frac{2\pi}{2^{m-1}},z\bigr)=
u_{\infty}(\rho,\theta,z)$ for almost every point in $\Omega_{\rho,\theta,z}$ for $m=1,2,\ldots$.
Hence, $u_{\infty,a}$ is $\frac{2\pi}{2^{m-1}}$-periodic in $\theta$ for all
$m\in\mathbb{N}$ and $a\in\{\rho,\theta,z\}$.

Next, we consider a test function $\varphi\in{}C_{c}^\infty(\Omega_{\rho,\theta,z})$
and we choose $m\in\mathbb{N}$ large enough that
\begin{equation}\label{eq:bddist}
    \text{dist}(\text{supp}(\varphi),\partial\Omega_{\rho,\theta,z})
    >\frac{2\pi}{2^{m-1}}.
\end{equation}
We notice that since
\begin{equation*}
    \partial_{\theta}\varphi(\rho,\theta,z)=
    \lim_{m\to\infty}-\frac{\varphi(\rho,\theta-\frac{2\pi}{2^{m-1}},z)-
    \varphi(\rho,\theta,z)}{\frac{2\pi}{2^{m-1}}}
\end{equation*}
for each $(\rho,\theta,z)\in\Omega_{\rho,\theta,z}$ as well as that
\begin{equation*}
    \biggl|\frac{\varphi(\rho,\theta-\frac{2\pi}{2^{m-1}},z)-
    \varphi(\rho,\theta,z)}{\frac{2\pi}{2^{m-1}}}\biggr|
    \le\left\|\nabla\varphi\right\|_{L^{\infty}(\Omega_{\rho,\theta,z})}
\end{equation*}
then by the Dominated Convergence Theorem
used with dominant function $g\coloneqq{}u_{\infty,a}\left\|\nabla\varphi\right\|_{L^{\infty}(\Omega_{\rho,\theta,z})}$, we have, for $a\in\{\rho,\theta,z\}$ that
\begin{align*}
    \int_{\Omega_{\rho,\theta,z}}\!{}
    u_{\infty,a}(\rho,\theta,z)\partial_{\theta}\varphi(\rho,\theta,z)
    =&\lim_{m\to\infty}-\int_{\Omega_{\rho,\theta,z}}\!{}
    u_{\infty,a}(\rho,\theta,z)\cdot
    \frac{\varphi(\rho,\theta-\frac{2\pi}{2^{m-1}},z)
    -\varphi(\rho,\theta,z)}{\frac{2\pi}{2^{m-1}}}\\
    =&\lim_{m\to\infty}
    \biggl[-\int_{\text{supp}(\varphi)+(0,\frac{2\pi}{2^{m-1}},0)}\!{}
    u_{\infty,a}(\rho,\theta,z)\cdot
    \frac{\varphi(\rho,\theta-\frac{2\pi}{2^{m-1}},z)
    }{\frac{2\pi}{2^{m-1}}}\\
    &+\int_{\text{supp}(\varphi)}\!{}u_{\infty,a}(\rho,\theta,z)\cdot
    \frac{\varphi(\rho,\theta,z)}{\frac{2\pi}{2^{m-1}}}\biggr]
\end{align*}
where we have restricted to $\text{supp}(\varphi)+\bigl(0,\frac{2\pi}{2^{m-1}},0\bigr)$
after using \eqref{eq:bddist}.
Next we argue by discrete integration by parts in order to make use of the
$\theta$-periodicity of $u_{\infty,a}$ for $a\in\{\rho,\theta,z\}$.
By the Change of Variables Theorem applied with the map
$(\rho,\theta,z)\mapsto\bigl(\rho,\theta+\frac{2\pi}{2^{m-1}},z\bigr)$
we have, after recombining the integrals, that
\begin{align*}
    \int_{\Omega_{\rho,\theta,z}}\!{}
    u_{\infty,a}(\rho,\theta,z)\partial_{\theta}\varphi(\rho,\theta,z)
    &=\lim_{m\to\infty}-\int_{\text{supp}(\varphi)}\!{}
    \frac{u_{\infty,a}(\rho,\theta+\frac{2\pi}{2^{m-1}},z)
    -u_{\infty,a}(\rho,\theta,z)}{\frac{2\pi}{2^{m-1}}}\cdot\varphi(\rho,\theta,z)\\
    &=\lim_{m\to\infty}-\int_{\Omega_{\rho,\theta,z}}\!{}
    \frac{u_{\infty,a}(\rho,\theta+\frac{2\pi}{2^{m-1}},z)
    -u_{\infty,a}(\rho,\theta,z)}{\frac{2\pi}{2^{m-1}}}\cdot\varphi(\rho,\theta,z).
\end{align*}
Since $u_{\infty,a}\bigl(\rho,\theta+\frac{2\pi}{2^{m-1}},z\bigr)
=u_{\infty,a}(\rho,\theta,z)$ for $m\in\mathbb{N}$ and $a\in\{\rho,\theta,z\}$
for $\mathcal{L}^{3}$-almost every point in $\Omega_{\rho,\theta,z}$ then
we have
\begin{equation*}
    \int_{\Omega_{\rho,\theta,z}}\!{}
    \frac{u_{\infty,a}(\rho,\theta+\frac{2\pi}{2^{m-1}},z)
    -u_{\infty,a}(\rho,\theta,z)}{\frac{2\pi}{2^{m-1}}}\cdot\varphi(\rho,\theta,z)
    =0
\end{equation*}
for all $m\in\mathbb{N}$.
Taking the limit as $m\to\infty$ we see that
\begin{equation*}
    \int_{\Omega_{\rho,\theta,z}}\!{}u_{\infty,a}(\rho,\theta,z)
    \partial_{\theta}\varphi(\rho,\theta,z)
    =0.
\end{equation*}
Thus, for all $\varphi\in{}C_{c}^{\infty}(\Omega_{\rho,\theta,z})$ we have that
\begin{equation*}
    -\int_{\Omega_{\rho,\theta,z}}\!{}\partial_{\theta}u_{\infty,a}(\rho,\theta,z)
    \varphi(\rho,\theta,z)
    =\int_{\Omega_{\rho,\theta,z}}\!{}u_{\infty,a}(\rho,\theta,z)
    \partial_{\theta}\varphi(\rho,\theta,z)
    =0.
\end{equation*}
By the Fundamental Lemma of the Calculus of Variations we have that
$\partial_{\theta}u_{\infty,a}=0$ for $a\in\{\rho,\theta,z\}$ and for
$\mathcal{L}^{3}$-almost every point in $\Omega_{\rho,\theta,z}$.
Let $\eta\colon{}\mathbb{R}^{3}\to\mathbb{R}$ be a smooth, non-negative function
with compact support in $\overline{B_{1}(0)}$ satisfying $\int_{\mathbb{R}^{3}}\!{}\eta=1$.
Consider $\delta>0$ and set $\eta_{\delta}(x)\coloneqq\frac{1}{\delta^{3}}\eta\bigl(\frac{x}{\delta}\bigr)$
and define $u_{\infty,a,\delta}\coloneqq\eta_{\delta}\star{}u_{\infty,a}$ for
$a\in\{\rho,\theta,z\}$.
Observe that $u_{\infty,a,\delta}$ is smooth for $\delta>0$ and
\begin{equation*}
    \partial_{\theta}u_{\infty,a,\delta}
    =\eta_{\delta}\star(\partial_{\theta}u_{\infty,a})
    =0.
\end{equation*}
We conclude for each $\delta>0$ that $u_{\infty,a,\delta}$ is independent of
$\theta$ for all $a\in\{\rho,\theta,z\}$.
Letting $\delta\to0^{+}$ and using that $u_{\infty,a,\delta}$ converges pointwise
almost everywhere, in an $\mathcal{L}^{3}$ sense, to $u_{\infty,a}$ we find that
$u_{\infty,a}$ is independent of $\theta$ for each $a\in\{\rho,\theta,z\}$.

We conclude that $u_{\infty,a}$ is independent of $\theta$ for each
$a\in\{\rho,\theta,z\}$ and hence equivariant.

In order to show that $E(u_\infty)\leq E(u)$, we note that by equivariance of $u_\infty$ it holds that
\begin{align}\label{prop:symmetrization:E_ineqs_chain}
    E(u_\infty)
    \ = \
    \tilde{E}(u_\infty)
    \ \leq \
    \tilde{E}(u)
    \ \leq \
    E(u) - \T
    \ \leq \
    E(u)
    \, ,
\end{align}
where we used assumption \eqref{prop:symmetrization:T_=_0} that $\T\geq 0$.

\vspace{0.5cm}
\textit{Step 3: Equality case}
If $u$ is equivariant, then $\T=0$ and by construction $u_k=u$ for all $k\in\mathbb{N}$, giving $u_\infty=u$. 
Furthermore, if $u$ is equivariant it holds $E(u)=\tilde{E}(u)$ and thus $E(u_\infty)=\tilde{E}(u_\infty)=\tilde{E}(u)=E(u)$.

For the reverse implication, i.e.\ if $E(u_\infty) = E(u)$, we see that we must have equality in all of the inequalities in \eqref{prop:symmetrization:E_ineqs_chain}.
Therefore it holds that $\tilde{E}(u)=E(u)$ and thus \ds{by \eqref{prop:symmetrization:tildeE_E_T_theta}}
\begin{align*}
\int_{\Omega_{\rho,\theta,z}} 
\frac{1}{4\rho}\left(
(\partial_{\theta}u_\rho)^{2}
+(\partial_{\theta}u_\theta)^{2}
+(\partial_{\theta}u_z)^{2}
\right) \dx\rho \dx\theta \dx z
\ &= \
E(u) - \tilde{E}(u) - \T
\ \leq \ 0
\, ,
\end{align*}
i.e.\ $u$ is equivariant.
\end{proof}

\begin{proof}[Proof of Theorem~\ref{thm:main_symmetry}]
      The first part of Theorem~\ref{thm:main_symmetry} follows from Proposition~\ref{prop:symmetrization} since if $u$ is a minimizer, then $E(u)=E(u_\infty)$ and thus $u$ must be equivariant.

      The second part, is an application of Proposition~\ref{prop:u_theta_0_S2_implies_u_theta_0_equiv} since by minimality of $u$ it holds $E(\widetilde{u})=E(u)$, \ds{which implies that $u$ must be equivariant and $u\cdot\ee_\theta=0$ in $B_1(0)$.}
\end{proof}

\begin{remark}\label{rem:symm_cover_diff_cases}\hspace{5pt}\\
\vspace{-15pt}
\begin{itemize}
    \item Note that there exist functions $u\in W^{1,2}(\Omega,\mathbb{S}^2)$ such that $u(\Omega)$ is not included in a hemisphere, and that are not equivariant, but still $\T=0$, see Example~\ref{example:non-equiv-fct-T=0}.
    So in particular, the results of Proposition~\ref{prop:symmetrization} apply although the hypothesis \ds{to use the methods from} \cite{BBCH,KaSha,Sa,SaSha2,SaSha} and Proposition~\ref{prop:u_theta_0_S2_implies_u_theta_0_equiv} are not satisfied. 
    \item In \cite{diFraSlaZar}, it is shown for a more general energy that if $\Omega$ is a two-dimensional surface of revolution, then the existence of axially symmetric energy minimizers is equivalent to the existence of axially \emph{null-average} minimizers which is a condition reminiscent of our condition.
    Proposition~\ref{prop:symmetrization} states that if a minimizer satisfies $\T=0$, then it must be equivariant.
    On the other hand, equivariant minimizers trivially satisfy $\T=0$.
\end{itemize}
\end{remark}

\begin{example}\label{example:non-equiv-fct-T=0}
Here we construct a non-equivariant function $\widetilde{u}$ in the admissible class that satisfies $\T=0$.
Appealing to spherical coordinates on 
the boundary $\partial B_1(0)$ we can define a function
\begin{align*}
    u(\theta,\varphi)
    \ \coloneqq \ 
    \sin(\varphi)\ee_\theta + \cos(\varphi)\ee_\varphi
    \, ,
\end{align*}
and note that $u\cdot\nu=0$ and $u(\partial B_1(0))$ contains an open neighborhood of the equator. 
We extend $u$ into $B_1(0)$ by interpolating $u_\rho,u_\theta,u_z$ from the boundary along $\rho$ to the $z-$axis on which we impose $u_\rho= 0 = u_\theta$ and $u_z=1$ and then project the resulting function onto $\mathbb{S}^2$.
Then $u\in W^{1,2}(\Omega;\mathbb{S}^2)$.
Note that $u$ is equivariant and thus satisfies $\T=0$.
We can now perturb $u$ by choosing a compactly supported test function $\eta\in C_c^\infty(B_{\frac12}\setminus \{\rho>\frac14\})$.
Define
\begin{align*}
    \widetilde{u}
    \ &\coloneqq \
    u_\rho\ee_\rho
    + (u_\theta + \eta)\ee_\theta
    + \widetilde{u}_z\ee_z
    \, ,
\end{align*}
where $\widetilde{u}_z^2 = u_z^2 - \eta^2 - 2u_\theta\eta$ and thus
$\widetilde{u}_z = \sqrt{u_z^2 - \eta^2 - 2u_\theta\eta}$.
By choosing $\eta=0$ on $\{u_z\leq 0\}$ and $\eta\geq 0$ small enough outside this set, this is well-defined.
Since $u_\rho$ is independent of $\theta$, so is $\widetilde{u}_\rho$ and thus $\widetilde{u}_\theta\partial_\theta\widetilde{u}_\rho = 0$.
Furthermore, $\widetilde{u}_\rho\partial_\theta\widetilde{u}_\theta$ integrated over circles $\rho\mathbb{S}^1 + z\ee_z$ yields zero because of periodicity of $\widetilde{u}_\theta$.
We conclude that $\T=0$ also for $\widetilde{u}$. 
\end{example}

\section{Excluding Interior Defects}
\label{sec:bdry_defects}

We recall that we are interested in finding minimizers $u\in W^{1,2}(\Omega,\mathbb{R}^3)$ of the energy
\begin{equation*}
\frac{1}{2}\int_\Omega |\nabla u|^2 \dx x\, ,
\end{equation*}
subject to the constraint $|u|=1$ and the boundary condition $u\cdot\nu=0$.
In Proposition \ref{subsec:fullreg} we saw that that minimizers $u$ have only finitely many singular points.

From here on, we will assume that $u_\theta=0$ for some choice of axis. 
This allows us to apply Proposition~\ref{prop:symmetrization} an conclude that $u$ is rotationally equivariant around the $\ee_3-$axis.
Therefore, singularities cannot occur on the boundary, except on the axis of equivariance.
This implies in particular that $u$ is continuous on the boundary $\partial\Omega$, except on the two points of intersection with the axis.

The main result of this section is the following proposition which directly implies Theorem~\ref{thm:main_no_interior_defects}:

\begin{proposition}\label{prop:sing_only_on_bdry}
Let $u$ be a minimizer of \eqref{def:dir} satisfying \eqref{eq:tanbc} with $u_\theta=0$. 
Then $u$ has exactly two singularities that appear on the boundary at the North and South pole.
\end{proposition}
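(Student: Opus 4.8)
The plan is to reduce to a two-dimensional problem on a meridian half-disk and then transplant the argument of \cite{ABL} controlling the polar angle. First I would use the standing assumption $u_\theta=0$ together with Proposition~\ref{prop:symmetrization} (applied as in the proof of Theorem~\ref{thm:main_symmetry}) to conclude that $u$ is equivariant about the $\ee_3$-axis, so that writing $u=\sin(\beta(\rho,z))\ee_\rho+\cos(\beta(\rho,z))\ee_z$ for a phase function $\beta$ defined on the half-disk $D\coloneqq\{(\rho,z):\rho>0,\ \rho^2+z^2<1\}$ captures the full map; here $\beta$ is the angle between $u$ and $\ee_3$. The energy then becomes, up to the factor $2\pi$ from the $\theta$-integration, a weighted Dirichlet-type integral $\int_D \big(|\nabla\beta|^2+\tfrac{\sin^2\beta}{\rho^2}\big)\rho\,d\rho\,dz$, and the tangency condition forces $\beta\in\{0,\pi\}$ on the spherical part of $\partial D$ (since $u=\pm\ee_\varphi$ there, i.e.\ $\sin\beta=\pm1$ is wrong — rather $u\cdot\nu=0$ on $\rho^2+z^2=1$ means $u\perp(\rho\ee_\rho+z\ee_z)$, which after a short computation pins $\beta$ to the value making $\sin\beta\,\rho+\cos\beta\,z=0$; in any case $\beta$ is a prescribed smooth function of position on the curved boundary away from the poles). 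By Proposition~\ref{prop:full_reg} and the remarks after it, $u$ — hence $\beta$ — is smooth away from finitely many points, and equivariance already rules out boundary singularities except at the two poles $(0,0,\pm1)$, so it only remains to exclude interior singularities, i.e.\ singularities in the open half-disk $D$.

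Next I would carry out the angle estimate. The key point, as in \cite{ABL}, is to show $\beta$ cannot wind: one derives that $\beta$ satisfies the Euler–Lagrange equation $\dif(\rho\nabla\beta)=\tfrac{\sin\beta\cos\beta}{\rho}$ on the smooth set, and then one shows that a minimizer with the given boundary data stays in a strip $\beta\in[0,\pi]$ (or an even narrower one, avoiding $\beta$ turning back toward $-\ee_3$), which prevents the formation of a nontrivial degree around any interior point. Concretely, I would argue by a truncation/competitor comparison: if $\beta$ exceeded $\pi$ (or dipped below $0$) on a set of positive measure, replacing $\beta$ by $\min\{\max\{\beta,0\},\pi\}$ (or by a reflection $2\pi-\beta$ on the overshoot region) yields an admissible competitor — still $\mathbb S^2$-valued, still tangential, same boundary data — with strictly smaller energy, because both $|\nabla\beta|^2$ and $\rho^{-2}\sin^2\beta$ are non-increasing under this operation and the boundary values are untouched. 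This forces $\beta\in[0,\pi]$. With $\beta$ confined to $[0,\pi]$, near any interior point $p\in D$ the map $u=\sin\beta\,\ee_\rho+\cos\beta\,\ee_z$ takes values in a half-great-circle (a contractible arc of $\mathbb S^2$), so it has vanishing topological degree on small spheres around $p$; combined with the $0$-homogeneity of blow-ups at singular points (Corollary~\ref{cor:radialmono} and the blow-up in the proof of Proposition~\ref{prop:full_reg}), the only $0$-homogeneous tangent map with values in an arc is constant, so $\Theta(u,p)=0$ and $p\notin\Sigma(u)$.

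Finally I would verify that the two poles are genuinely singular, so the count is exactly two rather than zero. This follows because the prescribed boundary phase $\beta$ is continuous on $\partial D\setminus\{(0,0,\pm1)\}$ but has a jump (the tangent unit vector field on the punctured sphere has index $\pm1$ at each pole, by the hairy-ball obstruction cited in the introduction via \cite{Mil}): more precisely, $u|_{\partial\Omega}$ extends to a continuous tangent field only after removing two points, and a degree/linking computation shows $u$ cannot be continuous at $(0,0,\pm1)$ regardless of the interior extension, so $\Theta>0$ there; alternatively one quotes the upper bound $E(u)\le 5\pi-\tfrac{\pi^3}{4}<\infty$ from Lemma~\ref{lem:upper_bound_competitor} to see the poles carry finite, hence two, defects. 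The main obstacle I anticipate is the rigorous truncation step: one must justify that cutting or reflecting $\beta$ across $0$ and $\pi$ produces an admissible $W^{1,2}(\Omega;\mathbb S^2)$ map that still satisfies the tangential boundary condition in the trace sense and strictly lowers the energy unless $\beta$ already lies in $[0,\pi]$ — the boundary-compatibility of the truncation (so that no spurious energy is created on $\partial\Omega$) and the strictness of the energy drop are the delicate points, exactly paralleling the sharp-angle argument of \cite{ABL} but now in the curved, tangentially-anchored geometry of the ball.
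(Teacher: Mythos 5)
Your reduction to the scalar phase $\beta$ on a meridian half-disk, the form of the reduced energy, and the overall strategy (confine the angle, then conclude the axis value is forced to $0$) all match the paper. The gap is in the confinement step: your proposed truncation $\beta\mapsto\min\{\max\{\beta,0\},\pi\}$ does not produce an admissible competitor, and the correctable variant does not lower the energy.

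To be concrete: the tangency condition $u\cdot\nu=0$ pins $\beta=\varphi\pm\tfrac{\pi}{2}$ (mod $2\pi$) on $\partial\Omega$, so the continuous boundary trace of $\beta$ sweeps out an interval of length exactly $\pi$ centered on $0$ or on $\pi$ --- e.g.\ $\beta\in[-\tfrac{\pi}{2},\tfrac{\pi}{2}]$ with the branch chosen in the paper. Truncating to $[0,\pi]$ therefore \emph{changes the boundary data} on half of $\partial\Omega$ and is not a valid competitor. If instead you truncate to the boundary-compatible window $[-\tfrac{\pi}{2},\tfrac{\pi}{2}]$, the gradient term drops but the potential $\rho^{-2}\sin^{2}\beta$ is \emph{maximal}, not minimal, at $\beta=\pm\tfrac{\pi}{2}$, so the energy does not decrease pointwise at the cutoff; the one-line truncation argument fails precisely at the endpoints you need. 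The only levels where truncation simultaneously preserves the boundary data and lowers the potential are $\pm\pi$, which is why the paper first truncates to $-\pi\le\psi\le\pi$ and then runs a two-step comparison: it reflects via $\min\{\psi,-\psi\}$/$\max\{\psi,-\psi\}$ to show the positivity and negativity sets $X_{\pm}$ are connected (propagating equality by real-analyticity of minimizers away from the axis), and then compares with $\min\{\psi,\pi-\psi\}$ on $X_{+}$ and $\max\{\psi,-\pi-\psi\}$ on $X_{-}$ to reach $|\psi|\le\tfrac{\pi}{2}$. These two steps, and in particular the use of the sign sets $X_{\pm}$ and the analytic-continuation argument to upgrade an a.e.\ inequality to a pointwise one, are the missing ingredients in your sketch. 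Once $|\psi|\le\tfrac{\pi}{2}$ is established, the conclusion on the axis (where integrability of $\rho^{-2}\sin^{2}\psi$ forces $\psi\in\{-\pi,0,\pi\}$, leaving only $\psi=0$) is elementary and does not require the blow-up/degree argument you invoke at the end, though that argument would also be valid. Your final remarks about the two poles being genuinely singular via the hairy-ball obstruction are correct.
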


The proof proceeds similarly to \cite[Section 4]{ABL}. 

\begin{proof}
We express $u$ in cylindrical coordinates $(\rho,\theta,z)$ and use the fact that $u_\theta=0$ and equivariance to write $u = \sin(\psi)\ee_\rho + \cos(\psi)\ee_z$, where $\psi=\psi(\rho,z)$ is a real-valued $W^{1,2}$ function.
\begin{equation}\label{def:E:psi}
E_0(\psi)
\ \coloneqq \
\int_\Omega \left( |\partial_\rho \psi|^2 + |\partial_z\psi|^2 + \frac{1}{\rho^2}\sin^2(\psi) \right)\dx x\, ,
\end{equation}

The boundary condition \eqref{eq:tanbc} translates to
\begin{equation*}
0
\ = \ u\cdot\nu
\ = \ \sin(\psi)\sin(\varphi) + \cos(\psi)\cos(\varphi)
\ = \ \cos(\varphi-\psi)
\end{equation*}
or in other words, $\psi = \varphi \pm \tfrac{\pi}{2} + 2\pi \mathbb{Z}$ on $\partial\Omega$.
We therefore consider the boundary condition
\begin{equation}\label{def:lim_tang:bc_psi}
\psi
\ = \ \varphi - \frac{\pi}{2} \, .
\end{equation}
Indeed, a minimizer has to be continuous on the boundary according to the previous section (or use $W^{1,2}-$regularity as in ABL).

It therefore holds that 
\begin{equation*}
-\frac{\pi}{2}
\ \leq \ \psi
\ \leq \ \frac{\pi}{2}
\qquad \text{ on }\partial\Omega.
\end{equation*}

Given a function $\psi$, we can define the competitors $\psi_1 \coloneqq \max\{\psi,-\pi\}$ and $\psi_2 \coloneqq \min\{\psi,\pi\}$.
Both $\psi_1$ and $\psi_2$ satisfy the same boundary conditions \eqref{def:lim_tang:bc_psi}
and decrease the energy \eqref{def:E:psi}.
We therefore conclude that for any minimizer $\psi$ the inequalities
\begin{equation*}
-\pi
\ \leq \ \psi
\ \leq \ \pi
\qquad \text{in }\Omega
\end{equation*}
hold.

By finiteness of the energy \eqref{def:E:psi}, $\psi$ is forced to take values in $\{-\pi,0,\pi\}$ on the axis $\rho=0$.
In order to conclude that there are no singularities in the interior of $\Omega$, we have to show that $\psi$ is constant on the line segment $\rho=0$ for $r<1$.
The goal is to prove that $\psi=0$ on this line segment.\\

\noindent\textit{Step 1.} 
We define 
\begin{equation*}
\begin{aligned}
X_+ 
\ &\coloneqq \ \{ x\in\Omega\sd \psi > 0 \} \\
X_-
\ &\coloneqq \ \{ x\in\Omega\sd \psi < 0 \} 
\end{aligned}
\end{equation*}
and 
\begin{equation} \label{eq:lim_tang:A_pm}
\begin{aligned}
A_+ 
\ &\coloneqq \ \{ x\in\partial\Omega\sd \varphi\in (\tfrac{\pi}{2},\pi)  \} \subseteq \overline{X}_+ \\
A_-
\ &\coloneqq \ \{ x\in\partial\Omega\sd \varphi\in (0,\tfrac{\pi}{2}) \} \subseteq \overline{X}_- 
\end{aligned}
\end{equation}
The inclusions in \eqref{eq:lim_tang:A_pm} follow from the boundary condition for $\psi$ in \eqref{def:lim_tang:bc_psi}.

\begin{figure}
\begin{center}
\begin{tikzpicture}[scale=1]
\pgfmathsetmacro{\R}{3} 	
\pgfmathsetmacro{\h}{2} 	

\draw[fill=gray!10] (0,\R) arc (90:-90:\R) -- cycle;

\draw[red, fill=red!10!white] plot [smooth, tension=0.5] coordinates {(0,\R) (\R/4,\R/2) (\R/2,\R/2) (\R/3,\R/4) (\R/2,-\R/5) (2*\R/3,\R/10) (\R,0)} arc (0:90:\R) -- cycle;
\draw[red, fill=red!10!white] plot [smooth, tension=0.5] coordinates {(0,\R/10) (\R/5,\R/20) (\R/6,-\R/4) (0,-\R/3)} -- cycle;
\node[red] at (1.2*\R/2,0.5*\R/2) {$\omega_{-}$};
\node[red] at (\R/10,-\R/10) {$X_{-}$};
\node[] at (\R/1.5,-\R/2) {$\omega_{+}$};
\node[] at (\R/5,\R/3) {$X_{+}$};

\draw[->] (0,0)--(\R*1.10,0) node[right] {$\rho$};
\draw[->] (0,-1.05*\R)--(0,\R*1.10) node[above] {$z$};
\draw[line width=1.5] (0,\R) -- (0,-\R);

\draw[line width=1.5, blue] (0,\R) arc (90:0:\R);
\draw[line width=1.5, green!50!black] (\R,0) arc (0:-90:\R);
\node[blue] at (1.5*\R/2,1.6*\R/2) {$A_{-}$};
\node[green!50!black] at (1.5*\R/2,-1.6*\R/2) {$A_{+}$};

\node[] at (0.75,\R+0.25) {$\psi=-\frac{\pi}{2}$};
\node[] at (\R+0.5,0.25) {$\psi=0$};
\node[] at (0.6,-\R-0.25) {$\psi=\frac{\pi}{2}$};
\node[] at (-0.5,0) {$\psi=0$};
\end{tikzpicture}  
\caption{Domain and notation used in the proof of Proposition \ref{prop:sing_only_on_bdry}.}
\label{fig:excl_int_defects}
\end{center}
\end{figure}
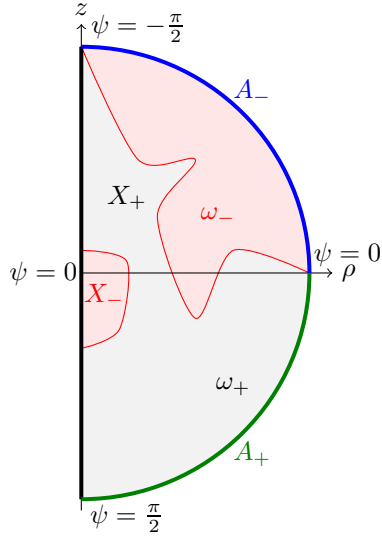

Let $\omega_\pm$ be the connected component of $X_\pm$ containing $A_\pm$. 
We want to show that $X_+ = \omega_+$.
Consider the function
\begin{equation*}
    \tilde{\psi} 
    \ \coloneqq \ \begin{cases}
    \psi & \omega_+ \, , \\
    \min\{\psi,-\psi\} & \Omega\setminus \omega_+ \, .
    \end{cases}
\end{equation*}
Then $\tilde{\psi}\in H^1(\Omega)$ and $E_0(\tilde{\psi}) = E_0(\psi)$ because of the symmetries of $\sin^2(\psi)$.
Furthermore, $\tilde{\psi}$ satisfies the boundary conditions \eqref{def:lim_tang:bc_psi} since 
on $A_+\subset\overline{\omega}_+$
it holds $\tilde{\psi}=\psi$ 
and on $\partial\Omega\setminus A_+$ we have $\psi = \varphi-\tfrac{\pi}{2}\leq 0$, i.e.\ $\min\{\psi,-\psi\}=\psi$. 
Therefore $\tilde{\psi}$ is a minimizer of $E_0$ and is analytic away from the axis $\rho=0$. 
Since $X_-$ is open and non-empty, there is an open subset of $\Omega\setminus \overline{\omega_+}$ such that $\psi < 0$. In this open subset, the analytic functions $\psi$ and $\min\{\psi,-\psi\}$ coincide and hence they coincide everywhere in $\Omega\setminus \overline{\omega_+}$.
It follows that $\psi \leq -\psi$ in $\Omega\setminus \overline{\omega_+}$, or in other words $\psi\leq 0$. 
We conclude that $X_+$ is connected. 

A similar argument allows us to show that $X_-$ is connected. 
We consider the function
\begin{equation*}
    \tilde{\psi} 
    \ \coloneqq \ \begin{cases}
    \psi & \omega_- \, , \\
    \max\{\psi,-\psi\} & \Omega\setminus \omega_- \, .
    \end{cases}
\end{equation*}
As before, one can show that $\tilde{\psi}$ satisfies the boundary conditions \eqref{def:lim_tang:bc_psi} and has the same energy as $\psi$, and is therefore a minimizer of $E_0$. 
Again by analyticity, we show that $\psi\geq -\psi$ on $\Omega\setminus \overline{\omega_-}$, i.e.\ $\psi\geq 0$ and therefore $X_-=\omega_-$ is connected.

\textit{Step 2.}
Let's define the competitor
\begin{equation*}
    \tilde{\psi} 
    \ \coloneqq \ \begin{cases}
    \min\{\psi,\pi-\psi\} & X_+ \, , \\
    \psi & \Omega\setminus X_+ \, .
    \end{cases}
\end{equation*}
We have that $\tilde{\psi}\in H^1(\Omega)$ and $E_0(\tilde{\psi}) = E_0(\psi)$ because $\sin^2(\pi-\psi)=\sin^2(\psi)$.
Furthermore, $\tilde{\psi}$ satisfies the boundary conditions \eqref{def:lim_tang:bc_psi} since on $A_+\subset \overline{X}_+$ it holds $0\leq \psi\leq\tfrac{\pi}{2}$, i.e.\ $\min\{\psi,\pi-\psi\}=\psi$ and on $A_-$ we have $\tilde{\psi}=\psi$.
Therefore $\tilde{\psi}$ is a minimizer of $E_0$ and is analytic away from the axis $\rho=0$. 
We observe that $0<\tfrac{\pi}{8}\leq \psi\leq \tfrac{3\pi}{8}<\tfrac{\pi}{2}$ in a small open neighborhood of the boundary point $\psi(\sqrt{2}/2,-\sqrt{2}/2)$. 
In particular, this neighborhood is contained in $X_+$.
In this open subset, the analytic functions $\psi$ and $\tilde{\psi}$ coincide and hence they coincide everywhere in $X_+$.
It follows that $\psi = \min\{\psi,\pi-\psi\}$ in $X_+$ from which $\psi \leq \pi-\psi$ in $X_+$, or in other words $\psi\leq \tfrac{\pi}{2}$.

It is possible to do a similar comparison argument with the competitor
\begin{equation*}
    \tilde{\psi} 
    \ \coloneqq \ \begin{cases}
    \psi & X_+ \, , \\
    \max\{\psi,-\pi-\psi\} & \Omega\setminus X_+ \, .
    \end{cases}
\end{equation*}
Indeed, $E_0(\tilde{\psi}) = E_0(\psi)$ and $\tilde{\psi}$ satisfies the boundary conditions \eqref{def:lim_tang:bc_psi}.
Therefore, on $X_-$, $\psi=\max\{\psi,-\pi-\psi\}$ which gives $\psi\geq -\pi-\psi$, i.e.\ $\psi\geq -\tfrac{\pi}{2}$.

So in total, we conclude that $-\tfrac{\pi}{2}\leq \psi \leq \tfrac{\pi}{2}$ on the whole of $\Omega$. 
It therefore follows that $\psi\equiv 0$ on the axis $\rho=0$, i.e.\ there are no singularities in the interior of $\Omega$.
\end{proof}

\section{Estimating the Energy of Minimizers}
\label{sec:profile}

The goal of this section is to bound the energy of equivariant minimizers of $E$ of the form $u = u_\rho\ee_\rho + u_z\ee_z$.

In the next lemma, we derive an upper bound on the minimial energy of $E$ by using an explicit competitor $u_0$.
The vector field $u_0$ is obtained by constructing the unit normal vector field of spherical shells that intersect the boundary of $\Omega$ perpendicularly, see Figure~\ref{fig:competitor}.
Note that in the case of Dirichlet boundary data $u=\frac{x}{|x|}$ on $\mathbb{S}^2$, this construction yields precisely the minimizer $u=\frac{x}{|x|}$.

\begin{figure}
\begin{center}
\scalebox{1.5}{\begin{tikzpicture}[scale=1]
\pgfmathsetmacro{\r}{2} 	
\pgfmathsetmacro{\d}{0.15} 	
\pgfmathsetmacro{\a}{0.2} 	

\draw[black,line width=1] (0,0) circle (\r cm);

\draw[dotted] (0,-\r) -- (0,\r);

\draw[blue] (-\r,0) -- (\r,0);
\foreach \t in {-1,-0.5,...,1}{ 
	\draw[black, ->] ({\t*\r},{0}) -- +({0},{\a*1});
	} 

\foreach \z in {-0.2,-0.4,...,-0.8}{ 
	\pgfmathsetmacro{\R}{sqrt((1+\z*\z)*(1+\z*\z)/(4*\z*\z)-1)} 
	\pgfmathsetmacro{\Cz}{(1+\z*\z)/(2*\z)} 
	\pgfmathsetmacro{\angle}{asin((2*\z)/(1+\z*\z))} 
	\draw[domain=-\angle:\angle, smooth, variable=\t, blue] plot ({\r*\R*sin(\t)}, {\r*\R*cos(\t)+\r*\Cz});
	\foreach \s in {-1,-0.5,...,1}{ 
		\draw[black, ->] ({\r*\R*sin(\s*\angle)}, {\r*\R*cos(\s*\angle)+\r*\Cz}) -- +({\a*sin(\s*\angle)},{\a*cos(\s*\angle)});
		} 
	} 

\foreach \z in {0.2,0.4,...,0.8}{ 
	\pgfmathsetmacro{\R}{sqrt((1+\z*\z)*(1+\z*\z)/(4*\z*\z)-1)} 
	\pgfmathsetmacro{\Cz}{(1+\z*\z)/(2*\z)} 
	\pgfmathsetmacro{\angle}{asin((2*\z)/(1+\z*\z))} 
	\draw[domain=-\angle:\angle, smooth, variable=\t, blue] plot ({\r*\R*sin(\t)}, {-\r*\R*cos(\t)+\r*\Cz});
	\foreach \s in {-1,-0.5,...,1}{ 
		\draw[black, ->] ({\r*\R*sin(\s*\angle)}, {-\r*\R*cos(\s*\angle)+\r*\Cz}) -- +({-\a*sin(\s*\angle)},{\a*cos(\s*\angle)});
		} 
	} 

\fill[red] (-\d,-\r-\d/5) -- (\d,-\r-\d/5) arc(0:180:\d) -- cycle;
\fill[red] (-\d, \r+\d/5) -- (\d, \r+\d/5) arc(0:-180:\d) -- cycle;


\end{tikzpicture}}
\caption{Competitor $u_0$ used in Lemma~\ref{lem:upper_bound_competitor}. 
The vector field (black arrows) is the unit normal vector field of level sets (blue lines). 
The level sets are included in a family of spherical caps of varying radius and center such that they intersect the boundary $\partial\Omega$ and the axis $\ee_3$ perpendicularly, thus making $u_0$ tangential to $\partial\Omega$. 
The two red half disks represent the boundary defects of $u_0$ created by the radius of the level sets shrinking to zero at the North and South pole. 
On the equator the radius is infinite and thus $u_0=\ee_3$.}
\label{fig:competitor}
\end{center}
\end{figure}
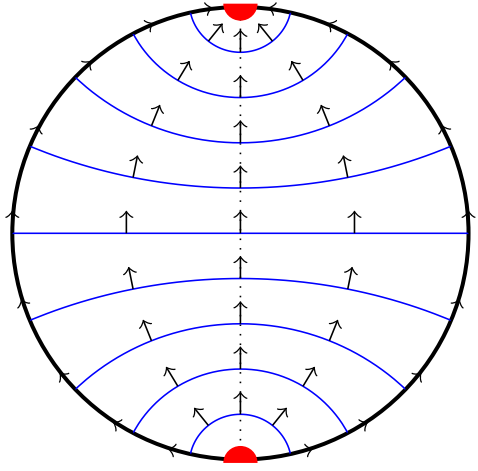

More precisely, we have:

\begin{lemma}\label{lem:upper_bound_competitor}
The vector field
\begin{align*}
    u_0(x)
    \ = \
    \frac{(x^2+y^2-z^2+1)\ee_z-2z\sqrt{x^2+y^2}\ee_\rho}{\sqrt{4z^2(x^2+y^2) + (1+x^2+y^2-z^2)^2}}
\end{align*}
satisfies $|u_0|=1$ and $u_0\cdot\nu=0$ on $\mathbb{S}^2$ and is thus in the admissible class for $E$.
It holds that $E(u_0)=5\pi - \tfrac{\pi^3}{4}\approx 7.956<\infty$.
\end{lemma}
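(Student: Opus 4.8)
The plan is to verify the three claimed properties of $u_0$ directly and then compute the energy integral explicitly, taking advantage of equivariance to reduce to a two-dimensional integral. First I would check the pointwise norm and the tangency condition. Writing $s=x^2+y^2$ and $\rho=\sqrt{s}$, the vector field is $u_0 = \frac{(s-z^2+1)\ee_z - 2z\rho\,\ee_\rho}{\sqrt{4z^2 s + (1+s-z^2)^2}}$, so $|u_0|^2 = \frac{(s-z^2+1)^2 + 4z^2 s}{4z^2 s + (1+s-z^2)^2} = 1$ is immediate. For tangency on $\mathbb{S}^2$, note $\nu = x\,\ee_x+y\,\ee_y+z\,\ee_z = \rho\,\ee_\rho + z\,\ee_z$, hence $u_0\cdot\nu = \frac{z(s-z^2+1) - 2z\rho^2}{\sqrt{\cdots}} = \frac{z(1 - s - z^2)}{\sqrt{\cdots}}$, which vanishes on $\{s+z^2=1\}=\mathbb{S}^2$. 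This also confirms the geometric picture: the level sets of $u_0$ are arcs of the circles $x^2+y^2 = 2cz$ rotated about $\ee_3$, i.e.\ spheres through the poles and meeting $\partial\Omega$ orthogonally.

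Next I would set up the energy computation. By equivariance $u_0 = u_{0,\rho}\ee_\rho + u_{0,z}\ee_z$ with $u_{0,\theta}=0$ and all coefficients independent of $\theta$, so using the cylindrical formula for $|\nabla u|^2$ recorded in Section~\ref{sec:sym_equiv} (with the $\partial_\theta$-terms dropping out except $\frac{1}{\rho^2}(u_{0,\rho}^2+u_{0,\theta}^2) = \frac{1}{\rho^2}u_{0,\rho}^2$) one gets
\begin{align*}
    |\nabla u_0|^2
    \ = \
    |\partial_\rho u_{0,\rho}|^2 + |\partial_\rho u_{0,z}|^2
    + |\partial_z u_{0,\rho}|^2 + |\partial_z u_{0,z}|^2
    + \frac{u_{0,\rho}^2}{\rho^2}
    \, .
\end{align*}
It is cleaner to write $u_0 = \sin(\psi)\ee_\rho + \cos(\psi)\ee_z$ for the angle function $\psi = \psi(\rho,z)$, which by the form of $u_0$ satisfies $\tan\psi = \frac{-2z\rho}{1+\rho^2-z^2}$; then $|\nabla u_0|^2 = |\partial_\rho\psi|^2 + |\partial_z\psi|^2 + \frac{\sin^2\psi}{\rho^2}$, matching $E_0$ from \eqref{def:E:psi}. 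So $E(u_0) = \frac12 E_0(\psi) = \pi \int_{\{\rho^2+z^2<1,\ \rho>0\}} \big(|\partial_\rho\psi|^2 + |\partial_z\psi|^2 + \rho^{-2}\sin^2\psi\big)\,\rho\,\dx\rho\,\dx z$ after integrating out $\theta$.

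Then I would evaluate this integral. Introducing polar-type coordinates $\rho = r\sin\phi$, $z = r\cos\phi$ on the unit half-disk ($0<r<1$, $0<\phi<\pi$) should make the integrand manageable — the level-set geometry suggests $\psi$ has a simple closed form in these variables (one expects something like $\psi$ depending on $\phi$ and $r$ through an expression built from $\sin\phi$, $r$; indeed $\tan\psi = \frac{-2r^2\sin\phi\cos\phi}{1+r^2\sin^2\phi - r^2\cos^2\phi} = \frac{-r^2\sin 2\phi}{1 - r^2\cos 2\phi}$, which is the tangent-addition pattern, giving $\psi$ in terms of $\arctan$ of $r^2$ and $2\phi$ combined). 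Computing the partial derivatives, assembling $|\nabla\psi|^2 + \rho^{-2}\sin^2\psi$, multiplying by the Jacobian $\rho\,\dx\rho\,\dx z = r^2\sin\phi\,\dx r\,\dx\phi$ (or rather $\rho\cdot r\,\dx r\,\dx\phi$), and carrying out the iterated integral should yield $5\pi - \frac{\pi^3}{4}$. The main obstacle is purely computational: getting the explicit formula for $\psi$ in the convenient coordinates, differentiating it correctly, and performing the resulting one-dimensional integrals (which will involve $\arctan$ and rational functions, hence terms like $\pi$ and $\pi^3$) without arithmetic slips. The finiteness $E(u_0)<\infty$ is automatic once the integral is evaluated, but I would also note a priori that the only possible concentration is at the poles $(0,0,\pm 1)$ where the level-set radius degenerates, and there the energy density is that of a boundary half-vortex with finite (order $\pi$) energy, consistent with Remark~\ref{rem:3D_O1_bound}.
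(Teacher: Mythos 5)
Your verification of $|u_0|=1$ and of $u_0\cdot\nu=0$ is exactly the paper's argument, and your energy computation — reduce by equivariance to a weighted two-dimensional integral over the half-disk and evaluate — is the same strategy, differing only in that you repackage $u_0$ through the angle $\psi$ with $\tan\psi=-2z\rho/(1+\rho^2-z^2)$ while the paper simply records the closed form $|\nabla u_0|^2=(4\rho^2+8z^2)/\bigl((1+\rho^2-z^2)^2+4\rho^2z^2\bigr)$ and integrates it (one checks these two expressions agree). Neither your outline nor the paper displays the final one-dimensional integrations, so your proposal is correct and essentially identical in approach.
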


\begin{proof}
From definition and since $\ee_\rho\cdot\ee_z=0$ it is clear that $|u_0|=1$.
Furthermore, we note that $\nu = \rho\ee_\rho + z\ee_z$ and since $\rho^2+z^2=1$ on $\partial\Omega$ we find
\begin{align*}
    u_0\cdot\nu
    \ &= \
    \frac{(\rho^2-z^2+1)z - 2z\rho^2}{\sqrt{4z^2\rho^2 + (1+\rho^2-z^2)^2}}
    \ = \
    \frac{(-\rho^2-z^2+1)z}{\sqrt{4z^2\rho^2 + (1+\rho^2-z^2)^2}}
    \ = \
    0
    \, .
\end{align*}

One can compute that for $\rho^2+z^2\leq 1$, $\rho\geq 0$ it holds
\begin{align*}
    |\nabla u_0|^2
    \ &= \
    \frac{4\rho^2+8z^2}{\rho^2+2\rho^2 z^2 + 2\rho^2 + z^4 - 2z^2+1}
    \, .
\end{align*}
Multiplying this with the Jacobian of cylindrical coordinates, $\rho$, and integrating over the unit ball yields the energy $10\pi - \tfrac{\pi^3}{2}$.  
\end{proof}

\begin{remark}\label{rem:upper_bound_competitor}
    The competitor $u_0$ from Lemma \ref{lem:upper_bound_competitor} is \emph{not} a minimizer of $E$.
    This can be seen by plugging $u_0$ into the Euler-Lagrange equations.
    It turns out that $u_0$ satisfies these equations only in the direction of the linearly independent vector fields $\ee_\theta$ and $u_0$, but not $(u_0)_z\ee_\rho - (u_0)_\rho \ee_z$.
\end{remark}

In the remainder of this section, we compute a lower bound on the energy using \emph{level set coordinates}.

For a level set $\{u_\rho=c\}$ for some constant $c\in [-1,1]$ we introduce the parametrization $\lambda=(\lambda_\rho,\lambda_z)$, i.e.\ a function $\lambda\colon(0,L(c))\rightarrow \{(\rho,z)\in [0,\infty)\times\mathbb{R}\sd \rho^2+z^2\leq 1 \}$ solution to the ODEs
\begin{align*}
    \frac{\dx}{\dx t}\lambda \ = \ (\nabla^\perp u_\rho)\circ\lambda
    \quad \text{where } \nabla^\perp = (-\partial_z, \partial_\rho)
    \, .
\end{align*}
We want to change coordinates from $(\rho,z)$ to $(t,c)$ where $t$ is the parameter for $\lambda$ on the level set $\{u_\rho=c\}$.
We define $\Phi(t,c) \coloneqq (\rho,z)$.

\begin{lemma}\label{lem:jacobian_level_set_coord}
    For $(\rho,z)\neq(0,0)$ we have $\det(\nabla\Phi) = -1$.
\end{lemma}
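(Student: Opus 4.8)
The claim is that the change of variables $\Phi(t,c)=(\rho,z)$ from level-set coordinates back to cylindrical coordinates in the $(\rho,z)$ half-plane has Jacobian determinant identically equal to $-1$ (away from the origin). The plan is to compute the two columns of $\nabla\Phi$ directly from the defining ODE and the level-set relation, and show the $2\times 2$ determinant collapses to $-|\nabla u_\rho|^{-2}\cdot|\nabla u_\rho|^{2}$ or similar, giving $-1$.

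First I would set up notation: for fixed $c$, the curve $t\mapsto\lambda(t)=\Phi(t,c)$ solves $\dot\lambda=(\nabla^\perp u_\rho)\circ\lambda$, so the first column of $\nabla\Phi$ is $\partial_t\Phi = \nabla^\perp u_\rho = (-\partial_z u_\rho,\partial_\rho u_\rho)$ evaluated at $(\rho,z)$. For the second column $\partial_c\Phi$, I would differentiate the identity $u_\rho(\Phi(t,c))=c$ with respect to $c$, obtaining $\nabla u_\rho\cdot\partial_c\Phi = 1$, and differentiate the same identity with respect to $t$, obtaining $\nabla u_\rho\cdot\partial_t\Phi = 0$ (which is automatically consistent since $\nabla u_\rho\cdot\nabla^\perp u_\rho=0$). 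These two scalar relations pin down $\partial_c\Phi$ only up to a multiple of $\partial_t\Phi$; but the determinant $\det(\nabla\Phi)=\det(\partial_t\Phi\mid\partial_c\Phi)$ is unaffected by adding multiples of the first column to the second, so I may replace $\partial_c\Phi$ by its component along $\nabla u_\rho/|\nabla u_\rho|^2$, namely the unique vector $w$ with $\nabla u_\rho\cdot w=1$ and $w\parallel\nabla u_\rho$, i.e. $w=\nabla u_\rho/|\nabla u_\rho|^2$.

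Then the determinant becomes
\begin{equation*}
\det(\nabla\Phi) = \det\!\big(\nabla^\perp u_\rho \;\big|\; \tfrac{\nabla u_\rho}{|\nabla u_\rho|^2}\big) = \frac{1}{|\nabla u_\rho|^2}\det\!\big(\nabla^\perp u_\rho \;\big|\; \nabla u_\rho\big).
\end{equation*}
Writing $\nabla u_\rho=(a,b)$ so $\nabla^\perp u_\rho=(-b,a)$, the last determinant is $\det\begin{pmatrix}-b & a\\ a & b\end{pmatrix} = -b^2 - a^2 = -|\nabla u_\rho|^2$, hence $\det(\nabla\Phi)=-1$. I would then note this computation is valid wherever $\nabla u_\rho\neq 0$, which by the analyticity/regularity of the minimizer away from the axis (and away from the defects) holds on the relevant open set, and that the condition $(\rho,z)\neq(0,0)$ excludes the one bad point; one should also remark that the level-set parametrization $\Phi$ is well-defined precisely on the region where $\nabla u_\rho\neq 0$, so the statement is not vacuous.

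The main obstacle is not the algebra — which is a three-line determinant computation — but making precise the domain on which $\Phi$ is a legitimate diffeomorphism: one must know that $\nabla u_\rho$ does not vanish on the region swept out by the level curves, so that the ODE for $\lambda$ has well-defined non-stationary trajectories and $\partial_c\Phi$ exists. I would handle this by invoking the regularity results already established (Proposition~\ref{prop:full_reg}: the minimizer is smooth away from finitely many singular points, which here lie on the axis) together with the structural information on $\psi$ from Section~\ref{sec:bdry_defects} guaranteeing monotonicity of $u_\rho$ transversally to the level sets; the determinant identity itself then follows formally as above, and its constancy ($\equiv -1$, with the correct sign) is exactly what the short computation delivers.
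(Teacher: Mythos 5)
Your proof is correct and follows essentially the same route as the paper's: both determine the derivative of $\Phi$ (the paper works with $\nabla\Phi^{-1}$, solving a linear system with an undetermined function $q$) only up to a tangential component along the level set, and both observe that this ambiguity cancels in the determinant, yielding $-1$. Your version, which computes $\det(\nabla\Phi)$ directly via a column operation, is if anything slightly cleaner and makes explicit the non-degeneracy requirement $\nabla u_\rho\neq 0$ that the paper's division by $\partial_\rho u_\rho$ leaves implicit.
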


\begin{proof}
    In order to calculate $\nabla t$, we note that
\begin{align*}
(1,0)
\ &= \ \partial_\rho (\rho,z)
\ = \ \partial_\rho \lambda(t,c)
\ = \ \dot{\lambda}(t) \partial_\rho t \ + \ \partial_c \lambda\:\partial_\rho c
\ = \ \nabla^\perp u_\rho \: \partial_\rho t \ + \ \partial_c \lambda\:\partial_\rho c
\end{align*}
and 
\begin{align*}
(0,1)
\ &= \ \partial_z (\rho,z)
\ = \ \partial_z \lambda(t)
\ = \ \dot{\lambda}(t) \partial_z t \ + \ \partial_c \lambda\:\partial_z c
\ = \ \nabla^\perp u_\rho \: \partial_z t \ + \ \partial_c \lambda\:\partial_z c
\end{align*}
By definition $\nabla c = \nabla u_\rho$.

Solving the four equations for the four unknowns $\partial_\rho t,\partial_z t,\partial_c\lambda_\rho,\partial_c\lambda_z$, we obtain that 
\begin{align*}
\begin{pmatrix}
\partial_\rho t \\ 
\partial_z t \\
\partial_c\lambda_\rho \\
\partial_c\lambda_z
\end{pmatrix}
\ &= \
\begin{pmatrix}
0 \\ 
\frac{1}{\partial_\rho u_\rho} \\
\frac{1}{\partial_\rho u_\rho} \\
0
\end{pmatrix}
\ + \ q 
\begin{pmatrix}
-1 \\ 
-\frac{\partial_z u_\rho}{\partial_\rho u_\rho} \\
-\frac{\partial_z u_\rho}{\partial_\rho u_\rho} \\
1
\end{pmatrix}
\, ,
\end{align*}
where $q=q(t,c)$ is some unknown function.

This unknown function cancels in the Jacobian and we obtain
\begin{align*}
\det(\nabla\Phi^{-1})
\ &= \ 
-1 - q\left( \partial_z u_\rho - \partial_z u_\rho \right)
\ = \
-1 
\, ,
\end{align*}
which gives $\det(\nabla\Phi)=-1$.
\end{proof}

\begin{proposition}\label{prop:lower_bound_level_set_coord}
    Let $u$ be an admissible minimizer of \eqref{def:dir}. 
    Assume also that $u$ is equivariant and of the form $u = u_\rho\ee_\rho + u_z\ee_z$.
    Then
    \begin{align*}
    E(u)
    \ &\geq \
    \frac{4}{3}\pi(2\sqrt{2}-1)
    \ \approx \
    7.6589
    \, .
\end{align*}
\end{proposition}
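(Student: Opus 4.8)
\textbf{Proof plan for Proposition~\ref{prop:lower_bound_level_set_coord}.}
The plan is to use the level set coordinates $\Phi(t,c) = (\rho,z)$ introduced above, together with the unit-Jacobian identity from Lemma~\ref{lem:jacobian_level_set_coord}, to rewrite the energy as an integral over the $(t,c)$-variables and then bound it from below by a pointwise-in-$c$ estimate. First I would express the (equivariant, $u_\theta = 0$) Dirichlet energy in cylindrical coordinates,
\begin{align*}
    E(u)
    \ = \
    \pi \int_{\{\rho^2+z^2\leq 1,\,\rho\geq 0\}}
    \left( |\partial_\rho u_\rho|^2 + |\partial_z u_\rho|^2 + |\partial_\rho u_z|^2 + |\partial_z u_z|^2 + \frac{1}{\rho^2}(u_\rho^2) \right) \rho \dx\rho \dx z
    \, ,
\end{align*}
using $u_z = \sqrt{1-u_\rho^2}$ (on the set where it is smooth) so that $|\nabla u_z|^2 = |\nabla u_\rho|^2 / (1-u_\rho^2)$, hence the purely gradient part equals $|\nabla u_\rho|^2/(1-u_\rho^2)$. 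Then I would change variables via $\Phi$: since $\det\nabla\Phi = -1$, the area element $\rho\,\dx\rho\,\dx z$ becomes $\rho(t,c)\,\dx t\,\dx c$, and along a level set $\{u_\rho = c\}$ the tangential derivative drops out, leaving only the normal derivative contribution, which is governed by $|\nabla u_\rho|$ evaluated along $\lambda$. The key point is that $|\dot\lambda| = |\nabla^\perp u_\rho| = |\nabla u_\rho|$, so that one obtains an expression of the schematic form
\begin{align*}
    E(u)
    \ \geq \
    \pi \int_{-1}^{1} \frac{1}{1-c^2} \left( \int_{0}^{L(c)} |\nabla u_\rho|^2 \rho(t,c) \dx t \right) \dx c
    \, ,
\end{align*}
discarding the nonnegative $\frac{1}{\rho^2}u_\rho^2\,\rho$ term.

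Next I would estimate the inner integral from below for each fixed $c$. The level set $\{u_\rho = c\}$ is a curve in the half-disk joining a point on the axis $\{\rho=0\}$ to a point on the boundary arc $\{\rho^2+z^2=1\}$ (this uses the boundary condition $\psi = \varphi - \pi/2$, i.e.\ $u_\rho = \sin\psi = -\cos\varphi$ on $\partial\Omega$, so the boundary value of $u_\rho$ sweeps $[-1,1]$, together with the topological/continuity structure established in Section~\ref{sec:bdry_defects}). Parametrizing by arclength $s$ instead of $t$, one has $\dx s = |\nabla u_\rho|\,\dx t$ along the level set, so $\int_0^{L(c)} |\nabla u_\rho|^2 \rho\,\dx t = \int |\nabla u_\rho|\,\rho\,\dx s$; since $|\nabla u_\rho| = |\partial_s (u_\rho\circ\lambda)|^{-1}$... — more carefully, one should parametrize so that a factor emerges allowing a Cauchy–Schwarz or Jensen step in the spirit of a coarea/isoperimetric lower bound, reducing the problem to: among all curves from the axis to the boundary sphere, minimize $\int \rho\,\dx s$ (a weighted length), which is a classical geodesic problem in the half-disk with density $\rho$. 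I expect the minimizing curves to be circular arcs meeting both the axis and the unit sphere perpendicularly, exactly the level sets of the competitor $u_0$ from Lemma~\ref{lem:upper_bound_competitor}, and the weighted length of such an arc can be computed explicitly as a function of $c$.

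Finally I would integrate the resulting pointwise bound $\Phi_{\min}(c)$ against $\frac{\pi}{1-c^2}\,\dx c$ over $c\in[-1,1]$ and evaluate the elementary integral to arrive at $\tfrac{4}{3}\pi(2\sqrt2 - 1)$. The main obstacle, I expect, is the second step: making the reduction to the weighted geodesic problem fully rigorous, since the level set foliation need not be smooth everywhere (critical points of $u_\rho$, the singular points on the axis, possible nonconnected or non-transverse level sets), so one must either work on the regular set and control the rest via the finiteness of $\mathcal{H}^0(\Sigma)$ and the coarea formula, or replace the ODE-parametrization argument by a direct coarea-formula computation $\int |\nabla u_\rho|^2 \rho\,\dx x = \int_{-1}^{1}\big(\int_{\{u_\rho=c\}} |\nabla u_\rho|\,\rho\,\dx\mathcal{H}^1\big)\dx c$ and then invoke the weighted-isoperimetric lower bound for each slice. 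A secondary technical point is justifying that one may drop the $\frac{1}{\rho^2}u_\rho^2$ term while still getting a bound strong enough — here it is simply discarded, which is why the resulting constant $\tfrac{4}{3}\pi(2\sqrt2-1)\approx 7.66$ is strictly below the upper bound $5\pi - \tfrac{\pi^3}{4}\approx 7.96$ and below the Dirichlet value $4\pi$.
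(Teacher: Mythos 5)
Your setup matches the paper exactly through the first stage: expressing the equivariant energy in cylindrical coordinates, using $|\nabla u_z|^2 = \tfrac{u_\rho^2}{1-u_\rho^2}|\nabla u_\rho|^2$ so that the pure gradient part becomes $\tfrac{|\nabla u_\rho|^2}{1-u_\rho^2}$, and passing to level-set coordinates $\Phi(t,c)=(\rho,z)$ with $|\det\nabla\Phi|=1$. From there, however, your route diverges from the paper's and contains a genuine gap.

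The paper does \emph{not} discard the $\tfrac{1}{\rho^2}u_\rho^2=\tfrac{c^2}{\lambda_\rho}$ term; it is used essentially. After writing the inner integral for fixed $c$ as a functional of the curve $\lambda$, the paper derives the Euler--Lagrange system for $\lambda$ over a relaxed class, extracts the first integral $\lambda_\rho\dot\lambda_z=\alpha(c)$ with $\alpha=c(1-c^2)$ (determined by matching the blow-up profile at the pole), substitutes $|\dot\lambda|^2=|\dot\lambda_\rho|^2+\alpha^2/\lambda_\rho^2$, and then applies Young's inequality to the pair $\tfrac{\lambda_\rho}{2(1-c^2)}|\dot\lambda_\rho|^2$ and $\tfrac{c^2(2-c^2)}{2\lambda_\rho}$. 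The factor $(2-c^2)$ comes precisely from combining $\alpha^2/\lambda_\rho^2$ with the $c^2/\lambda_\rho$ term you propose to drop. If you discard $\tfrac{c^2}{\lambda_\rho}$, the analogous argument yields only $|c|\sqrt{1-c^2}$ inside the $c$-integral, and $\int_{-1}^{1}|c|\sqrt{1-c^2}\,dc=\tfrac23$, giving $E(u)\geq\tfrac{4\pi}{3}\approx4.19$ — strictly weaker than the stated $\tfrac{4}{3}\pi(2\sqrt2-1)\approx7.66$. So your closing assertion that the dropped term merely explains why the constant is ``strictly below the upper bound'' is wrong: that term carries roughly a factor $2\sqrt2-1\approx1.83$ of the final constant, and without it the claimed bound cannot be reached.

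There is a second, independent obstruction to the weighted-geodesic reduction. The quantity you need to bound per slice is, via the coarea formula,
$\int_0^{L(c)}|\nabla u_\rho|^2\lambda_\rho\,dt=\int_{\{u_\rho=c\}}|\nabla u_\rho|\,\rho\,\dx\mathcal H^1$,
not $\int\rho\,\dx s$. The extra factor $|\nabla u_\rho|$ along the level set is not determined by the curve's geometry, so the slice problem is not a purely geometric geodesic problem; eliminating $|\nabla u_\rho|$ by Cauchy--Schwarz introduces the flow time $L(c)=\int|\nabla u_\rho|^{-1}\,\dx s$, which is not controlled. Moreover, even if the problem did reduce to $\min\int\rho\,\dx s$, the minimizers would not in general be circular arcs: the paper's ODE solutions \eqref{eq:lam_rho_poly_cubic_expl} are not arcs of circles, and Remark~\ref{rem:upper_bound_competitor} explicitly notes that the circular-arc competitor $u_0$ is \emph{not} a minimizer. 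The paper sidesteps all of this by keeping the $c^2/\lambda_\rho$ term, using the $\alpha$-relation from the relaxed Euler--Lagrange system, applying Young's inequality, and finishing with the elementary estimate $\int_0^{L(c)}|\dot\lambda_\rho|\,\dx t\geq\sqrt{1-c^2}$.
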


\begin{proof}
We note the useful identities
\begin{align*}
|\nabla u_\rho|^2
\ &= \ |\nabla^\perp u_\rho|^2 
\  = \ |\dot{\lambda}|^2 
\ = \ |(\dot{\lambda}\cdot\nabla^\perp) u_\rho|
\end{align*}
Using Lemma \ref{lem:jacobian_level_set_coord}, we can rewrite the integral over $\Omega = \Phi(D)$ as
\begin{align*}
E_0(u) 
\ &= \ 
\int_\Omega \left(\frac{\rho}{2}|\nabla u|^2 + \frac{1}{2\rho}|u_\rho|^2\right) \dx \rho \dx z 
\ = \ 
\int_D \left(\frac{\lambda_\rho}{2}|\nabla u|^2 + \frac{1}{2\lambda_\rho}|u_\rho|^2\right) \dx t \dx c \\
\ &= \ \int_{-1}^1 \int_0^{L(c)} \left(\frac{\lambda_\rho}{2(1-c^2)}|\Dot{\lambda}|^2 + \frac{c^2}{2\lambda_\rho}\right) \dx t \dx c
 \, .
\end{align*}
For fixed $c$, minimizing the inner integral w.r.t.\ $\lambda$ yields the Euler-Lagrange equations
\begin{align*}
0
\ &= \ 
\frac{\dx}{\dx t}\left( \frac{\lambda_\rho \Dot{\lambda_z}}{1-c^2} \right) \\
0
\ &= \
\frac{\dx}{\dx t}\left( \frac{\lambda_\rho \Dot{\lambda_\rho}}{1-c^2} \right) - \frac{1}{2}\frac{|\Dot{\lambda}|^2}{1-c^2} + \frac{c^2}{2\lambda_\rho^2}
\, .
\end{align*}
The first equation means that there is a constant $\alpha(c)$ such that $\lambda_\rho\Dot{\lambda_z}=\alpha(c)$. 
It will later turn out to be useful to write $\alpha(c) = \gamma c\sqrt{1-c^2}$. 
Note however that $\gamma$ might still be dependent on $c$.

Substituting this into the equation for $\lambda_\rho$, multiplying by $1-c^2$ and dividing by $\lambda_\rho$, we get
\begin{align}\label{eq:ode_lam_rho_final}
0
\ &= \
\Ddot{\lambda}_\rho 
\ + \ \frac{1}{2 \lambda_\rho}\Dot{\lambda}_\rho^2 
\ + \ \frac{(1-\gamma^2)c^2(1-c^2)}{2\lambda_\rho^3} 
\, .
\end{align}
This ODE admits the implicit solution 
\begin{align}\label{eq:lam_rho_poly_cubic_impl}
(c_2 + t)^2
\ &= \
\frac{1}{c_1} 
\left( \lambda_\rho + \frac{3b}{c_1} \right)
\left( \lambda_\rho - \frac{6b}{c_1} \right)^2
\, ,
\end{align}
for some constants $c_1,c_2$ and $b=\frac{3}{4}(1-\gamma^2)c^2(1-c^2)$.

If we parametrize $\lambda_\rho$ such that $\lambda_\rho(0)=0$, then we obtain the relation $c_2^2 = \frac{108 b^3}{c_1^4}$. 
Solving the cubic polynomial equation \eqref{eq:lam_rho_poly_cubic_impl} and using the fact that $\Im(\lambda_\rho)=0$, we can write 
\begin{align}\label{eq:lam_rho_poly_cubic_expl}
    \lambda_\rho(t)
\ &= \ 
\frac{3b}{c_1} \Bigg(
1 
- \cos\left( \frac{1}{3}\arctan\left( \frac{\sqrt{(c_2+t)^2 t (-2c_2 - t)}}{(c_2+t)^2 - \frac12 c_2^2} \right) \right) \\
&\hspace{2cm}
+ \sqrt{3}\sin\left( \frac{1}{3}\arctan\left( \frac{\sqrt{(c_2+t)^2 t (-2c_2 - t)}}{(c_2+t)^2 - \frac12 c_2^2} \right)  \right) 
\Bigg) \nonumber
\, ,
\end{align}
which has the Taylor expansion in $t=0$
\begin{align*}
\lambda_\rho(t)
\ &= \ 
\frac{2\sqrt{6} b}{c_1\sqrt{|c_2|}}\sqrt{t} + O(t) 
\quad\text{and}\quad
\dot{\lambda}_\rho(t)
\ = \
\frac{\sqrt{6} b}{c_1\sqrt{|c_2|}}\frac{1}{\sqrt{t}} + O(1)
\, .
\end{align*}
Since $\lambda_\rho\dot{\lambda_z}=\alpha(c)$, we get that
\begin{align*}
\dot{\lambda}_z(t)
\ &= \
\frac{\alpha(c) c_1 \sqrt{|c_2|}}{2\sqrt{6} b}\frac{1}{\sqrt{t}} + O(1) 
\, ,
\end{align*}
In order to determine $\alpha$, we calculate the angle under which the level set leaves the singularity at the North/South pole.

On the one hand, the angle between $\dot{\lambda}$ and the $z-$axis is then given by
\begin{align*}
\tan(\varsigma)
\ &= \
\frac{\dot{\lambda}_\rho}{\dot{\lambda}_z}
\ = \
\frac{\frac{\sqrt{6} b}{c_1\sqrt{|c_2|}}\frac{1}{\sqrt{t}}}{\frac{\alpha c_1 \sqrt{|c_2|}}{2\sqrt{6} b}\frac{1}{\sqrt{t}}}
\ = \
\frac{\frac{\sqrt{6} b}{c_1\sqrt{|c_2|}}}{\frac{\alpha c_1 \sqrt{|c_2|}}{2\sqrt{6} b}}
\ = \
\frac{12 b^2}{\alpha c_1^2 |c_2|}
\ = \
\frac{2 \sqrt{b}}{\sqrt{3} \alpha}
\, ,
\end{align*}
where the last inequality is due to $\lambda_\rho(0)=0$, i.e.\ $c_2^2 = \frac{108 b^3}{c_1^4}$.

On the other hand, from the blow up argument we know that the profile around a defect is of the form $\rho\ee_\rho+(z+\mathrm{sign}(c))\ee_z$. 
Thus
\begin{align*}
\tan(\varsigma)
\ &= \
\frac{c}{\sqrt{1-c^2}}
\, .
\end{align*}
Setting those two expression equal to each other and using $b=\frac{3}{4}(c^2(1-c^2)-\alpha^2)$, one gets $\alpha = c(1-c^2) $.
This allows us to reformulate $b$ as $b=\frac34 \alpha c^3$.

The information about $\alpha$ can now be used to calculate a lower bound on the energy:
First we note
\begin{align*}
|\Dot{\lambda}|^2
\ &= \
|\dot{\lambda}_\rho|^2 + |\dot{\lambda}_z|^2
\ = \
|\dot{\lambda}_\rho|^2 + \frac{\alpha^2}{\lambda_\rho^2}
\, ,
\end{align*}
which gives
\begin{align*}
E_0(u) 
\ &= \  
\int_{-1}^1 \int_0^{L(c)} \left(\frac{\lambda_\rho}{2(1-c^2)}|\Dot{\lambda}|^2 + \frac{c^2}{2\lambda_\rho}\right) \dx t \dx c \\
\ &= \ 
\int_{-1}^1 \int_0^{L(c)} \left(\frac{\lambda_\rho}{2(1-c^2)} |\dot{\lambda}_\rho|^2 
+ \frac{\alpha^2 + c^2(1-c^2)}{2(1-c^2)\lambda_\rho}\right) \dx t \dx c 
\, .
\end{align*}
Since $\alpha^2 = c^2(1-c^2)^2$, it holds $\alpha^2 + c^2(1-c^2) = c^2(1-c^2)(2-c^2)$. 
Applying Young's inequality we get
\begin{align*}
E_0(u) 
\ &= \ 
\int_{-1}^1 \int_0^{L(c)} \left(\frac{\lambda_\rho}{2(1-c^2)} |\dot{\lambda}_\rho|^2 
+ \frac{c^2(2-c^2)}{2\lambda_\rho}\right) \dx t \dx c 
\ \geq \
\int_{-1}^1 \left(\sqrt{\frac{c^2(2-c^2)}{1-c^2}}
\int_0^{L(c)} |\dot{\lambda}_\rho| \dx t \right) \dx c
\, .
\end{align*}
Then we use that $\int_0^{L(c)} |\dot{\lambda}_\rho| \dx t\geq |\lambda_\rho(L(c)) - \lambda_\rho(0)|=\sqrt{1-c^2}$ to obtain
\begin{align*}
E_0(u) 
\ &\geq \
\int_{-1}^1 \left(\sqrt{\frac{c^2(2-c^2)}{1-c^2}}
\int_0^{L(c)} |\dot{\lambda}_\rho| \dx t \right) \dx c \\
\ &\geq \
\int_{-1}^1 \sqrt{c^2(2-c^2)} \dx c 
\ = \
\frac{2}{3}(2\sqrt{2} - 1)
\ \approx \ 
1.21895
\, .
\end{align*}
Integrating over $\theta\in (0,2\pi)$, this gives 
\begin{align*}
    \min_{u\in W_{T}^{1,2}(\Omega;\mathbb{S}^2) \text{ with } u_\theta=0} E(u)
    \ &\geq \
    \frac{4}{3}\pi(2\sqrt{2}-1)
    \ \approx \
    7.6589
    \, .
\end{align*}

\end{proof}

\begin{remark}\label{rem:ELE_level_set}
    Note that the Euler-Lagrange equations for $\lambda$ obtained in the proof of Proposition~\ref{prop:lower_bound_level_set_coord} do not preserve the character of $\lambda$ being a level set. 
    Since we're minimizing in a strictly larger class of functions $\lambda$, this means that we cannot expect our lower bound to be optimal.
\end{remark}

\bibliographystyle{acm}
\bibliography{Harmonic_Map_Bibliography}
                
\end{document}